\newcommand{\la}{\lambda}
\newcommand{\al}{\alpha}
\newcommand{\ga}{\gamma}
\newcommand{\ve}{\varepsilon}
\newcommand{\vp}{\varphi}
\newcommand{\f}{\frac}
\newcommand{\R}{\mathbb{R}}
\newcommand{\T}{\mathbb{T}}
\newcommand{\Z}{\mathbb{Z}}
\newcommand{\Om}{\Omega}
\newcommand{\om}{\omega}
\newcommand{\n}[1]{\Vert #1\Vert}
\newcommand{\bn}[1]{\big \Vert #1 \big \Vert}
\newcommand{\bbn}[1]{\Big\Vert #1 \Big \Vert}
\newcommand{\lr}[1]{\left\{ #1\right\}}
\newcommand{\lrc}[1]{\left[ #1\right]}
\newcommand{\lrs}[1]{\left( #1\right)}
\newcommand{\lrv}[1]{\left| #1\right|}
\newcommand{\lra}[1]{\langle #1\rangle}
\newcommand{\blra}[1]{\big\langle #1\big\rangle}
\newcommand{\bblra}[1]{\Big\langle #1\Big\rangle}
\newcommand{\abs}[1]{|#1|}
\newcommand{\babs}[1]{\big | #1 \big|}
\newcommand{\bbabs}[1]{\Big | #1 \Big|}
\newcommand{\wt}[1]{\widetilde{#1}}
\newcommand{\pa}{\partial}
\newcommand{\ol}[1]{\overline{#1}}
\newtheorem{theorem}{Theorem}[section]
\newtheorem{lemma}[theorem]{Lemma}
\newtheorem{proposition}[theorem]{Proposition}
\title[Asymptotic stability of the symmetric flow]{Asymptotic stability of the symmetric flow via inviscid damping and enhanced dissipation}
\author[Q. Chen]{Qi Chen}
\address{School of Mathematical Sciences, Zhejiang University, Hangzhou 310058, China}
\email{chenqi123@zju.edu.cn}
\author[H. Li]{Hao Li}
\address{School of Mathematical Sciences, University of Electronic Science and Technology of China, Chengdu, 611731, China}
\email{lihao\_pde@163.com}
\author[S. Shen]{Shunlin Shen}
\address{School of Mathematical Sciences, University of Science and Technology of China, Hefei, 230026, China}
\email{slshen@ustc.edu.cn}
\author[Z. Zhang]{Zhifei Zhang}
\address{School of Mathematical Sciences, Peking University, Beijing, 100871, China}
\email{zfzhang@pku.edu.cn}
\begin{document}

%\date{\today}

\begin{abstract}
In this paper, we establish the inviscid damping and enhanced dissipation estimates for the linearized Navier-Stokes system around the symmetric flow in a finite channel with the non-slip boundary condition.  As an immediate consequence, we prove the asymptotic stability of the symmetric flow in the high Reynolds number regime. Namely, if the initial velocity perturbation $u^{\mathrm{in}}$ satisfies $\Vert u^{\mathrm{in}}-(U(y),0) \Vert_{H^5}\leq c \nu^{\frac{2}{3}}$, then inviscid damping and enhanced dissipation estimates also hold for the solution to the Navier-Stokes system.
\end{abstract}

\maketitle

\tableofcontents

\section{Introduction}

\renewcommand{\theequation}{\thesection.\arabic{equation}}
\newtheorem{Definition}{Definition}[section]
\newtheorem{Theorem}{Theorem}[section]
\newtheorem{Proposition}{Proposition}[section]
\newtheorem{Lemma}{Lemma}[section]
\newtheorem{Remark}{Remark}[section]
\newtheorem{Corollary}{Corollary}[section]
\numberwithin{equation}{section}

Hydrodynamic stability at high Reynolds numbers has been a fundamental problem in fluid mechanics since Reynolds' pioneering experiment \cite{r-prsl1883}. A central focus of this research has been understanding the laminar-to-turbulent transition mechanisms. To this end, Trefethen et al. \cite{ttrd-science1993} first proposed a crucial question regarding the transition threshold, which was formalized through a rigorous mathematical framework by Bedrossian, Germain, and Masmoudi \cite{bgm-bams2019}:

 {\it
 Given a norm $\n{\cdot}_X$, determine a $\beta=\beta(X)$ such that
	\begin{align*}
		\n{u^{\mathrm{in}}}_X&\leq Re^{-\beta}\Longrightarrow stability,\\
		\n{u^{\mathrm{in}}}_X&\gg Re^{-\beta}\Longrightarrow instability.
\end{align*}}
Here the exponent $\beta$ is referred to as the transition threshold.

 Notably, there has been significant progress in understanding the stability of various flows, see, for example, Couette flow \cite{bgm-bams2019,bgm-mams2020,wz-cpam2021,cwz-mams2024,bgm-mams2022, CDW}, Poiseuille flow \cite{cdlz-arxiv2023,cwz-cpam2023}, Kolmogorov flow \cite{lwz-cpam2020}, and many others. These results indicate that two important physical effects, namely inviscid damping and enhanced dissipation due to the shear-flow-induced mixing mechanism, play a crucial role in the early stage of the transition. For further details, see the review papers \cite{WZ-ICM, bgm-bams2019}.

The main goal of this paper is to establish the linear inviscid damping and enhanced dissipation estimates for the linearized Navier-Stokes system around the symmetric flow $(U(y),0)$ in a finite channel
 $\Omega=\{(x,y):x\in \T, y\in(-1,1)\}$, which takes the form of
 \begin{equation}\label{equ:per,INS}
	\left\{
	\begin{aligned}
		&\partial_t u-\nu\Delta u+U(y)\pa_x u+(U'(y)u^2,0)+\nabla P=0,\\
		&\nabla\cdot u=0,\\
		& u(t,x,\pm1)=0,\quad u(0,x,y)=u^{\mathrm{in}}(x,y),
	\end{aligned}\right.
\end{equation}
 where  $\nu=Re^{-1}\ll 1$ is the viscosity coefficient, $u=(u^1,u^2)$ is the velocity, $P$ is the pressure. To reduce technical difficulties, we only consider the following symmetric flows that fulfill the $\mathrm{S}$-condition
\begin{align}\label{equ:K condition,U}
	\mathrm{S}:=\Big\{U(y)\in C^{4}[-1,1]:\quad  U(y)=U(-y),\quad\mathrm{and}\quad \inf_{y\in[-1,1]}U''(y)>0\Big\}.
\end{align}
An important example is the plane Poiseuille flow $U(y)=y^{2}$. Let us point out that the symmetric flows in an infinite channel (namely, $x\in \R$) are linearly unstable for a certain region of the parameter space identified by Grenier, Guo and Nguyen \cite{GGN}. However, the question of linear stability outside this region  remained open. In a recent work \cite{AH}, Almog and Helffer fill this gap and prove that outside the region of instability in the parameter space, the symmetric flows are indeed linearly stable.  In  this paper, we prove the linear stability of  the symmetric flows in a finite channel in the high Reynolds number regime.

The second goal of this paper is to investigate the {\bf asymptotic stability threshold problem} for the symmetric flows, a problem first proposed by Masmoudi and Zhao in the context of the 2D Couette flow \cite{MZ-AIHP}.
To this end, we consider the 2D incompressible Navier-Stokes equation with a force in a finite channel $\Omega=\{(x,y):x\in \T, y\in(-1,1)\}$:
\begin{equation}\label{equ:INS}
	\left\{
	\begin{aligned}
		&\pa_t v-\nu\Delta v+v\cdot \nabla v+\nabla P=F,\\
		&\nabla\cdot v=0,\quad v(0,x,y)=v^{\mathrm{in}}(x,y).
	\end{aligned}\right.
\end{equation}
Then the shear flow $(U(y), 0)$ is a steady solution of \eqref{equ:INS} with $F=(-\nu\pa^2_y U,0 )$.  Let $u=v-U$ be the velocity perturbation. Thus,
\begin{equation}\label{equ:NS-pert}
	\left\{
	\begin{aligned}
		&\partial_t u-\nu \Delta u+U(y)\pa_x u+(U'(y)u^2,0)+u\cdot \nabla u+\nabla P=0,\\
		&\nabla\cdot u=0,\\
		& u(t,x,\pm1)=0,\quad u(0,x,y)=u^{\mathrm{in}}(x,y),
	\end{aligned}\right.
\end{equation}
The asymptotic stability threshold problem is to seek a $\beta>0$ as small as possible such that if $\|u_{\text{in}}\|_X\ll \nu^\beta$, then the inviscid damping and enhanced dissipation estimates hold for the solution $u$ to the system \eqref{equ:NS-pert}. Let us review some known results in this direction. For the Couette flow, the following asymptotic stability results have been established when $\Om=\mathbb{T}\times \R$:
\begin{itemize}

\item if $X$ is  Gevrey class $2+$, then $\beta=0$ \cite{BMV};

\item if $X$ is Sobolev space, then $\beta\le \f13$ \cite{MZ-AIHP, WZ-TJM};

\item if $X$ is Gevrey class $\f 1s$, $ s\in[0,\f12]$, then $\beta\le \f{1-2s}{3(1-s)}$ \cite{LMZ}.
\end{itemize}
For $\Om=\mathbb{T}\times [-1,1]$,  the following results have been proved under different boundary conditions:
\begin{itemize}
\item if $X$ is Sobolev space with non-slip BC, then $\beta\le \f12$ \cite{CLWZ-ARMA};

\item if $X$ is Sobolev space with Navier-slip BC, then $\beta\le \f13$ \cite{WZ-AIHP};

\item if $X$ is  Gevrey class $2+$ with Navier-slip BC, then $\beta=0$ \cite{BHIW}.
\end{itemize}
For the plane Poiseuille flow, the following asymptotic stability results have been established:
\begin{itemize}
\item if $X$ is Sobolev space in $\T\times \R$, then $\beta\le \f34+$  \cite{CEW};

\item if $X$ is Sobolev space in $\T\times \R$, then $\beta\le \f23+$ \cite{Del};

\item if $X$ is Sobolev space in $\T\times [-1,1]$ with Navier-slip BC, then $\beta\le \f23$ \cite{DL-SCM}.
\end{itemize}
Very recently, Chen, Jia, Wei and Zhang \cite{CJWZ} prove the asymptotic stability threshold $\beta\le \f13$ for the Kolmogorov flow in $\T_{2\kappa\pi}\times \T_{2\pi}, \kappa<1$.

Let us refer to \cite{BHIW-CMP, WZZ-Adv, CZ-Non, BG, LMZ-CPAM, MZ-CPDE, AHL, AB} and the references therein for more relevant results.

\subsection{Main results}

 Let $\phi$ be the stream function with the velocity $u=(-\pa_{y}\phi,\pa_x \phi)$ and the vorticity $\om=\pa_xu^2-\pa_yu^1$.
 The linearized NS system of \eqref{equ:omega,full,NS} then reduces to
\begin{equation}\label{equ:omega,linear,NS}
	\left\{
	\begin{aligned}
		&\pa_{t}\om-\nu\Delta\om+U(y)\pa_x\om-U''(y)\pa_x\Delta^{-1}\om=0,\\
		&\Delta \phi=\om,\quad \phi(\pm 1)=0,\quad u(t,x,\pm1)=0,\\ &\om(0,x,y)=\om^{\mathrm{in}}(x,y).
	\end{aligned}\right.
\end{equation}

We define the zero mode and non-zero mode as follows
\begin{align}
	P_{\al}f=:&{f_{\al}}(t,y)=\f{1}{\abs{\T}}\int_{\T}f(t,x,y)e^{-i\al x}dx,\quad P_{\neq 0}f=:f-f_0=\sum_{\al\neq0}f_{\al}(t,y)e^{i\al x},
\end{align}
and introduce the norm notation
\begin{align}
\n{\om_{\al}}_{H_{\al}^{k}}^{2}=:&\sum_{i=0}^{k}\n{(\pa_{y},|\al|)^{i}\om_{\al}}_{L_{y}^{2}}^{2}.
\end{align}

We now present the linear inviscid damping and enhanced dissipation estimates for the linearized Navier-Stokes system \eqref{equ:omega,linear,NS}.

\begin{theorem}\label{theorem:linear,enhan,invis,est}
Let $U(y)\in \mathrm{S}$ and $(\om,u)$ be the solution to the linearized  NS system \eqref{equ:omega,linear,NS} with $\int_{\T}\om^{\mathrm{in}}(x,y)dx=0$. There exist positive constants $\nu_0$, $\varepsilon_0$, such that for $\nu\in(0,\nu_0]$, $\ve\in[0,\ve_{0}]$, the following space-time estimates hold for each Fourier mode $\al$,
	\begin{align}\label{equ:linear,enhan,invis,est}
	&\abs{\al}\n{e^{\varepsilon \nu^{\f12}t}u_{\al}}_{L_{t}^{\infty}L_{y}^{2}}^{2}
		+\abs{\al}\n{e^{\varepsilon \nu^{\f12}t}u_{\al}}_{L_{t,y}^{2}}^{2}+\n{e^{\varepsilon \nu^{\f12}t}u_{\al}}_{L_{t}^{\infty}L_{y}^{\infty}}^{2}+\nu^{\f12}\abs{\al}^{\f12}\n{e^{\varepsilon \nu^{\f12}t}\om_{\al}}^2_{L_{t,y}^{2}}\\
		&\quad  + \nu^{\frac{1}{2}}\n{e^{\varepsilon \nu^{\f12}t}\om_{\al}}_{L_{t}^{\infty}L_{y}^{2}}^{2}+\nu^{\f32}\n{e^{\varepsilon \nu^{\f12}t}\pa_y\om_{\al}}^2_{L_{t,y}^{2}}+\n{e^{\varepsilon \nu^{\f12}t}\sqrt{1-y^{2}}\om_{\al}}^2_{L_{t}^{\infty}L_{y}^{2}}
		\lesssim \n{\om^{\mathrm{in}}_{\al}}_{H_{\al}^{4}}^{2}.\notag
	\end{align}
\end{theorem}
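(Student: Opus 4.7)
My plan is to prove Theorem \ref{theorem:linear,enhan,invis,est} by the resolvent approach that has become standard for inviscid-damping/enhanced-dissipation problems (see \cite{wz-cpam2021,cwz-mams2024,cdlz-arxiv2023}): work mode-by-mode in $\al\neq 0$, reduce the family of estimates in \eqref{equ:linear,enhan,invis,est} to uniform-in-$c$ bounds on the Orr--Sommerfeld resolvent, and invert by Plancherel on a contour shifted into the lower half-plane by $O(\nu^{1/2}/|\al|)$ so as to generate the weight $e^{\varepsilon\nu^{1/2}t}$. Writing the Dunford integral
\begin{equation*}
\om_\al(t,y)=\f{1}{2\pi i}\int_{\mathrm{Im}\,c=-\sigma}e^{-i\al ct}\,W(c,y)\,dc,\qquad \sigma=\tfrac{\varepsilon\nu^{1/2}}{|\al|},
\end{equation*}
the resolvent density $W=W_\al(c,\cdot)$ and its stream function $\Phi$ solve
\begin{equation*}
-\nu(\pa_y^2-\al^2)W+i\al(U-c)W-i\al U''\Phi=\om^{\mathrm{in}}_\al,\quad (\pa_y^2-\al^2)\Phi=W,
\end{equation*}
with $\Phi(\pm1)=\pa_y\Phi(\pm1)=0$. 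Plancherel then converts each $L^2_t$ norm on the left of \eqref{equ:linear,enhan,invis,est} into the corresponding $L^2_{\mathrm{Re}\,c}L^2_y$ norm of $W$, $\pa_yW$, $\Phi$, or $\sqrt{1-y^2}\,W$; the $L^\infty_t$ norms would then be recovered by a parallel weighted energy estimate that uses the resolvent bound as input.

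The core task is therefore the \emph{resolvent estimate}, to be shown uniformly in $c$ on the shifted contour: for $(W,\Phi)$ as above,
\begin{equation*}
\nu^{1/2}|\al|^{1/2}\|W\|_{L^2_y}^2+\nu^{3/2}\|\pa_yW\|_{L^2_y}^2+|\al|^2\|\Phi\|_{H^1_y}^2+\|\sqrt{1-y^2}\,W\|_{L^2_y}^2\lesssim \f{1}{|\al|}\|\om^{\mathrm{in}}_\al\|_{H^4_\al}^2.
\end{equation*}
I would split the analysis according to where $\la:=\mathrm{Re}\,c$ lies relative to $U([-1,1])$:
\begin{enumerate}[(i)]
\item $\la$ outside a fixed neighborhood of $U([-1,1])$: the Rayleigh term is coercive and a direct energy estimate (test against $\ol W$ and against $(1-y^2)\ol W$) closes everything.
\item $\la\in(U(0)+\eta,\max U)$: by the $\mathrm{S}$-condition, $\{U=\la\}$ consists of two symmetric points $\pm y_\la$ with $U'(\pm y_\la)\ne 0$; this is the nondegenerate critical-layer regime, handled by matched asymptotics with Airy-type viscous sublayers of width $(\nu/|\al|)^{1/3}$ about $\pm y_\la$ and Prandtl sublayers of width $(\nu/|\al|)^{1/2}$ at $y=\pm1$. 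Throughout I would exploit the symmetry $U(y)=U(-y)$ by splitting $W=W_e+W_o$ into even/odd parts, reducing to $[0,1]$ with parity boundary conditions at $y=0$ and decoupling the two critical layers.
\item $\la$ within $O(\nu^{1/2}|\al|^{-1/2})$ of $\min U=U(0)$: the two critical points $\pm y_\la$ merge at the degenerate turning point $y=0$ where $U'(0)=0$. The quadratic tangency $U(y)-c\approx U(0)-c+\tfrac12U''(0)y^2$ is exactly what forces the enhanced-dissipation rate $\nu^{1/2}$ rather than the monotonic rate $\nu^{1/3}$. Here I would replace the Airy model by a parabolic-cylinder (Weber) model, and rebuild the Rayleigh Green's function together with the viscous corrector on the natural sublayer scale $(\nu/(|\al|U''(0)))^{1/4}$.
\end{enumerate}

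The principal obstacle is regime (iii): the coalescence of the two critical layers at the degenerate turning point produces a nonstandard Orr--Sommerfeld problem that must simultaneously be matched to the no-slip boundary layers at $y=\pm1$. The weight $\sqrt{1-y^2}$ appearing on the left of \eqref{equ:linear,enhan,invis,est} is chosen precisely to cancel the leading boundary-layer amplitude so that the resulting bulk estimate is sharp; this is why the $L^\infty_tL^2$ norm of $\sqrt{1-y^2}\,\om_\al$, and not of $\om_\al$ itself, appears. The $H^4$-loss on the right is bookkeeping for derivatives spent on commutators and on matching the outer Rayleigh solution with the inner corrector, and the power $\nu^{3/2}$ on $\|\pa_y\om_\al\|$ reflects that one controls only the bulk derivative, with the extra $\nu^{1/2}$ absorbing the boundary-layer distortion. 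Once the matched-asymptotic construction in (iii) yields the sharp $\nu^{1/2}|\al|^{1/2}\|W\|_{L^2_y}^2$ resolvent term, integration over $\mathrm{Re}\,c$ via Plancherel plus the companion weighted energy estimate delivers the full family of norms in \eqref{equ:linear,enhan,invis,est}.
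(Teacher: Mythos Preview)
Your overall architecture---reduce to resolvent bounds, invert by Plancherel on a shifted contour, then upgrade $L^2_t$ to $L^\infty_t$ by a weighted energy identity---matches the paper's framework for the \emph{inhomogeneous} (zero initial data, time-dependent forcing) part of the problem, and that part would go through essentially as you describe.

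The genuine gap is in the \emph{homogeneous} (initial-data) problem. You write the Dunford density $W(c,\cdot)$ with source $\om^{\mathrm{in}}_\al$ and claim a pointwise-in-$c$ bound $\nu^{1/2}|\al|^{1/2}\|W\|_{L^2_y}^2\lesssim |\al|^{-1}\|\om^{\mathrm{in}}_\al\|_{H^4_\al}^2$. For this to give the $L^2_t$ estimate after Plancherel you must integrate in $\mathrm{Re}\,c$ over an interval of length $\sim 1$ (the range of $U$), and since the right-hand side is $c$-independent, the bound has to be exactly this strong pointwise. But the paper's sharpest non-slip resolvent estimate with smooth data (Proposition~\ref{proposition:est,u,L2,FL2H1H-1}, the $H^1$ line) is only $\nu^{3/8}|\al|^{5/8}\|W\|_{L^2}\lesssim\|F\|_{H^1}$, which is weaker by a factor $\nu^{1/8}|\al|^{-1/8}$ and does not close; the boundary-layer contribution $c_j w_{cor,j}$ costs $\nu^{-3/8}|\al|^{-5/8}$ in $L^2$ regardless of how smooth $F$ is, because the boundary layer at $y=\pm1$ has width $\sim(\nu/|\al|)^{1/3}$ and does not shrink with extra interior regularity. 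So the pointwise resolvent bound you need is not available, and the pure resolvent/Plancherel route for the initial-data problem does not close as written.

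The paper circumvents this by a completely different mechanism for $\om_H$: it sets $\om_H^{(1)}(t,y)=g(t,y)\om_H^{(0)}(t,y)$ where $\om_H^{(0)}$ solves the \emph{linearized Euler} equation and $g=e^{-\nu\al^2(\frac13(U')^2t^3+t)-\nu^{1/2}|\al|^{1/2}t}$ is an explicit Gaussian-type damping factor. The $L^2_t$ control of $\om_H^{(1)}$ then comes not from a resolvent bound but from inviscid damping and the vorticity-depletion pointwise estimate \eqref{equ:est,om,loc+nloc,depletion} for the Euler flow (this is where the $H^4_\al$ norm is actually spent). The remainder $\om_H^{(2)}=\om_H-\om_H^{(1)}-\om_H^{(3)}$ has zero initial data and is forced, so now the resolvent/Plancherel machinery applies; $\om_H^{(3)}$ is a boundary corrector handled by the Airy-function analysis of Section~\ref{sec:Boundary Layer Corrector}. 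Your proposal is missing this Euler-based ingredient entirely.

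A secondary difference: your resolvent analysis proposes matched asymptotics with Airy/Weber inner models at the interior critical layers, including a parabolic-cylinder reduction near the degenerate point $y=0$. The paper takes a different route: the Navier-slip resolvent is handled purely by energy methods---the coercive Rayleigh identities of Section~\ref{sec:Rayleigh Equation}, exterior/interior splitting, and the weak-type resolvent estimates of Lemmas~\ref{lemma:weak,est,w,FL2}--\ref{lemma:weak,est,w,FH-1}---with no special-function construction at the interior critical layers; Airy functions enter only for the boundary correctors at $y=\pm1$. The degenerate case $\la\approx U(0)$ (your regime (iii)) is dispatched by the direct energy estimate \eqref{equ:energy estimate,im,w,0}, not by a Weber model.
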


Building on space-time estimates in Theorem \ref{theorem:linear,enhan,invis,est}, we can prove the asymptotic stability of the symmetric flow. We consider the Navier-Stokes system in terms of  the vorticity $\om=\pa_xu^2-\pa_yu^1$:
\begin{equation}\label{equ:omega,full,NS}
	\left\{
	\begin{aligned}
		&	\pa_{t} \om-\nu\Delta\om+U(y)\pa_x\om -U''(y)\pa_x\Delta^{-1}\om +u\cdot\nabla\om=0,\\
		&\Delta \phi=\om,\quad \phi(\pm 1)=0,\\ &u(t,x,\pm1)=0,\quad  \om(0,x,y)=\om^{\mathrm{in}}(x,y).
	\end{aligned}\right.
\end{equation}

\begin{theorem}\label{theorem:nonlinear,stability}
Let $U(y)\in \mathrm{S}$ and $(\om,u)$ be the solution to the nonlinear system \eqref{equ:omega,full,NS}. There exist positive constants $\nu_0$, $\ve_{0}$, $c$, such that if the initial perturbation\footnote{The $H_{\alpha}^{4}$ regularity requirement could be reduced. For brevity, we do not pursue sharp regularity here.} satisfies
\begin{align}
\sum_{\al\in \Z}\n{\om^{\mathrm{in}}_{\al}}_{H_{\al}^{4}}\leq c\nu^{\f23},\quad \text{for $\nu\in(0, \nu_0]$,}
\end{align}
then the solution $(\om,u)$ is global and satisfies global stability estimate
	\begin{align}\label{equ:nonlinear,stability}
		\sum_{\al\in \Z}\mathcal{E}_{\al}
		\lesssim \sum_{\al\in \Z}\n{\om^{\mathrm{in}}_{\al}}_{H_{\al}^{4}},
	\end{align}
	where the stability norm $\mathcal{E}_{\al}$ is defined as
	\begin{equation}\label{euq,def,E,al}
\mathcal{E}_{\al}:=
		\left\{
		\begin{aligned}
		&\n{{\om}_0}_{L_{t}^{\infty}L_{y}^{2}},
		&   {\al}= 0,\\
		&\abs{\al}^{\f12}\n{ e^{\ve_{0} \nu^{\f12}t}u_{\al}}_{L_{t}^{\infty}L_{y}^{2}}+\abs{\al}^{\f12}\n{ e^{\ve_{0} \nu^{\f12}t}u_{\al}}_{L_{t,y}^{2}}+\n{e^{\ve_{0} \nu^{\f12}t} u_{\al}}_{L_{t}^{\infty}L_{y}^{\infty}}+\\
		&\nu^{\f14}\abs{\al}^{\f14}\n{e^{\ve_{0} \nu^{\f12}t}\om_{\al}}_{L_{t,y}^{2}}
		+\nu^{\f14}\n{e^{\ve_{0}\nu^{\f12}t}\om_{\al}}_{L_{t}^{\infty}L_{y}^{2}}+ \n{e^{\ve_{0} \nu^{\f12}t}\sqrt{1-y^{2}}\om_{\al}}_{L_{t}^{\infty}L_{y}^{2}},  &{\al}\neq 0.
		\end{aligned}\right.
	\end{equation}
In particular, there holds that
\begin{align}
\n{P_{\neq 0}u}_{L_{t,x,y}^{2}}+\nu^{\frac{1}{4}}\n{P_{\neq 0}\om}_{L_{t,x,y}^{2}}\lesssim& \sum_{\al\in \Z}\n{\om^{\mathrm{in}}_{\al}}_{H_{\al}^{4}},\label{equ:enhanced,inviscid,nonlinear}\\
\n{P_{\neq 0}u(t)}_{L_{x,y}^{\infty}}+\nu^{\frac{1}{4}}\n{P_{\neq 0}\om(t)}_{L_{x,y}^{2}}\lesssim& e^{-\ve_{0}\nu^{\frac{1}{2}}t}\sum_{\al\in \Z}\n{\om^{\mathrm{in}}_{\al}}_{H_{\al}^{4}}.
\end{align}
\end{theorem}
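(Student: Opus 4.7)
The strategy is a continuity bootstrap driven by Theorem \ref{theorem:linear,enhan,invis,est}. Set $\delta:=\sum_{\al\in\Z}\n{\om^{\mathrm{in}}_{\al}}_{H_{\al}^{4}}$ and let $T^{*}$ be the maximal time on which the a priori bound $\sum_{\al}\mathcal{E}_{\al}\leq 2C_{0}\delta$ holds, where $C_{0}$ is the implicit constant in \eqref{equ:linear,enhan,invis,est}. The aim is to improve this to $\sum_{\al}\mathcal{E}_{\al}\leq C_{0}\delta+\text{(quadratic remainder)}$; once the remainder is absorbed under the smallness $\delta\leq c\nu^{\f23}$, standard continuity yields $T^{*}=+\infty$ and \eqref{equ:nonlinear,stability} follows.

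The first step is to promote Theorem \ref{theorem:linear,enhan,invis,est} to its inhomogeneous Duhamel version: for each $\al\neq 0$, if $\om_{\al}$ solves the linearized vorticity equation with a source $g_{\al}$ on the right-hand side, then $\mathcal{E}_{\al}\lesssim \n{\om^{\mathrm{in}}_{\al}}_{H_{\al}^{4}}+\mathcal{G}_{\al}(g_{\al})$ in a suitable weighted space-time norm $\mathcal{G}_{\al}$ read off from the semigroup representation used to establish \eqref{equ:linear,enhan,invis,est}. Applying this with $g_{\al}=-P_{\al}(u\cdot\nabla\om)$, one splits the nonlinearity (using $u^{2}_{0}=0$) as
\begin{align*}
P_{\al}(u\cdot\nabla\om) = u^{1}_{0}(i\al)\om_{\al}+u^{2}_{\al}\pa_{y}\om_{0}+P_{\al}\lrs{P_{\neq 0}u\cdot\nabla P_{\neq 0}\om}.
\end{align*}
The zero mode is treated separately: divergence-free gives $P_{0}(u\cdot\nabla\om)=\pa_{y}\sum_{\be\neq 0}u^{2}_{-\be}\om_{\be}$, so a direct heat energy estimate on $\pa_{t}\om_{0}-\nu\pa_{y}^{2}\om_{0}=-\pa_{y}\sum_{\be\neq 0}u^{2}_{-\be}\om_{\be}$, integrating by parts onto $\pa_{y}$, yields $\n{\om_{0}}_{L_{t}^{\infty}L_{y}^{2}}\lesssim\n{\om^{\mathrm{in}}_{0}}_{L^{2}}+\nu^{-\f12}\n{P_{\neq 0}u}_{L^{2}_{t}L^{\infty}_{x,y}}\n{P_{\neq 0}\om}_{L^{2}_{t,x,y}}$, both factors on the right being dominated by $\sum_{\al}\mathcal{E}_{\al}$.

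Each piece of the non-zero-mode nonlinearity is then estimated by Hölder in $y$ and summed in $\al$ via Young's convolution inequality, paired against the components of $\mathcal{E}_{\al}$: the inviscid-damping gain $|\al|^{\f12}$ in the velocity norms absorbs the $i\al$ arising in $u^{1}_{0}(i\al)\om_{\al}$, while the enhanced-dissipation gain $\nu^{\f14}$ pays for the vorticity factors in the purely non-zero--non-zero term. The main obstacle is the mixed interaction $u^{2}_{\al}\pa_{y}\om_{0}$: the zero mode enjoys only parabolic dissipation, so $\pa_{y}\om_{0}$ costs $\nu^{-\f12}$, and moreover $\pa_{y}\om_{0}$ need not vanish at $y=\pm 1$. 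This is precisely the role of the boundary-weighted term $\n{\sqrt{1-y^{2}}\om_{\al}}_{L_{t}^{\infty}L_{y}^{2}}$ in $\mathcal{E}_{\al}$: a near/far decomposition around $y=\pm 1$, combined with the vanishing of $u^{2}_{\al}$ at the boundary and Hardy-type inequalities, lets one transfer the $\pa_{y}$ (and the boundary loss) onto the $\sqrt{1-y^{2}}$-weighted vorticity. Assembling the estimates and summing in $\al$ yields a bound of the form $\sum_{\al}\mathcal{E}_{\al}\lesssim \delta+\nu^{-\f23}\lrs{\sum_{\al}\mathcal{E}_{\al}}^{2}$, which closes the bootstrap for $\delta\leq c\nu^{\f23}$ with $c$ sufficiently small; the pointwise-in-time bounds \eqref{equ:enhanced,inviscid,nonlinear} and the exponential decay are then immediate from the definition \eqref{euq,def,E,al} of $\mathcal{E}_{\al}$.
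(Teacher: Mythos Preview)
Your bootstrap strategy is the right one, and your treatment of the zero mode is fine. The gap is in how you write the nonlinear source. You split $g_{\al}=-P_{\al}(u\cdot\nabla\om)$ and propose to estimate the pieces directly; but Proposition~\ref{proposition:est,u,om,timespace,f1234} (the inhomogeneous version of Theorem~\ref{theorem:linear,enhan,invis,est}) requires the source in the specific form $i\al f_{1}+\pa_{y}f_{2}$ with $(f_{1},f_{2})\in L_{t,y}^{2}$, and the bound is $\nu^{-1/2}\|(f_{1},f_{2})\|_{L_{t,y}^{2}}$. Writing $u\cdot\nabla\om=\operatorname{div}(u\om)$ forces $f_{1}=(u^{1}\om)_{\al}$, and for a generic high--high pair $l,\al-l\neq 0$ the best you can do is
\[
\nu^{-1/2}\bn{u^{1}_{l}\om_{\al-l}}_{L_{t,y}^{2}}
\lesssim \nu^{-1/2}\n{u^{1}_{l}}_{L_{t}^{\infty}L_{y}^{\infty}}\n{\om_{\al-l}}_{L_{t,y}^{2}}
\lesssim \nu^{-3/4}\,\mathcal{E}_{l}\mathcal{E}_{\al-l},
\]
since $\nu^{1/4}|\al-l|^{1/4}\|\om_{\al-l}\|_{L_{t,y}^{2}}\le\mathcal{E}_{\al-l}$. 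The Hardy/weighted trick does not save this term: passing to $\|u^{1}_{l}/\sqrt{1-y^{2}}\|_{L^{2}_{t}L^{\infty}_{y}}\lesssim\|\pa_{y}u^{1}_{l}\|_{L_{t,y}^{2}}$ reproduces an $\|\om_{l}\|_{L_{t,y}^{2}}$ and hence the same $\nu^{-1/4}$ loss. So your scheme closes only at threshold $\nu^{3/4}$, not $\nu^{2/3}$, and the claimed final inequality $\sum\mathcal{E}_{\al}\lesssim\delta+\nu^{-2/3}(\sum\mathcal{E}_{\al})^{2}$ does not follow from the pieces you describe.

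The paper avoids this by writing the same nonlinearity differently: $u\cdot\nabla\om=\operatorname{curl}(u\cdot\nabla u)=i\al(u\cdot\nabla u)^{2}-\pa_{y}(u\cdot\nabla u)^{1}$, so that $f_{1},f_{2}$ are bilinear in $u$ and $\nabla u$ rather than in $u$ and $\om$. The point is that $|\al-l|\,\|u_{\al-l}\|_{L_{t,y}^{2}}$ sits strictly between the inviscid-damping norm $|\al-l|^{1/2}\|u_{\al-l}\|_{L_{t,y}^{2}}$ and the enhanced-dissipation norm $\|\om_{\al-l}\|_{L_{t,y}^{2}}$, and the interpolation
\[
|\al-l|\,\|u_{\al-l}\|_{L_{t,y}^{2}}\le \bigl(|\al-l|^{1/2}\|u_{\al-l}\|_{L_{t,y}^{2}}\bigr)^{1/3}\|\om_{\al-l}\|_{L_{t,y}^{2}}^{2/3}\lesssim \nu^{-1/6}\mathcal{E}_{\al-l}
\]
is exactly what produces $\nu^{-2/3}$ after the $\nu^{-1/2}$ from the linear estimate. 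Incidentally, in this formulation the term you flag as the ``main obstacle'', $u^{2}_{\al}\pa_{y}\om_{0}$, never appears; its analogue is $u^{2}_{\al}\pa_{y}u^{1}_{0}=-u^{2}_{\al}\om_{0}$, handled by the same Hardy argument applied to $u^{2}_{\al}$ (this is where the weighted norm $\|\sqrt{1-y^{2}}\om\|$ is actually used, on the \emph{nonzero} modes, in the $u^{2}_{l}\pa_{y}u^{j}_{\al-l}$ interactions).
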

Let us give some remarks on Theorems \ref{theorem:linear,enhan,invis,est} and Theorem \ref{theorem:nonlinear,stability}.

\begin{itemize}
\item
Theorems \ref{theorem:linear,enhan,invis,est}--\ref{theorem:nonlinear,stability} establish the nonlinear stability of general symmetric flows under the non-slip boundary condition. The linear stability problem for such flows was initially studied in \cite{AH}. Our work employs a distinct framework to derive full resolvent estimates, inviscid damping, and enhanced dissipation estimates. These results serve as key ingredients for resolving the nonlinear stability problem.

\item
The asymptotic stability  threshold $\beta=\frac{2}{3}$ under the non-slip boundary condition is rather nontrivial, although it may not be optimal. For comparison, even in the special case of the plane Poiseuille flow with the Navier-slip boundary condition, the threshold in existing literature remains $\frac{2}{3}$, as seen in \cite{DL-SCM}.
The present work achieves this threshold for  general symmetric flows under the non-slip boundary condition, relying crucially on the sharp inviscid damping estimate $|\al|^{\frac{1}{2}}\n{u_{\al}}_{L_{t,y}^{2}}$  and the enhanced dissipation estimate $\nu^{\frac{1}{4}}|\al|^{\frac{1}{4}}\n{\om_{\al}}_{L_{t,y}^{2}}$. These estimates are novel and, to our knowledge, established here for the first time, even in the case of plane Poiseuille flow.

\item The $\n{u(t)}_{L_{x,y}^{\infty}}$ estimate  included in the stability norm means that the symmetric flows are absolutely stable. This pointwise  estimate is far from trivial, the proof of which is also based on sharp $L_{t,y}^{2}$ space-time estimates.

\item Unlike studies focusing on specific flows such as Couette or Poiseuille flows,
 the analysis of general symmetric flows presents a substantial challenge due to the nonlocal term $U''(y)\pa_x\Delta^{-1}$, which is widely recognized as a major difficulty.
 To tackle this, we develop a unified framework based on coercive estimates for the Rayleigh and Orr-Sommerfeld equations. This approach systematically addresses the inherent complexities of the nonlocal term, providing potential applications for broader stability analyses.

 \item The symmetric condition imposed on $U(y)$ greatly streamlines the proof. We note, however, that this condition may not be essential and could be relaxed.

 \item  The asymptotic stability threshold derived in this paper is not optimal. We conjecture that the asymptotic stability threshold is $\beta\le \f12$ for the non-slip boundary condition, and $\beta\le \f13$
 for the Navier-slip boundary condition.

\end{itemize}

\subsection{Key ideas and outline of the proof}
To investigate the asymptotic stability of the shear flow $(U(y),0)$, a crucial ingredient is to derive the space-time estimates for the linearized Navier-Stokes system around it
\begin{equation}\label{equ:linear,nonslip,NS,fj,intro}
	\left\{
	\begin{aligned}
		&\pa_{t}\om-\nu(\pa^2_y-\al^2)\om+i\al U\om-i\al U''\phi=i\al f_1+\pa_y f_2,\\
		&(\pa^2_y-\al^2)\phi=\om,\quad \phi(t,\pm 1)=\phi'( t,\pm 1)=0,\quad \om(0,y)=\om_{\al}^{\mathrm{in}}(y).
	\end{aligned}\right.
\end{equation}
Namely, there holds
\begin{align}\label{eq:ST}
		&\abs{\al}\n{e^{\varepsilon \nu^{\f12}t}u}_{L_{t}^{\infty}L_{y}^{2}}^{2}
		+\abs{\al}\n{e^{\varepsilon \nu^{\f12}t}u}_{L_{t,y}^{2}}^{2}+\n{e^{\varepsilon \nu^{\f12}t}u}_{L_{t}^{\infty}L_{y}^{\infty}}^{2}+\nu^{\f12}\abs{\al}^{\f12}\n{e^{\varepsilon \nu^{\f12}t}\om}^2_{L_{t,y}^{2}}\\
		&+ \nu^{\frac{1}{2}}\n{e^{\varepsilon \nu^{\f12}t}\om}_{L_{t}^{\infty}L_{y}^{2}}^{2}+\nu^{\f32}\n{e^{\varepsilon \nu^{\f12}t}\pa_y\om}^2_{L_{t,y}^{2}}+\n{e^{\varepsilon \nu^{\f12}t}\sqrt{1-y^{2}}\om}^2_{L_{t}^{\infty}L_{y}^{2}}\notag\\
		\lesssim& \n{\om^{\mathrm{in}}}_{H_{\al}^{4}}^{2}+\nu^{-1}\n{e^{\varepsilon \nu^{\f12}t}(f_1,f_2)}^2_{L_{t,y}^{2}}.\notag
	\end{align}
In the case of $\T\times \R$, both inviscid-damping and enhanced-dissipation estimates for the Couette flow can be obtained by Fourier-multiplier techniques \cite{bgm-am2017,BMV,WZ-TJM,MZ-AIHP}. In the channel domain $\T\times [-1,1]$, however, this approach ceases to apply. A systematic framework was introduced in \cite{CLWZ-ARMA} that reduces the derivation of space-time estimates to resolvent estimates via the following crucial decomposition.

The solution to the linearized equation \eqref{equ:linear,nonslip,NS,fj,intro} admits a decomposition:
\begin{align*}
\om=\om_{I}+\om_{H},
\end{align*}
where $\om_{I}$ satisfies the inhomogeneous equation
\begin{equation*}
	\left\{
	\begin{aligned}
		&\pa_{t} \om_{I}-\nu(\pa^2_y-\al^2) \om_{I}+i\al U \om_{I}-i\al U''\phi_{I}=i\al f_1+ \pa_y f_2,\\
		&(\pa^2_y-\al^2)\phi_{I}= \om_{I},\quad \phi_{I}(t,\pm 1)=\phi'_{I}( t,\pm 1)=0,\quad  \om_{I}(0,y)=0,
	\end{aligned}\right.
\end{equation*}
and $\om_{H}$ satisfies the homogeneous equation
\begin{equation*}
	\left\{
	\begin{aligned}
		&\pa_{t} \om_{H}-\nu(\pa^2_{y}-\al^2) \om_{H}+i\al U \om_{H}-i\al U''\phi_{H}=0,\\
		&(\pa^2_y-\al^2)\phi_{H}= \om_{H},\quad \phi_{H}(t,\pm 1)=\phi'_{H}( t,\pm 1)=0,\quad  \om_{H}(0,y)= \om_{\al}^{\mathrm{in}}(y).
	\end{aligned}\right.
\end{equation*}
Applying the Laplace transform, the inhomogeneous problem reduces to resolvent estimates for the {\bf Orr-Sommerfeld equation} with non-slip boundary condition:
\begin{equation*}
	\left\{
	\begin{aligned}
		&-\nu(\pa^2_y-\al^2)w+i\al(U-\la)w-i\al U''\psi=F,\\
		&(\pa^2_y-\al^2)\psi=w,\quad \psi(\pm 1)=\psi'( \pm 1)=0.
	\end{aligned}\right.
\end{equation*}
To decouple the boundary layer effect and the critical layer effect,  the solution can be further decomposed as
\begin{align*}
	w=w_{Na}+c_1w_{cor,1}+c_2w_{cor,2},
\end{align*}
where $w_{Na}$ solves the Orr-Sommerfeld equation with Navier-slip boundary condition
\begin{equation}\label{equ:os,navier-slip,intro}
\left\{
\begin{aligned}
&-\nu(\partial^2_y-\alpha^2)w_{Na}+i\alpha(U-\lambda)w_{Na}-i\alpha U''\psi_{Na}=F,\\
&(\partial^2_y-\alpha^2)\psi=w_{Na},\quad\psi_{Na}(\pm 1)= w_{Na}(\pm 1)=0,
\end{aligned}\right.
\end{equation}
and the boundary layer correctors $w_{cor,j}$ ($j=1,2$) satisfy the homogeneous equations
\begin{equation*}
	\left\{
	\begin{aligned}
		&-\nu(\pa^2_y-\al^2)w_{cor,j}+i\al(U-\la)w_{cor,j}-i\alpha U''\psi_{cor,j}+w_{cor,j}=0,\\
		&(\pa^2_y-\al^2)\psi_{cor,j}=w_{cor,j},\quad\psi_{cor,j}(\pm 1)=0,\\
        & \psi'_{cor,1}(1)=\psi'_{cor,2}(-1)=1,\quad\psi'_{cor,1}(-1)=\psi'_{cor,2}(1)=0.
	\end{aligned}\right.
\end{equation*}
The main advantage of this decomposition is that (1) due to the favorable boundary condition for $w_{Na}$, we can use the energy method to deal with it; (2) the boundary layer correctors $w_{cor,j}$ can be approximated via Airy functions; (3) the coefficients $c_1, c_2$ depend only on $w_{Na}$.

Having decomposed $w$ as above, we must derive sharp, comprehensive resolvent bounds under Navier-slip boundary conditions in order to handle the boundary-layer problem and ultimately obtain the corresponding bounds for the non-slip case.  The main difficulty in our setting stems from the non-local term $i\al U''\Delta^{-1}w$. If $U(y)=y$ or $U(y)=y^{2}$, then  $U''(y)$  would be identically zero or constant, and the Orr-Sommerfeld equation would simplify dramatically; for general shear profiles, however, one has to uncover the intrinsic structure and prove coercive estimates before the desired resolvent inequalities can be reached.  We now outline the argument, emphasizing the key ideas and novel ingredients.
\begin{itemize}
\item {\bf  Coercive estimates for the Rayleigh equation}. We begin by analyzing the Rayleigh equation:
\begin{equation*}
\left\{
\begin{aligned}
&i\alpha(U-\lambda)w-i\alpha U''\psi_{1}=F,\\
&(\partial^2_y-\alpha^2)\psi_{1}=w,\quad \psi_{1}(y_{1})=\psi_{1}(y_{2})=w(\pm 1)=0,
\end{aligned}\right.
\end{equation*}
where $-1\leq y_{1}\leq 0\leq y_{2}\leq 1, y_{1}=-y_{2}$ with $U(y_i)=\lambda\in [U(0),U(1)]$, $i=1,2$.
A key ingredient, stated in Lemma \ref{lemma:in,coercive estimate}, is the coercive estimate for the Rayleigh equation
 \begin{align}
\int_{y_1}^{y_2}(\la-U)^2\lrv{\lrs{\frac{\psi_{1}}{\la-U}}^{\prime}}^2 d y+|\al|^{2}\int_{y_1}^{y_2} |\psi_{1}|^2 d y \leq & \bblra{\frac{(\la-U)w}{U''},\chi w}+\lra{\psi_{1},\chi w} ,\label{equ:coercive,Ray,1,intro}\\
(y_{2}-y_{1})^{2}\lrs{\bblra{\frac{(\la-U)w}{U''},\chi w}-\lra{\psi_{1}, \chi w}}\lesssim&\bblra{\frac{(\la-U)w}{U''},\chi w}+\lra{\psi_{1},\chi w},\label{equ:coercive,Ray,2,intro}
\end{align}
where $\chi(y)=1_{(y_{1},y_{2})}(y)$.
These estimates are crucial for controlling the interior norm $\n{w_{Na}}_{L^{2}(y_{1},y_{2})}$. For example, testing the Orr-Sommerfeld equation \eqref{equ:os,navier-slip,intro} against $\frac{\chi w_{Na}}{U''}$, and applying coercive estimate \eqref{equ:coercive,Ray,2,intro}, we directly obtain
\begin{align*}
(y_{2}-y_{1})^{2}\n{\sqrt{\la-U}w_{Na}}_{L^{2}(y_{1},y_{2})}^{2}\lesssim |\al|^{-1}\babs{\operatorname{Im}\blra{F+\nu(\partial^2_y-\alpha^2)w_{Na},\frac{\chi w_{Na}}{U''}}}.
\end{align*}

To fully exploit the coercivity and obtain sharp bounds, we further derive two auxiliary estimates adapted to the shear flow profile, that is, a Hardy-type inequality and a pointwise bound for the stream function $\psi_{1}$:
\begin{align}\label{equ:hardy,est,psi1,U,intro}
\int^{y_{2}}_{y_{1}}\frac{|\psi_1|^2}{(\la-U)^2}dy
&\lesssim \frac{1}{(y_{2}-y_{1})^2}\int^{y_{2}}_{y_{1}}\big(\la-U\big) ^2\lrv{\lrs{\frac{\psi_1}{U-\lambda}}'}^{2}dy+\frac{|\psi_1(0)|^2}{(y_{2}-y_{1})^3},
\end{align}
\begin{align}\label{equ:single point estimate,in,intro}
\frac{|\psi_{1}(0)|^{2}}{(y_{2}-y_{1})^{3}}\lesssim & \frac{1}{(y_{2}-y_{1})^{2}}\lrs{\bblra{\frac{\big(\la-U\big)w}{U''},\chi w}+\lra{\psi_{1},\chi w}}
+\frac{1}{y_{2}-y_{1}}\bbabs{\int_{y_{1}+\theta}^{y_{2}-\theta}\frac{U''\psi_{1}}{\la-U}dy}^{2}.
\end{align}
Together, the coercive and auxiliary estimates furnish a complete toolkit for controlling the interior behavior of the solution, thereby bounding the relevant interior norms with explicit dependence on the spectrum.

\item {\bf Resolvent bounds with explicit spectral dependence.} While $L^2$-resolvent estimates suffice for linear stability, nonlinear stability demands full resolvent bounds with explicit dependence on the spectral parameter $\la$:
\begin{align*}
\nu^{\frac{1}{3}}|\al|^{\frac{2}{3}}\lrs{|\la-U(0)|^{\frac{1}{2}}+\nu^{\frac{1}{4}}|\al|^{-\frac{1}{4}}}^{\frac{2}{3}}\n{w_{Na}}_{L^{2}}\lesssim & \n{F}_{L^{2}},\\
\nu^{\f23}|\alpha|^{\f13}\lrs{|\lambda-U(0)|^{\f12}+\nu^{\f14}|\alpha|^{-\f14}}^{\f13}\n{(\pa_{y},|\al|)w_{Na}}_{L^{2}}\lesssim&
\n{F}_{L^{2}},\\
\nu^{\frac{2}{3}}|\al|^{\frac{1}{3}}\lrs{|\la-U(0)|^{\frac{1}{2}}
+\nu^{\frac{1}{4}}|\al|^{-\frac{1}{4}}}^{\frac{1}{3}}\n{w_{Na}}_{L^{2}}\lesssim & \n{F}_{H^{-1}}.
\end{align*}
Away from the critical layer $\la=U(0)$, we obtain the resolvent bound
$$\nu^{\frac{2}{3}}|\al|^{\frac{1}{3}}\n{w_{Na}}_{L^{2}}\lesssim \n{F}_{H^{-1}},$$
thereby recovering the same scaling previously established for monotone flows in \cite{CWZ-CMP}.

Derivation of $H^{-1}$  bounds is usually more delicate than that of  $L^{2}$ bounds: the rough cutoff  $\chi(y)=1_{(y_{1},y_{2})}(y)$ employed in the  $L^{2}$ argument is not admissible in the $H^{-1}$ setting. Thus, we localize with smooth cut-offs $\rho_{\delta}(y)$ and $\rho_{\delta}^{c}(y)$, splitting the problem into an exterior and an interior part. The starting point is the observation
\begin{align*}
  \lra{\psi,\rho_{\delta}^{c} w_{Na}}\geq& \|\psi_1'\|^2_{L^2((-1,1)\setminus (y_{1},y_{2}))}+|\alpha|^2\|\psi_1\|^2_{L^{2}((-1,1)\setminus (y_{1},y_{2}))}\\&\quad- \n{\psi}_{L^{\infty}(B_{1,2,\delta})}\n{\chi^{c}w_{Na}}_{L^{1}},
\end{align*}
The right-hand side is coercive once the error term $\n{\psi}_{L^{\infty}(B_{1,2,\delta})}\n{\chi^{c}w}_{L^{1}}$ is absorbed.  Together with the coercive estimate for the Rayleigh equation, this yields a unified framework fo $H^{-1}$ and $L^{2}$ resolvent bounds with explicit spectral dependence.

\item {\bf Weak-type resolvent estimates.} To estimate the velocity, we begin with the expression
\begin{align*}
|\lra{u_{Na},u_{Na}}|=\bbabs{\bblra{F+\nu(\partial^2_y-\alpha^2)w_{Na}+i\al U''\Delta^{-1}w_{Na},\frac{\psi_{Na}}{U-\la}}}.
\end{align*}
In the case of Couette flow, where $U''(y)=0$, this expression can be estimated directly. For general shear flows, however, the nonlocal operator
 $U''\Delta^{-1}$ cannot be treated as a negligible error.  To obtain sharper velocity estimates, we establish weak-type resolvent estimates for $\frac{w_{Na}}{U''}$ as follows
	\begin{align*}
		\babs{\blra{\frac{w_{Na}}{U''}, f }}
		\lesssim &|\al|^{-1}\sum_{j\in \lr{-1,1}}|f(j)| \n{F}_{H^{-1}}\big(|U(1)-\la|+\delta^{\frac43}\big)^{-\frac34}\delta^{-1}\\
		&+ |\al|^{-1} \n{F}_{H^{-1}} \lrs{\bn
		{\text{\bf Ray}_{\delta_1}^{-1} f}_{H^{1}}+ \delta^{-\frac{4}{3}}(|\la-U(0) |^{\frac{1}{2}} + \delta)^{\frac{1}{3}}\bn{\text{\bf Ray}_{\delta_1}^{-1} f}_{L^2}}.
	\end{align*}
Here, the factor $\frac{1}{U''}$ serves to make the nonlocal operator self-adjoint, which allows us to apply the limiting absorption principle for the Rayleigh equation.
A subsequent technical analysis based on these weak-type resolvent estimates leads to the following refined bounds for the velocity $u_{Na}$
   	\begin{align*}
    		\nu^{\frac{1}{4}}|\al|^{\frac{3}{4}}\n{u_{Na}}_{L^{2}}\lesssim & \n{F}_{L^2},\\
		\nu^{\f12}|\al|^{\f12}\n{u_{Na}}_{L^2} \lesssim&  \n{F}_{H^{-1}}.
	\end{align*}
\end{itemize}

Combining the weak-type resolvent estimates with a detailed analysis of boundary layer correctors and coefficients $c_1, c_2$, we can establish the following sharp $H^{-1}$ resolvent estimate under the non-slip boundary condition:
\begin{align} \nu^{\frac{3}{4}}\left|\al\right|^{\frac{1}{4}}\|w\|_{L^2}+\nu^{\frac{1}{2}}\left|\al\right|^{\frac{1}{2}}\|u\|_{L^2} \lesssim\|F\|_{H^{-1}}. \label{equ:reso,est,u,w,FH-1,nonslip,intro}
\end{align}
The bound \eqref{equ:reso,est,u,w,FH-1,nonslip,intro} is particularly important to prove the space-time estimates. The inhomogeneous part $\om_{I}$ is directly controlled via this estimate \eqref{equ:reso,est,u,w,FH-1,nonslip,intro}.

As for the homogeneous part $\om_{H}$, it admits a decomposition $\om_{H}=\om_{H}^{(1)}+\om_{H}^{(2)}+\om_{H}^{(3)}$ as in \cite{CLWZ-ARMA}.
 To derive sharp bounds, our argument relies on the vorticity depletion estimate from \cite{IIJ-VJM}:
\begin{align*}
|\om(t,y)|\lesssim \abs{U'(y)}^{\f74}\n{\om^{\mathrm{in}}}_{H_{\al}^{4}}+\lra{t}^{-\f78}
\n{\om^{\mathrm{in}}}_{H_{\al}^{4}},
\end{align*}
which enables effective control of the term $\om_{H}^{(1)}$. Then for the term $\om_{H}^{(2)}$,
we apply space-time estimates previously derived for the inhomogeneous problem. The term $\om_{H}^{(3)}$ is handled by combining boundary layer corrector estimates with bounds already obtained for $\om_{H}^{(1)}$.
As a consequence, we establish the inviscid damping and enhanced dissipation estimates as below
	\begin{align}\label{equ:linear,enhan,invis,est}
	&\abs{\al}\n{u_{\al}}_{L_{t,y}^{2}}^{2}+\nu^{\f12}\abs{\al}^{\f12}\n{\om_{\al}}^2_{L_{t,y}^{2}}
		\lesssim \n{\om^{\mathrm{in}}_{\al}}_{H_{\al}^{4}}^{2}.\notag
	\end{align}
	
 Building on the space-time estimates above, we further establish the weighted bound
$\n{\sqrt{1-y^{2}}\om_{\al}}_{L_{t}^{\infty}L_{y}^{2}}$, which in turn yields a pointwise estimate for $\n{u(t)}_{L_{x,y}^{\infty}}$. These estimates are novel even for the Poiseuille flow $U(y)=y^{2}$, and
serve as a key ingredient in establishing nonlinear stability. Notably, such bounds are already nontrivial in the simpler case of the Couette flow $U(y)=y$, as seen in \cite{CLWZ-ARMA}.

Finally, the asymptotic stability is a direct application of the space-time estimates \eqref{eq:ST}. See Section \ref{sec:Nonlinear Stability} for the details.

\section{Coercive estimates for Rayleigh equation}\label{sec:Rayleigh Equation}

In this section, we consider the Rayleigh equation
\begin{equation}\label{equ:Rayleigh,navier-slip}
\left\{
\begin{aligned}
&i\alpha(U-\lambda)w-i\alpha U''\psi_{1}=F,\\
&(\partial^2_y-\alpha^2)\psi_{1}=w,\quad \psi_{1}(y_{1})=\psi_{1}(y_{2})=w(\pm 1)=0.
\end{aligned}\right.
\end{equation}
Here, $-1\leq y_{1}\leq 0\leq y_{2}\leq 1, y_{1}=-y_{2}$ with $U(y_i)=\lambda\in [U(0),U(1)]$, $i=1,2$.  In subsequent analysis, we establish coercive estimates, Hardy-type inequalities, and single-point estimates specifically adapted for symmetric flows $U(y)$. These results serve as the foundation for deriving resolvent estimates for the Orr-Sommerfeld equation with the Navier-slip boundary condition.
This is the reason why we can impose the extra boundary condition $w(\pm 1)=0$ (Here we present only an a priori estimate rather than solving the Rayleigh equation).

We introduce the cutoff functions
\begin{align*}
\chi(y)=1_{(y_{1},y_{2})}(y),\quad \chi^{c}(y)=1_{(-1,1)\setminus (y_{1},y_{2})}(y).
\end{align*}
Applying integration by parts, we derive preliminary energy estimates
\begin{align}
 -\langle \psi_1,\chi w\rangle =&\|\psi_{1}'\|^2_{L^2(y_{1},y_{2})}+|\alpha|^2\|\psi_1\|^2_{L^2(y_{1},y_{2})},\label{estE2-varphi-1}\\
 -\langle \psi_1,\chi^{c}w\rangle =&\|\psi_1'\|^2_{L^2((-1,1)\setminus (y_{1},y_{2}))}+|\alpha|^2\|\psi_1\|^2_{L^{2}((-1,1)\setminus (y_{1},y_{2}))}.\label{estE1-varphi-1}
\end{align}

\begin{lemma}[Coercive estimates]\label{lemma:in,coercive estimate}
Let $(w,\psi_{1})$ be the solution to \eqref{equ:Rayleigh,navier-slip}. We have
 \begin{align}
\int_{y_1}^{y_2}(\la-U)^2\lrv{\lrs{\frac{\psi_{1}}{\la-U}}^{\prime}}^2 d y+|\al|^{2}\int_{y_1}^{y_2} |\psi_{1}|^2 d y \leq & \bblra{\frac{(\la-U)w}{U''},\chi w}+\lra{\psi_{1},\chi w} ,\label{equ:in,coercive,L2}\\
(y_{2}-y_{1})^{2}\lrs{\bblra{\frac{(\la-U)w}{U''},\chi w}-\lra{\psi_{1}, \chi w}}\lesssim&\bblra{\frac{(\la-U)w}{U''},\chi w}+\lra{\psi_{1},\chi w}. \label{equ:in,coercive,H1}
\end{align}
\end{lemma}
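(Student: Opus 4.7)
My plan is to reduce both estimates to a single integration-by-parts identity
\[
\int_{y_1}^{y_2}(\la-U)^2\left|\left(\frac{\psi_{1}}{\la-U}\right)'\right|^2 dy = \int_{y_1}^{y_2}|\psi_1'|^2 dy - \int_{y_1}^{y_2}\frac{U''|\psi_1|^2}{\la-U} dy,
\]
which I would derive by writing $(\la-U)^2|(\psi_1/(\la-U))'|^2 = |\psi_1'+U'\psi_1/(\la-U)|^2$, integrating the cross-term $\int(U'/(\la-U))(|\psi_1|^2)'\,dy$ by parts, and using the boundary condition $\psi_1(y_i)=0$ together with the identity $(U'/(\la-U))'=U''/(\la-U)+(U')^2/(\la-U)^2$ to cancel the $(U')^2/(\la-U)^2$ pieces and kill all boundary contributions. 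Adding $|\al|^{2}\int_{y_1}^{y_2}|\psi_1|^2\,dy$ to both sides and invoking \eqref{estE2-varphi-1} transforms the left-hand side of \eqref{equ:in,coercive,L2} into $-\lra{\psi_{1},\chi w}-B$, where $B:=\int_{y_1}^{y_2}U''|\psi_1|^2/(\la-U)\,dy\ge 0$.

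For the first estimate \eqref{equ:in,coercive,L2}, the proof then reduces to a weighted Cauchy-Schwarz plus AM-GM step: splitting $\psi_1\bar w=\bigl(\sqrt{U''/(\la-U)}\,\psi_1\bigr)\bigl(\sqrt{(\la-U)/U''}\,\bar w\bigr)$, Cauchy-Schwarz yields $|\lra{\psi_{1},\chi w}|^2\le B\cdot A$ where $A:=\bblra{(\la-U)w/U'',\chi w}\ge 0$, and AM-GM then gives $A+B\ge 2\sqrt{AB}\ge -2\lra{\psi_1,\chi w}$, using that $\lra{\psi_1,\chi w}$ is real and non-positive by \eqref{estE2-varphi-1}. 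Rearranging $-\lra{\psi_1,\chi w}-B\le A+\lra{\psi_1,\chi w}$ is exactly \eqref{equ:in,coercive,L2}.

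For \eqref{equ:in,coercive,H1} I would bootstrap from \eqref{equ:in,coercive,L2}. Non-negativity of its left-hand side forces $B\le -\lra{\psi_1,\chi w}$, and combined with $A+B\ge -2\lra{\psi_1,\chi w}$ this already yields $A\ge -\lra{\psi_1,\chi w}$. To produce the $(y_2-y_1)^2$ prefactor, I would exploit the S-condition on $U$: symmetry with $y_1=-y_2$ and uniform convexity give the two-sided comparison $\la-U(y)\sim (y-y_1)(y_2-y)$ on $(y_1,y_2)$, in particular the upper bound $\la-U\le C(y_2-y_1)^2$. Combining this with Poincar\'e $\int|\psi_1|^2\lesssim (y_2-y_1)^2\int|\psi_1'|^2$ (available since $\psi_1(y_i)=0$) and the refined lower bound $A+\lra{\psi_1,\chi w}\ge \int(\la-U)^2|(\psi_1/(\la-U))'|^2+|\al|^2\int|\psi_1|^2$ afforded by \eqref{equ:in,coercive,L2}, I would bound $(y_2-y_1)^2$ times each of the constituents $\int|\psi_1'|^2$ and $|\al|^2\int|\psi_1|^2$ of $-\lra{\psi_1,\chi w}$ separately against the corresponding terms on the right. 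The main obstacle is this final balancing step: matching $(y_2-y_1)^2\int|\psi_1'|^2$ against $\int(\la-U)^2|(\psi_1/(\la-U))'|^2$ modulo the $B$-error hinges on the quadratic vanishing $\la-U\sim (y-y_1)(y_2-y)$ inherited from the S-condition supplying exactly the needed $(y_2-y_1)^2$ scaling, while keeping all implicit constants uniform in the profile.
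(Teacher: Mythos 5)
Your derivation of \eqref{equ:in,coercive,L2} is sound and essentially the paper's: the integration-by-parts identity plus your weighted Cauchy--Schwarz/AM--GM step is just the integrated form of the paper's pointwise inequality $(\la-U)|w|^2/U''+2\operatorname{Re}(\psi_1\bar w)+U''|\psi_1|^2/(\la-U)\geq 0$, so the two arguments coincide modulo where one integrates.

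For \eqref{equ:in,coercive,H1}, however, the step you flag as ``the main obstacle'' is a genuine gap, and the tools you list (quadratic vanishing of $\la-U$, Poincar\'e, the lower bound from \eqref{equ:in,coercive,L2}) do not close it. Writing $A=\blra{(\la-U)w/U'',\chi w}$ and $B=\int_{y_1}^{y_2}U''|\psi_1|^2/(\la-U)\,dy$, the identity gives $(y_2-y_1)^2\int|\psi_1'|^2=(y_2-y_1)^2\int(\la-U)^2|(\psi_1/(\la-U))'|^2+(y_2-y_1)^2B$. The first summand is $\lesssim A+\lra{\psi_1,\chi w}$ by \eqref{equ:in,coercive,L2} and $y_2-y_1\le 2$, but the only bound on $B$ available from your tools is $B\le -\lra{\psi_1,\chi w}$, which turns $(y_2-y_1)^2B$ into precisely the quantity you are trying to estimate --- the argument is circular. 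Poincar\'e, $\int|\psi_1|^2\lesssim(y_2-y_1)^2\int|\psi_1'|^2$, is strictly too weak; what is actually required is a singular-weight (Hardy) estimate. The paper's device is Cauchy--Schwarz followed by the Hardy inequality \eqref{proper,f,h1,1} of Lemma~\ref{lemma:hardy inequality} together with $\la-U\sim(y-y_1)(y_2-y)$, giving $\int U''|\psi_1|^2/(\la-U)^2\lesssim(y_2-y_1)^{-2}\int|\psi_1'|^2$ and hence
\begin{align*}
B\le\Big(\int_{y_1}^{y_2}U''|\psi_1|^2\Big)^{1/2}\Big(\int_{y_1}^{y_2}\frac{U''|\psi_1|^2}{(\la-U)^2}\Big)^{1/2}
\lesssim\frac{1}{y_2-y_1}\|\psi_1'\|_{L^2(y_1,y_2)}\|\psi_1\|_{L^2(y_1,y_2)}.
\end{align*}
After Young's inequality, $(y_2-y_1)^2B\le\varepsilon(y_2-y_1)^2\|\psi_1'\|^2_{L^2}+C_\varepsilon\|\psi_1\|^2_{L^2}$; the first term absorbs into the left-hand side, and the second is controlled by $A+\lra{\psi_1,\chi w}$ via $\|\psi_1\|^2_{L^2}\le|\al|^2\|\psi_1\|^2_{L^2}\le A+\lra{\psi_1,\chi w}$, which additionally uses $|\al|\ge 1$ --- a hypothesis your sketch never invokes. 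Without the Hardy estimate and the $|\al|\ge1$ step, your ``final balancing'' does not go through.
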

\begin{proof}
By integration by parts, we get
\begin{align}\label{equ:U,psi1,int,y1y2}
	\int^{y_2}_{y_1}\big(\la-U\big)^2 \lrv{\lrs{\frac{\psi_1}{\la-U}}'}^{2} dy
	&=\int_{y_1}^{y_2}| \psi_1'|^2+\frac{|U'|^2|\psi_1|^2}{(\la-U)^2}+\frac{U'\pa_y(|\psi_1|^2)}{\la-U} dy\\
	&=\int_{y_1}^{y_2}|\psi_1'|^2+\frac{U''|\psi_1|^2}{U-\la}dy.\nonumber
\end{align}
By Cauchy-Schwarz inequality, we have
\begin{align*}
	(\la-U)\frac{|w|^2}{U''}+2\psi_1 w+\frac{U''|\psi_1|^2}{\la-U}\geq 0,\quad \forall y\in(y_{1},y_{2}),
\end{align*}
which, together with \eqref{estE2-varphi-1} and \eqref{equ:U,psi1,int,y1y2}, implies that
\begin{align}\label{equ:est,Uwpsi1,int,y1y2,coercive}
&	\bblra{\frac{(\la-U)w}{U''},\chi w}+\lra{\psi_{1},\chi w}
	\geq -\lra{ \psi_1,\chi w}+\int_{y_1}^{y_2}\frac{U''|\psi_1|^2}{U-\la}dy\\
	=& \int^{y_2}_{y_1}(\la-U)^2 \lrv{\lrs{\frac{\psi_1}{U-\la}}'}^{2}dy+\int_{y_1}^{y_2}|\al|^2|\psi_1|^2 dy\geq 0.\notag
\end{align}
Hence, we complete the proof of \eqref{equ:in,coercive,L2}.

For \eqref{equ:in,coercive,H1}, using \eqref{equ:U,psi1,int,y1y2} and \eqref{equ:est,Uwpsi1,int,y1y2,coercive}, we obtain
 \begin{align*}
 \int_{y_1}^{y_2}|\psi_1'|^2dy&=\int^{y_2}_{y_1}(\la-U)^2 \lrv{\lrs{\frac{\psi_1}{\lambda-U}}'}^{2} dy+\int_{y_1}^{y_2}\frac{U''|\psi_1|^2}{\la-U}dy\nonumber\\
 	&\leq 	\bblra{\frac{(\la-U)w}{U''},\chi w}+\lra{\psi_{1},\chi w}-\int_{y_1}^{y_2}|\al|^2|\psi_1|^2 dy+\int_{y_1}^{y_2}\frac{U''|\psi_1|^2}{\la-U}dy,
 \end{align*}
 which, together with \eqref{estE2-varphi-1}, gives
 \begin{align}\label{equ:est,psi1,chiw,inty1y2}
 	-\lra{\psi_{1}, \chi w}\leq \bblra{\frac{(\la-U)w}{U''},\chi w}+\lra{\psi_{1},\chi w} +\int_{y_1}^{y_2}\frac{U''|\psi_1|^2}{\la-U}dy.
 \end{align}
  It remains to bound the last term $\int_{y_1}^{y_2}\frac{U''|\psi_1|^2}{\la-U}dy$ in \eqref{equ:est,psi1,chiw,inty1y2}. By Cauchy-Schwarz inequality and Hardy inequality \eqref{proper,f,h1,1} in Lemma \ref{lemma:hardy inequality},  we deduce
 \begin{align}\label{equ:est,psi1U,psi1chiL2,psi1'chiL2}
 	\int_{y_1}^{y_2}\frac{U''|\psi_1|^2}{\la-U}dy
        \leq&\lrc{\int_{y_1}^{y_2}U''|\psi_1|^2dy}^{\frac{1}{2}} \lrc{\int_{y_1}^{y_2}\frac{U''|\psi_1|^2}{(\la-U)^{2}}dy}^{\frac{1}{2}}\\
        \lesssim&\frac{1}{y_{2}-y_{1}}\lrc{\int_{y_1}^{y_2}|\psi_1|^2dy}^{\frac{1}{2}}\lrc{\int_{y_1}^{y_2}|\psi_{1}'|^{2}dy}^{\frac{1}{2}}\notag\\
        \leq& \frac{C_{\ve}}{(y_{2}-y_{1})^{2}}\int_{y_1}^{y_2}|\psi_1|^2dy+
        \ve\int_{y_1}^{y_2}|\psi_{1}'|^{2}dy.\notag
 \end{align}
Combining \eqref{equ:est,psi1,chiw,inty1y2} and \eqref{equ:est,psi1U,psi1chiL2,psi1'chiL2}, and using \eqref{equ:in,coercive,L2} with $|\al|\geq 1$, we have
 \begin{align*}
 	-\lra{\psi_{1}, \chi w}
 	&\leq \frac{C_{\ve}}{(y_{2}-y_{1})^{2}}\int_{y_1}^{y_2}|\psi_1|^2dy+\bblra{\frac{(\la-U)w}{U''},\chi w}+\lra{\psi_{1},\chi w}\\
 	&\lesssim \lrs{1+\frac{1}{(y_{2}-y_{1})^{2}|\al|^{2}}}\lrs{\bblra{\frac{(\la-U)w}{U''},\chi w}+\lra{\psi_{1},\chi w}}\\
 &\lesssim \frac{1}{(y_{2}-y_{1})^{2}}\lrs{\bblra{\frac{(\la-U)w}{U''},\chi w}+\lra{\psi_{1},\chi w}}.
 \end{align*}
This inequality, along with \eqref{equ:est,Uwpsi1,int,y1y2,coercive} which yields the non-negativity, gives the desired result \eqref{equ:in,coercive,H1}.
\end{proof}

\begin{lemma}[Hardy-type inequality]\label{lemma:hardy-type}
Let $(w,\psi_{1})$ be the solution to \eqref{equ:Rayleigh,navier-slip}. We have
\begin{align}\label{equ:hardy,est,psi1,U}
\int^{y_{2}}_{y_{1}}\frac{|\psi_1|^2}{(U-\la)^2}dy
&\lesssim \frac{1}{(y_{2}-y_{1})^2}\int^{y_{2}}_{y_{1}}\big(U-\la\big) ^2\lrv{\lrs{\frac{\psi_1}{U-\lambda}}'}^{2}dy+\frac{|\psi_1(0)|^2}{(y_{2}-y_{1})^3}.
\end{align}
\end{lemma}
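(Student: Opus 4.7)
The plan is to reduce the claim to a one-dimensional weighted Hardy inequality by setting $g=\psi_1/(U-\lambda)$ and exploiting the symmetry $y_1=-y_2$ together with the convexity $U''\geq c_0>0$. First I would record the geometric facts coming from the $\mathrm{S}$-condition: since $U'(0)=0$ and $U''\geq c_0$, integrating twice gives, on $(0,y_2)$,
\begin{align*}
|U(y)-\lambda|=\int_y^{y_2}U'(s)\,ds\sim (y_2-y)(y_2+y)\sim (y_2-y_1)(y_2-y),
\end{align*}
and similarly $|U(y)-\lambda|\sim (y_2-y_1)(y-y_1)$ on $(y_1,0)$, together with the crucial identity $|U(0)-\lambda|\sim (y_2-y_1)^2$. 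These comparisons are what convert the weight $(U-\lambda)^2$ into the simpler weight $(y_2-y_1)^2(y_2-y)^2$ (respectively $(y_2-y_1)^2(y-y_1)^2$) on each half interval.

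Next I would split $\int_{y_1}^{y_2}|g|^2\,dy$ into $\int_{y_1}^0+\int_0^{y_2}$. On $(0,y_2)$, I integrate by parts against $F(y)=y-y_2$ (so $F'=1$, $F(y_2)=0$):
\begin{align*}
\int_0^{y_2}|g|^2\,dy=\bigl[(y-y_2)|g|^2\bigr]_0^{y_2}-2\int_0^{y_2}(y-y_2)\,\operatorname{Re}(g\,\overline{g'})\,dy=y_2|g(0)|^2+2\int_0^{y_2}(y_2-y)\operatorname{Re}(g\,\overline{g'})\,dy,
\end{align*}
and apply Cauchy-Schwarz:
\begin{align*}
\Bigl|2\int_0^{y_2}(y_2-y)g\,\overline{g'}\,dy\Bigr|\leq 2\Bigl(\int_0^{y_2}|g|^2\Bigr)^{1/2}\Bigl(\int_0^{y_2}(y_2-y)^2|g'|^2\Bigr)^{1/2}.
\end{align*}
Using $(y_2-y)^2\sim (y_2-y_1)^{-2}(U-\lambda)^2$ to pass the weight back to $(U-\lambda)^2$, and Young's inequality to absorb the $\int |g|^2$ factor back into the left-hand side, I arrive at
\begin{align*}
\int_0^{y_2}|g|^2\,dy\lesssim y_2|g(0)|^2+\frac{1}{(y_2-y_1)^2}\int_0^{y_2}(U-\lambda)^2|g'|^2\,dy.
\end{align*}
Finally, $y_2|g(0)|^2=y_2|\psi_1(0)|^2/(U(0)-\lambda)^2\sim |\psi_1(0)|^2/(y_2-y_1)^3$ by the identity above. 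The analogous argument on $(y_1,0)$ with $F(y)=y-y_1$ gives the matching bound, and summing the two halves yields the claim.

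The main technical obstacle is the behavior of $g=\psi_1/(U-\lambda)$ at the endpoints $y_{1},y_{2}$: both numerator and denominator vanish there, so the integration by parts and the Cauchy-Schwarz step must be justified despite the apparent singularity. The boundary contributions at $y=\pm y_2$ vanish because the chosen $F$ vanishes there, removing the need to make sense of $|g(y_i)|^2$ directly; nonetheless one should note that $g$ is actually bounded near the endpoints thanks to $\psi_1(y_i)=0$ and $U'(y_i)\neq 0$ (from $U''\geq c_0$ and $U'(0)=0$), so the weighted integrals are finite. A secondary subtlety is the constant management: the Cauchy-Schwarz application produces $2I^{1/2}A$, which must be absorbed carefully so that $y_2|g(0)|^2$ converts cleanly into the stated $|\psi_1(0)|^2/(y_2-y_1)^3$ using $|U(0)-\lambda|\sim (y_2-y_1)^2$ without losing factors; this is the only place where the symmetry hypothesis $y_1=-y_2$ enters essentially.
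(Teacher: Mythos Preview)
Your proof is correct and close in spirit to the paper's, but technically more direct. Both arguments reduce the claim to a weighted Hardy inequality on each half-interval $(y_1,0)$ and $(0,y_2)$, using the same comparison $(y_2-y)^2\sim (y_2-y_1)^{-2}(U-\lambda)^2$ on $(0,y_2)$ (and its mirror on $(y_1,0)$). The paper, however, proves the Hardy bound by duality: it writes $\psi_1/(\lambda-U)$ as the integral of its derivative from $0$, pairs with a test function $g$, applies Fubini, then weighted Cauchy--Schwarz, and finally invokes the classical Hardy inequality for the averaging operator $\tfrac{1}{y_2-z}\int_z^{y_2}g$. Your integration by parts against $F(y)=y-y_2$ is the standard one-step Hardy argument and bypasses the Fubini/averaging detour entirely. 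A further advantage of your version is that the term $|\psi_1(0)|^2/(y_2-y_1)^3$ emerges naturally as the boundary contribution at $y=0$, whereas the paper first assumes $\psi_1(0)=0$ and then handles the general case separately by subtracting the constant $\psi_1(0)/(\lambda-U(0))$ and tracking the extra piece. Both approaches need, and both address, the endpoint issue that $g=\psi_1/(U-\lambda)$ is bounded near $y_1,y_2$ so that the weighted integrals and boundary terms make sense.
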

\begin{proof}
Without loss of generality, we may assume that $\psi_1(0)=0$. Testing against a function $g$, we obtain
\begin{align}\label{equ:est,hardy,U,psi1,g,inty1y2}
		&\int_{y_1}^{y_2}\frac{\psi_1(y)}{\la-U(y)}g(y)dy=\int_{y_1}^{y_2}\int_0^y\lrs{\frac{\psi_1(z)}{\la-U(z)}}'dzg(y)dy\nonumber\\
		=&\int_{0}^{y_2}\lrs{\frac{\psi_1(z)}{\la-U(z)}}'\lrs{\int_z^{y_2}g(y)dy}dz+\int^{0}_{y_1}\lrs{\frac{\psi(z)}{\la-U(z)}}'
\lrs{\int_{y_1}^zg(y)dy}dz\nonumber\\
		\leq& \lrs{\int_0^{y_2}\lrv{\lrs{\frac{\psi_1(z)}{\la-U(z)}}'(z-y_2)}^2dz}^{\f12}\lrs{\int_0^{y_2}\lrv{\frac{\int_z^{y_2}g(y)dy}{y_2-z}}^2dz}^{\f12}\nonumber\\
		& +\lrs{\int_{y_1}^0\lrv{\lrs{\frac{\psi_1(z)}{\la-U(z)}}'(z-y_1)}^2dz}^{\f12}\lrs{\int_{y_1}^0\lrv{\frac{\int_{y_1}^zg(y)dy}{y_1-z}}^2dz}^{\f12}\nonumber\\
		\lesssim & \lrs{\int_{y_1}^{y_2}\lrv{\lrs{\frac{\psi_1(z)}{\la-U(z)}}'}^2\min\lr{(z-y_1)^2,(z-y_2)^2}dz}^{\f12}
\lrs{\int_{y_1}^{y_2}|g(y)|^2dy}^{\f12},
	\end{align}
where in the last inequality we used the Hardy inequality \eqref{lemma:hardy inequality}.
By Lemma \ref{lemma:proper,symme,flow}, we have
	\begin{align}\nonumber
		\min\lr{(z-y_1)^2,(z-y_2)^2}\sim \frac{\big(U(z)-\la\big)^2}{(y_2-y_1)^2}.
	\end{align}
	Taking $g(y)=\frac{\ol{\psi_1}(y)}{\la-U(y)}$ in \eqref{equ:est,hardy,U,psi1,g,inty1y2}, we obtain
	\begin{align*} \int_{y_1}^{y_2}\frac{|\psi_1(y)|^2}{(\la-U(y))^2}dy\lesssim\frac{1}{(y_2-y_1)^2}\int_{y_1}^{y_2}\big(U(z)-\la\big)^2
\lrv{\lrs{\frac{\psi_1(z)}{\la-U(z)}}'}^2 dz.
	\end{align*}
	
If $\psi_1(0)\neq 0$, we note that $\frac{\psi_1(y)}{\la-U(y)}=\int_0^y\lrs{\frac{\psi_1(z)}{\lambda-U(z)}}'dz+\frac{\psi_1(0)}{\la-U(0)}$. Following a similar procedure, we get
	\begin{align*}
		&\int_{y_1}^{y_2}\lrs{\int_0^y\lrs{\frac{\psi_1(z)}{\la-U(z)}}'dz+\frac{\psi_1(0)}{\la-U(0)}}g(y)dy\\
		\lesssim& \lrs{\frac{1}{(y_2-y_1)^2}\int_{y_1}^{y_2}(U(y)-\la)^2\lrv{\lrs{\frac{\psi_1(y)}{\la-U(y)}}'}^2dy}^{\f12}\lrs{\int_{y_1}^{y_2}|g(y)|^2dy}^{\f12}\nonumber\\
		& +\frac{|\psi_1(0)|}{\la-U(0)}(y_2-y_1)^{\f12}\lrs{\int_{y_1}^{y_2}|g(y)|^2dy}^{\f12},
	\end{align*}
	which, together with $\la-U(0)\sim (y_2-y_1)^2$, $g(y)=\frac{\ol{\psi_1}(y)}{\la-U(y)}$, implies \eqref{equ:hardy,est,psi1,U}.
\end{proof}

\begin{lemma}\label{lemma:single point estimate,in}
For $\theta\in(0,\frac{y_{2}-y_{1}}{4}]$, we have
\begin{align}\label{equ:single point estimate,in}
\frac{|\psi_{1}(0)|^{2}}{(y_{2}-y_{1})^{3}}\lesssim & \frac{1}{(y_{2}-y_{1})^{2}}\lrs{\bblra{\frac{\big(\la-U\big)w}{U''},\chi w}+\lra{\psi_{1},\chi w}}
+\frac{1}{y_{2}-y_{1}}\bbabs{\int_{y_{1}+\theta}^{y_{2}-\theta}\frac{U''\psi_{1}}{\la-U}dy}^{2}.
\end{align}

\end{lemma}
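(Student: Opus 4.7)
The plan is to isolate $\psi_{1}(0)$ via a rank-one correction and then to absorb the remainder by Lemma \ref{lemma:in,coercive estimate} and Lemma \ref{lemma:hardy-type}. Define $A:=\f{\psi_{1}(0)}{\la-U(0)}$ and $\wt\psi_{1}:=\psi_{1}-A(\la-U)$. By construction $\wt\psi_{1}(0)=0$, and since $U(y_{1})=U(y_{2})=\la$ one still has $\wt\psi_{1}(y_{1})=\wt\psi_{1}(y_{2})=0$; crucially $\lrs{\f{\wt\psi_{1}}{\la-U}}^{\prime}=\lrs{\f{\psi_{1}}{\la-U}}^{\prime}$. Writing $\psi_{1}=A(\la-U)+\wt\psi_{1}$ inside the test integral decomposes
\begin{align*}
\int_{y_{1}+\theta}^{y_{2}-\theta}\f{U''\psi_{1}}{\la-U}\,dy=A\int_{y_{1}+\theta}^{y_{2}-\theta}U''\,dy+\int_{y_{1}+\theta}^{y_{2}-\theta}\f{U''\wt\psi_{1}}{\la-U}\,dy.
\end{align*}

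Next I would exploit the symmetric structure. Since $U$ is even, $U'$ is odd, and with $y_{1}=-y_{2}$ the first integral on the right equals $2U'(y_{2}-\theta)$. Using $\inf U''>0$ and $\theta\leq (y_{2}-y_{1})/4$, a direct calculation gives $U'(y_{2}-\theta)\sim y_{2}-y_{1}$. Solving for $A$ and squaring yields
\begin{align*}
|A|^{2}(y_{2}-y_{1})^{2}\lesssim\bbabs{\int_{y_{1}+\theta}^{y_{2}-\theta}\f{U''\psi_{1}}{\la-U}\,dy}^{2}+\bbabs{\int_{y_{1}+\theta}^{y_{2}-\theta}\f{U''\wt\psi_{1}}{\la-U}\,dy}^{2}.
\end{align*}

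To control the remainder I would apply Cauchy--Schwarz together with the boundedness of $U''$ to get
\begin{align*}
\bbabs{\int_{y_{1}+\theta}^{y_{2}-\theta}\f{U''\wt\psi_{1}}{\la-U}\,dy}^{2}\lesssim(y_{2}-y_{1})\int_{y_{1}}^{y_{2}}\f{|\wt\psi_{1}|^{2}}{(\la-U)^{2}}\,dy,
\end{align*}
and then apply Lemma \ref{lemma:hardy-type} to the corrected profile $\wt\psi_{1}$. Since $\wt\psi_{1}$ vanishes at the three points $y_{1},0,y_{2}$, the boundary term in \eqref{equ:hardy,est,psi1,U} drops out; using the identity on logarithmic derivatives together with the coercive bound \eqref{equ:in,coercive,L2} then produces
\begin{align*}
\int_{y_{1}}^{y_{2}}\f{|\wt\psi_{1}|^{2}}{(\la-U)^{2}}\,dy\lesssim\f{1}{(y_{2}-y_{1})^{2}}\lrs{\bblra{\f{(\la-U)w}{U''},\chi w}+\lra{\psi_{1},\chi w}}.
\end{align*}
Dividing the combined inequality by $y_{2}-y_{1}$ and invoking the Taylor expansion $\la-U(0)\sim(y_{2}-y_{1})^{2}$, so that $|A|^{2}(y_{2}-y_{1})\sim|\psi_{1}(0)|^{2}/(y_{2}-y_{1})^{3}$, delivers \eqref{equ:single point estimate,in}.

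The hard part is precisely the choice of the rank-one correction. The naive pointwise estimate $|\psi_{1}(0)|^{2}\leq|y_{1}|\int_{y_{1}}^{0}|\psi_{1}'|^{2}\,dy$ combined with $\int|\psi_{1}'|^{2}\lesssim(y_{2}-y_{1})^{-2}\cdot(\text{coercive})$ only yields $|\psi_{1}(0)|^{2}/(y_{2}-y_{1})^{3}\lesssim(y_{2}-y_{1})^{-4}\cdot(\text{coercive})$, which is off by a factor $(y_{2}-y_{1})^{-2}$. The subtraction $\wt\psi_{1}=\psi_{1}-A(\la-U)$ removes exactly the direction responsible for that loss: the extra interior zero at $y=0$ unlocks the Hardy gain of a factor $(y_{2}-y_{1})^{-2}$, while the rank-one part deposits the whole $|\psi_{1}(0)|^{2}$ information into the single scalar $\int U''\psi_{1}/(\la-U)\,dy$ that appears on the right-hand side of \eqref{equ:single point estimate,in}.
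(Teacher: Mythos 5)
Your proof is correct, and it takes a genuinely different route from the paper. The paper proves the lemma by integrating $\frac{d}{dy}\left[\frac{U''\psi_{1}}{\la-U}\,(y_{2}-\theta-y)\right]$ over $[0,y_{2}-\theta]$ and its mirror over $[y_{1}+\theta,0]$, which isolates $\frac{U''(0)\psi_{1}(0)(y_{2}-y_{1}-2\theta)}{\la-U(0)}$ as a boundary term at the cost of error integrals involving $U''(\frac{\psi_{1}}{\la-U})'(y_{2}-\theta-y)$ and $U'''\frac{\psi_{1}}{\la-U}(y_{2}-\theta-y)$; these are then absorbed by \eqref{equ:in,coercive,L2} using the quantitative asymptotics of Lemma \ref{lemma:proper,symme,flow}. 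Your rank-one subtraction $\wt\psi_{1}=\psi_{1}-A(\la-U)$ replaces that computation with a structural projection: it manufactures the interior zero that makes the boundary term in the Hardy-type inequality \eqref{equ:hardy,est,psi1,U} vanish, while the whole $\psi_{1}(0)$-information collapses onto the scalar $\int_{y_{1}+\theta}^{y_{2}-\theta}U''\,dy=2U'(y_{2}-\theta)\sim y_{2}-y_{1}$. The two arguments have the same analytic content but your version exposes the mechanism more transparently and avoids tracking the $U'''$ error terms. One point you should make explicit: Lemma \ref{lemma:hardy-type} is stated for the Rayleigh solution $\psi_{1}$, whereas you apply it to $\wt\psi_{1}$, which does not satisfy the Rayleigh equation. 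This is legitimate because the proof of \eqref{equ:hardy,est,psi1,U} never uses the equation — it is a pure weighted Hardy inequality relying only on the profile asymptotics from Lemma \ref{lemma:proper,symme,flow} — but it deserves a sentence rather than a silent invocation of the lemma by name.
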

\begin{proof}
Due to that $U''(0)>0$, it suffices to estimate $U''(0)\psi_{1}(0)$.
By integration by parts, we have
\begin{align}\label{equ:single point estimate,in,equality,1}
&\frac{U''(0)\psi_{1}(0)(\theta-y_{2})}{\la-U(0)}
=\frac{U''(y)\psi_{1}(y)}{\la-U(y)}(y_{2}-\theta-y)\Big|^{y_{2}-\theta}_{0}\\
=&\int_{0}^{y_{2}-\theta}\lrs{\frac{\psi_{1}(y)}{\la-U(y)}}^{'}U''(y)(y_{2}-\theta-y)dy-
\int_{0}^{y_{2}-\theta}\frac{U''(y)\psi_{1}(y)}{\la-U(y)}dy\notag\\
&+\int_{0}^{y_{2}-\theta}\frac{\psi_{1}(y)}{\la-U(y)}U'''(y)(y_{2}-\theta-y)dy.\notag
\end{align}
In the same way, we have
\begin{align}\label{equ:single point estimate,in,equality,2}
&\frac{U''(0)\psi_{1}(0)(y_{1}+\theta)}{\la-U(0)}=\frac{U''(y)\psi_{1}(y)}{\la-U(y)}(y_{1}+\theta-y)\Big|_{y_{1}-\theta}^{0}\\
=&\int_{y_{1}-\theta}^{0}\lrs{\frac{\psi_{1}(y)}{\la-U(y)}}^{'}U''(y)(y_{1}+\theta-y)dy-
\int_{y_{1}-\theta}^{0}\frac{U''(y)\psi_{1}(y)}{\la-U(y)}dy\notag\\
&+\int_{y_{1}-\theta}^{0}\frac{\psi_{1}(y)}{\la-U(y)}U'''(y)(y_{1}+\theta-y)dy.\notag
\end{align}
Combining \eqref{equ:single point estimate,in,equality,1} and \eqref{equ:single point estimate,in,equality,2}, we obtain
\begin{align*}
&\frac{U''(0)\psi_{1}(0)(y_{2}-y_{1}-2\theta)}{\la-U(0)}\\
=&\int_{y_{1}+\theta}^{y_{2}-\theta}\frac{U''(y)\psi_{1}(y)}{\la-U(y)}dy-
\int_{0}^{y_{2}-\theta}\lrs{\frac{\psi_{1}(y)}{\la-U(y)}}^{'}U''(y)(y_{2}-\theta-y)dy\\
&-\int_{y_{1}+\theta}^{0}\lrs{\frac{\psi_{1}(y)}{\la-U(y)}}^{'}U''(y)(y_{1}+\theta-y)dy-\int_{0}^{y_{2}-\theta}\frac{\psi_{1}(y)}{\la-U(y)}U'''(y)(y_{2}-\theta-y)dy\\
&-\int_{y_{1}+\theta}^{0}\frac{\psi_{1}(y)}{\la-U(y)}U'''(y)(y_{1}
+\theta-y)dy.
\end{align*}
Using that $\la-U(0)\sim (y_{2}-y_{1})^{2}$, $\la-U(y)\sim y_{2}(y_{2}-y)\sim y_{2}(y-y_{1})$ as in Lemma \ref{lemma:proper,symme,flow}, we get
\begin{align*}
|U''(0)\psi_{1}(0)|^{2}\lesssim & (y_{2}-y_{1})^{2}\bbabs{\int_{y_{1}+\theta}^{y_{2}-\theta}\frac{U''(y)\psi_{1}(y)}{\la-U(y)}dy}^{2}+(y_{2}-y_{1})\int_{y_{1}+\theta}^{y_{2}-\theta}
\abs{\psi_{1}(y)}^{2}dy\\
&+(y_{2}-y_{1})\int_{y_{1}+\theta}^{y_{2}-\theta}
\bbabs{\lrs{\frac{\psi_{1}(y)}{\la-U(y)}}^{'}}^{2}(\la-U(y))^{2}dy,
\end{align*}
which, together with \eqref{equ:in,coercive,L2}, completes the proof.
\end{proof}

\section{Resolvent estimates with Navier-slip boundary condition}\label{sec:Orr-Sommerfeld Equation with the Navier-slip Boundary Condition}

In this section, we consider the Orr-Sommerfeld (OS) equation with the Navier-slip boundary condition
\begin{equation}\label{equ:OS,navier-slip}
\left\{
\begin{aligned}
&-\nu(\partial^2_y-\alpha^2)w+i\alpha(U-\lambda)w-i\alpha U''\psi=F,\\
&(\partial^2_y-\alpha^2)\psi=w,\quad\psi(\pm 1)= w(\pm 1)=0.
\end{aligned}\right.
\end{equation}
To facilitate the analysis, we decompose the stream function $\psi=\psi_1+\psi_2$, where
\begin{equation}\label{def-varphi1}
\left\{
\begin{aligned}
&(\partial_y^2-|\alpha|^2)\psi_1=w,\\
&\psi_1(\pm 1)=0, \psi_1(y_{1})=\psi_{1}(y_{2})=0,
\end{aligned}\right.
\end{equation}
and
\begin{equation}\label{def-varphi2}
    \left\{
    \begin{aligned}
         &(\partial_y^2-|\alpha|^2)\psi_2=0,\\
         &\psi_2(\pm 1)=0,\psi_2(y_i)=\psi(y_i),\ i=1,2.
  \end{aligned}\right.
\end{equation}
In fact, $\psi_{2}(y)$ is explicitly given by
\begin{equation}\label{form-varphi2}
    \psi_2(y)=
    \left\{
    \begin{aligned}
          &\frac{\operatorname{sinh}\big(|\alpha|(1+y)\big)}
          {\operatorname{\operatorname{sinh}}\big(|\alpha|(1+y_{1})\big)}\psi(y_{1}),\quad & y\in[-1,y_{1}],\\
          &\frac{\operatorname{\operatorname{sinh}}\big(|\alpha|(y-y_{1})\big)\psi(y_{2})
          +\operatorname{\operatorname{sinh}}
          \big(|\alpha|(y_{2}-y)\big)\psi(y_{1})}{\operatorname{\operatorname{sinh}}\big(|\alpha|(y_{2}-y_{1})\big)},\quad &y\in[y_{1},y_{2}],\\
          &\frac{\operatorname{\operatorname{sinh}}\big(|\alpha|(1-y)\big)}{\operatorname{\operatorname{sinh}}\big(|\alpha|(1-y_{2})\big)}\psi(y_{2}),\quad & y\in[y_{2},1].\\
    \end{aligned}\right.
\end{equation}

The main goal is to establish full resolvent estimates for the OS equation \eqref{equ:OS,navier-slip}. The proof is intricate and consists of several technical steps, such as exterior and interior estimates, weak-type resolvent estimates which are employed to enhance the velocity estimates. These steps are organized into Subsection \ref{sec:Resolvent Estimates for}--\ref{sec:Resolvent Estimates for F}.
Finally, we summarize the main results in Subsection \ref{sec:Summary of Results under the Navier-slip Boundary Condition}.

\subsection{Resolvent estimates for $\la\in \R\setminus (U(0),U(1))$}\label{sec:Resolvent Estimates for}
We provide energy estimates and resolvent estimates for $\la\in \R\setminus (U(0),U(1))$.
\begin{lemma}\label{lemma:energy estimate,real part}
For $\la\in \R$, we have
\begin{align}
    \nu\n{\pa_yw}_{L^{2}}^{2}+\nu\alpha^{2}\n{w}^2_{L^2}\lesssim& \nu\n{w}^2_{L^2}+ |\operatorname{Re}\blra{F,\frac{w}{U''}}|,
    \label{equ:energy estimate,real part,w,nu}\\
    |\al|\n{(\pa_{y},|\al|)w}_{L^{2}}   \lesssim& \n{w}_{L^2}+\nu^{-1}\n{F}_{L^{2}},\label{equ:energy estimate,real part,w,L2}\\
    \n{(\pa_{y},|\al|)w}_{L^{2}}   \lesssim& \n{w}_{L^2}+\nu^{-1}\n{F}_{H^{-1}}.\label{equ:energy estimate,real part,w,H-1}
\end{align}

For $\la\leq U(0)+\nu^{\frac{1}{2}}|\al|^{-\frac{1}{2}}$, we have
\begin{align}\label{equ:energy estimate,im,w,0}
|\al||\la-U(0)|\n{w}_{L^{2}}^{2}+2\nu^{\frac{1}{2}}|\al|^{\frac{1}{2}}\n{w}_{L^{2}}^{2}\lesssim \babs{\blra{F,\frac{w}{U''}}}.
\end{align}

For $\la\geq U(1)-\nu^{\frac{1}{3}}|\al|^{-\frac{1}{3}}$, we have
\begin{align}\label{equ:energy estimate,im,w,1}
|\al||\la-U(1)|\n{w}_{L^{2}}^{2}+2\nu^{\frac{1}{3}}|\al|^{\frac{2}{3}}\n{w}_{L^{2}}^{2}\lesssim \babs{\blra{F,\frac{w}{U''}}}.
\end{align}
\end{lemma}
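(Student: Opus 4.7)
The plan is to prove all five bounds from a single choice of test function: $\bar w/U''$. This choice is decisive. Because $(U-\lambda)/U''$ is real, the drift $i\alpha(U-\lambda)w$ paired with $\bar w/U''$ contributes purely imaginarily. Because $U''\psi\cdot\bar w/U''=\psi\bar w$ and $\langle\psi,w\rangle=\langle\psi,\Delta_\alpha\psi\rangle=-(\|\partial_y\psi\|_{L^2}^2+\alpha^2\|\psi\|_{L^2}^2)\in\mathbb{R}$ (using $\psi(\pm 1)=0$), the nonlocal term $-i\alpha U''\psi$ likewise produces a purely imaginary contribution. Thus the real part of the tested identity isolates the viscous terms plus a smooth commutator, while the imaginary part isolates the drift together with one controllable nonlocal piece.

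\textbf{Real-part bounds \eqref{equ:energy estimate,real part,w,nu}--\eqref{equ:energy estimate,real part,w,H-1}.} The real part of the identity gives $\nu\int|\partial_y w|^2/U''+\nu\alpha^2\int|w|^2/U'' = \operatorname{Re}\langle F,w/U''\rangle+\text{commutator}$, where the commutator from $\partial_y(1/U'')$ reduces by integration by parts (using $w(\pm 1)=0$) to $\tfrac{\nu}{2}\int|w|^2\partial_y(U'''/(U'')^2)$, bounded by $C\nu\|w\|^2$; since $U''\gtrsim 1$, this yields \eqref{equ:energy estimate,real part,w,nu}. From \eqref{equ:energy estimate,real part,w,nu} with $|\operatorname{Re}\langle F,w/U''\rangle|\leq\|F\|_{L^2}\|w\|_{L^2}$ one reads off $\alpha^2\|w\|\lesssim\|w\|+\nu^{-1}\|F\|_{L^2}$ and $\|\partial_y w\|^2\lesssim\|w\|^2+\nu^{-1}\|F\|\|w\|$; multiplying the latter by $|\alpha|^2$, applying Young, and combining with the former gives \eqref{equ:energy estimate,real part,w,L2}. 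For \eqref{equ:energy estimate,real part,w,H-1}, replace the bound on $|\langle F,w/U''\rangle|$ by the duality pairing $\|F\|_{H^{-1}}\|w/U''\|_{H^{1}_\alpha}\lesssim\|F\|_{H^{-1}}\|(\partial_y,|\alpha|)w\|_{L^2}$ (valid since $|\alpha|\geq 1$ and $1/U''$ is smooth), and absorb $\|(\partial_y,|\alpha|)w\|_{L^2}$ into the left via Young's inequality.

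\textbf{Imaginary-part bounds \eqref{equ:energy estimate,im,w,0}--\eqref{equ:energy estimate,im,w,1}.} The imaginary part yields
\[
\alpha\int\frac{U-\lambda}{U''}|w|^2+\alpha\bigl(\|\partial_y\psi\|^2+\alpha^2\|\psi\|^2\bigr)=\operatorname{Im}\langle F,w/U''\rangle+R,
\]
with $R=O(\nu\|\partial_y w\|\|w\|)$ absorbable via \eqref{equ:energy estimate,real part,w,nu} and AM-GM. For \eqref{equ:energy estimate,im,w,0}: the S-condition forces $U'(0)=0$ and $U''(0)>0$, so $U(y)-U(0)\geq cy^2$; hence for $\lambda\leq U(0)+\nu^{1/2}|\alpha|^{-1/2}$ we extract $\alpha\int(U-\lambda)/U''\,|w|^2\gtrsim\alpha|\lambda-U(0)|\|w\|^2+c\alpha\int y^2|w|^2$ (up to an $O(\nu^{1/2}|\alpha|^{1/2}\|w\|^2)$ correction when $\lambda>U(0)$, absorbable below). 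The enhancement scale follows from the interpolation $\|w\|^2\lesssim\delta\|w\|\|\partial_y w\|+\delta^{-2}\int y^2|w|^2$ (split $\int|w|^2=\int_{|y|\leq\delta}+\int_{|y|\geq\delta}$, Sobolev on the first, Chebyshev on the second); optimizing $\delta$ gives $\nu^{1/2}|\alpha|^{1/2}\|w\|^2\lesssim\alpha\int y^2|w|^2+\nu\|\partial_y w\|^2$, which combined with the imaginary-part identity and \eqref{equ:energy estimate,real part,w,nu} yields \eqref{equ:energy estimate,im,w,0}. For \eqref{equ:energy estimate,im,w,1}, the same scheme is run near $y=\pm 1$: $U(1)-U(y)\gtrsim 1-|y|$ (from $U'(\pm 1)\neq 0$, guaranteed by $U''\geq c>0$ and $U'(0)=0$) and the non-slip bound $|w(y)|^2\leq(1-|y|)\|\partial_y w\|^2$ yield $\|w\|^2\lesssim\rho^2\|\partial_y w\|^2+\rho^{-1}\int(1-|y|)|w|^2$; choosing $\rho=(\nu/|\alpha|)^{1/3}$ produces the Airy scaling $\nu^{1/3}|\alpha|^{2/3}\|w\|^2\lesssim\alpha\int(1-|y|)|w|^2+\nu\|\partial_y w\|^2$, from which \eqref{equ:energy estimate,im,w,1} follows.

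\textbf{Main obstacle.} The most delicate point throughout the imaginary-part analysis is the sign-indefinite stream-function term $\alpha(\|\partial_y\psi\|^2+\alpha^2\|\psi\|^2)$. The sharp Poincaré-type estimate $\|\psi\|\lesssim\|w\|/\alpha^2$ only gives $\alpha(\|\partial_y\psi\|^2+\alpha^2\|\psi\|^2)\lesssim\|w\|^2/|\alpha|$, which is not automatically dominated by either enhancement scale $\nu^{1/2}|\alpha|^{1/2}\|w\|^2$ or $\nu^{1/3}|\alpha|^{2/3}\|w\|^2$ at small $|\alpha|$. Closing \eqref{equ:energy estimate,im,w,0} and \eqref{equ:energy estimate,im,w,1} therefore requires keeping both $|\alpha||\lambda-U(\cdot)|\|w\|^2$ and the enhancement scale on the left simultaneously, then using the spectral restrictions $\lambda\leq U(0)+\nu^{1/2}|\alpha|^{-1/2}$ and $\lambda\geq U(1)-\nu^{1/3}|\alpha|^{-1/3}$ to absorb this error via AM-GM with a small constant.
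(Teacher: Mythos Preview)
Your treatment of the real-part bounds \eqref{equ:energy estimate,real part,w,nu}--\eqref{equ:energy estimate,real part,w,H-1} matches the paper's. The issue is with the imaginary-part analysis.

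\textbf{On \eqref{equ:energy estimate,im,w,0}: the obstacle is a phantom.} For $\lambda\le U(0)$ the stream-function contribution $\alpha\|(\partial_y,|\alpha|)\psi\|^2$ has the \emph{same} sign as the drift term $\alpha\int\frac{U-\lambda}{U''}|w|^2$, since $U-\lambda\ge U-U(0)\ge 0$ everywhere. Both sit on the favorable side of the identity; there is nothing to absorb. The paper simply keeps $\alpha\|u\|^2$ on the left, writes $\|w\|_{L^2}^2=-\langle y,\partial_y|w|^2\rangle\le 2\|yw\|_{L^2}\|\partial_yw\|_{L^2}$, and reads off $\nu^{1/2}|\alpha|^{1/2}\|w\|^2\le |\alpha|\|yw\|^2+\nu\|\partial_yw\|^2$. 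Your interpolation-and-optimization route reaches the same conclusion, but the ``main obstacle'' you flag does not actually arise here.

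\textbf{On \eqref{equ:energy estimate,im,w,1}: a genuine gap.} For $\lambda\ge U(1)$ the signs really do oppose: one must control
\[
|\alpha|\Big\langle\frac{(U(1)-U)w}{U''},w\Big\rangle-|\alpha|\,\|(\partial_y,|\alpha|)\psi\|_{L^2}^2
\]
from below. Your proposed absorption via $|\alpha|\|u\|^2\le|\alpha|^{-1}\|w\|^2$ leaves a remainder $|\alpha|^{-1}\|w\|^2$. Feeding in the Hardy-type bound $\|w\|^2\lesssim\big(\!\int(1-|y|)|w|^2\big)^{2/3}\|\partial_yw\|^{2/3}$ and optimizing with Young against $|\alpha|\int(1-|y|)|w|^2+\nu\|\partial_yw\|^2$ closes only when $\nu|\alpha|^5\gtrsim 1$; for small $|\alpha|$ the argument fails. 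The spectral restriction $\lambda\ge U(1)-\nu^{1/3}|\alpha|^{-1/3}$ does not help either, since $\lambda=U(1)$ is allowed and then $|\alpha||\lambda-U(1)|\|w\|^2$ vanishes.

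The paper closes this gap by invoking the Rayleigh coercive estimate \eqref{equ:in,coercive,H1} with $y_1=-1$, $y_2=1$ and $\lambda=U(1)$ (so $\chi\equiv 1$ and $\psi_1=\psi$), which gives the structural inequality
\[
\Big\langle\frac{(U(1)-U)w}{U''},w\Big\rangle-\|(\partial_y,|\alpha|)\psi\|_{L^2}^2\;\gtrsim\;\Big\langle\frac{(U(1)-U)w}{U''},w\Big\rangle+\|(\partial_y,|\alpha|)\psi\|_{L^2}^2.
\]
This is the missing ingredient: it converts the sign-indefinite combination into a coercive one without any smallness assumption on $|\alpha|$ or $\nu$, after which your Airy-scale interpolation near $y=\pm 1$ (which does match the paper's use of \eqref{proper,f,h1,3}) finishes the job.
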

\begin{proof}
We test equation $\eqref{equ:OS,navier-slip}$ by $\frac{w}{U''}$ to get
 \begin{align}\label{equ:F,test,w}
\blra{F,\frac{w}{U''}}=\blra{-\nu(\pa_{y}^{2}-\al^{2})w+i\al (U-\la) w-i\al U''\psi,  \frac{w}{U''}}.
\end{align}
 Taking the real part of \eqref{equ:F,test,w}, we use the integration by parts to obtain
\begin{align*}
    &\operatorname{Re}\lra{F,\frac{w}{U''}}=\int_{-1}^{1}\frac{\nu|\pa_yw|^{2}+\nu|\alpha|^{2}|w|^{2}}{U''}+\nu \operatorname{Re}\bblra{\pa_yw,w\lrs{\frac{1}{U''}}'}.
\end{align*}
Noticing that
\begin{align*}
\operatorname{Re}\bblra{\pa_yw,w\lrs{\frac{1}{U''}}'}=-\int_{-1}^{1}\frac{|w|^2}{2}\Big(\frac{1}{U''}\Big)''dy,
\end{align*}
by the condition that $U''(y)>0$, we have
\begin{align*}
\nu\n{\pa_yw}_{L^{2}}^{2}+\nu\alpha^{2}\n{w}_{L^{2}}^{2}\lesssim \nu\n{w}^2_{L^2}+\bbabs{\operatorname{Re}\lra{F,\frac{w}{U''}}},
\end{align*}
which completes the proof of \eqref{equ:energy estimate,real part,w,nu}. Using \eqref{equ:energy estimate,real part,w,nu} and Young's inequality,
we have
\begin{align*}
\nu\n{\pa_yw}_{L^{2}}^{2}+\nu\alpha^{2}\n{w}_{L^{2}}^{2}\lesssim \nu\n{w}^2_{L^2}+ o\lrs{\nu|\al|^{2}}\n{w}_{L^{2}}^{2}+ \frac{\n{F}_{L^{2}}^{2}}{\nu|\al|^{2}},
\end{align*}
which implies \eqref{equ:energy estimate,real part,w,L2}.
Moreover, we have
\begin{align*}
\babs{\blra{F,\frac{w}{U''}}}\leq \n{F}_{H^{-1}}\n{(\pa_{y},|\al|)w}_{L^{2}}\bbn{\frac{1}{U''}}_{W^{1,\infty}}\lesssim \n{F}_{H^{-1}}\n{(\pa_{y},|\al|)w}_{L^{2}},
\end{align*}
and, together with \eqref{equ:energy estimate,real part,w,nu}, arrive at \eqref{equ:energy estimate,real part,w,H-1}.

Taking the imaginary part of \eqref{equ:F,test,w}, we use the integration by parts to obtain
\begin{align}\label{equ:F,test,w,im}
\operatorname{Im}\blra{F,\frac{w}{U''}}
=&\al \lrs{\blra{w(U-\la),\frac{w}{U''}}+\al^{2}\lra{\psi,\psi}+\lra{\pa_{y}\psi,\pa_{y}\psi}} -\nu\operatorname{Im}\blra{w',w\frac{U'''}{(U'')^{2}}}.
\end{align}

For resolvent estimate \eqref{equ:energy estimate,im,w,0}, it suffices to consider $\la\leq U(0)$. By \eqref{equ:F,test,w,im}, we have
\begin{align*}
&|\al|\lrs{|\la-U(0)|\n{w}_{L^{2}}^{2}+\blra{(U-U(0))w,\frac{w}{U''}}+\n{(\pa_{y},|\al|)\psi}_{L^{2}}^{2}}\\
\lesssim& \babs{\operatorname{Im}\blra{F,\frac{w}{U''}}}+\nu\n{\pa_yw}_{L^{2}}\n{w}_{L^{2}},
\end{align*}
which together with \eqref{equ:energy estimate,real part,w,nu} implies that
\begin{align}\label{equ:energy estimate,all}
&|\al|\lrs{|\la-U(0)|\n{w}_{L^{2}}^{2}+\blra{(U-U(0))w,\frac{w}{U''}}+\n{(\pa_{y},|\al|)\psi}_{L^{2}}^{2}}+\nu\n{(\pa_{y},|\al|)w}_{L^{2}}^{2}\\
\lesssim&\babs{\blra{F,\frac{w}{U''}}}+\nu\n{\pa_yw}_{L^{2}}\n{w}_{L^{2}}+\nu \n{w}_{L^{2}}^{2}.\notag
\end{align}
Using that $y^{2}\lesssim |U(y)-U(0)|$ as in Lemma \ref{lemma:proper,symme,flow}, we have
\begin{align}\label{equ:L2,w,y,U}
 \|w\|^2_{L^2}=&\langle 1,w^2\rangle=\langle -y,\partial_y|w|^2\rangle
    \leq 2\|yw\|_{L^2}\|\partial_yw\|_{L^2}
    \lesssim  \blra{(U-U(0))w,w}^{\frac{1}{2}}\n{\pa_yw}_{L^{2}}.
\end{align}
Combining \eqref{equ:energy estimate,all} and \eqref{equ:L2,w,y,U}, we use Young's inequality to get
\begin{align}\label{equ:L2,w,resolvent,0,full,proof}
|\al|\lrs{|\la-U(0)|\n{w}_{L^{2}}^{2}+\blra{(U-U(0))w,\frac{w}{U''}}
+\n{(\pa_{y},|\al|)\psi}_{L^{2}}^{2}}+\nu\n{(\pa_{y},|\al|)w}_{L^{2}}^{2}
\lesssim&\babs{\blra{F,\frac{w}{U''}}},
\end{align}
which together with \eqref{equ:L2,w,y,U} implies that
\begin{align}\label{equ:L2,w,resolvent,0,proof}
|\al||\la-U(0)|\n{w}_{L^{2}}^{2}+2\nu^{\frac{1}{2}}|\al|^{\frac{1}{2}}\n{w}_{L^{2}}^{2}\lesssim \babs{\blra{F,\frac{w}{U''}}}.
\end{align}
Therefore, we complete the proof of \eqref{equ:energy estimate,im,w,0}.

For resolvent estimate \eqref{equ:energy estimate,im,w,1}, it suffices to consider $\la\geq U(1)$.
By interior coercive estimate \eqref{equ:in,coercive,H1} in Lemma \ref{lemma:in,coercive estimate}, we have
\begin{align*}
\blra{(U(1)-U)w,\frac{w}{U''}}-\n{(\pa_{y},|\al|)\psi}_{L^{2}}^{2}\gtrsim \blra{(U(1)-U)w,\frac{w}{U''}}+\n{(\pa_{y},|\al|)\psi}_{L^{2}}^{2}.
\end{align*}
 Repeating the proof of \eqref{equ:energy estimate,all}, we get
\begin{align*}
&|\al|\lrs{|\la-U(1)|\n{w}_{L^{2}}^{2}+\blra{(U(1)-U)w,\frac{w}{U''}}-\n{(\pa_{y},|\al|)\psi}_{L^{2}}^{2}}
+\nu\n{(\pa_{y},|\al|)w}_{L^{2}}^{2}\\
\lesssim& \babs{\blra{F,\frac{w}{U''}}}+\nu\n{\pa_yw}_{L^{2}}\n{w}_{L^{2}}+\nu \n{w}_{L^{2}}^{2},
\end{align*}
and hence,
\begin{align}\label{equ:energy estimate,all,la=1}
&|\al|\lrs{|\la-U(1)|\n{w}_{L^{2}}^{2}+\blra{(U(1)-U)w,\frac{w}{U''}}+\n{(\pa_{y},|\al|)\psi}_{L^{2}}^{2}}
+\nu\n{(\pa_{y},|\al|)w}_{L^{2}}^{2}\\
\lesssim& \babs{\blra{F,\frac{w}{U''}}}+\nu\n{\pa_yw}_{L^{2}}\n{w}_{L^{2}}+\nu \n{w}_{L^{2}}^{2}.\notag
\end{align}
Due to that $1-y^{2}\lesssim U(1)-U(y)$ as in Lemma \ref{lemma:proper,symme,flow}, we use Hardy-type inequality \eqref{proper,f,h1,3} in Lemma \ref{lemma:hardy inequality} to obtain
\begin{align}\label{equ:L2,w,y,hardy}
 \|w\|^2_{L^2}
    \lesssim \n{\sqrt{1-y^{2}}w}_{L^{2}}^{\frac{4}{3}}\n{\pa_yw}_{L^{2}}^{\frac{2}{3}}\lesssim \blra{(U(1)-U)w,w}^{\frac{2}{3}}\n{\pa_yw}_{L^{2}}^{\frac{2}{3}}.
\end{align}
Combining \eqref{equ:energy estimate,all,la=1} and \eqref{equ:L2,w,y,hardy}, we use Young's inequality to get
\begin{align}\label{equ:L2,w,resolvent,1,full,proof}
&|\al|\lrs{|\la-U(1)|\n{w}_{L^{2}}^{2}+\blra{(U(1)-U)w,\frac{w}{U''}}+\n{(\pa_{y},|\al|)\psi}_{L^{2}}^{2}}
+\nu\n{(\pa_{y},|\al|)w}_{L^{2}}^{2}\\
\lesssim& \bbabs{\blra{F,\frac{w}{U''}}},\notag
\end{align}
which together with \eqref{equ:L2,w,y,hardy} implies the desired estimate
\begin{align*}
|\al||\la-U(1)|\n{w}_{L^{2}}^{2}+2\nu^{\frac{1}{3}}|\al|^{\frac{2}{3}}\n{w}_{L^{2}}\lesssim \bbabs{\blra{F,\frac{w}{U''}}}.
\end{align*}
Hence, we complete the proof of \eqref{equ:energy estimate,im,w,1}.
\end{proof}

\subsection{Resolvent estimates for $\lambda\in [U(0), U(1)]$}

\subsubsection{Exterior energy estimates}

In this subsection, we focus on the estimates on the exterior domain $(-1,1)\setminus (y_{1},y_{2})$ with $\la\in [U(0),U(1)]$. We define the smooth cutoff function
\begin{equation}\label{equ:smooth cutoff,ex,delta}
\rho_{\delta}^{c}(y)= \begin{cases}0, & y \in\left(y_{1}-\frac{\delta}{2}, y_{2}+\frac{\delta}{2}\right), \\ 1, & y \in(-1,1) \backslash\left(y_{1}-\delta, y_{2}+\delta\right), \\ \text { smooth, } & \text {others, }\end{cases}
\end{equation}
where $0<\delta\leq y_{2}\leq 1$ and $ 0\leq \rho_{\delta}^{c}(y)\leq 1$.
For simplicity, we set
\begin{itemize}
\item $\chi(y)=1_{(y_{1},y_{2})}(y),\quad \chi^{c}(y)=1_{(-1,1)\setminus (y_{1},y_{2})}(y).$
\item $B_{1,2,\delta}=B(y_{1},\delta)\cup B(y_{2},\delta)$.
\end{itemize}

From the expression \eqref{form-varphi2}, we have that $\n{\psi_{2}}_{L^{\infty}}\leq \n{\psi}_{L^{\infty}(B_{1,2,\delta})}$ and the local $L^{\infty}$ estimate
\begin{align}\label{equ:local Linfity estimate}
\n{\psi_{1}}_{L^{\infty}(B_{1,2,\delta})}+\n{\psi_{2}}_{L^{\infty}}\leq 2\n{\psi}_{L^{\infty}(B_{1,2,\delta})}.
\end{align}

The following lemma provides exterior energy estimates.
\begin{lemma}[Exterior energy estimates]\label{lemma:exter,estimate,w,psi}
Let $\delta\in (0, y_{2}]$ and $(w,\psi)$ be the solution to \eqref{equ:OS,navier-slip}.
We have
\begin{align}\label{equ:ex,estimate,w,L2}
        \int_{-1}^{1}\rho_{\delta}^{c}|w|^2dy\leq &\frac{|\operatorname{Im}\lra{F,\frac{\rho_{\delta}^{c} w}{U-\la}}|}{|\al|}+
        \frac{\nu\|\partial_y w\|_{L^2}\n{w}
        _{L^\infty}}{|\alpha|\delta^{\f32}(y_{2}-y_{1})} +\int_{-1}^{1}\frac{|\psi|^2\rho_{\delta}^{c}}{(U-\lambda)^2}dy,
\end{align}
\begin{align}\label{equ:ex,estimate,w}
  \int_{-1}^{1}\frac{\rho_{\delta}^{c}(U-\lambda)|w|^2}{U''}dy-\lra{\psi_{1},\chi^{c}w}
 \lesssim &\frac{|\lra{F,\frac{\rho_{\delta}^{c}w}{U''}}|}{|\alpha|}+\frac{\nu\|\partial_yw\|_{L^2}
 \n{w}_{L^\infty}}{|\alpha|\delta^{\frac{1}{2}}}
   +\n{\psi}_{L^{\infty}(B_{1,2,\delta})}\n{\chi^{c}w}_{L^{1}}.
\end{align}
In particular, we have
\begin{align}\label{equ:ex,estimate,w,psi}
&\frac{1}{(y_{2}-y_{1})^{2}}\int_{(-1,1)\setminus (y_{1},y_{2})} |\al|^{2}|\psi_{1}|^2+|\pa_y\psi_{1}|^{2} d y\\
\lesssim& \frac{|\lra{F,\frac{\rho_{\delta}^{c}w}{U''}}|}{|\alpha|(y_{2}-y_{1})^{2}}+\mathcal{E}(w)
 +\frac{\n{\chi^{c}w}_{L^{1}}^{2}}{y_{2}-y_{1}}+
        \frac{\n{\psi}_{L^{\infty}(B_{1,2,\delta})}^{2}}{(y_{2}-y_{1})^{2}\delta}.\notag
\end{align}
where
\begin{align}\label{equ:E,w,effective}
\mathcal{E}(w)=\frac{\nu^{2}\n{\pa_yw}_{L^{2}}^{2}}{|\al|^{2}(y_{2}-y_{1})^{2}\delta^{4}}
        +\delta\n{w}_{L^{\infty}}^{2}.
\end{align}
\end{lemma}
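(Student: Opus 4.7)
The plan is to derive the three bounds by pairing the Orr-Sommerfeld equation \eqref{equ:OS,navier-slip} with localized multipliers supported away from the critical layer, extracting the imaginary part, and then reading off $\n{(\pa_y,|\al|)\psi_1}_{L^2((-1,1)\setminus(y_1,y_2))}^2$ via the identity \eqref{estE1-varphi-1}.

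For \eqref{equ:ex,estimate,w,L2}, I would test \eqref{equ:OS,navier-slip} against $\rho_{\delta}^{c}\bar w/(U-\la)$, which is real and positive on the exterior since $\la\in[U(0),U(1)]$. Taking imaginary parts, the transport term yields the principal contribution $\al\int\rho_{\delta}^{c}|w|^2\,dy$; the nonlocal term $-i\al U''\psi$ contributes $-\al\operatorname{Re}\int \rho_{\delta}^{c}U''\psi\bar w/(U-\la)\,dy$, absorbed by Cauchy--Schwarz at the cost of $\int\rho_{\delta}^{c}|\psi|^2/(U-\la)^2\,dy$ on the right; the forcing produces the $\operatorname{Im}\lra{F,\rho_{\delta}^{c}w/(U-\la)}$ term. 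For the viscous part, after one integration by parts (boundary terms at $y=\pm1$ vanish since $w(\pm1)=0$), only the weights $(\pa_y\rho_{\delta}^{c})/(U-\la)$ and $\rho_{\delta}^{c}U'/(U-\la)^2$ produce nonzero imaginary contributions. Using $|U-\la|\sim(y_2-y_1)|y-y_i|$ near the turning points, a direct computation shows both weights have $L^2$-norm $\lesssim (y_2-y_1)^{-1}\delta^{-3/2}$; bounding the remaining factors by $\n{\pa_y w}_{L^2}\n{w}_{L^\infty}$ and dividing by $|\al|$ yields \eqref{equ:ex,estimate,w,L2}.

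For \eqref{equ:ex,estimate,w}, I would switch to the smoother multiplier $\rho_{\delta}^{c}\bar w/U''$ (finite since $\inf U''>0$). The imaginary part now yields the positive principal term $\al\int\rho_{\delta}^{c}(U-\la)|w|^2/U''\,dy$ and the nonlocal contribution $-\al\operatorname{Re}\lra{\psi,\rho_{\delta}^{c}w}$. Splitting $\psi=\psi_1+\psi_2$, the explicit formula \eqref{form-varphi2} together with \eqref{equ:local Linfity estimate} gives $\n{\psi_2}_{L^\infty}\leq\n{\psi}_{L^\infty(B_{1,2,\delta})}$, so the $\psi_2$ piece is controlled by $\n{\psi}_{L^\infty(B_{1,2,\delta})}\n{\chi^{c}w}_{L^1}$. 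For the $\psi_1$ piece, I rewrite $\rho_{\delta}^{c}=\chi^{c}-(\chi^{c}-\rho_{\delta}^{c})$: the main piece produces exactly $-\al\lra{\psi_1,\chi^{c}w}$, which is real by \eqref{estE1-varphi-1}, while the remainder is supported on $B_{1,2,\delta}$ where $|\psi_1|\leq\n{\psi}_{L^\infty(B_{1,2,\delta})}$ by \eqref{equ:local Linfity estimate}, giving the same boundary-layer error. The viscous contribution is cleaner here because $\n{\pa_y\rho_{\delta}^{c}/U''}_{L^2}\lesssim\delta^{-1/2}$, producing the $\delta^{-1/2}$ factor in \eqref{equ:ex,estimate,w}.

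For \eqref{equ:ex,estimate,w,psi}, both terms on the LHS of \eqref{equ:ex,estimate,w} are nonnegative, since $(U-\la)/U''>0$ on the exterior and $-\lra{\psi_1,\chi^{c}w}=\n{(\pa_y,|\al|)\psi_1}^2_{L^2((-1,1)\setminus(y_1,y_2))}$ by \eqref{estE1-varphi-1}, so \eqref{equ:ex,estimate,w} directly bounds the latter quantity. Dividing by $(y_2-y_1)^2$ and using $\delta\leq y_2=(y_2-y_1)/2$, Young's inequality converts the viscous error via
\begin{align*}
\frac{\nu\n{\pa_y w}_{L^2}\n{w}_{L^\infty}}{|\al|(y_2-y_1)^2\delta^{1/2}}\lesssim \frac{\nu^2\n{\pa_y w}_{L^2}^2}{|\al|^2(y_2-y_1)^2\delta^4}+\frac{\delta^3\n{w}_{L^\infty}^2}{(y_2-y_1)^2}\leq \mathcal{E}(w),
\end{align*}
and the boundary-layer error via
\begin{align*}
\frac{\n{\psi}_{L^\infty(B_{1,2,\delta})}\n{\chi^{c}w}_{L^1}}{(y_2-y_1)^2}\lesssim \frac{\n{\psi}_{L^\infty(B_{1,2,\delta})}^2}{(y_2-y_1)^2\delta}+\frac{\delta\n{\chi^{c}w}_{L^1}^2}{(y_2-y_1)^2}\leq \frac{\n{\psi}_{L^\infty(B_{1,2,\delta})}^2}{(y_2-y_1)^2\delta}+\frac{\n{\chi^{c}w}_{L^1}^2}{y_2-y_1},
\end{align*}
producing \eqref{equ:ex,estimate,w,psi}. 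The main obstacle is the joint singularity of $\pa_y\rho_{\delta}^{c}$ and the critical-layer weight $1/(U-\la)$ near $y_1,y_2$, which dictates the precise bookkeeping between $\delta$ and $y_2-y_1$; the two different multipliers are both essential, since $1/(U-\la)$ is needed in \eqref{equ:ex,estimate,w,L2} to extract a clean $\n{w}_{L^2}^2$ but accepts a $(U-\la)^{-2}$ remainder, whereas the smooth weight $1/U''$ delivers the coercive $\psi_1$-structure with a sharper viscous bound at the price of the splitting $\psi=\psi_1+\psi_2$ and the cutoff swap $\rho_{\delta}^{c}\mapsto\chi^{c}$.
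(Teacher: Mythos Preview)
Your proposal is correct and follows essentially the same approach as the paper: test \eqref{equ:OS,navier-slip} against $\rho_\delta^c \bar w/(U-\la)$ for \eqref{equ:ex,estimate,w,L2} and $\rho_\delta^c \bar w/U''$ for \eqref{equ:ex,estimate,w}, take imaginary parts, use the splitting $\psi=\psi_1+\psi_2$ together with the cutoff swap $\rho_\delta^c\to\chi^c$ controlled via \eqref{equ:local Linfity estimate}, and then apply Young's inequality (with $\delta\le y_2$) to pass from \eqref{equ:ex,estimate,w} to \eqref{equ:ex,estimate,w,psi}. The paper packages the $L^2$ bounds on $(\rho_\delta^c/(U-\la))'$ and $(\rho_\delta^c/U'')'$ into Lemma~\ref{lemma:basic Lp estimate,U}, but your direct computation of those norms is equivalent.
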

\begin{proof}
Testing equation \eqref{equ:OS,navier-slip} by $\frac{\rho_{\delta}^{c} w}{U-\la}$, we have
\begin{align*}
\lra{F,\frac{\rho_{\delta}^{c} w}{U-\la}}=&\lra{-\nu(\pa_{y}^{2}-\al^{2})w+i\al ((U-\la) w-U''\psi),\frac{\rho_{\delta}^{c} w}{U-\la}}
\end{align*}
and take the imaginary part to obtain
\begin{align*}
\operatorname{Im}\lra{F,\frac{\rho_{\delta}^{c} w}{U-\la}}=&\al \lra{w,\rho_{\delta}^{c} w}+\nu \operatorname{Im}\lra{\pa_yw,w\lrs{\frac{\rho_{\delta}^{c}}{U-\la}}'}-\al
\operatorname{Re}\lra{U''\psi,\frac{\rho_{\delta}^{c} w}{U-\la}}.
\end{align*}
By H\"{o}lder inequality and estimate \eqref{equ:basic estimate,U,rho,H1} in Lemma \ref{lemma:basic Lp estimate,U}, we get
\begin{align*}
        \int_{-1}^{1}\rho_{\delta}^{c}|w|^2dy\lesssim &\frac{|\operatorname{Im}\lra{F,\frac{\rho_{\delta}^{c} w}{U-\la}}|}{|\al|}+
        \frac{\nu\|\partial_y w\|_{L^2}\n{w}
        _{L^\infty}}{|\alpha|\delta^{\f32}(y_{2}+\delta)} +\int_{-1}^{1}\frac{|\psi|^2\rho_{\delta}^{c}}{(U-\lambda)^2}dy,
\end{align*}
which completes the proof of \eqref{equ:ex,estimate,w,L2}.

In a similar way, testing equation \eqref{equ:OS,navier-slip} by $\frac{\rho_{\delta}^{c} w}{U''}$, we take the imaginary part to obtain
\begin{align}\label{equ:F,test,w,ex}
\operatorname{Im}\lra{F,\frac{\rho_{\delta}^{c} w}{U''}}=&\nu\operatorname{Im} \lra{\pa_{y}w,w\lrs{\frac{\rho_{\delta}^{c}}{U''}}'}+\al\operatorname{Re}\lra{(U-\la)w-U''\psi,\frac{\rho_{\delta}^{c} w}{U''}}\\
=&\nu \operatorname{Im}\lra{\pa_{y}w,w\lrs{\frac{\rho_{\delta}^{c}}{U''}}'}+\al\operatorname{Re}\lra{\frac{(U-\la)w}{U''},\rho_{\delta}^{c} w}-\al \operatorname{Re}\lra{\psi,\rho_{\delta}^{c} w}.\notag
\end{align}
Noticing that
\begin{align*}
    \lra{\psi,\rho_{\delta}^{c} w}=\lra{\psi_{1},\chi^{c}w}+\lra{\psi_{1},(\rho_{\delta}^{c}-\chi^{c})w}+\lra{\psi_{2},\rho_{\delta}^{c}w},
\end{align*}
 we use H\"{o}lder inequality and the local $L^{\infty}$ estimate \eqref{equ:local Linfity estimate} to get
\begin{align}\label{equ:F,test,w,ex,main part}
  |\lra{\psi,\rho_{\delta}^{c} w}- \lra{\psi_{1},\chi^{c}w}|\lesssim \n{\psi}_{L^{\infty}(B_{1,2,\delta})}\n{\chi^{c}w}_{L^{1}}.
\end{align}
Combining \eqref{equ:F,test,w,ex} and \eqref{equ:F,test,w,ex,main part}, we use H\"{o}lder inequality and estimate \eqref{equ:basic estimate,U,rho,L2} in Lemma \ref{lemma:basic Lp estimate,U} to get
\begin{align*}
\int_{-1}^{1}\frac{\rho_{\delta}^{c}(U-\lambda)|w|^2}{U''}dy-\lra{\psi_{1},\chi^{c}w}
\lesssim&\frac{|\lra{F,\frac{\rho_{\delta}^{c}w}{U''}}|}{|\alpha|}+\frac{\nu\|\partial_yw\|_{L^2}\n{w}_{L^\infty}}{|\al|\delta^{\frac{1}{2}}}
+\n{\psi}_{L^{\infty}(B_{1,2,\delta})}\n{\chi^{c}w}_{L^{1}},
\end{align*}
which completes the proof of \eqref{equ:ex,estimate,w}.
Applying Cauchy-Schwarz inequality to \eqref{equ:ex,estimate,w} and using that $\delta\leq y_{2}$, we obtain \eqref{equ:ex,estimate,w,psi}.
\end{proof}

From \eqref{equ:ex,estimate,w,L2}, to provide exterior $L^{2}$ estimate on $w$, we need to estimate
$\int_{-1}^{1}\frac{|\psi|^2\rho_{\delta}^{c}}{(U-\lambda)^2}dy$.

\begin{lemma}[Exterior $L^{2}$ weighted estimate on $\psi$]\label{lemma:ex,l2 weighted,psi}
Let $\delta\in (0, y_{2}]$. It holds that
\begin{align}\label{equ:ex,l2 weighted,psi}
\int_{-1}^{1}\frac{|\psi|^2\rho_{\delta}^{c}}{(U-\lambda)^2}dy\lesssim
\frac{|\lra{F,\frac{\rho_{\delta}^{c}w}{U''}}|}{|\alpha|(y_{2}-y_{1})\delta}+
\mathcal{E}(w)
        +\frac{\n{\chi^{c}w}_{L^{1}}^{2}}{y_{2}-y_{1}}+
        \frac{\n{\psi}_{L^{\infty}(B_{1,2,\delta})}^{2}}{(y_{2}-y_{1})^{2}\delta}.
\end{align}

\end{lemma}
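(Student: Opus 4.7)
To prove \eqref{equ:ex,l2 weighted,psi}, my plan is to split $\psi=\psi_1+\psi_2$ per \eqref{def-varphi1}--\eqref{def-varphi2} and to control the two contributions to $\int|\psi|^2\rho_\delta^c/(U-\la)^2\,dy$ by different means: the $\psi_2$ piece via the explicit Poisson formula \eqref{form-varphi2}, and the $\psi_1$ piece via a Hardy-type inequality combined with the already-established exterior energy estimate \eqref{equ:ex,estimate,w,psi}. Throughout, I exploit two structural facts: on the support of $\rho_\delta^c$, Lemma~\ref{lemma:proper,symme,flow} together with $U'(y_2)\gtrsim y_2$ gives the pointwise lower bound $|U(y)-\la|\gtrsim (y_2-y_1)\,\mathrm{dist}(y,\{y_1,y_2\})$; and the hypothesis $\delta\in(0,y_2]$ with the symmetry $y_1=-y_2$ yields $\delta\leq (y_2-y_1)/2$, a relation I will use repeatedly to absorb intermediate factors.

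For the $\psi_2$ contribution, I use $|\psi_2(y)|\leq \|\psi\|_{L^\infty(B_{1,2,\delta})}$ on the support of $\rho_\delta^c$, which follows at once from \eqref{form-varphi2} and the monotonicity of $\sinh$. A direct computation of $\int_{y_2+\delta/2}^1 (y-y_2)^{-2}\,dy \lesssim \delta^{-1}$ (and its analog near $y_1$) then yields
\[
\int_{-1}^1 \frac{|\psi_2|^2\rho_\delta^c}{(U-\la)^2}\,dy \lesssim \frac{\|\psi\|_{L^\infty(B_{1,2,\delta})}^2}{(y_2-y_1)^2\,\delta},
\]
which is exactly the last term on the right of \eqref{equ:ex,l2 weighted,psi}.

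For the $\psi_1$ contribution, I exploit that $\psi_1$ vanishes at $\pm 1$ and at $y_1,y_2$, so the classical Hardy inequality on $[y_2,1]$ (and its analog on $[-1,y_1]$) gives $\int_{y_2}^1 |\psi_1|^2/(y-y_2)^2\,dy \lesssim \int_{y_2}^1|\pa_y\psi_1|^2\,dy$. Combined with the pointwise lower bound on $|U-\la|$, this converts the singular weight into an $H^1$ seminorm on the exterior region:
\[
\int_{-1}^1 \frac{|\psi_1|^2\rho_\delta^c}{(U-\la)^2}\,dy \lesssim \frac{1}{(y_2-y_1)^2}\int_{(-1,1)\setminus(y_1,y_2)}|\pa_y\psi_1|^2\,dy.
\]
Applying \eqref{equ:ex,estimate,w,psi} to the right-hand side then produces all four terms on the right of \eqref{equ:ex,l2 weighted,psi}, up to a single reconciliation of coefficients.

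The only real point to verify, and the main (very mild) obstacle, is that the $F$-term emerging from \eqref{equ:ex,estimate,w,psi} carries the denominator $(y_2-y_1)^2$, whereas the target statement has the looser denominator $(y_2-y_1)\delta$. Since $\delta\leq (y_2-y_1)/2$, one has $(y_2-y_1)^2\geq 2(y_2-y_1)\delta$, so the stronger bound is strictly absorbed by the weaker target. Summing the $\psi_1$- and $\psi_2$-contributions then yields \eqref{equ:ex,l2 weighted,psi}; no new coercive estimate is required, as the main analytic work has already been done in \eqref{equ:ex,estimate,w,psi}.
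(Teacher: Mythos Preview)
Your proposal is correct and follows essentially the same route as the paper: decompose $\psi=\psi_1+\psi_2$, bound the $\psi_2$ piece by the maximum principle/explicit formula together with the $L^2$ bound on $(U-\la)^{-1}$ away from $B_{1,2,\delta}$, and bound the $\psi_1$ piece by Hardy's inequality (using $\psi_1(y_1)=\psi_1(y_2)=\psi_1(\pm1)=0$) followed by the exterior energy estimate \eqref{equ:ex,estimate,w,psi}. Your explicit remark that $\delta\le y_2=(y_2-y_1)/2$ allows the $F$-term denominator $(y_2-y_1)^2$ to be relaxed to $(y_2-y_1)\delta$ makes transparent a step the paper leaves implicit.
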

\begin{proof}
Since $\psi=\psi_{1}+\psi_{2}$, we have
\begin{align}\label{equ:ex,l2 weighted,psi,1+2}
\int_{-1}^{1}\frac{|\psi|^2\rho_{\delta}^{c}}{(U-\lambda)^2}dy\leq
\int_{-1}^{1}\frac{|\psi_{1}|^2\rho_{\delta}^{c}}{(U-\lambda)^2}dy+
\int_{-1}^{1}\frac{|\psi_{2}|^2\rho_{\delta}^{c}}{(U-\lambda)^2}dy.
\end{align}
By the local $L^{\infty}$ estimate \eqref{equ:local Linfity estimate} and estimate \eqref{equ:basic estimate,U,L2} in Lemma \ref{lemma:basic Lp estimate,U}, we get
\begin{align}\label{equ:ex,l2 weighted,psi2,delta}
\bbabs{\int_{-1}^{1}\frac{|\psi_{2}|^2\rho_{\delta}^{c}}{(U-\lambda)^2}dy}\lesssim \frac{\n{\psi}_{L^{\infty}(B_{1,2,\delta})}^{2}}{(y_{2}-y_{1})^{2}\delta}.
\end{align}

On the other hand, using \eqref{u-y2} in Lemma \ref{lemma:proper,symme,flow} that
$$|U(y)-\lambda|\sim |(y+y_{2})(y-y_{2})|\sim |(y+y_{1})(y-y_{1})|,$$
we obtain
\begin{align*}
    \int_{-1}^{1}\frac{|\psi_{1}|^2\rho_{\delta}^{c}}{(U-\lambda)^2}dy&\leq
   \int_{-1}^{y_{1}-\frac{\delta}{2}}\frac{|\psi_1|^2}{(U-\lambda)^2}dy+\int_{y_{2}+\frac{\delta}{2}}^1\frac{|\psi_1|^2}{(U-\lambda)^2}dy\nonumber\\
    &\lesssim \int_{-1}^{y_{1}-\frac{\delta}{2}}\frac{|\psi_1|^2}{(y-y_{1})^2(y+y_{1})^2}dy+\int_{y_{2}+\frac{\delta}{2}}^1\frac{|\psi_1|^2}{(y-y_{2})^2(y+y_{2})^2}dy\nonumber\\
    &\lesssim \frac{1}{(y_{2}+\frac{\delta}{2})^2}\int_{-1}^{y_{1}}\frac{|\psi_1|^2}{(y-y_{1})^2}dy+\frac{1}{(y_{2}+\frac{\delta}{2})^2}
    \int_{y_{2}}^1\frac{|\psi_1|^2}{(y-y_{2})^2}dy\nonumber\\
    &\lesssim \frac{1}{(y_{2}+\frac{\delta}{2})^2}\int_{(-1,1)\setminus(y_{1},y_{2})}|\pa_y\psi_{1}|^2dy,
\end{align*}
where in the last inequality we have used the boundary condition $\psi_{1}(\pm 1)=\psi_{1}(y_{1})=\psi_{1}(y_{2})=0$ and Hardy inequality. Then by exterior energy estimate \eqref{equ:ex,estimate,w,psi}, we arrive at
\begin{align*}
\int_{-1}^{1}\frac{|\psi_{1}|^2\rho_{\delta}^{c}}{(U-\lambda)^2}dy\lesssim \frac{|\lra{F,\frac{\rho_{\delta}^{c}w}{U''}}|}{|\alpha|(y_{2}-y_{1})\delta}+
\mathcal{E}(w)
        +\frac{\n{\chi^{c}w}_{L^{1}}^{2}}{y_{2}-y_{1}}+
        \frac{\n{\psi}_{L^{\infty}(B_{1,2,\delta})}^{2}}{(y_{2}-y_{1})^{2}\delta},
\end{align*}
which completes the proof of \eqref{equ:ex,l2 weighted,psi}.
\end{proof}

We are left to estimate $\n{\chi^{c}w}_{L^{1}}$ and $\n{\psi}_{L^{\infty}(B_{1,2,\delta})}$.
\begin{lemma}[Exterior $L^{1}$ estimate on $w$]
Let $\delta\in (0, y_{2}]$. It holds that
\begin{align}\label{equ:estimate,w,L1,ex}
\frac{\n{\chi^{c}w}_{L^{1}}^{2}}{y_{2}-y_{1}}\lesssim \frac{|\lra{F,\frac{\rho_{\delta}^{c}w}{U''}}|}{|\alpha|(y_{2}-y_{1})\delta}+\mathcal{E}(w)
+\frac{\n{\psi}_{L^{\infty}(B_{1,2,\delta})}^{2}}{(y_{2}-y_{1})^{2}\delta}.
\end{align}
\end{lemma}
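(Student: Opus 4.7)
The plan is to bound $\|\chi^c w\|_{L^1}^2$ by combining a smooth-cutoff/Cauchy-Schwarz reduction with the coercive exterior estimate \eqref{equ:ex,estimate,w} already proved in Lemma~\ref{lemma:exter,estimate,w,psi}, then closing the loop by a carefully calibrated Young's inequality. The starting observation is that the right-hand side of \eqref{equ:ex,estimate,w} already contains $\|\psi\|_{L^{\infty}(B_{1,2,\delta})}\|\chi^{c}w\|_{L^{1}}$, so the argument must bound $\|\chi^c w\|_{L^1}^2$ by a multiple of the same $\int\rho_\delta^c(U-\lambda)|w|^2/U''\,dy$ with a prefactor of order $1/\delta$, and then absorb the linear occurrence of $\|\chi^c w\|_{L^1}$ back into the left-hand side.

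\textbf{Step 1 (localization).} I would first write $\|\chi^c w\|_{L^1}\le \|(\chi^c-\rho_\delta^c)w\|_{L^1}+\|\rho_\delta^c w\|_{L^1}$. Since $\chi^c-\rho_\delta^c$ is supported on $(y_1-\delta,y_1)\cup(y_2,y_2+\delta)$, the first piece is bounded by $2\delta\|w\|_{L^\infty}$, and upon squaring and dividing by $y_2-y_1$ (using the hypothesis $\delta\le y_2=\tfrac{1}{2}(y_2-y_1)$) it is absorbed by $\delta\|w\|_{L^\infty}^2\le \mathcal E(w)$.

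\textbf{Step 2 (weighted Cauchy-Schwarz).} For the smooth piece I would apply Cauchy-Schwarz with weight $\sqrt{U''/(U-\lambda)}$:
\begin{align*}
\|\rho_\delta^c w\|_{L^1}^2\le \Big(\int \rho_\delta^c\frac{U''}{U-\lambda}\,dy\Big)\Big(\int \rho_\delta^c\frac{(U-\lambda)|w|^2}{U''}\,dy\Big).
\end{align*}
Using the equivalence $|U-\lambda|\sim(y-y_2)(y+y_2)$ from Lemma~\ref{lemma:proper,symme,flow} and partial fractions, the weight integral satisfies $\int\rho_\delta^c U''/(U-\lambda)\,dy\lesssim\ln(y_2/\delta)/y_2$, which via the elementary inequality $\delta\ln(y_2/\delta)\le y_2/e$ (valid since $\delta\le y_2$) is $\lesssim 1/((y_2-y_1)\delta)$.

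\textbf{Step 3 (apply the exterior estimate).} Since $-\langle\psi_1,\chi^c w\rangle=\|\psi_1'\|_{L^2(ext)}^2+|\alpha|^2\|\psi_1\|_{L^2(ext)}^2\ge 0$ by \eqref{estE1-varphi-1}, the bound \eqref{equ:ex,estimate,w} yields
\begin{align*}
\int\rho_\delta^c\frac{(U-\lambda)|w|^2}{U''}\,dy\lesssim \frac{|\langle F,\rho_\delta^c w/U''\rangle|}{|\alpha|}+\frac{\nu\|\partial_y w\|_{L^2}\|w\|_{L^\infty}}{|\alpha|\delta^{1/2}}+\|\psi\|_{L^\infty(B_{1,2,\delta})}\|\chi^c w\|_{L^1}.
\end{align*}

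\textbf{Step 4 (absorption).} Multiplying Step~2 by the right-hand side of Step~3 and dividing by $y_2-y_1$ produces the desired $|\langle F,\rho_\delta^c w/U''\rangle|/(|\alpha|(y_2-y_1)\delta)$ term directly; the $\nu\|\partial_y w\|_{L^2}\|w\|_{L^\infty}$ term is matched to $\mathcal E(w)$ by Young's inequality $ab\le \tfrac12a^2+\tfrac12b^2$ with $a=\nu\|\partial_y w\|_{L^2}/(|\alpha|(y_2-y_1)\delta^2)$ and $b=\delta^{1/2}\|w\|_{L^\infty}$. The delicate term is $\|\psi\|_{L^\infty}\|\chi^c w\|_{L^1}/((y_2-y_1)\delta)$, to which I would apply Young's inequality with a parameter tuned so that the $\|\psi\|_{L^\infty}^2$ part lands exactly on $\|\psi\|_{L^\infty}^2/((y_2-y_1)^2\delta)$ and the residual $\|\chi^c w\|_{L^1}^2$ part is a small multiple of $\|\chi^c w\|_{L^1}^2/(y_2-y_1)$ absorbable into the left-hand side. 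The main obstacle is precisely this last calibration: one needs the scale separation $\delta\le y_2-y_1$ (from the hypothesis $\delta\in(0,y_2]$) to balance the Young parameter so that the prefactor on the absorbed $\|\chi^c w\|_{L^1}^2$ is strictly less than $1$, and simultaneously to place the $\|\psi\|_{L^\infty}^2$ term at the advertised weight $(y_2-y_1)^{-2}\delta^{-1}$ rather than at the weaker scaling that a naive application of Young's inequality would produce.
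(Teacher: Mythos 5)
Your Steps 1--3 match the paper's argument exactly: localization $\chi^c = \rho_\delta^c + (\chi^c-\rho_\delta^c)$, weighted Cauchy-Schwarz with weight $\sqrt{U''/(U-\lambda)}$, and the exterior energy estimate \eqref{equ:ex,estimate,w}. The gap is in the order of operations between Steps 2 and 4, and it is genuine.

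In Step 2 you coarsen the weight integral to a $\delta$-power estimate immediately: from $\int \rho_\delta^c U''/(U-\lambda)\,dy \lesssim \ln(1+2y_2/\delta)/(y_2-y_1)$ you pass, via $\delta\ln(y_2/\delta)\lesssim y_2$, to a crude bound. (Incidentally, that inequality gives $\lesssim 1/\delta$, not the $\lesssim 1/((y_2-y_1)\delta)$ you wrote --- there is an extra spurious $(y_2-y_1)^{-1}$ in your statement.) Carrying this coarse bound into Step 4, the problematic term is
\begin{align*}
\frac{\n{\psi}_{L^{\infty}(B_{1,2,\delta})}\n{\chi^c w}_{L^1}}{(y_2-y_1)\delta}.
\end{align*}
Any Young split $ab \le \frac{\epsilon}{2}a^2 + \frac{1}{2\epsilon}b^2$ with $a=\n{\chi^c w}_{L^1}/\sqrt{y_2-y_1}$ forces $b=\n{\psi}_{L^\infty}/(\sqrt{y_2-y_1}\,\delta)$, so the $\n{\psi}^2$ piece lands at weight $1/((y_2-y_1)\delta^2)$. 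Comparing with the advertised weight $1/((y_2-y_1)^2\delta)$, the ratio is $(y_2-y_1)/\delta$, and the hypothesis $\delta\le y_2 = (y_2-y_1)/2$ gives $(y_2-y_1)/\delta\ge 2$, unbounded as $\delta\to 0$. So you can never simultaneously take $\epsilon$ small (to absorb) and hit the target weight; the ``scale separation $\delta\le y_2-y_1$'' you invoke pushes in exactly the \emph{wrong} direction --- to make the Young parameter small at the target weight you would need $\delta\gtrsim y_2-y_1$.

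The fix is to defer the coarsening. Keep the logarithmic factor and apply Young first: from
\begin{align*}
\frac{\ln(1+\tfrac{2y_2}{\delta})}{(y_2-y_1)^2}\,\n{\psi}_{L^\infty}\n{\chi^c w}_{L^1}
\le \frac{1}{4}\frac{\n{\chi^c w}_{L^1}^2}{y_2-y_1} + \frac{\ln^2(1+\tfrac{2y_2}{\delta})}{(y_2-y_1)^3}\n{\psi}_{L^\infty}^2,
\end{align*}
absorb the first term, and only then use $\delta\,\ln^2(1+2y_2/\delta)\lesssim y_2-y_1$ (true because $u\mapsto u\ln^2(1+2/u)$ is bounded on $(0,1]$) to conclude $\ln^2/(y_2-y_1)^3\lesssim 1/((y_2-y_1)^2\delta)$. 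The single power of $\ln$ can tolerate the $1/\delta$ coarsening, and the square of $\ln$ still fits under $1/\delta$; but the product $\n{\psi}\n{\chi^c w}$ cannot, because squaring $1/\delta$ (after Young) overshoots. This is precisely the distinction your Step~2 discards.
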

\begin{proof}
First, we have
\begin{align*}
\frac{\n{\chi^{c}w}_{L^{1}}^{2}}{y_{2}-y_{1}}
\leq& \frac{\n{\rho_{\delta}^{c}w}^2_{L^{1}}}{y_{2}-y_{1}}+ \frac{\delta^{2}\n{w}_{L^{\infty}}^{2}}{y_{2}-y_{1}}\leq
\frac{\n{\rho_{\delta}^{c}w}^2_{L^{1}}}{y_{2}-y_{1}}+\mathcal{E}(w).
\end{align*}
By exterior energy estimate \eqref{equ:ex,estimate,w} and \eqref{equ:basic estimate,U,L1} in Lemma \ref{lemma:basic Lp estimate,U}, we get
\begin{align*}
 \n{\rho_{\delta}^{c}w}^2_{L^{1}}
   \leq& \lrc{\int\frac{U''\rho_{\delta}^{c}}{U-\lambda}dy}\lrc{\int \frac{\rho_{\delta}^{c}\lrs{U-\lambda}|w|^{2}}{U''}dy}\\
   \lesssim& \frac{\mathrm{ln}(1+\frac{2y_{2}}{\delta})}{y_{2}-y_{1}} \lrs{ \frac{|\lra{F,\frac{\rho_{\delta}^{c}w}{U''}}|}{|\alpha|}+\frac{\nu\|\partial_yw\|_{L^2}\n{w}_{L^\infty}}{|\alpha|\delta^{\frac{1}{2}}}
        +\n{\psi}_{L^{\infty}(B_{1,2,\delta})}\n{\chi^{c}w}_{L^{1}}},
\end{align*}
which yields that
\begin{align*}
\frac{\n{\chi^{c}w}^2_{L^{1}}}{y_{2}-y_{1}}
\lesssim&
 \frac{\mathrm{ln}(1+\frac{2y_{2}}{\delta})}{(y_{2}-y_{1})^{2}} \lrs{ \frac{|\lra{F,\frac{\rho_{\delta}^{c}w}{U''}}|}{|\alpha|}+\frac{\nu\|\partial_yw\|_{L^2}\n{w}_{L^\infty}}{|\alpha|\delta^{\frac{1}{2}}}
        +\frac{\mathrm{ln}(1+\frac{2y_{2}}{\delta})}{y_{2}-y_{1}}\n{\psi}_{L^{\infty}(B_{1,2,\delta})}^{2}}+
        \mathcal{E}(w)\\
        \lesssim &
        \frac{|\lra{F,\frac{\rho_{\delta}^{c}w}{U''}}|}{|\alpha|(y_{2}-y_{1})\delta}+\frac{\nu\n{\pa_{y}w}_{L^2}
        \n{w}_{L^\infty}}{|\alpha|(y_{2}-y_{1})\delta^{\frac{3}{2}}}
        +\frac{\n{\psi}_{L^{\infty}(B_{1,2,\delta})}^{2}}{(y_{2}-y_{1})^{2}\delta}+
        \mathcal{E}(w)\\
        \lesssim &\frac{|\lra{F,\frac{\rho_{\delta}^{c}w}{U''}}|}{|\alpha|(y_{2}-y_{1})\delta}
        +
        \mathcal{E}(w)+\frac{\n{\psi}_{L^{\infty}(B_{1,2,\delta})}^{2}}{(y_{2}-y_{1})^{2}\delta}.
\end{align*}
Hence, we complete the proof of \eqref{equ:estimate,w,L1,ex},
\end{proof}

\begin{lemma}[Local $L^{\infty}$ estimate on $\psi$]\label{Local-L-inf-est-psi}
Let $\delta\in (0, y_{2}]$. We have
\begin{align}\label{equ:estimate,vp,B,infty}
\frac{\n{\psi}_{L^{\infty}(B_{1,2,\delta})}^{2}}{(y_{2}-y_{1})^{2}\delta}\leq \frac{\min\lr{\n{F}_{L^{2}}^{2},\delta^{-2}\n{F}_{H^{-1}}^{2}}}{|\alpha|^2(y_{2}-y_{1})^2\delta^2}+
        \frac{\nu^{2}|\al|^{2}\n{w}_{L^{\infty}}^{2}}{(y_{2}-y_{1})^{2}\delta}+\mathcal{E}(w).
\end{align}
Moreover, for $F\in L^{2}$, we have a more stronger estimate
\begin{align}\label{equ:estimate,vp,B,infty,L2}
\frac{\n{\psi}_{L^{\infty}(B_{1,2,\delta})}^{2}}{(y_{2}-y_{1})^{2}\delta}\lesssim
\frac{\n{F}_{L^{2}}^{2}}{|\alpha|^2(y_{2}-y_{1})^2\delta^2}
        +\frac{\nu^{2}\n{w}_{L^{2}}^{2}}{|\alpha|^2(y_{2}-y_{1})^2\delta^2}+\mathcal{E}(w).
\end{align}
\end{lemma}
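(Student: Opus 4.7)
The plan is to bound $\psi$ pointwise at each $y_0\in B_{1,2,\delta}$ by testing the Orr--Sommerfeld equation \eqref{equ:OS,navier-slip} against a localized bump. Since $U''$ is smooth and bounded below by a positive constant near $y_i$ (where $i\in\{1,2\}$ is chosen so that $y_0\in B(y_i,\delta)$), I pick a smooth bump $\eta_{y_0,\delta}$ with unit mass and support of radius $O(\delta)$ inside $B(y_i,2\delta)$, so that $\eta_{y_0,\delta}/U''$ enjoys the scalings $\|\eta_{y_0,\delta}/U''\|_{L^1}\sim 1$, $\|\eta_{y_0,\delta}/U''\|_{L^2}\sim\delta^{-1/2}$, and $\|\eta_{y_0,\delta}/U''\|_{H^1}\sim\delta^{-3/2}$. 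Pairing \eqref{equ:OS,navier-slip} with $\eta_{y_0,\delta}/U''$ and integrating one derivative in the viscous term by parts gives
\begin{equation*}
-i\alpha\langle\psi,\eta_{y_0,\delta}\rangle=\Big\langle F,\tfrac{\eta_{y_0,\delta}}{U''}\Big\rangle+\nu\Big\langle\partial_y w,\partial_y\tfrac{\eta_{y_0,\delta}}{U''}\Big\rangle+\nu\alpha^2\Big\langle w,\tfrac{\eta_{y_0,\delta}}{U''}\Big\rangle-i\alpha\Big\langle\tfrac{(U-\lambda)w}{U''},\eta_{y_0,\delta}\Big\rangle,
\end{equation*}
so that estimating the local average $\langle\psi,\eta_{y_0,\delta}\rangle$ reduces to bounding the four right-hand terms.

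I would estimate these four terms by duality. The $F$-term gives $\min\{\|F\|_{L^2}\delta^{-1/2},\,\|F\|_{H^{-1}}\delta^{-3/2}\}$, accounting for the $\min$ on the right of \eqref{equ:estimate,vp,B,infty}. The viscous-gradient piece gives $\nu\delta^{-3/2}\|\partial_y w\|_{L^2}$, yielding after squaring and inserting the appropriate weight the first $\mathcal E(w)$-piece $\nu^2\|\partial_y w\|_{L^2}^2/[|\alpha|^2(y_2-y_1)^2\delta^4]$. The $\nu\alpha^2$ piece is estimated by $\nu\alpha^2\|w\|_{L^\infty}$ via $L^\infty$--$L^1$ duality, contributing the $\nu^2|\alpha|^2\|w\|_{L^\infty}^2/[(y_2-y_1)^2\delta]$ term of \eqref{equ:estimate,vp,B,infty}. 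For the transport term, Lemma \ref{lemma:proper,symme,flow} yields $|U(y)-\lambda|\lesssim\delta(y_2-y_1)$ on $\operatorname{supp}\eta_{y_0,\delta}\subset B(y_i,2\delta)$ (using $U'(y_i)\sim(y_2-y_1)/2$ for a symmetric flow), hence the bound $|\alpha|\delta(y_2-y_1)\|w\|_{L^\infty}$, producing the remaining $\delta\|w\|_{L^\infty}^2$ piece of $\mathcal E(w)$. For the sharper estimate \eqref{equ:estimate,vp,B,infty,L2} with $F\in L^2$, I would replace the $L^\infty$ bound on $w$ in the $\nu\alpha^2$ piece with an $L^2$ bound, and transfer both derivatives of the viscous operator to the test function in a careful way that preserves the claimed power of $|\alpha|$.

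Combining the four estimates, dividing by $|\alpha|^2$, and converting the resulting bound on the local average $|\langle\psi,\eta_{y_0,\delta}\rangle|$ to the pointwise value $|\psi(y_0)|$ (using continuity of $\psi$ at scale $\delta$, which holds since $w\in L^2$ forces $\psi\in H^2\hookrightarrow C^{0,1/2}$, and a Poincar\'e-type approximation bound), then taking the supremum over $y_0\in B_{1,2,\delta}$, gives \eqref{equ:estimate,vp,B,infty} and \eqref{equ:estimate,vp,B,infty,L2}. I expect the main obstacle to lie in the refined $L^2$ bound: the naive estimate $\nu\alpha^2|\langle w,\eta_{y_0,\delta}/U''\rangle|\lesssim\nu\alpha^2\delta^{-1/2}\|w\|_{L^2}$ loses powers of $|\alpha|$ compared with the claimed $\nu^2\|w\|_{L^2}^2/[|\alpha|^2(y_2-y_1)^2\delta^2]$, so recovering the sharp exponent requires treating $\nu(\partial_y^2-\alpha^2)w$ as a single elliptic term rather than splitting into pieces, likely in combination with a Hardy-type weighting near the critical points $y_i$ where $U=\lambda$.
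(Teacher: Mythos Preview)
Your overall strategy---test the Orr--Sommerfeld equation against a localized bump near $y_i$ and estimate the four resulting terms---is exactly what the paper does, and your bookkeeping of the four pieces is correct.  There are, however, two concrete gaps.

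\textbf{Average-to-pointwise conversion needs second-order accuracy.}  Invoking $\psi\in H^2\hookrightarrow C^{0,1/2}$ and a Poincar\'e bound yields at best $|\psi(y_0)-\langle\psi,\eta_{y_0,\delta}\rangle|\lesssim\delta\,\|\psi'\|_{L^\infty}$ (or even worse, $\delta^{1/2}\|\psi\|_{H^2}$), and after squaring and dividing by $(y_2-y_1)^2\delta$ this contributes a term of size $\delta\|w\|_{L^\infty}^2/(y_2-y_1)^2$, which is \emph{not} dominated by $\mathcal E(w)$ unless $y_2-y_1\gtrsim 1$.  The paper avoids this by choosing a bump $\rho_{a,\delta}$ \emph{symmetric} about $a$ and using the exact two-fold integration-by-parts identity
\[
|f(a)|\lesssim\frac{1}{\delta}\Big|\int f\,\rho_{a,\delta}\,dy\Big|+\delta^2\|f''\|_{L^\infty},
\]
applied to $f=U''\psi$, so that the first-order term cancels and the remainder is $\delta^2\|(U''\psi)''\|_{L^\infty}\lesssim\delta^2\|w\|_{L^\infty}$, which \emph{does} fit into $\mathcal E(w)$.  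Your approach can be salvaged by the same device (symmetric bump plus second-order Taylor), but the argument you wrote is one order short.

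\textbf{The sharper $L^2$ estimate.}  You correctly locate the obstacle in the $\nu\alpha^2\langle w,\eta_{y_0,\delta}/U''\rangle$ term, but your proposed fix (treat $\nu(\partial_y^2-\alpha^2)w$ as a single elliptic block together with a Hardy weight) is not what closes the estimate.  The paper's solution is much simpler: bound this term by $\nu|\alpha|^2\,\delta^{1/2}\|w\|_{L^2}$ and then invoke the already-proved energy estimate \eqref{equ:energy estimate,real part,w,L2}, which gives $\nu|\alpha|^2\|w\|_{L^2}\lesssim\nu\|w\|_{L^2}+\|F\|_{L^2}$.  This single substitution recovers the missing factor of $|\alpha|$ and lands exactly on $\nu^2\|w\|_{L^2}^2/[\,|\alpha|^2(y_2-y_1)^2\delta^2]$ plus an extra $\|F\|_{L^2}^2$ term that is absorbed into the first term on the right of \eqref{equ:estimate,vp,B,infty,L2}.
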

\begin{proof}

For any $a\in B(y_{1},\delta)$, let us introduce a smooth cut-off function $\rho_{a,\delta}(y)\in [0,1]$ with $0\le \rho_{a,\delta}(y)\leq 1$, $\rho_{a,\delta}(y+a)=\rho_{a,\delta}(a-y)$ and denoted by
\begin{equation}\label{def-rho3}
    \rho_{a,\delta}(y)=
    \left\{
    \begin{aligned}
          &1,\quad & y\in [a-\frac{\delta}{2}, a+\frac{\delta}{2}],\\
          &0,\quad &y\in[-1,1] \setminus (a-\delta,a+\delta),\\
          &\mathrm{smooth},\quad & \mathrm{others}.
    \end{aligned}\right.
\end{equation}
It is easy to see that
\begin{align}\label{equ:estimate,rho,a,delta}
   \n{\rho_{a,\delta}}_{L^{2}}\lesssim \delta^{\frac{1}{2}},\quad \|\rho_{a,\delta}\|_{L^\infty}\leq 1,\qquad \|\rho_{a,\delta}'\|_{L^1}\lesssim 1,\qquad \|\rho_{a,\delta}'\|_{L^2}\lesssim \delta^{-\f12},
\end{align}
We assume $g(a+y)=g(a-y)$, and set
\begin{align*}
G_{1}(y)=\int_{y}^{a+b}\int_{z_{1}}^{a+b}g(z_{2})dz_{2}dz_{1},\quad G_{2}(y)=\int_{a-b}^{y}\int_{a-b}^{z_{1}}g(z_{2})dz_{2}dz_{1}.
\end{align*}
By integration by parts, we have
\begin{align*}
\int_{a-b}^{a+b}f(y)g(y)dy =f(a)\int_{a-b}^{a+b}g(y)dy+\int_{a}^{a+b}f''(y)G_{1}(y)dy+\int_{a-b}^{a}f''(y)G_{2}(y)dy.
\end{align*}
By taking $b=\delta$, $g(y)=\rho_{a,\delta}(y)$, we obtain
\begin{align}\label{equ:estimate,vp,B,infty,basic inequality}
|f(a)|\lesssim \frac{1}{b}\bbabs{\int_{a-\delta}^{a+\delta}f(y)\rho_{a,\delta}(y)dy}+\delta^{2}\n{f''}_{L^{\infty}}.
\end{align}
Then, recalling that
$$-\nu(\partial^2_y-\alpha^2)w+i\alpha(U-\lambda)w-i\alpha U''\psi=F,$$ we take $f(y)=U''(y)\psi(y)$ in \eqref{equ:estimate,vp,B,infty,basic inequality} to obtain
\begin{align}\label{equ:estimate,vp,B,infty,single}
         |U''(a)\psi(a)|
         \leq&\frac{1}{\delta}\Bigg|\int_{a-\delta}^{a+\delta}U''\psi\rho_{a,\delta}(y)dy\Bigg|+
         \delta^2\|(U''\psi)''\|_{L^\infty}\notag\\
\leq&\frac{1}{|\al|\delta}\bbabs{\lra{F+\nu(\pa_{y}^{2}-\al^{2})w,\rho_{a,\delta}}}+
\frac{1}{\delta}\babs{\lra{(U-\la)w,\rho_{a,\delta}}}+\delta^2\n{w}_{L^\infty},
\end{align}
where in the last inequality we have used that $\delta^2\|(U''\psi)''\|_{L^\infty}\lesssim \delta^2\n{w}_{L^\infty}$.
Next, we deal with other terms on the right hand side of \eqref{equ:estimate,vp,B,infty,single}. By \eqref{equ:estimate,rho,a,delta} and energy estimate \eqref{equ:energy estimate,real part,w,L2}, we have
\begin{align*}
|\lra{F,\rho_{a,\delta}}|\lesssim& \delta^{\frac{1}{2}}\min\lr{\n{F}_{L^{2}},\delta^{-1}\n{F}_{H^{-1}}},\\
\nu|\lra{\pa_{y}^{2}w,\rho_{a,\delta}}|\lesssim& \nu\delta^{-\frac{1}{2}}\n{\pa_yw}_{L^{2}},\\
\nu|\al|^{2}|\lra{w,\rho_{a,\delta}}|\lesssim &\delta^{\frac{1}{2}}\nu|\al|^{2}\n{w}_{L^{2}}\lesssim \delta^{\frac{1}{2}}\nu \n{w}_{L^{2}}+\delta^{\frac{1}{2}}\n{F}_{L^{2}},\\
\nu|\al|^{2}|\lra{w,\rho_{a,\delta}}|\lesssim& \delta \nu|\al|^{2}\n{w}_{L^{\infty}},\\
|\lra{(U-\la)w,\rho_{a,\delta}}|\lesssim& (y_{2}-y_{1})\delta^{2}\n{w}_{L^{\infty}},
\end{align*}
where in the last inequality we have used that
 $$1_{B(a,\delta)}(y)|U(y)-\la|\lesssim (y_{2}-y_{1})\delta.$$

To sum up, using that $U''>0$ and $\delta\leq y_{2}$, we have
\begin{align*}
\frac{\n{\psi}_{L^{\infty}(B_{1,2,\delta})}^{2}}{(y_{2}-y_{1})^{2}\delta}\leq \frac{\min\lr{\n{F}_{L^{2}}^{2},\delta^{-2}\n{F}_{H^{-1}}^{2}}}{|\alpha|^2(y_{2}-y_{1})^2\delta^2}+
        \frac{\nu^{2}|\al|^{2}\n{w}_{L^{\infty}}^{2}}{(y_{2}-y_{1})^{2}\delta}+\mathcal{E}(w),
\end{align*}
and
\begin{align*}
\frac{\n{\psi}_{L^{\infty}(B_{1,2,\delta})}^{2}}{(y_{2}-y_{1})^{2}\delta}\lesssim
\frac{\n{F}_{L^{2}}^{2}}{|\alpha|^2(y_{2}-y_{1})^2\delta^2}
        +\frac{\nu^{2}\n{w}_{L^{2}}^{2}}{|\alpha|^2(y_{2}-y_{1})^2\delta^2}+\mathcal{E}(w).
\end{align*}
Hence, we complete the proof of \eqref{equ:estimate,vp,B,infty} and \eqref{equ:estimate,vp,B,infty,L2}.
\end{proof}

Now, we are able to provide the exterior $L^{2}$ estimate on $w$.
\begin{lemma}[Exterior $L^{2}$ estimate on $w$]\label{lemma:ex,l2,w}
Let $\delta\in (0, y_{2}]$.
We have
\begin{equation}\label{equ:ex,l2,w}
\n{w}_{L^{2}\lrs{(-1,1)\setminus(y_{1}, y_{2})}}^{2} \lesssim \mathcal{E}^{ex}(w),
\end{equation}
where
\begin{align}\label{equ:E,ex,full}
\mathcal{E}^{ex}(w)=&\frac{\operatorname{Im}|\lra{F,\frac{\rho_{\delta}^{c} w}{U-\la}}|}{|\al|}+ \frac{|\lra{F,\frac{\rho_{\delta}^{c}w}{U''}}|}{|\alpha|(y_{2}-y_{1})\delta}
+\frac{\min\lr{\n{F}_{L^{2}}^{2},\delta^{-2}\n{F}_{H^{-1}}^{2}}}{|\alpha|^2(y_{2}-y_{1})^2\delta^2}\\
&+\frac{\nu^{2}|\al|^{2}\n{w}_{L^{\infty}}^{2}}{(y_{2}-y_{1})^{2}\delta}+\frac{\nu^{2}\n{w}_{L^{2}}^{2}}{|\alpha|^2(y_{2}-y_{1})^2\delta^2}+\mathcal{E}(w).\notag
\end{align}

\end{lemma}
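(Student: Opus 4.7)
The plan is to chain together the four exterior estimates already established, starting from Lemma \ref{lemma:exter,estimate,w,psi} and propagating upward through the auxiliary lemmas; no new integration-by-parts identity is needed. First I would pass from $\rho_{\delta}^{c}$ to $\chi^{c}$: since $\chi^{c}-\rho_{\delta}^{c}$ is supported in the two transition strips of width $\lesssim \delta$ around $y_{1}$ and $y_{2}$, one has the elementary bound
\begin{align*}
\n{w}_{L^{2}((-1,1)\setminus(y_{1},y_{2}))}^{2}
=\int_{-1}^{1}\chi^{c}|w|^{2}\,dy
\leq \int_{-1}^{1}\rho_{\delta}^{c}|w|^{2}\,dy+C\delta\n{w}_{L^{\infty}}^{2},
\end{align*}
and the extra piece $\delta\n{w}_{L^{\infty}}^{2}$ is one of the two summands defining $\mathcal{E}(w)$ in \eqref{equ:E,w,effective}.

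Next I apply the exterior energy estimate \eqref{equ:ex,estimate,w,L2} to $\int \rho_{\delta}^{c}|w|^{2}\,dy$. This contributes the first term of $\mathcal{E}^{ex}(w)$ directly, plus the mixed term $\nu\n{\partial_{y}w}_{L^{2}}\n{w}_{L^{\infty}}/(|\al|\delta^{3/2}(y_{2}-y_{1}))$, and the weighted $\psi$-term $\int \rho_{\delta}^{c}|\psi|^{2}/(U-\la)^{2}\,dy$. The mixed term is dispatched by Young's inequality as
\begin{align*}
\frac{\nu\n{\partial_{y}w}_{L^{2}}\n{w}_{L^{\infty}}}{|\al|\delta^{3/2}(y_{2}-y_{1})}
\leq \frac{\nu^{2}\n{\partial_{y}w}_{L^{2}}^{2}}{|\al|^{2}(y_{2}-y_{1})^{2}\delta^{4}}+\delta\n{w}_{L^{\infty}}^{2},
\end{align*}
which is precisely $\mathcal{E}(w)$. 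The weighted $\psi$-term is handled by Lemma \ref{lemma:ex,l2 weighted,psi}, producing the second summand of $\mathcal{E}^{ex}(w)$, another $\mathcal{E}(w)$, and two remainders: $\n{\chi^{c}w}_{L^{1}}^{2}/(y_{2}-y_{1})$ and $\n{\psi}_{L^{\infty}(B_{1,2,\delta})}^{2}/((y_{2}-y_{1})^{2}\delta)$.

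I then close both remainders. For the $L^{1}$ remainder I invoke \eqref{equ:estimate,w,L1,ex}, which trades it for a combination of terms already on the target list plus another copy of the local-$L^{\infty}$ term, so nothing new appears. For the local $L^{\infty}$ remainder I use Lemma \ref{Local-L-inf-est-psi}: applying \eqref{equ:estimate,vp,B,infty} yields the $\min\{\n{F}_{L^{2}}^{2},\delta^{-2}\n{F}_{H^{-1}}^{2}\}$-summand together with the $\nu^{2}|\al|^{2}\n{w}_{L^{\infty}}^{2}/((y_{2}-y_{1})^{2}\delta)$-summand of $\mathcal{E}^{ex}(w)$, while the stronger companion bound \eqref{equ:estimate,vp,B,infty,L2} supplies the remaining $\nu^{2}\n{w}_{L^{2}}^{2}/(|\al|^{2}(y_{2}-y_{1})^{2}\delta^{2})$-summand when $F\in L^{2}$. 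Taking the more favorable of the two estimates in each situation and collecting all terms reproduces $\mathcal{E}^{ex}(w)$ as written in \eqref{equ:E,ex,full}.

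The whole argument is essentially bookkeeping once the previous lemmas are in place, so I do not expect a genuine obstacle; the only delicate point is the Young splitting of the cross term, where the weights $\delta^{3/2}$ and $(y_{2}-y_{1})$ must be distributed exactly so that the output matches both components of $\mathcal{E}(w)$. All other terms propagate linearly through the chain $\eqref{equ:ex,estimate,w,L2}\to\eqref{equ:ex,l2 weighted,psi}\to\eqref{equ:estimate,w,L1,ex}\to\eqref{equ:estimate,vp,B,infty}/\eqref{equ:estimate,vp,B,infty,L2}$ and land in the prescribed slots of $\mathcal{E}^{ex}(w)$.
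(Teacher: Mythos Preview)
Your proposal is correct and follows essentially the same route as the paper: pass from $\chi^{c}$ to $\rho_{\delta}^{c}$ at the cost of $\delta\n{w}_{L^{\infty}}^{2}\le\mathcal{E}(w)$, apply \eqref{equ:ex,estimate,w,L2}, absorb the cross term into $\mathcal{E}(w)$ via Young, feed the weighted $\psi$-integral through \eqref{equ:ex,l2 weighted,psi}, and close with \eqref{equ:estimate,w,L1,ex} and \eqref{equ:estimate,vp,B,infty}. One small simplification: you do not actually need \eqref{equ:estimate,vp,B,infty,L2} here, since $\mathcal{E}^{ex}(w)$ is a sum of nonnegative terms and \eqref{equ:estimate,vp,B,infty} alone already bounds $\n{\psi}_{L^{\infty}(B_{1,2,\delta})}^{2}/((y_{2}-y_{1})^{2}\delta)$ by a subset of those terms; the extra summand $\nu^{2}\n{w}_{L^{2}}^{2}/(|\al|^{2}(y_{2}-y_{1})^{2}\delta^{2})$ in $\mathcal{E}^{ex}(w)$ is included for later use rather than being needed in this lemma.
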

\begin{proof}
First, we have
\begin{align*}
\n{w}_{L^{2}\lrs{(-1,1)\setminus(y_{1}, y_{2})}}^{2}\leq& \int_{-1}^{1}\rho_{\delta}^{c}|w|^2dy+\delta\n{w}_{L^{\infty}}^{2}.
\end{align*}
Then by \eqref{equ:ex,estimate,w,L2} and \eqref{equ:ex,l2 weighted,psi}, we get
\begin{align*}
 \int_{-1}^{1}\rho_{\delta}^{c}|w|^{2}dy
\leq &\frac{|\operatorname{Im}\lra{F,\frac{\rho_{\delta}^{c} w}{U-\la}}|}{|\al|}+
        \frac{\nu\|\partial_y w\|_{L^2}\n{w}_{L^\infty}}{|\alpha|\delta^{\f32}(y_{2}-y_{1})} +\int_{-1}^{1}\frac{|\psi|^2\rho_{\delta}^{c}}{(U-\lambda)^2}dy\\
        \lesssim&\frac{|\operatorname{Im}\lra{F,\frac{\rho_{\delta}^{c} w}{U-\la}}|}{|\al|}+\mathcal{E}(w)+\int_{-1}^{1}\frac{|\psi|^2\rho_{\delta}^{c}}{(U-\lambda)^2}dy\\
\lesssim& \frac{|\operatorname{Im}\lra{F,\frac{\rho_{\delta}^{c} w}{U-\la}}|}{|\al|}+ \frac{|\lra{F,\frac{\rho_{\delta}^{c}w}{U''}}|}{|\alpha|(y_{2}-y_{1})\delta}+
\mathcal{E}(w)
        +\frac{\n{\psi}_{L^{\infty}(B_{1,2,\delta})}^{2}}{(y_{2}-y_{1})^{2}\delta}+\frac{\n{\chi^{c}w}_{L^{1}}^{2}}{y_{2}-y_{1}}.
\end{align*}
Using estimates \eqref{equ:estimate,vp,B,infty} on $\n{\psi}_{L^{\infty}(B_{1,2,\delta})}$ and \eqref{equ:estimate,w,L1,ex} on $\n{\chi^{c}w}_{L^{1}}$, we arrive at
\begin{align*}
 \int_{-1}^{1}\rho_{\delta}^{c}|w|^{2}dy\lesssim& \frac{|\operatorname{Im}\lra{F,\frac{\rho_{\delta}^{c} w}{U-\la}}|}{|\al|}+ \frac{|\lra{F,\frac{\rho_{\delta}^{c}w}{U''}}|}{|\alpha|(y_{2}-y_{1})\delta}
+\frac{\min\lr{\n{F}_{L^{2}}^{2},\delta^{-2}\n{F}_{H^{-1}}^{2}}}{|\alpha|^2(y_{2}-y_{1})^2\delta^2}\\
&+\frac{\nu^{2}|\al|^{2}\n{w}_{L^{\infty}}^{2}}{(y_{2}-y_{1})^{2}\delta}+\mathcal{E}(w).
\end{align*}
Hence, the proof of \eqref{equ:ex,l2,w} is done.
\end{proof}

\subsubsection{Interior energy estimates}

In this subsection, we focus on estimates on the interior domain $(y_{1},y_{2})$ with $\la\in [U(0),U(1)]$. First, let us introduce the smooth cutoff function
\begin{equation}\label{equ:smooth cutoff,in,delta}
    \rho_{\delta}(y)=
    \left\{
    \begin{aligned}
          &1,\quad & y\in(y_{1}-\delta, y_{2}-\delta),\\
          &0,\quad &y\in[-1,1] \setminus (y_{1}-\frac{\delta}{2},y_{2}-\frac{\delta}{2}),\\
          &\mathrm{smooth},\quad & \mathrm{others},
    \end{aligned}\right.
\end{equation}
where $0<\delta\leq y_{2}\leq 1$ and $0\leq \rho_{\delta}(y)\leq 1. $

The following lemma provides interior energy estimates.
\begin{lemma}[Interior energy estimates]\label{lemma:in,estimate,w,psi}
Let $\delta\in (0, y_{2}]$ and $(w,\psi)$ be the solution to \eqref{equ:OS,navier-slip}.
We have
\begin{align}\label{equ:in,estimate,w,L2}
        \int_{-1}^{1}\rho_{\delta}|w|^2dy\leq &\frac{|\operatorname{Im}\lra{F,\frac{\rho_{\delta} w}{U-\la}}|}{|\al|}+
        \frac{\nu\|\partial_y w\|_{L^2}\n{w}
        _{L^\infty}}{|\alpha|\delta^{\f32}(y_{2}-y_{1})} +\int_{-1}^{1}\frac{|\psi|^2\rho_{\delta}}{(U-\lambda)^2}dy
\end{align}
and
\begin{align}\label{equ:in,estimate,w}
        &\int_{y_1}^{y_2}(\la-U)^2\left|\left(\frac{\psi_{1}}{\la-U}\right)^{\prime}\right|^2 d y+|\al|^{2}\int_{y_1}^{y_2} |\psi_{1}|^2 d y+(y_{2}-y_{1})^{2}\int_{y_{1}}^{y_{2}}|\pa_y\psi_{1}|^{2}dy\\
        \lesssim & \frac{|\lra{F,\frac{\rho_{\delta}w}{U''}}|}{|\alpha|}+\frac{\nu\|\partial_yw\|_{L^2}\n{w}_{L^\infty}}{|\alpha|\delta^{\frac{1}{2}}}
        +(y_{2}-y_{1})\n{\psi}_{L^{\infty}(B_{1,2,\delta})}\n{w}_{L^{\infty}}+(y_{2}-y_{1})\delta^{2}\n{w}_{L^{\infty}}^{2}.\notag
\end{align}
In particular, we have
\begin{align}\label{equ:in,estimate,w,psi}
&\frac{1}{(y_{2}-y_{1})^{2}}\int_{y_1}^{y_2}(\la-U)^2\left|\left(\frac{\psi_{1}}{\la-U}\right)^{\prime}\right|^2 d y+\frac{|\al|^{2}}{(y_{2}-y_{1})^{2}}\int_{y_1}^{y_2} |\psi_{1}|^2 d y\\
\lesssim& \frac{|\lra{F,\frac{\rho_{\delta}w}{U''}}|}{|\alpha|(y_{2}-y_{1})^{2}}+\mathcal{E}(w)
+\frac{\|\psi\|_{L^\infty(B_{1,2,\delta})}^{2}}{(y_{2}-y_{1})^2\delta},\notag
\end{align}
where $\mathcal{E}(w)$ is defined in \eqref{equ:E,w,effective}.
\end{lemma}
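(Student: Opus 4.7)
The plan is to mirror the proof of the exterior estimates in Lemma \ref{lemma:exter,estimate,w,psi}, replacing the exterior cutoff $\rho_\delta^{c}$ by the interior cutoff $\rho_\delta$, and then feed the resulting boundary terms $\bblra{(\lambda-U)w/U'',\chi w}$ and $\lra{\psi_1,\chi w}$ into the coercive estimates of Lemma \ref{lemma:in,coercive estimate} to produce the weighted Sobolev bounds on $\psi_1$ that appear on the left of \eqref{equ:in,estimate,w} and \eqref{equ:in,estimate,w,psi}.

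Estimate \eqref{equ:in,estimate,w,L2} I would obtain exactly as \eqref{equ:ex,estimate,w,L2}: test the OS equation \eqref{equ:OS,navier-slip} against $\rho_\delta w/(U-\lambda)$ and take the imaginary part. Because $\rho_\delta$ is supported at distance $\gtrsim \delta$ from the critical points $y_1,y_2$, the weight $\rho_\delta/(U-\lambda)$ and its derivative are controlled by the $L^p$ bounds of Lemma \ref{lemma:basic Lp estimate,U}, producing the viscous error $\nu\|\partial_y w\|_{L^2}\|w\|_{L^\infty}/(|\alpha|\delta^{3/2}(y_2-y_1))$; the cross term $\alpha\operatorname{Re}\lra{U''\psi,\rho_\delta w/(U-\lambda)}$ is handled by Cauchy-Schwarz, depositing $\int|\psi|^2\rho_\delta/(U-\lambda)^2$ on the right and a fraction of $|\alpha|\int \rho_\delta |w|^2$ absorbable on the left.

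For the core bound \eqref{equ:in,estimate,w}, I would test \eqref{equ:OS,navier-slip} against $\rho_\delta w/U''$ and take imaginary parts to get
\begin{align*}
\operatorname{Im}\lra{F,\rho_\delta w/U''}=\alpha\!\int\! \rho_\delta (U-\lambda)|w|^2/U''\,dy-\alpha\operatorname{Re}\lra{\psi,\rho_\delta w}+\nu\operatorname{Im}\lra{\partial_y w,w(\rho_\delta/U'')'}.
\end{align*}
On the set where $\rho_\delta$ differs from $\chi=1_{(y_1,y_2)}$, which has measure $\lesssim \delta$ and lies within distance $\delta$ of $y_1,y_2$, Lemma \ref{lemma:proper,symme,flow} gives $|U-\lambda|\lesssim(y_2-y_1)\delta$, so the first term differs from $\bblra{(U-\lambda)w/U'',\chi w}$ by at most $(y_2-y_1)\delta^2\|w\|_{L^\infty}^2$. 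Decomposing $\psi=\psi_1+\psi_2$ and invoking the local $L^\infty$ bound \eqref{equ:local Linfity estimate}, together with the explicit form \eqref{form-varphi2} of $\psi_2$, the difference $|\lra{\psi,\rho_\delta w}-\lra{\psi_1,\chi w}|$ is bounded by $(y_2-y_1)\|\psi\|_{L^\infty(B_{1,2,\delta})}\|w\|_{L^\infty}$, while the viscous commutator supported on $\{\rho_\delta'\neq 0\}$ contributes $\nu\|\partial_y w\|_{L^2}\|w\|_{L^\infty}\delta^{-1/2}$. This yields the preliminary estimate
\begin{align*}
\bblra{(\lambda-U)w/U'',\chi w}+\lra{\psi_1,\chi w}\lesssim \text{right-hand side of }\eqref{equ:in,estimate,w}.
\end{align*}
Now \eqref{equ:in,coercive,L2} directly bounds the first two summands on the left of \eqref{equ:in,estimate,w}; for the third summand I would use identity \eqref{estE2-varphi-1} in the form $\|\partial_y\psi_1\|_{L^2(y_1,y_2)}^2\leq -\lra{\psi_1,\chi w}$ and then apply \eqref{equ:in,coercive,H1} to control $(y_2-y_1)^2(-\lra{\psi_1,\chi w})$ by the same right-hand side. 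Finally, \eqref{equ:in,estimate,w,psi} follows by dividing \eqref{equ:in,estimate,w} by $(y_2-y_1)^2$ and applying Young's inequality to absorb the viscous and $\psi$-errors into the effective error $\mathcal{E}(w)+\|\psi\|_{L^\infty(B_{1,2,\delta})}^2/((y_2-y_1)^2\delta)$.

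The main technical obstacle is matching the indefinite-sign quantity $\bblra{(\lambda-U)w/U'',\chi w}+\lra{\psi_1,\chi w}$ against the three positive summands on the left of \eqref{equ:in,estimate,w}; in particular, recovering the extra $(y_2-y_1)^2\|\partial_y\psi_1\|_{L^2}^2$ factor requires using \eqref{equ:in,coercive,L2} and \eqref{equ:in,coercive,H1} in tandem, since the former alone does not see the $(y_2-y_1)^2$ prefactor while the latter alone does not yield nonnegative control of $\|\partial_y\psi_1\|_{L^2}^2$.
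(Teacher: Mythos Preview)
Your proposal is correct and follows essentially the same approach as the paper: mirror the exterior argument with $\rho_\delta$ in place of $\rho_\delta^c$ for \eqref{equ:in,estimate,w,L2}, test by $\rho_\delta w/U''$ and compare against $\langle(\lambda-U)w/U'',\chi w\rangle+\langle\psi_1,\chi w\rangle$ before invoking the coercive estimates \eqref{equ:in,coercive,L2}--\eqref{equ:in,coercive,H1} for \eqref{equ:in,estimate,w}, then divide by $(y_2-y_1)^2$ and apply Cauchy--Schwarz/Young for \eqref{equ:in,estimate,w,psi}. Your explicit identification of how \eqref{estE2-varphi-1} and \eqref{equ:in,coercive,H1} combine to yield the $(y_2-y_1)^2\|\partial_y\psi_1\|_{L^2}^2$ term is in fact more detailed than what the paper records.
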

\begin{proof}
The proof of the interior energy estimate \eqref{equ:in,estimate,w,L2} follows the same procedure as that of the exterior energy estimate \eqref{equ:ex,estimate,w,L2}, with the smooth cutoff function $\rho_{\delta}^{c}$ replaced by $\rho_{\delta}$. Similarly, following the proof of exterior energy estimate \eqref{equ:ex,estimate,w}, we also have
\begin{align}\label{equ:in,estimate,w,psi,proof}
        &\bblra{\frac{(\la-U)w}{U''},\chi w}+\lra{\psi_{1},\chi w} \\
        \leq   &\bblra{\frac{(\la-U)w}{U''},\rho_{\delta} w}+\lra{\psi_{1},\chi w} +(y_{2}-y_{1})\delta^{2}\n{w}_{L^{\infty}}^{2}\notag\\
        \lesssim & \frac{|\lra{F,\frac{\rho_{\delta}w}{U''}}|}{|\alpha|}+\frac{\nu\|\partial_yw\|_{L^2}\n{w}_{L^\infty}}{|\alpha|\delta^{\frac{1}{2}}}
        +(y_{2}-y_{1})\n{\psi}_{L^{\infty}(B_{1,2,\delta})}\n{\rho_{\delta}w}_{L^{\infty}}
        +(y_{2}-y_{1})\delta^{2}\n{w}_{L^{\infty}}^{2},\notag
\end{align}
which together with interior coercive estimates \eqref{equ:in,coercive,L2}--\eqref{equ:in,coercive,H1} implies the desired estimate \eqref{equ:in,estimate,w}.
Applying Cauchy-Schwarz inequality to \eqref{equ:in,estimate,w} and using that $\delta\leq y_{2}$, we obtain \eqref{equ:in,estimate,w,psi}.
\end{proof}

From \eqref{equ:in,estimate,w,L2}, to provide interior $L^{2}$ estimate on $w$, we need to estimate
 $\int_{-1}^{1}\frac{|\psi|^2\rho_{\delta}}{(U-\lambda)^2}dy$.

\begin{lemma}[Interior $L^{2}$ weighted estimate on $\psi$]\label{lemma:in,l2 weighted,psi}
Let $\delta\leq 4\theta \leq y_{2}$.
We have
\begin{align}\label{equ:in,l2 weighted,psi}
\int_{-1}^{1}\frac{|\psi|^2\rho_{\delta}}{(U-\lambda)^2}dy\lesssim \frac{|\lra{F,\frac{\rho_{\delta}w}{U''}}|}{|\alpha|(y_{2}-y_{1})^{2}}+\mathcal{E}(w)+ \frac{\|\psi\|_{L^\infty(B_{1,2,\delta})}^{2}}{(y_{2}-y_{1})^2\delta}+\frac{1}{y_{2}-y_{1}}\bbabs{\int_{y_{1}+\theta}^{y_{2}-\theta}\frac{U''\psi_{1}}{\la-U}dy}^{2}.
\end{align}
\end{lemma}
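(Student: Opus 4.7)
My plan is to mirror the proof of the exterior version (Lemma \ref{lemma:ex,l2 weighted,psi}), with the smooth cutoff $\rho_{\delta}^{c}$ replaced by $\rho_{\delta}$, and with one genuinely new ingredient needed to handle the critical layer at $y=0$. Starting from the splitting $\psi=\psi_{1}+\psi_{2}$, I bound the two pieces separately. For the $\psi_{2}$ piece, the local $L^{\infty}$ bound \eqref{equ:local Linfity estimate} together with the weighted $L^{2}$ estimate for $1/(U-\lambda)^{2}$ on the support of $\rho_{\delta}$ (Lemma \ref{lemma:basic Lp estimate,U}) yields a contribution of order $\|\psi\|_{L^{\infty}(B_{1,2,\delta})}^{2}/((y_{2}-y_{1})^{2}\delta)$, exactly as in \eqref{equ:ex,l2 weighted,psi2,delta}.

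For the $\psi_{1}$ piece, the key difficulty is that $\psi_{1}$ does not have a useful boundary value on the support of $\rho_{\delta}$ near the critical layer, so the naive Hardy argument used in the exterior case is insufficient. Instead, I invoke the Hardy-type inequality \eqref{equ:hardy,est,psi1,U} from Lemma \ref{lemma:hardy-type}:
\[
\int_{-1}^{1}\frac{|\psi_{1}|^{2}\rho_{\delta}}{(U-\lambda)^{2}}\,dy
\lesssim \frac{1}{(y_{2}-y_{1})^{2}}\int_{y_{1}}^{y_{2}}(U-\lambda)^{2}\lrv{\lrs{\frac{\psi_{1}}{U-\lambda}}^{\prime}}^{2} dy + \frac{|\psi_{1}(0)|^{2}}{(y_{2}-y_{1})^{3}}.
\]
The first integral on the right is controlled by the interior energy estimate \eqref{equ:in,estimate,w,psi} (derived via the coercive estimates of Lemma \ref{lemma:in,coercive estimate}), producing precisely the $F$-term, $\mathcal{E}(w)$, and $\|\psi\|_{L^{\infty}(B_{1,2,\delta})}^{2}/((y_{2}-y_{1})^{2}\delta)$ contributions of the claim.

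It remains to dispatch the single-point term $|\psi_{1}(0)|^{2}/(y_{2}-y_{1})^{3}$. Here I apply \eqref{equ:single point estimate,in} of Lemma \ref{lemma:single point estimate,in}, whose hypothesis $\theta \le (y_{2}-y_{1})/4$ is satisfied since $4\theta \le y_{2} = (y_{2}-y_{1})/2$. Its first piece $\frac{1}{(y_{2}-y_{1})^{2}}(\bblra{(\lambda-U)w/U'',\chi w}+\lra{\psi_{1},\chi w})$ is once more absorbed by the interior energy estimate \eqref{equ:in,estimate,w,psi}, while its second piece reproduces verbatim the residual term $\frac{1}{y_{2}-y_{1}}\babs{\int_{y_{1}+\theta}^{y_{2}-\theta}U''\psi_{1}/(\lambda-U)\,dy}^{2}$ on the right-hand side of the target inequality. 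Collecting the three contributions completes the proof.

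The main obstacle is the treatment of the point value $|\psi_{1}(0)|^{2}/(y_{2}-y_{1})^{3}$, which is the genuinely new feature compared with the exterior analysis. Because it cannot be absorbed by the coercive $H^{1}$-type estimate for $\psi_{1}$, it must be processed through the delicate integration-by-parts identity underlying Lemma \ref{lemma:single point estimate,in}; the residual integral of $U''\psi_{1}/(\lambda-U)$ on the truncated interval is intrinsic and, as one sees later, is handled globally via a limiting absorption argument for the Rayleigh equation.
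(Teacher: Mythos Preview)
Your proposal is correct and follows essentially the same route as the paper: split $\psi=\psi_1+\psi_2$, control the $\psi_2$ piece by the local $L^\infty$ bound \eqref{equ:local Linfity estimate} together with \eqref{equ:basic estimate,U,L2}, and for the $\psi_1$ piece combine the Hardy-type inequality \eqref{equ:hardy,est,psi1,U} with the single-point estimate \eqref{equ:single point estimate,in}, then feed the resulting coercive quantity back into the interior energy machinery of Lemma~\ref{lemma:in,estimate,w,psi}. One small remark: when you say the coercive term $\frac{1}{(y_{2}-y_{1})^{2}}\big(\langle(\lambda-U)w/U'',\chi w\rangle+\langle\psi_{1},\chi w\rangle\big)$ is ``absorbed by \eqref{equ:in,estimate,w,psi}'', the precise reference is the intermediate inequality \eqref{equ:in,estimate,w,psi,proof} in the proof of that lemma (or equivalently \eqref{equ:in,estimate,w}), since \eqref{equ:in,estimate,w,psi} itself only bounds the weighted-derivative quantity, which by \eqref{equ:in,coercive,L2} sits \emph{below} the coercive form; the right-hand sides agree after Cauchy--Schwarz, so this is only a labeling issue.
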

\begin{proof}
Due to $\psi=\psi_{1}+\psi_{2}$, we have
\begin{align*}
\int_{-1}^{1}\frac{|\psi|^2\rho_{\delta}}{(U-\lambda)^2} dy\lesssim & \int_{-1}^{1}\frac{|\psi_{1}|^2\rho_{\delta}}{(U-\lambda)^2} dy+
\int_{-1}^{1}\frac{|\psi_{2}|^2\rho_{\delta}}{(U-\lambda)^2} dy\\
\lesssim&\int_{y_{1}}^{y_{2}}\frac{|\psi_{1}|^2}{(U-\lambda)^2} dy+\frac{\|\psi\|_{L^\infty(B_{1,2,\delta})}^{2}}{(y_{2}-y_{1})^2\delta},
\end{align*}
where in the last inequality we have used the local $L^{\infty}$ estimate \eqref{equ:local Linfity estimate} on $\psi_{2}$ and \eqref{equ:basic estimate,U,L2} in Lemma \ref{lemma:basic Lp estimate,U}.

By Hardy-type inequality \eqref{equ:hardy,est,psi1,U} in Lemma \ref{lemma:hardy-type}, single point estimate \eqref{equ:single point estimate,in} in Lemma \ref{lemma:single point estimate,in}, and coercive estimate \eqref{equ:in,coercive,L2}, we derive
\begin{align*}
\int_{y_{1}}^{y_{2}}\frac{|\psi_{1}|^{2}}{|U-\la|^{2}}dy
\lesssim& \frac{1}{\lrs{y_{2}-y_{1}}^{2}}\int_{y_{1}}^{y_{2}}
\bbabs{\lrs{\frac{\psi_{1}}{\la-U}}^{'}}^{2}\lrs{\la-U}^{2}dy
+\frac{|\psi_{1}(0)|^{2}}{(y_{2}-y_{1})^{3}}\\
\lesssim& \frac{1}{(y_{2}-y_{1})^{2}}\lrs{\bblra{\frac{\big(\la-U\big)w}{U''},\chi w}+\lra{\psi_{1},\chi w}}
+\frac{1}{y_{2}-y_{1}}\bbabs{\int_{y_{1}+\theta}^{y_{2}-\theta}\frac{U''\psi_{1}}{\la-U}dy}^{2}.
\end{align*}
which, together with interior energy estimates \eqref{equ:in,estimate,w}, \eqref{equ:in,estimate,w,psi}, and \eqref{equ:in,estimate,w,psi,proof}, gives the desired estimate \eqref{equ:in,l2 weighted,psi}.
\end{proof}

We are left to deal with the weighted integration term $\int_{y_{1}+\theta}^{y_{2}-\theta}\frac{U''\psi_{1}}{\la-U}dy$.
\begin{lemma}[Interior weighted integration estimate on $\psi_{1}$]\label{lemma:in,weighted integration,psi1}
Let $\delta\leq 4\theta\leq y_{2}$. It holds that
\begin{align}\label{equ:in,weighted integration,psi1}
\frac{1}{y_{2}-y_{1}}\bbabs{\int_{y_{1}+\theta}^{y_{2}-\theta}\frac{U''\psi_{1}}{\la-U}dy}^{2}\lesssim \frac{|\lra{F,\frac{\rho_{\delta}w}{U''}}|}{|\alpha|(y_{2}-y_{1})\delta}+\mathcal{E}^{ex}(w),
\end{align}
where $\mathcal{E}^{ex}(w)$ is given by \eqref{equ:E,ex,full}.
\end{lemma}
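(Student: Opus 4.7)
The strategy is to convert the a priori interior quantity $J:=\int_{y_{1}+\theta}^{y_{2}-\theta}\frac{U''\psi_{1}}{\la-U}dy$ into data on the exterior pieces and on the forcing $F$ by exploiting the Orr--Sommerfeld equation pointwise. Writing $\psi_{1}=\psi-\psi_{2}$ and solving \eqref{equ:OS,navier-slip} for $i\al U''\psi$ gives, on $(y_{1},y_{2})$,
\begin{equation*}
\frac{U''\psi_{1}}{\la-U}=-w-\frac{\nu(\pa_{y}^{2}-\al^{2})w}{i\al(\la-U)}-\frac{F}{i\al(\la-U)}-\frac{U''\psi_{2}}{\la-U}.
\end{equation*}
Integrating against $\chi:=\mathbf{1}_{(y_{1}+\theta,y_{2}-\theta)}$ splits $J$ into four contributions that I bound separately.

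The $F$-contribution is immediate from Cauchy--Schwarz together with the weight bound $\n{\chi/(\la-U)}_{L^{2}}\lesssim 1/((y_{2}-y_{1})\theta^{1/2})$, which follows from $|\la-U(y)|\gtrsim (y_{2}-y_{1})\min(y-y_{1},y_{2}-y)$ in Lemma~\ref{lemma:proper,symme,flow}; after squaring and dividing by $y_{2}-y_{1}$ this produces $\n{F}_{L^{2}}^{2}/(|\al|^{2}(y_{2}-y_{1})^{3}\theta)$, which is dominated by the block $\n{F}_{L^{2}}^{2}/(|\al|^{2}(y_{2}-y_{1})^{2}\delta^{2})$ of $\mathcal{E}^{ex}(w)$ since $\theta\geq \delta/4$; a smooth regularization of $\chi$ handles the $H^{-1}$-version. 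The viscous term is handled by one integration by parts, producing a boundary contribution $\nu\pa_{y}w/(\la-U)\big|_{y_{1}+\theta}^{y_{2}-\theta}$, an interior piece with weight $\nu U'\pa_{y}w/(\la-U)^{2}$, and the remainder $\nu\al^{2}\int \chi w/(\la-U)$; each is absorbed into the $\nu^{2}\n{\pa_{y}w}_{L^{2}}^{2}/(|\al|^{2}(y_{2}-y_{1})^{2}\delta^{4})$ and $\nu^{2}|\al|^{2}\n{w}_{L^{\infty}}^{2}/((y_{2}-y_{1})^{2}\delta)$ pieces already inside $\mathcal{E}^{ex}(w)$. The $\psi_{2}$-contribution uses the explicit formula \eqref{form-varphi2} with \eqref{equ:local Linfity estimate} to bound $|\psi_{2}|\leq \n{\psi}_{L^{\infty}(B_{1,2,\delta})}$, combined with $\int_{y_{1}+\theta}^{y_{2}-\theta}|U''|/|\la-U|\,dy\lesssim (y_{2}-y_{1})^{-1}\log((y_{2}-y_{1})/\theta)$; squaring and dividing by $y_{2}-y_{1}$ yields a quantity majorized by $\n{\psi}_{L^{\infty}(B_{1,2,\delta})}^{2}/((y_{2}-y_{1})^{2}\delta)$ since $\delta\leq y_{2}-y_{1}$.

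The remaining term $-\int \chi w\,dy$ is the crux. Using $w=\pa_{y}^{2}\psi_{1}-\al^{2}\psi_{1}$ and integration by parts yields
\begin{equation*}
\int_{y_{1}+\theta}^{y_{2}-\theta}w\,dy=\psi_{1}'(y_{2}-\theta)-\psi_{1}'(y_{1}+\theta)-\al^{2}\int_{y_{1}+\theta}^{y_{2}-\theta}\psi_{1}\,dy.
\end{equation*}
The boundary traces $\psi_{1}'(y_{i}\pm\theta)$ I would propagate to $\psi_{1}'(y_{i})$ through the layer of width $\theta$ via $\psi_{1}''=w+\al^{2}\psi_{1}$, and control $\psi_{1}'(y_{i})$ by a trace inequality on the exterior pieces $(-1,y_{1})$ and $(y_{2},1)$, where the exterior energy estimate \eqref{equ:ex,estimate,w,psi} supplies the required $H^{1}$-control of $\psi_{1}$. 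The bulk term $\al^{2}\int \chi\psi_{1}\,dy$ is controlled through the interior coercive bound \eqref{equ:in,coercive,L2} together with the already-proved interior energy inequality \eqref{equ:in,estimate,w}; this step is precisely what generates the $|\lra{F,\rho_{\delta}w/U''}|/(|\al|(y_{2}-y_{1})\delta)$ summand that appears in the final inequality. The main obstacle is arranging the various $\theta$-dependent factors so that they cancel the $(y_{2}-y_{1})^{-1}$ prefactor in $|J|^{2}/(y_{2}-y_{1})$ without loss, and verifying that the trace estimate for $\psi_{1}'(y_{i})$ produces no factors worse than those already encoded in $\mathcal{E}^{ex}(w)$.
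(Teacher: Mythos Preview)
Your decomposition via the Orr--Sommerfeld equation is the right starting point, and the treatment of the $F$, viscous, and $\psi_{2}$ contributions is essentially correct. The genuine gap is in your handling of the crux term $\int\chi w\,dy$. Two issues arise. First, controlling the trace $\psi_{1}'(y_{i})$ ``by a trace inequality on the exterior pieces'' using only the $H^{1}$-control of $\psi_{1}$ from \eqref{equ:ex,estimate,w,psi} is not possible: a pointwise bound on $\psi_{1}'$ requires control of $\psi_{1}''=w+\al^{2}\psi_{1}$, and the $w$-part is precisely what you are trying to estimate, so the argument is circular. Second, your bound on the bulk piece $\al^{2}\int\chi\psi_{1}$ via the coercive estimate \eqref{equ:in,coercive,L2} yields, after squaring and dividing by $y_{2}-y_{1}$, a quantity of order $|\al|^{2}(y_{2}-y_{1})^{2}\cdot\frac{|\al|^{2}}{(y_{2}-y_{1})^{2}}\|\psi_{1}\|_{L^{2}}^{2}$; the prefactor $|\al|^{2}(y_{2}-y_{1})^{2}$ is unbounded in the regime $|\al|(y_{2}-y_{1})\gg 1$.

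The paper fixes both problems with two devices you are missing. It first splits into cases according to whether $|\al|^{2}(y_{2}-y_{1})^{2}\gtrless 1$. When $|\al|(y_{2}-y_{1})\gtrsim 1$, it abandons the OS substitution entirely and bounds $J$ directly by Cauchy--Schwarz (choosing $\theta$ of order $y_{2}$ so that $1/(\la-U)$ is uniformly $\lesssim(y_{2}-y_{1})^{-2}$ on the domain of integration), which lands on $\frac{|\al|^{2}}{(y_{2}-y_{1})^{2}}\|\psi_{1}\|_{L^{2}}^{2}$. When $|\al|(y_{2}-y_{1})\lesssim 1$, it follows your OS route but replaces your sharp cutoff by a smooth bump $\eta(\cdot/(y_{2}-y_{1}))$ that equals $1$ on $(y_{1},y_{2})$; then $\langle w,\eta\rangle=-\langle\psi_{1}',\eta'\rangle-\al^{2}\langle\psi_{1},\eta\rangle$, and crucially $\eta'$ is supported in the \emph{exterior}, so the trace problem disappears and one only needs $\|\psi_{1}'\|_{L^{2}(\text{ext})}$, which \eqref{equ:ex,estimate,w,psi} does control. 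The case assumption then absorbs the remaining $|\al|^{4}\|\psi_{1}\|_{L^{2}}^{2}$ into $\frac{|\al|^{2}}{(y_{2}-y_{1})^{2}}\|\psi_{1}\|_{L^{2}}^{2}$.
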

\begin{proof}
We divide the proof into the following two cases.\smallskip

{\it Case 1.} $|\alpha|^2(y_{2}-y_{1})^2\geq 1.$

In this case, taking $\theta=\frac{y_{2}}{2}$, we use \eqref{equ:basic estimate,U,L2} and interior energy estimate \eqref{equ:in,estimate,w,psi} to get
\begin{align*}
         \frac{1}{y_{2}-y_{1}}\bbabs{\int_{y_{1}+\theta}^{y_{2}-\theta}\frac{U''\psi_{1}}{\la-U}dy}^{2}
         \leq & \frac{1}{y_{2}-y_{1}}\lrs{\int_{y_{1}+\theta}^{y_{2}-\theta}|\psi_{1}|^{2}dy}
         \lrs{\int_{y_{1}+\theta}^{y_{2}-\theta}\frac{1}{(\lambda-U)^2}dy}\\
         \lesssim& \frac{|\al|^{2}}{(y_{2}-y_{1})^{2}}\int_{y_{1}}^{y_{2}}|\psi_{1}|^{2}dy\\
         \lesssim& \frac{|\lra{F,\frac{\rho_{\delta}w}{U''}}|}{|\alpha|(y_{2}-y_{1})^{2}}+\mathcal{E}(w)
+\frac{\|\psi\|_{L^\infty(B_{1,2,\delta})}^{2}}{(y_{2}-y_{1})^2\delta}.
\end{align*}
Hence, together with local $L^{\infty}$ estimate \eqref{equ:estimate,vp,B,infty} on $\psi$, we arrive at \eqref{equ:in,weighted integration,psi1}.\smallskip

{\it Case 2.} $|\alpha|^2(y_{2}-y_{1})^2\leq 1.$

In this case, taking $\theta=\delta$,  we use \eqref{equ:basic estimate,U,L2} to obtain
\begin{align*}
\bbabs{\int_{y_{1}+\theta}^{y_{2}-\theta}\frac{U''(y)\psi_{1}(y)}{\la-U(y)}dy}\leq& \bbabs{\int_{-1}^{1}\frac{U''\psi_{1}\rho_{\delta}}{\lambda-U}dy}+\delta^{\frac{1}{2}}\n{\psi_{1}}_{L^{\infty}(B_{1,2,\delta})}\bbn{\frac{1}{U-\lambda}}_{L^{2}((-1,1)\setminus B_{1,2,\delta})}\\
\lesssim&\bbabs{\int_{-1}^{1}\frac{U''\psi_{1}\rho_{\delta}}{\lambda-U}dy}+\frac{\n{\psi_{1}}_{L^{\infty}(B_{1,2,\delta})}}{y_{2}-y_{1}}.
\end{align*}
Recalling that
$$
F=-\nu(\partial_y^2-|\alpha|^2)w+i\alpha(U-\lambda)w-i\alpha U''\psi,
$$
we have
\begin{align*}
\bbabs{\int_{-1}^{1}\frac{U''\psi_{1}\rho_{\delta}}{\lambda-U}dy}\leq&
\bbabs{\int_{-1}^{1}\frac{U''\psi_{2}\rho_{\delta}}{\lambda-U}dy}+\frac{1}{|\al|} \bbabs{\int_{-1}^{1} \frac{F\rho_{\delta}+\nu(\pa_{y}^{2}w-\al^{2}w)\rho_{\delta}}{\la-U}dy}+\abs{\lra{w,\rho_{\delta}}},
\end{align*}
and hence get
\begin{align*}
\frac{1}{y_{2}-y_{1}}\bbabs{\int_{-1}^{1}\frac{U''\psi_{1}\rho_{\delta}}{\lambda-U}dy}^{2}\leq I_{1}+I_{2}+I_{3},
\end{align*}
where
\begin{align*}
I_{1}=&\frac{1}{y_{2}-y_{1}}\bbabs{\int_{-1}^{1}\frac{U''\psi_{2}\rho_{\delta}}{\lambda-U}dy}^{2},\\
I_{2}=&\frac{1}{|\al|^{2}(y_{2}-y_{1})}\bbabs{\int_{-1}^{1} \frac{F\rho_{\delta}+\nu(\pa_{y}^{2}w-\al^{2}w)\rho_{\delta}}{\la-U}dy}^{2},\\
I_{3}=&\frac{1}{y_{2}-y_{1}}\bbabs{\int_{-1}^{1} w\rho_{\delta}dy }^{2}.
\end{align*}

For $I_{1}$, by the local $L^{\infty}$ estimate \eqref{equ:local Linfity estimate} and estimate \eqref{equ:basic estimate,U,L2} in Lemma \ref{lemma:basic Lp estimate,U}, we get
\begin{align}
I_{1}\lesssim \bbabs{\int_{-1}^{1}\frac{|\psi_{2}|^2}{(U-\lambda)^2}dy}\lesssim \frac{\n{\psi}_{L^{\infty}(B_{1,2,\delta})}^{2}}{(y_{2}-y_{1})^{2}\delta}\lesssim \mathcal{E}^{ex}(w).
\end{align}

For $I_{2}$, by Lemma \ref{lemma:basic Lp estimate,U}, we have
\begin{align*}
\babs{\blra{F,\frac{\rho_{\delta}}{\la-U}}}\lesssim& \n{F}_{L^{2}}\bbn{\frac{\rho_{\delta}}{\la-U}}_{L^{2}}\lesssim \frac{\n{F}_{L^{2}}}{(y_{2}-y_{1})\delta^{\frac{1}{2}}},\\
\babs{\blra{F,\frac{\rho_{\delta}}{\la-U}}}\lesssim& \n{F}_{H^{-1}}\bbn{\lrs{\frac{\rho_{\delta}}{\la-U}}'}_{L^{2}}\lesssim \frac{\n{F}_{H^{-1}}}{(y_{2}-y_{1})\delta^{\frac{3}{2}}},\\
\nu\babs{\blra{\pa_{y}^{2}w,\frac{\rho_{\delta}}{\la-U}}}\lesssim & \nu\n{\pa_yw}_{L^{2}}\bbn{\lrs{\frac{\rho_{\delta}}{\la-U}}'}_{L^{2}}\lesssim
\frac{\nu\n{\pa_yw}_{L^{2}}}{(y_{2}-y_{1})\delta^{\frac{3}{2}}},\\
\nu\babs{\blra{w,\frac{\rho_{\delta}}{\la-U}}}\lesssim & \nu\n{w}_{L^{2}}\bbn{\frac{\rho_{\delta}}{\la-U}}_{L^{2}}\lesssim
\frac{\nu\n{w}_{L^{2}}}{(y_{2}-y_{1})\delta^{\frac{1}{2}}}.
\end{align*}
Using that $\delta\leq y_{2}$, we obtain
\begin{align*}
I_{2}\lesssim \mathcal{E}^{ex}(w).
\end{align*}

For $I_{3}$, we set $\eta(y)$ to be a smooth cutoff function satisfying $\eta(y)=1$ for $|y|\leq 1$. Then, we have
\begin{align}\label{equ:in,weighted integration,psi1,I3}
I_{3}\lesssim& \frac{1}{y_{2}-y_{1}}\lrs{ \delta\n{w}_{L^{\infty}}+\n{w}_{L^{1}((-1,1)\setminus (y_{1},y_{2}))}+
\bbabs{\blra{w,\eta(\frac{y}{y_{2}-y_{1}})}}}^{2}\\
\lesssim &\delta\n{w}_{L^{\infty}}^{2}+\frac{1}{y_{2}-y_{1}}\n{w}_{L^{1}((-1,1)\setminus (y_{1},y_{2}))}^{2}+\frac{1}{y_{2}-y_{1}}
\bbabs{\blra{w,\eta(\frac{y}{y_{2}-y_{1}})}}^{2}\notag\\
\lesssim& \mathcal{E}^{ex}(w)+\frac{1}{y_{2}-y_{1}}
\bbabs{\blra{w,\eta(\frac{y}{y_{2}-y_{1}})}}^{2},\notag
\end{align}
where in the last inequality we have used Exterior $L^{1}$ estimate \eqref{equ:estimate,w,L1,ex} on $w$. By integration by parts, we get
\begin{align*}
&\frac{1}{y_{2}-y_{1}}\babs{\blra{w,\eta(\frac{y}{y_{2}-y_{1}})}}^{2}
=\frac{1}{y_{2}-y_{1}}\babs{\blra{\pa_{y}^{2}\psi_{1}-\al^{2}\psi_{1},\eta(\frac{y}{y_{2}-y_{1}})}}^{2}\\
\lesssim& \frac{1}{y_{2}-y_{1}} \n{\psi_{1}'}_{L^{2}((-1,1)\setminus(y_{1},y_{2}))}^{2}\bbn{\lrs{\eta(\frac{y}{y_{2}-y_{1}})}'}_{L^{2}}^{2}+|\al|^{4}\n{\psi_{1}}_{L^{2}}^{2}
\bbn{\eta(\frac{y}{y_{2}-y_{1}})}_{L^{2}}^{2}\\
\lesssim & \frac{\n{\psi_{1}'}_{L^{2}((-1,1)\setminus(y_{1},y_{2}))}^{2}}{(y_{2}-y_{1})^{2}}+|\al|^{4}\n{\psi_{1}}_{L^{2}}^{2}.
\end{align*}
Due to that $|\al|^{2}\lesssim \frac{1}{(y_{2}-y_{1})^{2}}$, we use exterior and interior energy estimates \eqref{equ:ex,estimate,w,psi} and \eqref{equ:in,estimate,w,psi} to get
\begin{align*}
\frac{1}{y_{2}-y_{1}}\babs{\blra{w,\eta(\frac{y}{y_{2}-y_{1}})}}^{2}\lesssim \mathcal{E}^{ex}(w)+\frac{|\lra{F,\frac{\rho_{\delta}w}{U''}}|}{|\alpha|(y_{2}-y_{1})^{2}},
\end{align*}
which together with \eqref{equ:in,weighted integration,psi1,I3} gives that
\begin{align*}
I_{3}\lesssim \mathcal{E}^{ex}(w)+\frac{|\lra{F,\frac{\rho_{\delta}w}{U''}}|}{|\alpha|(y_{2}-y_{1})^{2}}.
\end{align*}

Putting together the estimates for $I_{1}$, $I_{2}$, $I_{3}$, we complete the proof of \eqref{equ:in,weighted integration,psi1}.
\end{proof}

To sum up, we arrive at the following interior $L^{2}$ estimate.
\begin{lemma}[Interior $L^{2}$ estimate on $w$]\label{lemma:in,l2,w}
It holds that
\begin{equation}\label{equ:in,l2,w}
\n{w}_{L^{2}(y_{1}, y_{2})}^2 \lesssim  \mathcal{E}^{in}(w)+\mathcal{E}^{ex}(w),
\end{equation}
where
\begin{align*}
\mathcal{E}^{in}(w)=\frac{|\operatorname{Im}\lra{F,\frac{\rho_{\delta} w}{U-\la}}|}{|\al|}+ \frac{|\lra{F,\frac{\rho_{\delta}w}{U''}}|}{|\alpha|(y_{2}-y_{1})\delta}
+\mathcal{E}(w).
\end{align*}
\end{lemma}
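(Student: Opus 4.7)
The plan is to combine the interior energy bound of Lemma \ref{lemma:in,estimate,w,psi} with the interior weighted $L^{2}$ bound for $\psi$ from Lemma \ref{lemma:in,l2 weighted,psi} and the weighted integration estimate from Lemma \ref{lemma:in,weighted integration,psi1}, and then verify that every error term produced fits into the prescribed forms of $\mathcal{E}^{in}(w)$ or $\mathcal{E}^{ex}(w)$. Since $\rho_{\delta}$ equals $1$ on $(y_{1},y_{2})$ outside an $O(\delta)$-neighborhood of the endpoints, I start from the pointwise split
\begin{align*}
\n{w}_{L^{2}(y_{1},y_{2})}^{2}\leq \int_{-1}^{1}\rho_{\delta}|w|^{2}dy+C\delta\n{w}_{L^{\infty}}^{2},
\end{align*}
so that the second piece already belongs to $\mathcal{E}(w)\subset \mathcal{E}^{in}(w)$.

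Next, inequality \eqref{equ:in,estimate,w,L2} produces three contributions to $\int_{-1}^{1}\rho_{\delta}|w|^{2}dy$. The first term $|\operatorname{Im}\lra{F,\rho_{\delta}w/(U-\la)}|/|\al|$ is exactly the first summand of $\mathcal{E}^{in}(w)$. The second term $\nu\n{\pa_{y}w}_{L^{2}}\n{w}_{L^{\infty}}/(|\al|\delta^{3/2}(y_{2}-y_{1}))$ is absorbed into $\mathcal{E}(w)$ by writing the product as $\big(\nu\n{\pa_{y}w}_{L^{2}}/(|\al|\delta^{2}(y_{2}-y_{1}))\big)\cdot\big(\delta^{1/2}\n{w}_{L^{\infty}}\big)$ and applying $ab\leq \tfrac12(a^{2}+b^{2})$, which reproduces the two summands of $\mathcal{E}(w)$. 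The third term, the weighted integral $\int_{-1}^{1}\rho_{\delta}|\psi|^{2}/(U-\la)^{2}dy$, is the nontrivial one.

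For this weighted integral I invoke Lemma \ref{lemma:in,l2 weighted,psi} to obtain the bound
\begin{align*}
\f{|\lra{F,\rho_{\delta}w/U''}|}{|\al|(y_{2}-y_{1})^{2}}+\mathcal{E}(w)+\f{\n{\psi}_{L^{\infty}(B_{1,2,\delta})}^{2}}{(y_{2}-y_{1})^{2}\delta}+\f{1}{y_{2}-y_{1}}\bbabs{\int_{y_{1}+\theta}^{y_{2}-\theta}\f{U''\psi_{1}}{\la-U}dy}^{2}.
\end{align*}
Using the standing relation $\delta\leq y_{2}=(y_{2}-y_{1})/2$, one has $1/(y_{2}-y_{1})^{2}\leq 1/(2(y_{2}-y_{1})\delta)$, so the first summand is dominated by $|\lra{F,\rho_{\delta}w/U''}|/(|\al|(y_{2}-y_{1})\delta)$, the second summand of $\mathcal{E}^{in}(w)$. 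The third summand $\n{\psi}_{L^{\infty}(B_{1,2,\delta})}^{2}/((y_{2}-y_{1})^{2}\delta)$ is then controlled by the local $L^{\infty}$ estimate \eqref{equ:estimate,vp,B,infty}, whose right-hand side is a sum of terms each of which appears in $\mathcal{E}^{ex}(w)$.

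Finally, the weighted integration term is handled directly by Lemma \ref{lemma:in,weighted integration,psi1}, which gives a bound by $|\lra{F,\rho_{\delta}w/U''}|/(|\al|(y_{2}-y_{1})\delta)+\mathcal{E}^{ex}(w)$; the first piece again sits inside $\mathcal{E}^{in}(w)$ and the second is already $\mathcal{E}^{ex}(w)$. Collecting everything yields \eqref{equ:in,l2,w}. No single step is conceptually hard; the only bookkeeping obstacle is checking that the denominator scales $(y_{2}-y_{1})\delta$, $(y_{2}-y_{1})^{2}$, and $\delta^{3/2}(y_{2}-y_{1})$ arising from the constituent lemmas can all be matched against or dominated by the scales prescribed by $\mathcal{E}^{in}(w)$ and $\mathcal{E}^{ex}(w)$, which is systematically resolved by the standing hypothesis $\delta\leq y_{2}=(y_{2}-y_{1})/2$.
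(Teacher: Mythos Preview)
Your proof is correct and follows essentially the same route as the paper's own argument: split off a $\delta\n{w}_{L^\infty}^2$ piece, apply the interior energy estimate \eqref{equ:in,estimate,w,L2}, absorb the cross term into $\mathcal{E}(w)$, then bound the weighted $\psi$-integral via Lemmas~\ref{lemma:in,l2 weighted,psi} and~\ref{lemma:in,weighted integration,psi1}. You have in fact been more explicit than the paper in spelling out the use of the local $L^\infty$ estimate \eqref{equ:estimate,vp,B,infty} and the denominator comparison $1/(y_2-y_1)^2\lesssim 1/((y_2-y_1)\delta)$, both of which the paper leaves implicit.
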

\begin{proof}
By interior energy estimate \eqref{equ:in,estimate,w,L2}, we have
\begin{align*}
\n{w}_{L^{2}(y_{1}, y_{2})}^{2} \leq& \int_{-1}^{1}\rho_{\delta}|w|^2dy+\delta\n{w}_{L^{\infty}}^{2}\\
\leq &\frac{|\operatorname{Im}\lra{F,\frac{\rho_{\delta} w}{U-\la}}|}{|\al|}+
        \frac{\nu\|\partial_y w\|_{L^2}\n{w}_{L^\infty}}{|\alpha|\delta^{\f32}(y_{2}-y_{1})} +\int_{-1}^{1}\frac{|\psi|^2\rho_{\delta}}{(U-\lambda)^2}dy+\delta\n{w}_{L^{\infty}}^{2}\\
        \lesssim&\frac{|\operatorname{Im}\lra{F,\frac{\rho_{\delta} w}{U-\la}}|}{|\al|}+\mathcal{E}(w)+\int_{-1}^{1}\frac{|\psi|^2\rho_{\delta}}{(U-\lambda)^2}dy.
\end{align*}
Then by Lemmas \ref{lemma:in,l2 weighted,psi}--\ref{lemma:in,weighted integration,psi1}, we get
\begin{align*}
\n{w}_{L^{2}(y_{1}, y_{2})}^{2}\lesssim& \frac{|\operatorname{Im}\lra{F,\frac{\rho_{\delta} w}{U-\la}}|}{|\al|}+\frac{|\lra{F,\frac{\rho_{\delta}w}{U''}}|}{|\alpha|(y_{2}-y_{1})\delta}+\mathcal{E}^{ex}(w)\\
\lesssim& \mathcal{E}^{in}(w)+\mathcal{E}^{ex}(w),
\end{align*}
which completes the proof of \eqref{equ:in,l2,w}.
\end{proof}

\subsection{Resolvent estimates for $F\in L^{2}$ and $F\in H^{-1}$}
In the subsection, we prove resolvent estimates for $F\in L^{2}$ and $F\in H^{-1}$.
\begin{lemma}\label{lemma:resol,w,L2,H-1,delta}
Let $\lambda \in (U(0)+\nu^{\frac{1}{2}}|\al|^{-\frac{1}{2}},U(1))$, and set
\begin{align}
&\delta=\nu^{\frac{1}{3}}|\al|^{-\frac{1}{3}}(y_{2}-y_{1})^{-\frac{1}{3}},
\quad \nu^{\frac{1}{4}}|\al|^{-\frac{1}{4}}\leq y_{2}\leq 1,\\
&\nu|\al|^{2}\lesssim |\la-U(0)|^{\frac{1}{2}}+\nu^{\frac{1}{4}}|\al|^{-\frac{1}{4}}.\label{equ:restriction,v,alpha}
\end{align}
Then we have
\begin{align}
\frac{\nu}{\delta^{2}}\n{w}_{L^{2}}+\frac{\nu(y_{2}-y_{1})^{\frac{1}{2}}}{\delta^{2}}\n{w}_{L^{1}}
+\frac{\nu}{\delta}\n{(\pa_{y},|\al|)w}_{L^{2}}\lesssim& \n{F}_{L^{2}},\label{equ:re,estimate,L2,lemma}\\
\frac{\nu}{\delta}\n{w}_{L^{2}}+\nu\n{(\pa_{y},|\al|)w}_{L^{2}}\lesssim& \n{F}_{H^{-1}}.\label{equ:re,estimate,H-1,lemma}
\end{align}
\end{lemma}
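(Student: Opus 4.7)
The plan is to combine the two base $L^2$ bounds already proved in this subsection, namely Lemma \ref{lemma:in,l2,w} and Lemma \ref{lemma:ex,l2,w}, to get
$$
\|w\|_{L^2}^2 \lesssim \mathcal{E}^{in}(w) + \mathcal{E}^{ex}(w),
$$
and then substitute the specific value $\delta = \nu^{1/3}|\alpha|^{-1/3}(y_2-y_1)^{-1/3}$. This value is chosen so that the two identities
$\nu/((y_2-y_1)|\alpha|\delta) = \delta^2$ and $\nu^2/((y_2-y_1)^2|\alpha|^2\delta^4) = \delta^2$
hold, which is exactly what collapses every $F$-dependent quantity in $\mathcal{E}^{in}+\mathcal{E}^{ex}$ into the target scaling $(\delta^2/\nu)^2\|F\|_{L^2}^2$ (respectively, $(\delta/\nu)^2\|F\|_{H^{-1}}^2$ in the $H^{-1}$ case) while all $w$-dependent pieces can be absorbed.

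For each of the duality pairings $\langle F,\rho_\delta w/(U-\lambda)\rangle$, $\langle F,\rho_\delta w/U''\rangle$, $\langle F,\rho_\delta^c w/(U-\lambda)\rangle$, $\langle F,\rho_\delta^c w/U''\rangle$, I would apply Cauchy--Schwarz (for the $L^2$ estimate) or $H^{-1}$--$H^1$ duality (for the $H^{-1}$ estimate). The crucial pointwise bound is $|U(y)-\lambda| \gtrsim (y_2-y_1)\delta$ on the supports of $\rho_\delta$ and $\rho_\delta^c$, which follows from Lemma \ref{lemma:proper,symme,flow}. This yields $\|\rho_\delta/(U-\lambda)\|_{L^\infty} + \|\rho_\delta^c/(U-\lambda)\|_{L^\infty} \lesssim 1/((y_2-y_1)\delta)$, and correspondingly $\|(\rho_\delta/(U-\lambda))'\|_{L^2} \lesssim ((y_2-y_1)\delta)^{-3/2}$ for the $H^{-1}$ pairings; the extra $\delta^{-1/2}$ is precisely what accounts for the different normalizations $\nu/\delta^2$ versus $\nu/\delta$ on the left-hand side. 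After Young's inequality each such $F$-contribution produces the desired scaling plus an absorbable $\varepsilon\|w\|_{L^2}^2$.

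The remaining terms in $\mathcal{E}(w)$, namely $\delta^2\|\partial_y w\|_{L^2}^2$ and $\delta\|w\|_{L^\infty}^2$, together with the spurious $\nu^2|\alpha|^2\|w\|_{L^\infty}^2/((y_2-y_1)^2\delta)$ and $\nu^2\|w\|_{L^2}^2/(|\alpha|^2(y_2-y_1)^2\delta^2)$ pieces of $\mathcal{E}^{ex}(w)$, are treated by invoking the derivative bound \eqref{equ:energy estimate,real part,w,L2} from Lemma \ref{lemma:energy estimate,real part}, which gives $\|(\partial_y,|\alpha|)w\|_{L^2} \lesssim \|w\|_{L^2} + \nu^{-1}\|F\|_{L^2}$, and the Gagliardo--Nirenberg interpolation $\|w\|_{L^\infty}^2 \lesssim \|w\|_{L^2}(\|\partial_y w\|_{L^2}+\|w\|_{L^2})$. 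The hypothesis \eqref{equ:restriction,v,alpha}, combined with $y_2 \gtrsim \nu^{1/4}|\alpha|^{-1/4}$ and $|\lambda-U(0)|\sim y_2^2$, is designed to make each resulting coefficient small enough to absorb. Once $\|w\|_{L^2}$ is bounded, feeding the bound back into \eqref{equ:energy estimate,real part,w,L2} (or \eqref{equ:energy estimate,real part,w,H-1} for the $H^{-1}$ case) produces the $\|(\partial_y,|\alpha|)w\|_{L^2}$ estimate.

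Finally, for the $L^1$ bound, I would split $\|w\|_{L^1} \leq \|w\|_{L^1(y_1,y_2)} + \|\chi^c w\|_{L^1}$: Cauchy--Schwarz reduces the interior part to $(y_2-y_1)^{1/2}\|w\|_{L^2}$, which already has the right scaling by the $L^2$ bound just proved, while the exterior part is handled directly by \eqref{equ:estimate,w,L1,ex}, whose right-hand side has been shown to be $\lesssim (\delta^2/\nu)^2\|F\|_{L^2}^2$. The main obstacle will be the bookkeeping of the absorption constants, in particular the term $\nu^2|\alpha|^2\|w\|_{L^\infty}^2/((y_2-y_1)^2\delta)$: after Gagliardo--Nirenberg and \eqref{equ:energy estimate,real part,w,L2}, its absorbability reduces to verifying that $\nu|\alpha|^2/((y_2-y_1)\delta)^{1/2}$ is controlled by a power of $\delta$, which is precisely what \eqref{equ:restriction,v,alpha} and $y_2\gtrsim \nu^{1/4}|\alpha|^{-1/4}$ guarantee. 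Checking that these two spectral conditions are tight enough to make every absorption go through, but no tighter than necessary, is the delicate part of the argument.
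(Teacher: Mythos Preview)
Your proposal is correct and follows essentially the same approach as the paper: start from $\|w\|_{L^2}^2\lesssim \mathcal{E}^{in}(w)+\mathcal{E}^{ex}(w)$, bound the $F$-pairings via Lemma~\ref{lemma:basic Lp estimate,U}, absorb the $w$-dependent error terms using the relation $\nu=\delta^3|\alpha|(y_2-y_1)$ together with \eqref{equ:restriction,v,alpha} (which gives $|\alpha|\delta\lesssim 1$), control $\mathcal{E}(w)$ via \eqref{equ:energy estimate,real part,w,nu} and Gagliardo--Nirenberg, and then feed the resulting $L^2$ bound back into the energy estimates for $\|(\partial_y,|\alpha|)w\|_{L^2}$ and into the interior/exterior split for $\|w\|_{L^1}$. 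One minor slip: the bound you quote for $\|(\rho_\delta/(U-\lambda))'\|_{L^2}$ should read $(y_2-y_1)^{-1}\delta^{-3/2}$ rather than $((y_2-y_1)\delta)^{-3/2}$ (see \eqref{equ:basic estimate,U,rho,H1}), but this does not affect the argument.
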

\begin{proof}
By $L^{2}$ exterior and interior estimates on $w$ in Lemmas \ref{lemma:ex,l2,w} and \ref{lemma:in,l2,w}, we have
\begin{align*}
\n{w}_{L^{2}}^{2}\lesssim \mathcal{E}^{in}(w)+\mathcal{E}^{ex}(w),
\end{align*}
where
\begin{align*}
\mathcal{E}^{ex}(w)= &\frac{\operatorname{Im}\lra{F,\frac{\rho_{\delta}^{c} w}{U-\la}}|}{|\al|}+ \frac{|\lra{F,\frac{\rho_{\delta}^{c}w}{U''}}|}{|\alpha|(y_{2}-y_{1})\delta}
+\frac{\min\lr{\n{F}_{L^{2}}^{2},\delta^{-2}\n{F}_{H^{-1}}^{2}}}{|\alpha|^2(y_{2}-y_{1})^2\delta^2}\\
&+\frac{\nu^{2}|\al|^{2}\n{w}_{L^{\infty}}^{2}}{(y_{2}-y_{1})^{2}\delta}+
\frac{\nu^{2}\n{w}_{L^{2}}^{2}}{|\alpha|^2(y_{2}-y_{1})^2\delta^2}+\mathcal{E}(w),\\
\mathcal{E}^{in}(w)=&\frac{|\operatorname{Im}\lra{F,\frac{\rho_{\delta} w}{U-\la}}|}{|\al|}+ \frac{|\lra{F,\frac{\rho_{\delta}w}{U''}}|}{|\alpha|(y_{2}-y_{1})\delta}
+\mathcal{E}(w),\\
\mathcal{E}(w)=&\frac{\nu^{2}\n{\pa_yw}_{L^{2}}^{2}}{|\al|^{2}(y_{2}-y_{1})^{2}\delta^{4}}
        +\delta\n{w}_{L^{\infty}}^{2}.
\end{align*}

Now, we need to estimate $\mathcal{E}^{in}(w)+\mathcal{E}^{ex}(w)$. First,
 with $\nu=\delta^{3}|\al|(y_{2}-y_{1})$, we have
\begin{align*}
\frac{\nu^{2}\n{w}_{L^{2}}^{2}}{|\alpha|^2(y_{2}-y_{1})^2\delta^2}
=\frac{\delta^{6}|\al|^{2}(y_{2}-y_{1})^{2}\n{w}_{L^{2}}^{2}}{|\alpha|^{2}(y_{2}-y_{1})^2\delta^2}=\delta^{4}\n{w}_{L^{2}}^{2}\ll \n{w}_{L^{2}}^{2}.
\end{align*}
Due to the restriction \eqref{equ:restriction,v,alpha} which gives that $|\al|\delta\lesssim 1$, with $\nu=\delta^{3}|\al|(y_{2}-y_{1})$, we get
\begin{align*}
\frac{\nu^{2}|\al|^{2}\n{w}_{L^{\infty}}^{2}}{(y_{2}-y_{1})^{2}\delta}\lesssim \delta\n{w}_{L^{\infty}}^{2}.
\end{align*}
We are left to control the other terms in $\mathcal{E}^{ex}(w)$ and $\mathcal{E}^{in}(w)$.
Noting that
\begin{align*}
\bbabs{\operatorname{Im}\lra{F,\frac{\rho_{\delta}^{c} w}{U-\la}}}\leq & \n{F}_{H^{-1}}\n{(\pa_{y},|\al|)w}_{L^{2}}\bbn{\frac{\rho_{\delta}^{c}}{U-\la}}_{L^{\infty}}+\n{F}_{H^{-1}}\n{w}_{L^{\infty}}
\bbn{\lrs{\frac{\rho_{\delta}^{c}}{U-\la}}'}_{L^{2}}\\
\bbabs{\operatorname{Im}\lra{F,\frac{\rho_{\delta}^{c} w}{U-\la}}}\leq& \n{F}_{L^{2}}\n{w}_{L^{2}}\bbn{\frac{\rho_{\delta}^{c}}{U-\la}}_{L^{\infty}},\\
\bbabs{\lra{F,\frac{\rho_{\delta}^{c}w}{U''}}}\leq&\n{F}_{H^{-1}}\n{(\pa_{y},|\al|)w}_{L^{2}}\bbn{\frac{\rho_{\delta}^{c}}{U''}}_{L^{\infty}}
+\n{F}_{H^{-1}}\n{w}_{L^{\infty}}
\bbn{\lrs{\frac{\rho_{\delta}^{c}}{U''}}'}_{L^{2}},\\
\bbabs{\lra{F,\frac{\rho_{\delta}^{c}w}{U''}}}\leq&\n{F}_{L^{2}}\n{w}_{L^{2}}\bbn{\frac{\rho_{\delta}^{c}}{U''}}_{L^{\infty}},
\end{align*}
by estimates in Lemma \ref{lemma:basic Lp estimate,U}, we have
\begin{align*}
\frac{|\operatorname{Im}\lra{F,\frac{\rho_{\delta}^{c} w}{U-\la}}|}{|\al|}\lesssim&
\min\lr{\frac{\n{F}_{H^{-1}}\n{(\pa_{y},|\al|)w}_{L^{2}}}{|\alpha| (y_{2}-y_{1})\delta}
        +\frac{\n{F}_{H^{-1}}\n{w}_{L^\infty}}{|\alpha| (y_{2}-y_{1})\delta^{\f32}},\frac{\n{F}_{L^{2}}\n{w}_{L^{2}}}{|\al|(y_{2}-y_{1})\delta}},\\
        \frac{|\lra{F,\frac{\rho_{\delta}^{c}w}{U''}}|}{|\alpha|(y_{2}-y_{1})\delta}\lesssim&
\min\lr{\frac{\n{F}_{H^{-1}}\n{(\pa_{y},|\al|)w}_{L^{2}}}{|\alpha| (y_{2}-y_{1})\delta}
        +\frac{\n{F}_{H^{-1}}\n{w}_{L^\infty}}{|\alpha| (y_{2}-y_{1})\delta^{\f32}},\frac{\n{F}_{L^{2}}\n{w}_{L^{2}}}{|\al|(y_{2}-y_{1})\delta}}.
\end{align*}
In the same way, we also have
\begin{align*}
\frac{|\operatorname{Im}\lra{F,\frac{\rho_{\delta} w}{U-\la}}|}{|\al|}\lesssim&
\min\lr{\frac{\n{F}_{H^{-1}}\n{(\pa_{y},|\al|)w}_{L^{2}}}{|\alpha| (y_{2}-y_{1})\delta}
        +\frac{\n{F}_{H^{-1}}\n{w}_{L^\infty}}{|\alpha| (y_{2}-y_{1})\delta^{\f32}},\frac{\n{F}_{L^{2}}\n{w}_{L^{2}}}{|\al|(y_{2}-y_{1})\delta}},\\
\frac{|\lra{F,\frac{\rho_{\delta}w}{U''}}|}{|\alpha|(y_{2}-y_{1})\delta}\lesssim&
\min\lr{\frac{\n{F}_{H^{-1}}\n{(\pa_{y},|\al|)w}_{L^{2}}}{|\alpha| (y_{2}-y_{1})\delta}
        +\frac{\n{F}_{H^{-1}}\n{w}_{L^\infty}}{|\alpha| (y_{2}-y_{1})\delta^{\f32}},\frac{\n{F}_{L^{2}}\n{w}_{L^{2}}}{|\al|(y_{2}-y_{1})\delta}}.
\end{align*}
Therefore, by Cauchy-Schwarz inequality, we arrive at
\begin{align}
\mathcal{E}^{in}(w)+\mathcal{E}^{ex}(w)\lesssim \frac{\n{F}_{L^{2}}\n{w}_{L^{2}}}{|\al|(y_{2}-y_{1})\delta}
+\frac{\n{F}_{L^{2}}^{2}}{|\alpha|^2(y_{2}-y_{1})^2\delta^2}+\mathcal{E}(w),\label{equ:L2,in,ex}
\end{align}
\begin{align}
\mathcal{E}^{in}(w)+\mathcal{E}^{ex}(w)\lesssim \frac{\n{F}_{H^{-1}}\n{(\pa_{y},|\al|)w}_{L^{2}}}{|\alpha| (y_{2}-y_{1})\delta}+\frac{\delta^{-2}\n{F}_{H^{-1}}^{2}}{|\alpha|^2(y_{2}-y_{1})^2\delta^2}+\mathcal{E}(w).\label{equ:H-1,in,ex}
\end{align}

 Next, we estimate $\mathcal{E}(w)$.
By energy estimate \eqref{equ:energy estimate,real part,w,nu} and Sobolev inequality, we have
\begin{align*}
\nu\n{\pa_yw}_{L^{2}}^{2}\lesssim& \nu\n{w}^2_{L^2}+|\operatorname{Re}\lra{F,w}|,\\
\n{w}_{L^{\infty}}^{2}\lesssim& \n{\pa_yw}_{L^{2}}\n{w}_{L^{2}}+\n{w}_{L^{2}}^{2},
\end{align*}
and hence get
\begin{align}\label{equ:estimate,Ew}
\mathcal{E}(w)\lesssim  \frac{\nu|\operatorname{Re}\lra{F,w}|}{|\al|^{2}(y_{2}-y_{1})^{2}\delta^{4}}+\frac{\delta|\operatorname{Re}\lra{F,w}|^{\frac{1}{2}}\n{w}_{L^{2}}}{\nu^{\frac{1}{2}}}+\delta \n{w}_{L^{2}}^{2}.
\end{align}

For $F\in L^{2}$, combining \eqref{equ:L2,in,ex} and \eqref{equ:estimate,Ew}, we obtain
\begin{align*}
\n{w}_{L^{2}}^{2}\lesssim \frac{\n{F}_{L^{2}}^{2}}{|\alpha|^2(y_{2}-y_{1})^2\delta^2}+\frac{\nu^{2}\n{F}_{L^{2}}^{2}}{|\alpha|^{4}(y_{2}-y_{1})^{4}\delta^{8}}
+\frac{\delta^{4}\n{F}_{L^{2}}^{2}}{\nu^{2}}.
\end{align*}
By the optimal choice that $\delta=\nu^{\frac{1}{3}}|\al|^{-\frac{1}{3}}(y_{2}-y_{1})^{-\frac{1}{3}}$, we arrive at
\begin{align}\label{equ:L2,w,F}
\n{w}_{L^{2}}\lesssim& \frac{\delta^{2}}{\nu}\n{F}_{L^{2}}.
\end{align}

For $F\in H^{-1}$, by energy estimate \eqref{equ:energy estimate,real part,w,H-1}, we have
\begin{align}\label{equ:energy estimate,H1,used}
 \n{(\pa_{y},|\al|)w}_{L^{2}}\lesssim& \n{w}_{L^2}+\nu^{-1}\n{F}_{H^{-1}}.
\end{align}
Combining \eqref{equ:L2,in,ex} and \eqref{equ:estimate,Ew}, with $\delta=\nu^{\frac{1}{3}}|\al|^{-\frac{1}{3}}(y_{2}-y_{1})^{-\frac{1}{3}}$, we get
\begin{align}\label{equ:H-1,w,F}
\n{w}_{L^{2}}^{2}\lesssim & \frac{\n{F}_{H^{-1}}^{2}}{\nu|\alpha|(y_{2}-y_{1})\delta}+\frac{\n{F}_{H^{-1}}^{2}}{|\alpha|^2(y_{2}-y_{1})^{2}\delta^{4}}
+\frac{\delta^{2}\n{F}_{H^{-1}}^{2}}{\nu^{2}}\lesssim \frac{\delta^{2}\n{F}_{H^{-1}}^{2}}{\nu^{2}}.
\end{align}
Hence, we complete the proof of \eqref{equ:re,estimate,L2,lemma}--\eqref{equ:re,estimate,H-1,lemma} for $\n{w}_{L^{2}}$. Moreover, by \eqref{equ:L2,w,F} and \eqref{equ:H-1,w,F}, we actually have
\begin{align}\label{equ:in,ex,L2,H-1}
\mathcal{E}^{in}(w)+\mathcal{E}^{ex}(w)\lesssim \min\lr{\frac{\delta^{2}}{\nu^{2}}\n{F}_{H^{-1}}^{2},\frac{\delta^{4}}{\nu^{2}}\n{F}_{L^{2}}^{2}}.
\end{align}
 We are left to deal with $\n{w}_{L^{1}}$ and $\n{(\pa_{y},|\al|)w}_{L^{2}}$.

\vspace{1em}
\textbf{Estimate on $\n{w}_{L^{1}}$.}
By H\"{o}lder inequality, exterior $L^{1}$ estimate \eqref{equ:estimate,w,L1,ex}, and \eqref{equ:in,ex,L2,H-1}, we have
\begin{align*}
\n{w}_{L^{1}}^{2}\leq& (y_{2}-y_{1})\n{w}_{L^{2}(y_{1},y_{2})}^{2}+\n{w}_{L^{1}((-1,1)\setminus (y_{1},y_{2}))}^{2}\\
\lesssim & (y_{2}-y_{1})\lrs{\mathcal{E}^{in}(w)+\mathcal{E}^{ex}(w)}\\
\lesssim&(y_{2}-y_{1})\min\lr{\frac{\delta^{2}}{\nu^{2}}\n{F}_{H^{-1}}^{2},\frac{\delta^{4}}{\nu^{2}}\n{F}_{L^{2}}^{2}}.
\end{align*}

\textbf{Estimate on $\n{(\pa_{y},|\al|)w}_{L^{2}}$.} By energy estimate \eqref{equ:energy estimate,real part,w,nu} and \eqref{equ:L2,w,F}, we have
\begin{align*}
\n{(\pa_{y},|\al|)w}_{L^{2}}^{2}\lesssim  \n{w}^2_{L^2}+\nu^{-1}|\operatorname{Re}\lra{F,w}|\lesssim \frac{\n{w}_{L^{2}}\n{F}_{L^{2}}}{\nu}
\lesssim \frac{\delta^{2}}{\nu^{2}}\n{F}_{L^{2}}^{2}.
\end{align*}
Similarly, by \eqref{equ:energy estimate,H1,used} and \eqref{equ:H-1,w,F}, we get
\begin{align*}
\n{(\pa_{y},|\al|)w}_{L^{2}}   \lesssim& \n{w}_{L^2}+\nu^{-1}\n{F}_{H^{-1}}\lesssim \nu^{-1}\n{F}_{H^{-1}}.
\end{align*}
This completes the proof.
\end{proof}

Now we are in a position to prove resolvent estimates.
\begin{proposition}[Resolvent estimates for $F\in L^{2}$ and $F\in H^{-1}$]\label{lemma:re,w,full}
For $\la\in \R$, we have
\begin{align}
\nu^{\frac{1}{3}}|\al|^{\frac{2}{3}}\lrs{|\la-U(0)|^{\frac{1}{2}}+\nu^{\frac{1}{4}}|\al|^{-\frac{1}{4}}}^{\frac{2}{3}}\n{w}_{L^{2}}\lesssim & \n{F}_{L^{2}},\label{equ:re,L2,w,full}\\
\nu^{\f23}|\alpha|^{\f13}\lrs{|\lambda-U(0)|^{\f12}+\nu^{\f14}|\alpha|^{-\f14}}^{\f13}\n{(\pa_{y},|\al|)w}_{L^{2}}\lesssim&
\n{F}_{L^{2}},\label{equ:re,L2,wH1,full}\\
\nu^{\frac{2}{3}}|\al|^{\frac{1}{3}}\lrs{|\la-U(0)|^{\frac{1}{2}}+\nu^{\frac{1}{4}}|\al|^{-\frac{1}{4}}}^{\frac{1}{3}}\n{w}_{L^{2}}\lesssim & \n{F}_{H^{-1}}.\label{equ:re,H-1,w,full}
\end{align}
In particular, we have
\begin{align}
\nu^{\f38}|\alpha|^{\f58}\lrs{\n{w}_{L^1}+|\alpha|^{\f12}\|u\|_{L^2}}+\nu^{\f34}|\alpha|^{\f14}\n{(\pa_{y},|\al|)w}_{L^{2}}+\nu^{\f12}|\alpha|^{\f12}\n{w}_{L^2} \lesssim&\n{F}_{L^2},\label{equ:re,L2,w,main}\\
\nu\n{(\pa_{y},|\al|)w}_{L^2}+\nu^{\frac{3}{4}}|\al|^{\frac{1}{4}}\n{w}_{L^{2}}\lesssim& \n{F}_{H^{-1}}.\label{equ:re,H-1,w,main}
\end{align}
\end{proposition}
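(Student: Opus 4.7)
The plan is to establish \eqref{equ:re,L2,w,full}--\eqref{equ:re,H-1,w,full} by piecing together the bulk resolvent estimate of Lemma~\ref{lemma:resol,w,L2,H-1,delta} with the endpoint energy estimates of Lemma~\ref{lemma:energy estimate,real part}, and then to read off \eqref{equ:re,L2,w,main}--\eqref{equ:re,H-1,w,main} from the worst-case prefactors. Write $M:=|\la-U(0)|^{1/2}+\nu^{1/4}|\al|^{-1/4}$, so the three targets become $\nu^{1/3}|\al|^{2/3}M^{2/3}\|w\|_{L^2}\lesssim\|F\|_{L^2}$ together with the two $\nu^{2/3}|\al|^{1/3}M^{1/3}$-type bounds. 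The symmetric-flow comparison $y_2^2\sim|\la-U(0)|$ from Lemma~\ref{lemma:proper,symme,flow} gives $y_2-y_1=2y_2\sim|\la-U(0)|^{1/2}$ for $\la\in[U(0),U(1)]$, and the choice $\delta=\nu^{1/3}|\al|^{-1/3}(y_2-y_1)^{-1/3}$ in Lemma~\ref{lemma:resol,w,L2,H-1,delta} realises $\nu/\delta^2=\nu^{1/3}|\al|^{2/3}(y_2-y_1)^{2/3}$ and $\nu/\delta=\nu^{2/3}|\al|^{1/3}(y_2-y_1)^{1/3}$; the lemma's hypothesis $y_2\ge\nu^{1/4}|\al|^{-1/4}$ is equivalent to $\la\ge U(0)+c\nu^{1/2}|\al|^{-1/2}$, and the auxiliary one $\nu|\al|^2\lesssim M$ is equivalent to $|\al|\delta\lesssim 1$.

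I split the spectrum into three regimes. In the \emph{boundary-layer regime} $\la\le U(0)+\nu^{1/2}|\al|^{-1/2}$, one has $M\sim\nu^{1/4}|\al|^{-1/4}$, and \eqref{equ:energy estimate,im,w,0} produces $\nu^{1/2}|\al|^{1/2}\|w\|_{L^2}^2\lesssim|\langle F,w/U''\rangle|$; bounding the right-hand side by $\|F\|_{L^2}\|w\|_{L^2}$ yields the $L^2$-target $\nu^{1/2}|\al|^{1/2}\|w\|_{L^2}\lesssim\|F\|_{L^2}$, while bounding it by $\|F\|_{H^{-1}}\|(\pa_y,|\al|)w\|_{L^2}$ and closing through \eqref{equ:energy estimate,real part,w,H-1} with Young's inequality yields the $H^{-1}$-target $\nu^{3/4}|\al|^{1/4}\|w\|_{L^2}\lesssim\|F\|_{H^{-1}}$; the derivative version follows by combining \eqref{equ:energy estimate,real part,w,nu} in the form $\nu\|(\pa_y,|\al|)w\|_{L^2}^2\lesssim\|F\|_{L^2}\|w\|_{L^2}$ with the $L^2$ bound just obtained. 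In the \emph{bulk regime} $U(0)+\nu^{1/2}|\al|^{-1/2}<\la<U(1)$ with $\nu|\al|^2\lesssim M$, Lemma~\ref{lemma:resol,w,L2,H-1,delta} with $y_2-y_1\sim M$ delivers all three target bounds directly. In the \emph{upper-endpoint regime} $\la\ge U(1)-\nu^{1/3}|\al|^{-1/3}$ (where $M\sim 1$), estimate \eqref{equ:energy estimate,im,w,1} gives $\nu^{1/3}|\al|^{2/3}\|w\|_{L^2}\lesssim\|F\|_{L^2}$ and, by the same Young trick, $\nu^{3/4}|\al|^{1/4}\|w\|_{L^2}\lesssim\|F\|_{H^{-1}}$. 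The residual sub-case $\nu|\al|^2\gtrsim M$, in which the bulk lemma's auxiliary hypothesis fails, is covered by the plain dissipation bound $\nu|\al|^2\|w\|_{L^2}\lesssim\|F\|_{L^2}$ extracted from \eqref{equ:energy estimate,real part,w,nu}, which dominates $\nu^{1/3}|\al|^{2/3}M^{2/3}$ because $M<\nu|\al|^2$.

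The particular estimates \eqref{equ:re,L2,w,main}--\eqref{equ:re,H-1,w,main} on $\|w\|_{L^2}$ and $\|(\pa_y,|\al|)w\|_{L^2}$ then follow by substituting the uniform lower bound $M\ge\nu^{1/4}|\al|^{-1/4}$ into \eqref{equ:re,L2,w,full}--\eqref{equ:re,H-1,w,full}; the uniform $\nu\|(\pa_y,|\al|)w\|_{L^2}\lesssim\|F\|_{H^{-1}}$ comes directly from Lemma~\ref{lemma:resol,w,L2,H-1,delta}. For $\|w\|_{L^1}$ the proof of that lemma supplies $\|w\|_{L^1}\lesssim(y_2-y_1)^{1/2}\delta^2\nu^{-1}\|F\|_{L^2}=\nu^{-1/3}|\al|^{-2/3}(y_2-y_1)^{-1/6}\|F\|_{L^2}$, and minimising in $y_2-y_1\ge\nu^{1/4}|\al|^{-1/4}$ yields $\nu^{3/8}|\al|^{5/8}\|w\|_{L^1}\lesssim\|F\|_{L^2}$. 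Finally, the velocity bound comes from $\|u\|_{L^2}^2=-\langle\psi,w\rangle\le\|\psi\|_{L^\infty}\|w\|_{L^1}$ together with the Sobolev/Poincar\'e inequality $\|\psi\|_{L^\infty}^2\le 2\|\pa_y\psi\|_{L^2}\|\psi\|_{L^2}\le 2|\al|^{-1}\|u\|_{L^2}^2$ (using $\psi(\pm 1)=0$ and $|\al|\ge 1$), giving $|\al|^{1/2}\|u\|_{L^2}\lesssim\|w\|_{L^1}$. The main obstacle is the bookkeeping at the two interfaces $\la=U(0)+\nu^{1/2}|\al|^{-1/2}$ and $\nu|\al|^2=M$: one must verify that the three regimes tile $\R$ without gaps and that each regime produces a prefactor no larger than the sharp $\nu^{1/3}|\al|^{2/3}M^{2/3}$ (or its $M^{1/3}$ analogue).
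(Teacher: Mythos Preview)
Your overall strategy is the same as the paper's---splitting $\la\in\R$ according to the position of $\la$ relative to $[U(0),U(1)]$ and whether the auxiliary hypothesis $\nu|\al|^2\lesssim M$ holds, and invoking Lemma~\ref{lemma:resol,w,L2,H-1,delta} in the bulk and Lemma~\ref{lemma:energy estimate,real part} at the endpoints. However, two points need repair.

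\textbf{(a) The endpoint claims $M\sim\nu^{1/4}|\al|^{-1/4}$ and $M\sim 1$ are false for unbounded $\la$.} In your boundary-layer regime $\la\le U(0)+\nu^{1/2}|\al|^{-1/2}$ you assert $M\sim\nu^{1/4}|\al|^{-1/4}$, but this fails as soon as $\la<U(0)-\nu^{1/2}|\al|^{-1/2}$, where $M\sim|\la-U(0)|^{1/2}$ can be arbitrarily large. Consequently your bound $\nu^{1/2}|\al|^{1/2}\|w\|_{L^2}\lesssim\|F\|_{L^2}$, while correct, does not imply the sharper target $\nu^{1/3}|\al|^{2/3}M^{2/3}\|w\|_{L^2}\lesssim\|F\|_{L^2}$ there. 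The fix is to use the \emph{full} content of \eqref{equ:energy estimate,im,w,0}, namely both the $|\al||\la-U(0)|$ and $\nu^{1/2}|\al|^{1/2}$ terms, and then apply Young's inequality $\nu^{1/3}|\al|^{2/3}M^{2/3}\lesssim |\al||\la-U(0)|+\nu^{1/2}|\al|^{1/2}$ to recover the target. The same issue and fix apply in the upper-endpoint regime using both terms of \eqref{equ:energy estimate,im,w,1} (with $|\la-U(0)|^{1/2}\lesssim|\la-U(1)|^{1/2}+1$).

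\textbf{(b) The $\|w\|_{L^1}$ bound is only argued in the bulk regime.} Your $L^1$ argument via Lemma~\ref{lemma:resol,w,L2,H-1,delta} is fine where that lemma applies, but it says nothing when $\la\le U(0)+\nu^{1/2}|\al|^{-1/2}$ (where no $y_1,y_2$ exist) or when $\la\ge U(1)$. The paper handles $\la\ge U(1)$ trivially by $\|w\|_{L^1}\le\sqrt{2}\,\|w\|_{L^2}\lesssim\nu^{-1/3}|\al|^{-2/3}\|F\|_{L^2}$, but for $\la\le U(0)+\nu^{1/2}|\al|^{-1/2}$ a genuine additional step is needed: split $\|w\|_{L^1}=\int_{|y|<\theta}+\int_{|y|>\theta}$, bound the first piece by $\theta^{1/2}\|w\|_{L^2}$ and the second by $\|\sqrt{U-U(0)}\,w\|_{L^2}\|(U-U(0))^{-1/2}\|_{L^2(|y|>\theta)}\lesssim\theta^{-1/2}\|\sqrt{U-U(0)}\,w\|_{L^2}$, then invoke the weighted energy estimate \eqref{equ:L2,w,resolvent,0,full,proof} and optimise in $\theta$. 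This is the missing ingredient.
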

\begin{proof}
We divide the proof into the following cases.\smallskip

{\it Case 1.} $\lambda\leq  U(0)+\nu^{\frac{1}{2}}|\al|^{-\frac{1}{2}}.$

By Young's inequality and resolvent estimate \eqref{equ:energy estimate,im,w,0},
we have
\begin{align*}
&\nu^{\frac{1}{3}}|\al|^{\frac{2}{3}}\lrs{|\la-U(0)|^{\frac{1}{2}}+\nu^{\frac{1}{4}}|\al|^{-\frac{1}{4}}}^{\frac{2}{3}}\n{w}_{L^{2}}^{2}\\
\lesssim& |\al||\la-U(0)|\n{w}_{L^{2}}^{2}+\nu^{\frac{1}{2}}|\al|^{\frac{1}{2}}\n{w}_{L^{2}}^{2}\lesssim \babs{\blra{F,\frac{w}{U''}}},
\end{align*}
which implies \eqref{equ:re,L2,w,full}. Then using that
\begin{align}\label{equ:energy estimate,wH1,w,used}
\n{(\partial_y,|\alpha|)w}_{L^2}\lesssim& \n{w}_{L^2}+\nu^{-1}\n{F}_{H^{-1}},
\end{align}
 we obtain \eqref{equ:re,L2,wH1,full} and \eqref{equ:re,H-1,w,full}.\smallskip

{\it Case 2.} $\lambda\geq U(1).$

By Young's inequality and resolvent estimate \eqref{equ:energy estimate,im,w,1},
we have
\begin{align*}
&\nu^{\frac{1}{3}}|\al|^{\frac{2}{3}}\lrs{|\la-U(0)|^{\frac{1}{2}}+\nu^{\frac{1}{4}}|\al|^{-\frac{1}{4}}}^{\frac{2}{3}}\n{w}_{L^{2}}^{2}\\
\lesssim&\nu^{\frac{1}{3}}|\al|^{\frac{2}{3}}|\la-U(1)|^{\frac{1}{3}}\n{w}_{L^{2}}^{2}+\nu^{\frac{1}{3}}|\al|^{\frac{2}{3}}\n{w}_{L^{2}}^{2}\\
\lesssim& |\al||\la-U(1)|\n{w}_{L^{2}}^{2}+\nu^{\frac{1}{3}}|\al|^{\frac{2}{3}}\n{w}_{L^{2}}^{2}\lesssim \babs{\blra{F,\frac{w}{U''}}},
\end{align*}
which, together with \eqref{equ:energy estimate,wH1,w,used}, implies \eqref{equ:re,L2,w,full}, \eqref{equ:re,L2,wH1,full}, and \eqref{equ:re,H-1,w,full}.\smallskip

	{\it Case 3.1.} $\lambda \in (U(0)+\nu^{\frac{1}{2}}|\al|^{-\frac{1}{2}},U(1))$, $\nu|\al|^{2}\lesssim |\la-U(0)|^{\frac{1}{2}}+\nu^{\frac{1}{4}}|\al|^{-\frac{1}{4}}$.

In the case, we use Lemma \ref{lemma:resol,w,L2,H-1,delta} to get
\begin{align}
\frac{\nu}{\delta^{2}}\n{w}_{L^{2}}+\frac{\nu(y_{2}-y_{1})^{\frac{1}{2}}}{\delta^{2}}\n{w}_{L^{1}}
+\frac{\nu}{\delta}\n{(\pa_{y},|\al|)w}_{L^{2}}\lesssim& \n{F}_{L^{2}},\label{equ:re,estimate,L2,lemma,proof}\\
\frac{\nu}{\delta}\n{w}_{L^{2}}+\frac{\nu(y_{2}-y_{1})^{\frac{1}{2}}}{\delta}\n{w}_{L^{1}}\lesssim& \n{F}_{H^{-1}}.
\end{align}
Due to that $\delta=\nu^{\frac{1}{3}}|\al|^{-\frac{1}{3}}(y_{2}-y_{1})^{-\frac{1}{3}}$, $\nu^{\frac{1}{4}}|\al|^{-\frac{1}{4}}\leq y_{2}\leq 1$, $(y_{2}-y_{1})^{2}\sim \la-U(0)$, we arrive at the desired resolvent estimates \eqref{equ:re,L2,w,full}, \eqref{equ:re,L2,wH1,full}, and \eqref{equ:re,H-1,w,full}.\smallskip

{\it Case 3.2.} $\lambda \in (U(0)+\nu^{\frac{1}{2}}|\al|^{-\frac{1}{2}},U(1))$, $\nu|\al|^{2}\gtrsim |\la-U(0)|^{\frac{1}{2}}+\nu^{\frac{1}{4}}|\al|^{-\frac{1}{4}}$.

Noticing that $\nu|\al|^{2}\gtrsim \nu^{\frac{1}{4}}|\al|^{-\frac{1}{4}}$ which implies that $|\al|\gtrsim \nu^{-\frac{1}{3}}\gg 1$, by energy estimates \eqref{equ:energy estimate,real part,w,nu}--\eqref{equ:energy estimate,real part,w,H-1}, we arrive at
\begin{align}\label{equ:energy estimate,real part,strong}
\nu|\al|\n{(\pa_{y},|\al|)w}_{L^{2}}\lesssim&\n{F}_{L^{2}},\quad
\nu\n{(\pa_{y},|\al|)w}_{L^{2}}\lesssim \n{F}_{H^{-1}}.
\end{align}
Therefore, using the condition that $\nu|\al|^{2}\gtrsim |\la-U(0)|^{\frac{1}{2}}+\nu^{\frac{1}{4}}|\al|^{-\frac{1}{4}}$, we have
\begin{align*}
&\nu^{\frac{1}{3}}|\al|^{\frac{2}{3}}\lrs{|\la-U(0)|^{\frac{1}{2}}+\nu^{\frac{1}{4}}|\al|^{-\frac{1}{4}}}^{\frac{2}{3}}\n{w}_{L^{2}}\\
\lesssim&\nu^{\frac{1}{3}}|\al|^{\frac{2}{3}}(\nu|\al|^{2})^{\frac{2}{3}}\n{w}_{L^{2}}=\nu|\al|^{2}\n{w}_{L^{2}}\lesssim \n{F}_{L^{2}},
\end{align*}
and complete the proof of \eqref{equ:re,L2,w,full}. In the same way, by using \eqref{equ:energy estimate,real part,strong} we also obtain estimates \eqref{equ:re,L2,wH1,full} and \eqref{equ:re,H-1,w,full}.

For \eqref{equ:re,L2,w,main} and \eqref{equ:re,H-1,w,main}, we are left to deal with $\n{w}_{L^1}+|\alpha|^{\f12}\|u\|_{L^2}$, as other terms can be treated by using \eqref{equ:energy estimate,real part,w,H-1}, \eqref{equ:re,L2,w,full}, \eqref{equ:re,L2,wH1,full}, and \eqref{equ:re,H-1,w,full}. Due to that $|\al|^{\frac{1}{2}}\n{u}_{L^{2}}\lesssim \n{w}_{L^{1}}$, we only need to estimate $\n{w}_{L^{1}}$.

For the case $\lambda\leq  U(0)+\nu^{\frac{1}{2}}|\al|^{-\frac{1}{2}}$, by energy estimate \eqref{equ:L2,w,resolvent,0,full,proof} and \eqref{equ:basic estimate,U,L1}, we have
\begin{equation*}
\begin{aligned}
\n{w}_{L^1}  =&\int_{-\theta}^\theta|w| \mathrm{d} y+\int_{(-1,1) \backslash(-\theta, \theta)}|w| \mathrm{d} y \\
 \leq&(2 \theta)^{\frac{1}{2}}\n{w}_{L^2}+\left\|\sqrt{U(y)-U(0)} w\right\|_{L^2}\left\|\left(U(y)-U(0)\right)^{-\frac{1}{2}}\right\|_{L^2((-1,1) \backslash(-\theta, \theta))} \\
 \lesssim& \theta^{\frac{1}{2}}\left(\nu\left|\al\right|\right)^{-\frac{1}{2}}\n{F}_{L^2}+\theta^{-\frac{1}{2}} \nu^{-\frac{1}{4}}\left|\al\right|^{-\frac{3}{4}}\n{F}_{L^2}
 \lesssim \nu^{-\frac{3}{8}}\left|\al\right|^{-\frac{5}{8}}\n{F}_{L^2},
\end{aligned}
\end{equation*}
where in the last inequality we have optimizied the choice of $\theta$.

For the case $\la\geq U(1)$, by Cauchy-Schwarz inequality, we have that
$$\n{w}_{L^{1}}\lesssim \n{w}_{L^{2}}\lesssim \nu^{-\frac{1}{3}}|\al|^{-\frac{2}{3}}\n{F}_{L^{2}}
\lesssim  \nu^{-\frac{3}{8}}\left|\al\right|^{-\frac{5}{8}}\n{F}_{L^{2}}.$$

For the case $\lambda \in (U(0)+\nu^{\frac{1}{2}}|\al|^{-\frac{1}{2}},U(1))$, we point out that \eqref{equ:re,estimate,L2,lemma,proof} for $F\in L^{2}$ is valid, as we can use a more stronger estimate \eqref{equ:estimate,vp,B,infty,L2} in Lemma \ref{Local-L-inf-est-psi}.
Therefore, we use \eqref{equ:re,estimate,L2,lemma,proof} to arrive at the desired estimate \eqref{equ:re,L2,w,main} for $\n{w}_{L^{1}}$.
\end{proof}

\subsection{Weak-type resolvent estimates}
In this subsection, we shall establish the weak-type resolvent estimate, which will be helpful in deriving more refined estimates for $\n{u}_{L^{2}}$. As a preliminary step, we give basic estimates for the Rayleigh equation, which is given by
\begin{equation}\label{equ:Ray,U,delta}
	\text{\bf Ray}_{\delta}W=:(U(y) - \lambda + i\delta)W-U''(y)\Psi= f, \quad (\partial_y^2 - |\alpha|^2)\Psi=W, \quad \Psi(\pm 1) = 0,
\end{equation}
 where $\lambda \in \mathbb{R}$. For $\delta\in \R\setminus \lr{0}$, we denote by $W = \text{\bf Ray}_\delta^{-1} f$, which is the solution of \eqref{equ:Ray,U,delta}.

\begin{lemma}\label{lemma:ray,est,W,Phi}
	   Let $(W,\Psi)$ solve \eqref{equ:Ray,U,delta} with $f(\pm 1) = 0$ and $\delta\ll 1$. Then it holds that
	   \begin{align}\label{equ:ray,est,W,Phi}
	   	\n{(\partial_y, |\alpha|)\Psi}_{L^2} + \delta^{\frac{1}{2}}\n{W}_{L^2} + \delta^{\frac{3}{2}}(|\lambda-U(0)| + \delta)^{-\frac{1}{2}}\n{\pa_yW}_{L^2} \lesssim  \bbn{(\partial_y, |\al|)\frac{f}{U''}}_{L^2}.
	   \end{align}
\end{lemma}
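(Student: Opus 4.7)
The plan is to reformulate \eqref{equ:Ray,U,delta} by dividing through by $U''$: setting $g=f/U''$, the equation becomes $W=U''(\Psi+g)/(U-\la+i\delta)$ with $\Psi$ solving $(\pa_y^2-|\al|^2)\Psi-U''\Psi/(U-\la+i\delta)=U''g/(U-\la+i\delta)$ and $\Psi(\pm 1)=0$. The key structural fact is that the boundary conditions $\Psi(\pm 1)=g(\pm 1)=0$ (the latter because $f(\pm 1)=0$) allow $\lra{g,\ol W}$ to be converted via integration by parts into an $H^1$-pairing of $g$ with $\Psi$, matching the $\n{(\pa_y,|\al|)g}_{L^2}$ appearing on the right-hand side.

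First, I would test \eqref{equ:Ray,U,delta} against $\ol W/U''$. Using $-\lra{\Psi,\ol W}=\n{(\pa_y,|\al|)\Psi}_{L^2}^2$ (real, by IBP with $\Psi(\pm 1)=0$) and taking the imaginary part yields
\begin{align*}
\delta\int\f{|W|^2}{U''}\,dy=\operatorname{Im}\lra{g,\ol W}=-\operatorname{Im}\int\pa_y g\,\pa_y\ol\Psi\,dy-|\al|^2\operatorname{Im}\int g\ol\Psi\,dy,
\end{align*}
so that $\delta\n{W}_{L^2}^2\lesssim\n{(\pa_y,|\al|)g}_{L^2}\n{(\pa_y,|\al|)\Psi}_{L^2}$. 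Next, I would test the $\Psi$-equation by $\ol\Psi$; the real part reads
\begin{align*}
\n{(\pa_y,|\al|)\Psi}_{L^2}^2+\int\f{(U-\la)U''|\Psi|^2}{(U-\la)^2+\delta^2}\,dy=-\operatorname{Re}\int\f{U''g\ol\Psi\,(U-\la-i\delta)}{(U-\la)^2+\delta^2}\,dy,
\end{align*}
and the imaginary part gives the absorption $\delta\int U''|\Psi|^2/((U-\la)^2+\delta^2)\,dy=\operatorname{Im}\int U''g\ol\Psi/(U-\la+i\delta)\,dy$. The sign-indefinite interior contribution (where $U<\la$) is the main obstruction. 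I handle it by splitting $\Psi=\Psi_1+\Psi_2$ as in \eqref{def-varphi1}--\eqref{def-varphi2}, applying the coercive estimate \eqref{equ:in,coercive,L2} of Lemma~\ref{lemma:in,coercive estimate} to the piece $\Psi_1$ (which vanishes at $\pm 1,y_1,y_2$), and bounding $\Psi_2$ pointwise via \eqref{form-varphi2} and the local $L^\infty$ control \eqref{equ:local Linfity estimate}. Closing the estimate yields $\n{(\pa_y,|\al|)\Psi}_{L^2}\lesssim\n{(\pa_y,|\al|)g}_{L^2}$ and consequently $\delta^{\f12}\n{W}_{L^2}\lesssim\n{(\pa_y,|\al|)g}_{L^2}$.

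For $\pa_y W$, I differentiate $W=U''(\Psi+g)/(U-\la+i\delta)$ directly. The terms with $\pa_y\Psi$ and $\pa_y g$ in the numerator contribute $O(\delta^{-1})$ times the $L^2$ quantities just controlled; the principal singular term $-U'U''(\Psi+g)/(U-\la+i\delta)^2$ is handled through the sharp integral
\begin{align*}
\int\f{|U'|^2}{|U-\la+i\delta|^4}\,dy\lesssim\f{y_2-y_1}{\delta^3}\sim\f{|\la-U(0)|^{\f12}+\delta}{\delta^3},
\end{align*}
obtained by the substitution $s=U'(y_i)(y-y_i)$ near each critical point $y_i$, combined with an $L^\infty$ bound on $\Psi+g$ deduced from the $H^1$ bounds above via Sobolev embedding. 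This yields the stated bound $\delta^{\f32}(|\la-U(0)|+\delta)^{-\f12}\n{\pa_y W}_{L^2}\lesssim\n{(\pa_y,|\al|)g}_{L^2}$. The main obstacle is the middle step: the real part of the $\ol W/U''$ test turns out to be tautologically satisfied once $W=U''(\Psi+g)/(U-\la+i\delta)$ is substituted back, so the $H^1$ bound on $\Psi$ must come from the $\ol\Psi$ test on the $\Psi$-equation, where the sign-indefinite interior term $(U-\la)U''|\Psi|^2/((U-\la)^2+\delta^2)$ forces the decomposition $\Psi=\Psi_1+\Psi_2$ and a careful invocation of the coercive estimate of Lemma~\ref{lemma:in,coercive estimate}.
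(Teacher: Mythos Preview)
Your scheme for $\delta^{1/2}\n{W}_{L^2}$ via the imaginary part of $\langle f/U'',\ol W\rangle$ is exactly what the paper does. For $\n{(\pa_y,|\al|)\Psi}_{L^2}$ the paper takes a much shorter route: rewriting \eqref{equ:Ray,U,delta} as $(U-\la+i\delta)(\pa_y^2-|\al|^2)\Psi-U''\Psi=f$ and invoking Lemma~\ref{lemma:proper,phi,ray,equ} (the Rayleigh bound from \cite{WZZ-APDE}) immediately gives $\n{(\pa_y,|\al|)\Psi}_{L^2}\lesssim\n{(\pa_y,|\al|)f}_{L^2}$. Your proposed re-derivation via the $\Psi_1+\Psi_2$ split and Lemma~\ref{lemma:in,coercive estimate} is plausible but is essentially a from-scratch reproof of that nontrivial lemma; the sketch as written does not close, since \eqref{equ:in,coercive,L2} bounds quantities \emph{from below} by $\langle(\la-U)W/U'',\chi W\rangle+\langle\Psi_1,\chi W\rangle$ rather than controlling the sign-indefinite term in your $\ol\Psi$-test.

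There is a concrete quantitative gap in your $\pa_y W$ step. Your chosen splitting --- putting $L^2$ on $U'/(U-\la+i\delta)^2$ and $L^\infty$ on $\Psi+g$ --- yields at best
\[
\n{\pa_y W}_{L^2}\lesssim\delta^{-3/2}\bigl(|\la-U(0)|^{1/2}+\delta\bigr)^{1/2}\n{(\pa_y,|\al|)g}_{L^2},
\]
whereas the lemma requires the factor $(|\la-U(0)|+\delta)^{1/2}$. In the regime $\delta\ll|\la-U(0)|^{1/2}\ll 1$ these differ: $|\la-U(0)|^{1/2}\not\lesssim|\la-U(0)|+\delta$. (Your integral estimate also breaks down when $\la\to U(0)$, since the substitution $s=U'(y_i)(y-y_i)$ degenerates as $U'(y_i)\to 0$; a direct computation at $\la=U(0)$ gives $\int|U'|^2/|U-\la+i\delta|^4\,dy\sim\delta^{-5/2}$, not $\delta^{-2}$.) The fix is to observe that your singular term equals $U'W/(U-\la+i\delta)$ and to reverse the H\"older splitting: use
\[
\bbn{\frac{U'}{U-\la+i\delta}}_{L^\infty}\lesssim\delta^{-1}\bigl(|\la-U(0)|+\delta\bigr)^{1/2}
\]
together with the already-established bound $\n{W}_{L^2}\lesssim\delta^{-1/2}\n{(\pa_y,|\al|)g}_{L^2}$. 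This is precisely how the paper proceeds, writing $\pa_y W=(f'+U''\pa_y\Psi+U'''\Psi)/(U-\la+i\delta)-U'W/(U-\la+i\delta)$ and never passing through the $L^\infty$ bound on $\Psi+g$.
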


\begin{proof}  First, by Lemma \ref{lemma:proper,phi,ray,equ}, we have
that $\n{(\partial_y, |\alpha|)\Psi}_{L^2}\lesssim \bbn{(\partial_y, |\al|)\frac{f}{U''}}_{L^2}$,
 and hence obtain
\begin{align}\label{lemma:est,Phi,H1}
          \n{W}_{H^{-1}}=\n{(\partial_y, |\alpha|)\Psi}_{L^2} \lesssim \n{(\partial_y, |\alpha|)f}_{L^2}
          \lesssim \bbn{(\partial_y, |\al|)\frac{f}{U''}}_{L^2}.
\end{align}
We take the inner product between \eqref{equ:Ray,U,delta} and $\frac{W}{U''}$ to yield
\begin{align*}
        \int_{-1}^{1} \frac{U - \la}{U''}|W|^2 dy + \n{(\partial_y, |\alpha|)\Psi}_{L^2}^2 + i\delta\bbn{\frac{W}{\sqrt{U''}}}_{L^2}^2 = \bblra{f,\frac{W}{U''}} ,
\end{align*}
which together with \eqref{lemma:est,Phi,H1} implies
\begin{align}\label{lemma:est,W,L2}
        \delta\n{W}_{L^2}^2\lesssim
        \bbabs{\operatorname{Im}\bblra{f,\frac{W}{U''}}}\lesssim \bbn{\frac{f}{U''}}_{H_{0}^1}\n{W}_{H^{-1}}
         \lesssim \n{(\partial_y, |\alpha|)f}_{L^2}^2,
\end{align}
where in the second-to-last inequality we have used that $f(\pm 1)=0$.
Noting that
\begin{align*}
	    \pa_yW = \frac{f' +U''\pa_y\Psi+U'''\Psi}{U - \la + i\delta} + \frac{U'W}{U- \la + i\delta},
\end{align*}
by \eqref{lemma:est,Phi,H1} and \eqref{lemma:est,W,L2}, we have
\begin{align*}
         \n{\pa_yW}_{L^2}
         \leq&\delta^{-1}\n{f' +U''\pa_y\Psi+U'''\Psi}_{L^2}+\n{W}_{L^2}\bbn{\frac{U'}{U- \la + i\delta}}_{L^\infty}\\
         \lesssim& \delta^{-1}\n{(\partial_y, |\alpha|)f}_{L^2}+\delta^{-\frac12}\n{(\partial_y, |\alpha|)f}_{L^2}\delta^{-1}
         \big(|\la-U(0)|+\delta\big)^{\frac12}\\
         \lesssim& \delta^{-\frac32}
         \big(|\la-U(0)|+\delta\big)^{\frac12} \bbn{(\partial_y, |\al|)\frac{f}{U''}}_{L^2},
\end{align*}
where in the second-to-last inequality we have used the fact that
$$\bn{\frac{U'}{U- \la + i\delta}}_{L^\infty}\lesssim \delta^{-1}
\big(|\lambda-U(0)|+\delta\big)^{\frac12}.$$
Hence, we complete the proof of \eqref{equ:ray,est,W,Phi}.
\end{proof}

 Next, we establish the weak-type resolvent estimates for $F\in L^2$ and $F\in H^{-1}$, respectively. Here, we set parameters\footnote{The parameter $\delta_{1}$ might be large when $|\la-U(0)|\gg 1$.} as follows
\[
\delta=\nu^{\frac14}|\al|^{-\frac14}\ll 1,\quad  \delta_1=\nu^{\frac13}|\al|^{-\frac13}\big(|\la-U(0)|^{\frac12}+\nu^{\frac14}|\al|^{-\frac14}\big)^{\frac23}=
\delta^{\frac{4}{3}}\lrs{|\la-U(0)|^{\frac12}+\delta}^{\frac{2}{3}}.
\]

\begin{lemma}[Weak-type resolvent estimate for $F\in L^{2}$]\label{lemma:weak,est,w,FL2}
		 Let $(w,\psi)$ be the solution to \eqref{equ:OS,navier-slip} with $F \in L^2$. If $\nu |\al|^2 \lesssim |\la- U(0)|^{\frac{1}{2}} + |\nu|^{\frac{1}{4}}|\al|^{-\frac14}$ and $f(\pm 1) = 0$, then  it holds that
		 \begin{align*}
		  \babs{\blra{\frac{w}{U''}, f }} \lesssim |\al|^{-1} \n{F}_{L^2} \lrs{\n{\text{\bf Ray}_{\delta_1}^{-1} f}_{L^2} + |\nu|^{\frac{1}{2}}|\al|^{-\frac12} \delta_1^{-\frac{1}{2}} \n{\partial_y (\text{\bf Ray}_{\delta_1}^{-1} f)}_{L^2} }.
		 \end{align*}
	\end{lemma}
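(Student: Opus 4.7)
The plan is to exploit the duality that the shifted Rayleigh operator $\text{\bf Ray}_{\delta_1}$ provides with respect to the weighted pairing $\lra{\,\cdot/U'',\cdot\,}$. Set $W=\text{\bf Ray}_{\delta_1}^{-1}f$ and let $\Psi=(\pa_y^2-\al^2)^{-1}W$ with $\Psi(\pm 1)=0$, so that
\begin{align*}
f=(U-\la+i\delta_1)W-U''\Psi.
\end{align*}
Note also that $W(\pm 1)=0$, since $W=(f+U''\Psi)/(U-\la+i\delta_1)$ and $f(\pm 1)=\Psi(\pm 1)=0$. Substituting this expression for $f$ into $\blra{w/U'',f}$, and then using the OS equation \eqref{equ:OS,navier-slip} in the rearranged form
\begin{align*}
\frac{(U-\la)w}{U''}=\psi+\frac{F+\nu(\pa_y^2-\al^2)w}{i\al U''}
\end{align*}
inside the resulting $\blra{(U-\la)w/U'',W}$ piece, the self-adjointness identity $\lra{\psi,W}=\lra{w,\Psi}$ (immediate from integration by parts, using $\psi(\pm1)=\Psi(\pm1)=0$) cancels the two Rayleigh-type contributions and leaves the clean identity
\begin{align*}
\blra{\frac{w}{U''},f}=\frac{1}{i\al}\blra{\frac{F}{U''},W}+\frac{\nu}{i\al}\blra{\frac{(\pa_y^2-\al^2)w}{U''},W}-i\delta_1\blra{\frac{w}{U''},W}.
\end{align*}

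I then estimate the three terms separately. The first is bounded directly by $|\al|^{-1}\n{F}_{L^2}\n{W}_{L^2}$. For the third, observe that $\nu^{1/3}|\al|^{2/3}(|\la-U(0)|^{1/2}+\nu^{1/4}|\al|^{-1/4})^{2/3}=|\al|\delta_1$, so that the $L^2$ resolvent bound \eqref{equ:re,L2,w,full} reads $\n{w}_{L^2}\lesssim (|\al|\delta_1)^{-1}\n{F}_{L^2}$, which again yields a contribution of the form $|\al|^{-1}\n{F}_{L^2}\n{W}_{L^2}$. The main work is the middle term. Integrating by parts twice (boundary terms vanish thanks to $W(\pm1)=0$) reduces it, up to smooth factors from $\pa_y(1/U'')$, to a combination of $\n{\pa_y w}_{L^2}\n{\pa_y W}_{L^2}$, $\n{\pa_y w}_{L^2}\n{W}_{L^2}$, and $|\al|^2\n{w}_{L^2}\n{W}_{L^2}$. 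Applying the $H^1$ resolvent bound \eqref{equ:re,L2,wH1,full} in the form $\n{(\pa_y,|\al|)w}_{L^2}\lesssim \nu^{-1/2}|\al|^{-1/2}\delta_1^{-1/2}\n{F}_{L^2}$, this term is controlled by
\begin{align*}
|\al|^{-1}\n{F}_{L^2}\lrs{\nu^{\f12}|\al|^{-\f12}\delta_1^{-\f12}\n{\pa_y W}_{L^2}+\nu^{\f12}|\al|^{\f12}\delta_1^{-\f12}\n{W}_{L^2}}.
\end{align*}

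The last and most delicate step is to absorb the residual prefactor $\nu^{1/2}|\al|^{1/2}\delta_1^{-1/2}$ appearing in the $\n{W}_{L^2}$ contribution above, i.e.\ to verify $\delta_1\gtrsim \nu|\al|$. This is precisely where the hypothesis $\nu|\al|^2\lesssim|\la-U(0)|^{1/2}+\nu^{1/4}|\al|^{-1/4}$ enters essentially: writing $\delta_1=\delta^{4/3}(|\la-U(0)|^{1/2}+\delta)^{2/3}$ with $\delta=\nu^{1/4}|\al|^{-1/4}$, the hypothesis gives $\delta_1\gtrsim\delta^{4/3}(\nu|\al|^2)^{2/3}=\nu^{1/3}|\al|^{-1/3}\cdot \nu^{2/3}|\al|^{4/3}=\nu|\al|$, so the offending factor is $\lesssim 1$. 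Assembling the three bounds yields the claim. The two pieces I expect to require real care are (i) isolating the cancellation $\lra{\psi,W}=\lra{w,\Psi}$, which is the analytic expression of the self-adjointness of the nonlocal operator $U''\Delta^{-1}$ under $\lra{\,\cdot/U'',\cdot\,}$ and is what licenses the insertion of the Rayleigh limiting-absorption principle here, and (ii) the sharp scaling check $\delta_1\gtrsim \nu|\al|$, which is tight and uses the full strength of the stated parameter regime.
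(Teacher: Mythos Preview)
Your proof is correct and follows essentially the same approach as the paper: both test the OS equation against $\vp/U''$ with $\vp=\text{\bf Ray}_{\delta_1}^{-1}f$, use the self-adjointness $\lra{\psi,W}=\lra{w,\Psi}$ to cancel the nonlocal terms, invoke the resolvent bounds \eqref{equ:re,L2,w,full}--\eqref{equ:re,L2,wH1,full}, and verify $\nu|\al|^2\lesssim|\al|\delta_1$ from the hypothesis. The only cosmetic difference is that you package the computation as an exact identity whereas the paper writes it as a duality inequality for general $\vp\in H_0^1$ before specializing; one minor slip is that a single integration by parts (not two) suffices for the $\nu\pa_y^2 w$ term, since $W(\pm1)=0$ already kills the boundary contribution.
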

	\begin{proof}
For $\vp \in H_0^1(-1,1)$, we test \eqref{equ:OS,navier-slip} by $\frac{\vp}{U''}$ and use the integration by parts to get
     \begin{align}\label{lemma:est,test,phi,U''}
     	&\babs{\lra{\al (U - \la + i \delta_1) w - \al U''(\pa_y^2 - |\al|^2)^{-1} w, \frac{\vp}{U''}}}\\
     	\leq& \babs{\blra{F, \frac{\vp}{U''}}}+\nu \n{\pa_yw}_{L^2} \bn{\big(\frac{\vp}{U''}\big)'}_{L^2}+ \bbabs{\nu |\al|^2 - \al\delta_1} \n{w}_{L^2} \bn{\frac{\vp}{U''}}_{L^2}.\notag
     \end{align}
	Using the condition that $\nu|\al|^2\lesssim \abs{\la-U(0)}^{\frac12}+\nu^{\frac14} |\al|^{-\frac14}$, we obtain
 \begin{align*}(\nu|\al|^2)^{\frac23}\leq \big(\abs{\la-U(0)}^{\frac12}+\nu^{\frac14} |\al|^{-\frac14}\big)^{\frac23}=\delta_1 |\nu|^{-\frac{1}{3}}|\al|^{\frac13},
  \end{align*}
  which implies $\nu|\al|^2\lesssim \nu^{\frac{1}{3}}|\al|^{\frac{2}{3}}(\delta_1 |\nu|^{-\frac{1}{3}}|\al|^{\frac13})\leq |\al|\delta_{1}$. Hence, we further deduce from \eqref{lemma:est,test,phi,U''} that
     \begin{align*}
	& |\al| \bbabs{\bblra{ \frac{w}{U''}, (U- \la+ i \delta_1) \vp -  U''(\pa_y^2 - |\al|^2)^{-1} \vp }}\\
	\lesssim &\n{F}_{L^2} \bn{\frac{\vp}{U''}}_{L^2}+\nu \n{w'}_{L^2} \bn{\big(\frac{\vp}{U''}\big)'}_{L^2}+ |\al| \delta_{1} \n{w}_{L^2} \bn{\frac{\vp}{U''}}_{L^2}\\
	\lesssim& \n{F}_{L^2} \n{\vp}_{L^2} + \nu \n{\pa_yw}_{L^2} \n{\pa_y\vp}_{L^2}+ |\al| \delta_1 \n{w}_{L^2} \n{\vp}_{L^2},
    \end{align*}
	which along with estimates \eqref{equ:re,L2,w,full} and \eqref{equ:re,L2,wH1,full} in Proposition \ref{lemma:re,w,full} implies
		   \begin{align}\label{lemma:est-wphi,FL2,fina}
		   	& | \al|\babs{\blra{ \frac{w}{U''},(U- \la- i \delta_1)\vp - U'' (\pa_y^2 - |\al|^2)^{-1} \vp}}\\
			\lesssim& \n{F}_{L^2} \n{\vp}_{L^2} + \nu^{\frac12}|\al|^{-\frac12} \delta_1^{-\frac12} \n{F}_{L^2} \n{\pa_y\vp}_{L^2}.\nonumber
		\end{align}
Taking $\vp = \text{\bf Ray}_{\delta_1}^{-1} f$,
		we have finished the proof.
     \end{proof}

\begin{lemma}[Weak-type resolvent estimate for $F\in H^{-1}$]\label{lemma:weak,est,w,FH-1}
	 Let $(w,\psi)$ be the solution to \eqref{equ:OS,navier-slip} with $F\in H^{-1}$. If $\nu|\al|^2\lesssim \abs{\la-U(0)}^{\frac12}+\nu^{\frac14}|\al|^{-\frac14}$, then it holds that
	\begin{align}\label{equ:weak,est,w,FH-1}
		\babs{\blra{\frac{w}{U''}, f }}
		\lesssim &|\al|^{-1}\sum_{j\in \lr{-1,1}}|f(j)| \n{F}_{H^{-1}}\big(|U(1)-\la|+\delta^{\frac43}\big)^{-\frac34}\delta^{-1}\\
		&+ |\al|^{-1} \n{F}_{H^{-1}} \lrs{\bn
		{\text{\bf Ray}_{\delta_1}^{-1} f}_{H^{1}}+ \delta^{-\frac{4}{3}}(|\la-U(0) |^{\frac{1}{2}} + \delta)^{\frac{1}{3}}\bn{\text{\bf Ray}_{\delta_1}^{-1} f}_{L^2}}.\notag
	\end{align}
\end{lemma}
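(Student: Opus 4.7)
The plan is to mimic the strategy of Lemma \ref{lemma:weak,est,w,FL2}, testing the Orr--Sommerfeld equation \eqref{equ:OS,navier-slip} against $\varphi/U''$ with $\varphi:=\text{\bf Ray}_{\delta_1}^{-1}f$, but this time carefully handling (i) that $F$ only lives in $H^{-1}$ and (ii) that $\varphi(\pm 1)=f(\pm 1)/(U(\pm 1)-\lambda+i\delta_1)$ need not vanish. First I would repeat the computation from the proof of the previous lemma. Using the Rayleigh relation $(U-\lambda+i\delta_1)\varphi - U''\Phi=f$ and the identity $\langle w,\Phi\rangle=\langle\psi,\varphi\rangle$ (which holds because $\psi(\pm 1)=\Phi(\pm 1)=0$), one arrives at the key identity
\begin{align*}
i\alpha\blra{\frac{w}{U''},f}=\lra{F,\varphi/U''}+\nu\lra{(\partial_y^2-\alpha^2)w,\varphi/U''}+\alpha\delta_1\lra{w,\varphi/U''}.
\end{align*}
A crucial observation is that by the OS equation itself, $F+\nu(\partial_y^2-\alpha^2)w=i\alpha(U-\lambda)w-i\alpha U''\psi$, so the sum of the first two terms is actually an honest $L^2$-integral and no $H^{-1}$-duality is needed to make sense of it.

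To extract the $H^{-1}$-norm of $F$, I would split $\varphi = \varphi^{(0)}+\varphi^{(b)}$, where $\varphi^{(b)}$ is an explicit corrector localized in $B_{1,2,\rho}$ carrying the trace values $\varphi(\pm 1)$, chosen so that $\varphi^{(0)}/U''\in H_0^1$ (up to the factor $U''\sim 1$). Accordingly, I would split
\begin{align*}
\lra{F,\varphi/U''}+\nu\lra{(\partial_y^2-\alpha^2)w,\varphi/U''}
=\bigl[\lra{F,\varphi^{(0)}/U''}+\nu\lra{(\partial_y^2-\alpha^2)w,\varphi^{(0)}/U''}\bigr]
+i\alpha\lra{(U-\lambda)w-U''\psi,\varphi^{(b)}/U''}.
\end{align*}
For the first bracket one pairs $F$ against $\varphi^{(0)}/U''\in H_0^1$ in the $H^{-1}$-$H_0^1$ duality, integrates by parts in the Laplacian (boundary terms vanish), and uses $\nu\|(\partial_y,|\alpha|)w\|_{L^2}\lesssim \|F\|_{H^{-1}}$ from \eqref{equ:re,H-1,w,main}, giving a contribution $\lesssim \|F\|_{H^{-1}}\|\varphi\|_{H^1}$. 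The last term $\alpha\delta_1\lra{w,\varphi/U''}$ is bounded by combining $|\alpha|\delta_1\|w\|_{L^2}\|\varphi\|_{L^2}$ with the $H^{-1}$ resolvent estimate \eqref{equ:re,H-1,w,full}, i.e.\ $\nu^{2/3}|\alpha|^{1/3}(|\lambda-U(0)|^{1/2}+\delta)^{1/3}\|w\|_{L^2}\lesssim \|F\|_{H^{-1}}$; substituting $\delta_1=\delta^{4/3}(|\lambda-U(0)|^{1/2}+\delta)^{2/3}$ one readily checks that $|\alpha|\delta_1\|w\|_{L^2}\lesssim \delta^{-4/3}(|\lambda-U(0)|^{1/2}+\delta)^{1/3}\|F\|_{H^{-1}}$, matching the second line of the stated inequality.

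The main obstacle is the boundary bracket $i\alpha\lra{(U-\lambda)w-U''\psi,\varphi^{(b)}/U''}$ supported in $B_{1,2,\rho}$, which must produce precisely the sharp factor $|f(\pm 1)|\|F\|_{H^{-1}}\delta^{-1}(|U(1)-\lambda|+\delta^{4/3})^{-3/4}$ after dividing by $|\alpha|$. Here I would exploit that $w(\pm 1)=0$ so $|w(y)|\lesssim(1-|y|)^{1/2}\|\partial_y w\|_{L^2}$ near the endpoints (and similarly for $\psi$), giving $\|w\|_{L^2(B_{1,2,\rho})}\lesssim \rho\|\partial_y w\|_{L^2}\lesssim \rho\nu^{-1}\|F\|_{H^{-1}}$, while on the support of $\varphi^{(b)}$ we have $|U-\lambda|\lesssim |U(1)-\lambda|+\rho$ and $|\varphi^{(b)}(\pm 1)|\lesssim |f(\pm 1)|(|U(1)-\lambda|+\delta_1)^{-1}$. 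Optimizing the scale $\rho\sim \delta^{2}(|U(1)-\lambda|+\delta^{4/3})^{-1/2}$ (which recovers the boundary-layer thickness $\delta^{4/3}$ precisely when $|U(1)-\lambda|\lesssim\delta^{4/3}$) produces the claimed factor after dividing by $|\alpha|$. Assembling the three contributions and using $\|\varphi^{(0)}\|_{H^1}\lesssim\|\varphi\|_{H^1}$ yields \eqref{equ:weak,est,w,FH-1}.
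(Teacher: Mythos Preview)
Your strategy is essentially the paper's: decompose $\varphi=\text{\bf Ray}_{\delta_1}^{-1}f$ into an $H_0^1$-piece and a boundary corrector supported on a scale $\rho$, estimate the $H_0^1$-piece via the $H^{-1}$ resolvent bounds \eqref{equ:re,H-1,w,main} and \eqref{equ:re,H-1,w,full}, handle the corrector by direct pointwise bounds using $w(\pm1)=0$, and optimize $\rho\sim\delta^{2}(|U(1)-\lambda|+\delta^{4/3})^{-1/2}$. The clever observation that $F+\nu(\partial_y^2-\alpha^2)w=i\alpha[(U-\lambda)w-U''\psi]$ lets you treat the boundary bracket as an honest $L^2$ integral, which is exactly what the paper does (they write it as $\langle w/U'',(U-\lambda)\chi_w-U''(\partial_y^2-|\alpha|^2)^{-1}\chi_w\rangle$, the same thing after moving the inverse Laplacian across).

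There is one genuine slip: the final claim $\|\varphi^{(0)}\|_{H^1}\lesssim\|\varphi\|_{H^1}$ is false. Since $\varphi^{(b)}$ is a bump of height $|\varphi(\pm1)|$ and width $\rho$, one has $\|\varphi^{(b)}\|_{H^1}\sim|\varphi(\pm1)|\rho^{-1/2}$, which can be much larger than $\|\varphi\|_{H^1}$. You must track this extra contribution $\|F\|_{H^{-1}}\|\varphi^{(b)}\|_{H^1}$ separately; substituting $|\varphi(\pm1)|\lesssim|f(\pm1)|(|U(1)-\lambda|+\delta^{4/3})^{-1}$ and $\rho^{-1/2}=\delta^{-1}(|U(1)-\lambda|+\delta^{4/3})^{1/4}$ shows it lands exactly on the first line of \eqref{equ:weak,est,w,FH-1}, so nothing is lost. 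The paper carries these $\|\chi_w\|_{H^1}$ and $\|\chi_w\|_{L^2}$ terms explicitly (see the lines just before and after the choice of $\delta_*$). A similar remark applies to the $\alpha\delta_1\langle w,\varphi^{(b)}/U''\rangle$ piece, which is also absorbed after using $(|\lambda-U(0)|^{1/2}+\delta)^{1/3}\lesssim\delta^{-2/3}(|U(1)-\lambda|+\delta^{4/3})^{1/2}$. Finally, the $U''\psi$ part of the boundary bracket (which you dismiss with ``and similarly for $\psi$'') does need a line: since $\psi(\pm1)=0$ one gets $\|\psi\|_{L^2(B_{1,2,\rho})}\lesssim\rho\|u\|_{L^2}$, and combining with $\nu^{1/2}|\alpha|^{1/2}\|u\|_{L^2}\lesssim\|F\|_{H^{-1}}$ shows this term is subdominant by a factor $\delta^{2/3}$.
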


\begin{proof}
Following a procedure similar to the proof of estimate \eqref{lemma:est-wphi,FL2,fina}, we also have
	\begin{align}\label{equ:est,w,phi,U'',F,H-1}
		&|\al| \babs{\blra{ \frac{w}{U''}, (U- \la+ i \delta_1) \vp -U''  (\pa_y^2 - |\al|^2)^{-1} \vp } }\\
		\lesssim& \n{F}_{H^{-1}} \n{\vp}_{H^{1}} + \nu^{-\frac13} |\al|^{\frac{1}{3}}\big(|\la-U(0)|^{\frac12}+\delta\big)^{\frac13} \n{F}_{H^{-1}} \n{\vp}_{L^2},\nonumber
	\end{align}
	where we have used the $H^{-1}$ estimate \eqref{equ:re,H-1,w,full} in Proposition \ref{lemma:re,w,full}.
	
	Without loss of generality, we might assume that $\vp \in H^{1}(-1,1)$ with $\vp(-1) = 0$. Then, we construct an auxiliary function $\vp_1 \in H_0^1(-1,1)$. Specifically, for any $\delta_* \in (0, \delta^{\frac{4}{3}}] \subset (0,1]$, we introduce a smooth function $\chi_{w}(y)$ satisfying
\begin{align*}
&0\leq \chi_{w}(y)\leq 1,\quad \chi_w(1-\delta_{*})=0,\quad \chi_{w}(1)=1,\quad \text{supp}\chi_w \subset [1-\delta_*, 1],\\
		&\n{\chi_w}_{L^\infty} = 1, \quad \n{\chi_w}_{L^2} \leq\delta_*^{\frac{1}{2}},\quad
		\n{\chi_w'}_{L^2} \leq \delta_*^{-\frac12}, \\
		&\n{(\pa_y^2 - |\al|^2)^{-1}\chi_w}
		_{L^\infty} \leq \n{\chi_w}_{L^1(1-\delta_*, 1)} \lesssim \delta_*, \\
		&\n{(U(y)  - \la)\chi_w + U''(\pa_y^2 - |\al|^2)^{-1}\chi_w}_{L^\infty} \lesssim |U(1)-\la| + \delta_*.
	\end{align*}
	Taking $\vp_1(y):=\vp(y)-\vp(1)\chi_w(y)$, by estimate \eqref{equ:est,w,phi,U'',F,H-1}, we deduce
	\begin{align*}
		I:=& \bbabs{\blra{ \frac{w}{U''}, (U - \la + i\delta_1)\vp - U''(\pa_y^2 - |\al|^2)^{-1}\vp }} \\
		\leq& |\vp(1)| \bbabs{\blra{ \frac{w}{U''}, (U - \la + i\delta_1)\chi_w - U''(\pa_y^2 - |\al|^2)^{-1}\chi_w }} \\
		&  + \bbabs{\blra{ \frac{w}{U''}, (U- \la + i\delta_1)\vp_1 - U''(\pa_y^2 - |\al|^2)^{-1}\vp_1 }}\\
\lesssim&  |\vp(1)| \n{w}_{L^1(1-\delta_*, 1)} \n{(U - \la)\chi_w + U''(\pa_y^2 - |\al|^2)^{-1}\chi_w}_{L^\infty} + \delta_1 |\vp(1)|\n{w}_{L^2} \n{\chi_w}_{L^2} \\
		    & + |\al|^{-1} \n{F}_{H^{-1}} \n{\vp_1}_{H^{1}} + |\al|^{-1} \delta^{-\frac{4}{3}} (|\la -U(0) |^{\frac12} + \delta)^{\frac13} \n{F}_{H^{-1}} \n{\vp_1}_{L^2},
\end{align*}
where we have used that $\nu=\delta^{4}|\al|$. Noting that $\delta_{1}=\delta^{\frac{4}{3}}\lrs{|\la-U(0)|^{\frac12}+\delta}^{\frac{2}{3}}$,
by resolvent estimates in Proposition \ref{lemma:re,w,full}, we have
\begin{align*}
\n{w}_{L^1(1-\delta_*, 1)} \leq &\delta_*^{\frac32}\n{\pa_yw}_{L^2}\lesssim\delta_*^{\frac32}\nu^{-1}\n{F}_{H^{-1}}=\delta_*^{\frac32}|\al|^{-1}\delta^{-4}\n{F}_{H^{-1}},\\
\delta_{1}\n{w}_{L^{2}}\lesssim & |\al|^{-1}\delta^{-\frac{4}{3}}(|\la-U(0)|^{\frac{1}{2}}+\delta)^{\frac{1}{3}}\n{F}_{H^{-1}}.
\end{align*}
Together with the above properties of $\chi_w$, we obtain
\begin{align*}
	I	
		      \lesssim& |\al|^{-1}|\vp(1)|\n{F}_{H^{-1}} (|U(1)-\la| + \delta_*)\delta_*^{\frac32} \delta^{-4}  \\
		    &  + |\al|^{-1} |\vp(1)| \n{F}_{H^{-1}}(\n{\chi_w}_{H^{1}} + \delta^{-\frac{4}{3}}(|\la -U(0) |^{\frac12} + \delta)^{\frac13} \n{\chi_w}_{L^2}) \\
		     &  + |\al|^{-1} \n{F}_{H^{-1}} (\n{\vp}_{H^{1}} + \delta^{-\frac{4}{3}}  (|\la -U(0)|^{\frac12} + \delta)^{\frac13} \n{\vp}_{L^2})\\
\lesssim& |\al|^{-1}|\vp(1)|\n{F}_{H^{-1}}\lrs{|U(1)-\la| + \delta_*}\delta_{*}^{\frac{3}{2}}\delta^{-4} \\
&+ |\al|^{-1} |\vp(1)| \n{F}_{H^{-1}}\lrs{\delta_{*}^{-\frac{1}{2}} + \delta^{-\frac{4}{3}}(|\la -U(0) |^{\frac12} + \delta)^{\frac13}
\delta_{*}^{\frac{1}{2}}} \\
		     &  + |\al|^{-1} \n{F}_{H^{-1}} \lrs{\n{\vp}_{H^{1}} + \delta^{-\frac{4}{3}}  (|\la -U(0)|^{\frac12} + \delta)^{\frac13} \n{\vp}_{L^2}}.
\end{align*}
Taking $\delta_* = (|U(1)-\la| + \delta^{\frac43})^{-\frac12} \delta^2\lesssim \delta^{\frac{4}{3}}$, we have
\begin{align*}
(|U(1)-\la| + \delta_*)\delta_{*}^{\frac{3}{2}}\delta^{-4}\lesssim (|U(1)-\la| + \delta^{\frac43})^{\frac14} \delta^{-1}.
\end{align*}
Using that $(|\la -U(0) |^{\frac12} + \delta)^{\frac13} \lesssim \delta^{-\frac23} (|U(1)-\la| + \delta^{\frac43})^{\frac12},$ we get
	\begin{align}\label{equ:weak,est,w,FH-1,proof,I}
		I\lesssim &|\al|^{-1} |\vp(1)| \n{F}_{H^{-1}} (|U(1)-\la| + \delta^{\frac43})^{\frac14} \delta^{-1} \\
		&  + |\al|^{-1} \n{F}_{H^{-1}} (\n{\vp}_{H^{1}} + \delta^{-\frac{4}{3}}  (|\la -U(0) |^{\frac12} + \delta)^{\frac13} \n{\vp}_{L^2}).\notag
	\end{align}
By taking $\vp = \text{\bf Ray}_{\delta_1}^{-1} f$ with $(U(1)-\la + i\delta_1)\vp(1) = f(1)$, we have
$$|\vp(1)| \leq (|U(1)-\la| + \delta_1)^{-1}|f(1)|\lesssim (|U(1)-\la| + \delta^{\frac{4}{3}})^{-1}|f(1)|,$$
where we have used that $\delta^{\frac{4}{3}}\lesssim |U(1)-\la|+\delta_{1}$.
Together with \eqref{equ:weak,est,w,FH-1,proof,I}, we arrive at
	\begin{align*}
		\bbabs{\blra{\frac{w}{U''}, f }}
		\lesssim& |\al|^{-1}|f(1)|\n{F}_{H^{-1}}(|U(1)-\la| + \delta^{\frac43})^{-\frac34}\delta^{-1} \\
		& + |\al|^{-1}\n{F}_{H^{-1}}(\n{\text{\bf Ray}_{\delta_1}^{-1} f}_{H^{1}} + \delta^{-\frac{4}{3}}(|\la-U(0) |^{\frac{1}{2}} + \delta)^{\frac{1}{3}}\n{\text{\bf Ray}_{\delta_1}^{-1} f}_{L^2}),
	\end{align*}
which completes the proof of \eqref{equ:weak,est,w,FH-1} for the case $f(-1)=0$.

For the general case $f(\pm 1)\neq 0$, we consider $\vp_1(y):=\vp(y)-\vp(1)\chi_w(y)-\vp(-1)\chi_{w}(-y)$. Repeating the proof, we arrive at the weak-type resolvent estimate \eqref{equ:weak,est,w,FH-1}.
\end{proof}

 With the help of the above weak-resolvent estimates, we are able to set up the following stronger estimates of $\n{u}_{L^2}$ for $F\in L^2$ and $F\in H^{-1}$, respectively.

    	\begin{proposition} \label{proposition:est,stronger,u,FL2,FH-1}
    		Under the same assumptions of Proposition $\ref{lemma:re,w,full}$, it holds that
    	\begin{align}
    		\nu^{\frac{1}{4}}|\al|^{\frac{3}{4}}\n{u}_{L^{2}}\leq \nu^{\frac{1}{6}} |\al|^{\frac{5}{6}}\Big( |\la- U(0)|^{\frac{1}{2}} + \nu^{\frac{1}{4}}|\al|^{-\f14} \Big)^{\frac{1}{3}} \n{u}_{L^2} \lesssim & \n{F}_{L^2},\label{equ:est,stronger,u,FL2}\\
		\nu^{\f12}|\al|^{\f12}\n{u}_{L^2} \lesssim&  \n{F}_{H^{-1}}.\label{equ:est,stronger,u,FH-1}
	\end{align}
    	\end{proposition}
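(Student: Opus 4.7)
The starting point is the identity
\begin{align*}
\n{u}_{L^2}^2 = \n{(\pa_y,|\al|)\psi}_{L^2}^2 = -\lra{\psi,w} = \bbabs{\bblra{\frac{w}{U''}, U''\psi}},
\end{align*}
which I obtain from integration by parts using $\psi(\pm 1)=0=w(\pm 1)$ and the realness of $U''$. The point of writing $\n{u}_{L^2}^2$ in this form is that it fits the left-hand side of the weak-type resolvent estimates in Lemmas \ref{lemma:weak,est,w,FL2}--\ref{lemma:weak,est,w,FH-1} with the specific choice $f=U''\psi$, which automatically satisfies $f(\pm 1)=0$ (so that, importantly, the boundary sum in Lemma \ref{lemma:weak,est,w,FH-1} vanishes).

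My main case is $\la\in(U(0)+\nu^{\frac12}|\al|^{-\frac12},U(1))$ with $\nu|\al|^2\lesssim |\la-U(0)|^{\frac12}+\nu^{\frac14}|\al|^{-\frac14}$ (Case 3.1 in the proof of Proposition \ref{lemma:re,w,full}). For $F\in L^2$ I apply Lemma \ref{lemma:weak,est,w,FL2} with $f=U''\psi$ and then Lemma \ref{lemma:ray,est,W,Phi}: since $f/U''=\psi$, the right-hand side of \eqref{equ:ray,est,W,Phi} is just $\n{u}_{L^2}$, so I get
\begin{align*}
\n{u}_{L^2}^2 \lesssim |\al|^{-1}\n{F}_{L^2}\lrs{\delta_1^{-\frac12}\n{u}_{L^2}+\nu^{\frac12}|\al|^{-\frac12}\delta_1^{-\frac12}\cdot\delta_1^{-\frac32}\lrs{|\la-U(0)|+\delta_1}^{\frac12}\n{u}_{L^2}}.
\end{align*}
Using $\nu^{\frac12}|\al|^{-\frac12}=\delta^2$, $\delta_1=\delta^{\frac43}(|\la-U(0)|^{\frac12}+\delta)^{\frac23}$, and the elementary bound $|\la-U(0)|+\delta_1\lesssim (|\la-U(0)|^{\frac12}+\delta)^2$, I verify that the viscous correction is dominated by the leading $\delta_1^{-\frac12}\n{u}_{L^2}$. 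Dividing by $\n{u}_{L^2}$ then gives $|\al|\delta_1^{\frac12}\n{u}_{L^2}\lesssim \n{F}_{L^2}$, which is precisely the second inequality in \eqref{equ:est,stronger,u,FL2}; the first inequality follows from $|\la-U(0)|^{\frac12}+\nu^{\frac14}|\al|^{-\frac14}\geq \nu^{\frac14}|\al|^{-\frac14}$. For $F\in H^{-1}$, the boundary sum in Lemma \ref{lemma:weak,est,w,FH-1} vanishes, and by Lemma \ref{lemma:ray,est,W,Phi} both $\n{W}_{H^1}$ and $\delta^{-\frac43}(|\la-U(0)|^{\frac12}+\delta)^{\frac13}\n{W}_{L^2}$ reduce to $\lesssim \delta^{-2}\n{u}_{L^2}$, yielding $\n{u}_{L^2}\lesssim |\al|^{-1}\delta^{-2}\n{F}_{H^{-1}}=\nu^{-\frac12}|\al|^{-\frac12}\n{F}_{H^{-1}}$, i.e., \eqref{equ:est,stronger,u,FH-1}.

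For the remaining parameter regimes I do not need the weak-type machinery. In Cases 1 and 2 ($\la\leq U(0)+\nu^{\frac12}|\al|^{-\frac12}$ or $\la\geq U(1)$), the energy identities \eqref{equ:L2,w,resolvent,0,full,proof} and \eqref{equ:L2,w,resolvent,1,full,proof} already give $|\al|\n{u}_{L^2}^2\lesssim |\lra{F,w/U''}|$; combining with the sharp $L^2$ resolvent bound \eqref{equ:re,L2,w,full} recovers \eqref{equ:est,stronger,u,FL2}, and combining with \eqref{equ:re,H-1,w,main} recovers \eqref{equ:est,stronger,u,FH-1}. In Case 3.2 ($\nu|\al|^2\gtrsim |\la-U(0)|^{\frac12}+\delta$, which forces $|\al|\gtrsim \nu^{-\frac13}$) I use \eqref{equ:energy estimate,real part,strong} in the form $\nu|\al|\n{w}_{L^2}\lesssim \n{F}_{L^2}$ together with $|\al|\n{u}_{L^2}\leq \n{w}_{L^2}$ to deduce $\nu^{\frac12}|\al|^{\frac32}\n{u}_{L^2}\lesssim \n{F}_{L^2}$, which dominates both required estimates thanks to the constraint $|\al|\gtrsim \nu^{-\frac13}$; the analogous $H^{-1}$ version uses $\nu\n{(\pa_y,|\al|)w}_{L^2}\lesssim \n{F}_{H^{-1}}$.

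The main obstacle I anticipate is the algebraic reconciliation in Case 3.1: showing that the viscous correction $\nu^{\frac12}|\al|^{-\frac12}\delta_1^{-\frac12}\n{\pa_y W}_{L^2}$ is controlled by the leading term depends delicately on $|\la-U(0)|+\delta_1\lesssim (|\la-U(0)|^{\frac12}+\delta)^2$, which is the quantitative reason why the hypothesis $\nu|\al|^2\lesssim |\la-U(0)|^{\frac12}+\nu^{\frac14}|\al|^{-\frac14}$ is exactly what is needed. A secondary bookkeeping task is to verify that the three degenerate regimes glue consistently with Case 3.1 to give the sharp estimates in every range; this becomes routine once one observes $\nu^{\frac14}|\al|^{\frac34}\gtrsim 1$ throughout Case 3.2.
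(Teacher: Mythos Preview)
Your approach is essentially the paper's: the core identity $\|u\|_{L^2}^2=|\langle w/U'',U''\psi\rangle|$, the choice $f=U''\psi$ in the weak-type lemmas, and the subsequent appeal to Lemma~\ref{lemma:ray,est,W,Phi} (with $f/U''=\psi$) are exactly what the paper does. Your case split is organized a bit differently—the paper, for $F\in L^2$, only splits on the size of $\nu|\al|^2$ and applies the weak-type machinery uniformly in $\la$, whereas you peel off $\la\le U(0)+\nu^{1/2}|\al|^{-1/2}$ and $\la\ge U(1)$ via the energy identities \eqref{equ:L2,w,resolvent,0,full,proof}--\eqref{equ:L2,w,resolvent,1,full,proof}. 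Both routes are valid.

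There is one genuine arithmetic slip in your Case~3.2. From \eqref{equ:energy estimate,real part,strong} you write ``in the form $\nu|\al|\|w\|_{L^2}\lesssim\|F\|_{L^2}$'' and then combine with $|\al|\|u\|_{L^2}\le\|w\|_{L^2}$ to conclude $\nu^{1/2}|\al|^{3/2}\|u\|_{L^2}\lesssim\|F\|_{L^2}$. But those two inputs only give $\nu|\al|^2\|u\|_{L^2}\lesssim\|F\|_{L^2}$, and passing to $\nu^{1/2}|\al|^{3/2}$ would require $\nu|\al|\gtrsim 1$, which is \emph{not} guaranteed here (only $\nu|\al|^3\gtrsim 1$ is). The fix is to keep the full strength of \eqref{equ:energy estimate,real part,strong}, namely $\nu|\al|\|(\pa_y,|\al|)w\|_{L^2}\lesssim\|F\|_{L^2}$, which yields $\nu|\al|^2\|w\|_{L^2}\lesssim\|F\|_{L^2}$ and hence $\nu|\al|^3\|u\|_{L^2}\lesssim\|F\|_{L^2}$; since $\nu|\al|^3\gtrsim 1$ one then has $\nu^{1/2}|\al|^{3/2}=(\nu|\al|^3)^{1/2}\le\nu|\al|^3$, closing the estimate. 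This is precisely the route the paper takes in its Case~1 for $F\in L^2$ (using \eqref{equ:re,L2,wH1,full} rather than the raw energy estimate, but the numerology is the same).
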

    	\begin{proof}

We first deal with \eqref{equ:est,stronger,u,FL2} for $F\in L^{2}$. We divide the proof into two cases.\smallskip
    		
    		 {\it Case 1 with $F\in L^{2}$.}  $\nu |\al|^2 \gtrsim |\la- U(0)|^{\frac{1}{2}} +\nu^{\f14}|\al|^{-\f14}$.
    		
    		 In this case, we have $\nu|\al|^2\gtrsim \nu^{\f14}|\al|^{-\f14}$ which gives $\nu|\al|^3\gtrsim 1$. Using estimate \eqref{equ:proper,u,L2L1} in Lemma \ref{lemma:proper,u,L2,Linf} and estimate \eqref{equ:re,L2,wH1,full} in Proposition \ref{lemma:re,w,full}, we arrive at
    		\begin{align*}
    			\n{u}_{L^2} & \lesssim |\al|^{-1} \n{w}_{L^2}\lesssim  |\al|^{-2} \n{(\pa_y, |\al|) w}_{L^2}\\
    			&\lesssim \nu^{-\frac{2}{3}} |\al|^{-\frac{1}{3}} |\al|^{-2} \big(|\la - U(0)|^{\frac{1}{2}} +\nu^{\f14}|\al|^{-\f14})^{-\frac{1}{3}} \n{F}_{L^2}\\
    			&= \nu^{-\frac{1}{6}} |\al|^{-\frac{5}{6}} (|\la - U(0)|^{\frac{1}{2}} +\nu^{\f14}|\al|^{-\f14})^{-\frac{1}{3}} \n{F}_{L^2} \times (\nu |\al|^3 )^{-\frac{1}{2}}\\
    			&\lesssim \nu^{-\frac{1}{6}} |\al|^{-\frac{5}{6}} (|\la - U(0)|^{\frac{1}{2}} +\nu^{\f14}|\al|^{-\f14})^{-\frac{1}{3}} \n{F}_{L^2},
    		\end{align*}
    		which implies \eqref{equ:est,stronger,u,FL2}.\smallskip
    	
    	{\it Case 2 with $F\in L^{2}$.}  $\nu |\al|^2 \lesssim |\la- U(0)|^{\frac{1}{2}} +\nu^{\f14}|\al|^{-\f14}$.

     	Taking $f = U''\psi$ in Lemma \ref{lemma:weak,est,w,FL2}, we deduce that
    	\begin{align*}
    		\n{u}_{L^2}^2 =& |\lra{ w, \psi}|=\babs{\blra{ \frac{w}{U''}, U''\psi}}\\
    \lesssim&|\al|^{-1} \n{F}_{L^2} \lrs{\n{\text{\bf Ray}_{\delta_1}^{-1} (U''\psi)}_{L^2} + |\nu|^{\frac{1}{2}}|\al|^{-\f12} \delta_1^{-\frac{1}{2}} \n{\partial_y (\text{\bf Ray}_{\delta_1}^{-1} (U''\psi))}_{L^2} }.
    	\end{align*}
    Using Lemma \ref{lemma:ray,est,W,Phi}, we have
    	\begin{align*}
\n{u}_{L^2}^2
    		\lesssim& |\al|^{-1} \n{F}_{L^2}  \big( \delta_1^{-\frac{1}{2}}  + \nu^{\f12}|\al|^{-\f12}
    		\delta_1^{-2} (|U(0) - \la| + \delta_1)^{\frac{1}{2}} \big) \n{(\pa_{y},|\al|)\psi}_{L^2}.
    	\end{align*}
 Noting that $\delta_1 = \nu^{\f13}|\al|^{-\f13}(|\la-U(0) |^{\frac{1}{2}} + \nu^{\f14}|\al|^{-\f14})^{\frac{2}{3}}$, we have
  \begin{align*}
  |U(0) - \la| + \delta_1 \lesssim (|\la - U(0)|^{\frac{1}{2}} + \nu^{\f14}|\al|^{-\f14})^2,\quad \nu^{\f12}{\al}^{-\f12} \delta_1^{-\frac{3}{2}} (|U(0)-\la| + \delta_1)^{\frac{1}{2}}\lesssim 1,
  \end{align*}
   which gives
    \begin{align*}
    \n{u}_{L^{2}}^{2}\lesssim |\al|^{-1} \delta_1^{-\frac{1}{2}} \n{F}_{L^2} \n{u}_{L^2}.
    \end{align*}
   Hence, we complete the proof of \eqref{equ:est,stronger,u,FL2}.

        Next, we prove \eqref{equ:est,stronger,u,FH-1} for $F\in H^{-1}$. We also consider the following two cases.\smallskip

	{\it Case 1 with $F\in H^{-1}$.} $|U(0) - \la| \gg 1$.
	
    In this case, we can use energy estimates \eqref{equ:energy estimate,im,w,0}--\eqref{equ:energy estimate,im,w,1} in Lemma \ref{lemma:energy estimate,real part} and obtain
    \begin{align*}
    |\al|\n{w}_{L^{2}}^{2}\lesssim \babs{\blra{F,\frac{w}{U''}}}\lesssim \n{F}_{H^{-1}}\n{w}_{H^{1}},
    \end{align*}
    which together with \eqref{equ:energy estimate,real part,w,H-1} in Lemma \ref{lemma:energy estimate,real part} implies that
    \begin{align*}
    \n{w}_{L^{2}}^{2}\lesssim \nu^{-1}|\al|^{-1}\n{F}_{H^{-1}}^{2}.
    \end{align*}
   By \eqref{equ:proper,u,L2L1} in Lemma \ref{lemma:proper,u,L2,Linf}, we arrive at
   \begin{align*}
   \n{u}_{L^2}\lesssim |\al|^{-1}\n{w}_{L^2}\lesssim \nu^{-\frac{1}{2}}|\al|^{-\frac{3}{2}}\n{F}_{H^{-1}}\lesssim\nu^{-\frac{1}{2}}|\al|^{-\frac{1}{2}}\n{F}_{H^{-1}}.
   \end{align*}

	{\it Case 2.1 with $F\in H^{-1}$. } $\abs{U(0) - \la} \lesssim 1$ and $\nu |\al|^2\gtrsim |U(0) - \la|^{\frac{1}{2}} + \nu^{\f14}|\al|^{-\f14}$.
	
	In this case, we have $\nu |\al|^2\geq \nu^{\f14}|\al|^{-\f14}$ which yields $\nu^{-1}\leq |\al|^{3}$. Together with  \eqref{equ:re,H-1,w,main} in Proposition \ref{lemma:re,w,full} and \eqref{equ:proper,u,L2L1} in Lemma \ref{lemma:proper,u,L2,Linf}, we get
	\begin{align*}
		\n{u}_{L^2}\lesssim |\al|^{-1}\n{w}_{L^2}\lesssim |\al|^{-1}\nu^{-\f34}|\al|^{-\f14}\n{F}_{H^{-1}}\lesssim \nu^{-\f12}|\al|^{-\f12}\n{F}_{H^{-1}}.
	\end{align*}
	
	{\it Case 2.2 with $F\in H^{-1}$.} $\abs{U(0) - \la} \lesssim 1$ and $\nu |\al|^2\leq \abs{U(0) - \la}^{\frac{1}{2}} + \nu^{\f14}|\al|^{-\f14}$.
	
In this case, by Lemma \ref{lemma:weak,est,w,FH-1} with $f=U''\psi$, noting that $(U''\psi)(\pm 1)=0$, we have
	\begin{align}\label{equ:est,stronger,u,FH-1,proof,1}
		&\n{u}^2_{L^2}=\babs{\blra{\frac{w}{U''}, U''\psi} }\\
		\lesssim& |\al|^{-1} \n{F}_{H^{-1}}\big(\n
		{\text{\bf Ray}_{\delta_1}^{-1} (U''\psi)}_{H^{1}}+\delta^{-\f43}(\abs{U(0)-\la}^{\f12}+\delta)^{\f13}\n
		{\text{\bf Ray}_{\delta_1}^{-1} (U''\psi)}_{L^2}\big).\notag
	\end{align}
Then with $\delta_{1}=\delta^{\frac{4}{3}}\lrs{|\la-U(0)|^{\f12}+\delta}^{\frac{2}{3}}$ and $\delta=\nu^{\frac{1}{4}}|\al|^{-\frac{1}{4}}$, we use Lemma
 \ref{lemma:ray,est,W,Phi}
with $f = U''\psi$ to arrive at
	\begin{align}\label{equ:est,stronger,u,FH-1,proof,2}
		&\n{\text{\bf Ray}_{\delta_1}^{-1} (U''\psi)}_{H^{1}}+\delta^{-\f43}(\abs{U(0)-\la}^{\f12}+\delta)^{\f13}\n
		{\text{\bf Ray}_{\delta_1}^{-1} (U''\psi)}_{L^2}\\
		\lesssim & \Big(\delta_1^{-\frac{3}{2}} (|\la - U(0)| + \delta_1)^{\frac{1}{2}}  + \delta_1^{-\frac{1}{2}} \delta^{-\frac{4}{3}} (|\la-U(0)|^{\f12}+\delta)^{\f13} \Big)\n{(\pa_{y},|\al|)\psi}_{L^2}\notag\\
\lesssim & \delta_1^{-\frac{1}{2}} \delta^{-\frac{4}{3}} (|\la-U(0)|^{\f12}+\delta)^{\f13} \n{(\pa_{y},|\al|)\psi}_{L^2}
		= \nu^{-\f12}|\al|^{\f12}\n{u}_{L^2},\notag
	\end{align}
where we have used that $(\abs{U(0)-\la}+\delta_1)^{\f12} \lesssim \abs{U(0)-\la}^{\f12}+\delta$. Combining \eqref{equ:est,stronger,u,FH-1,proof,1} and \eqref{equ:est,stronger,u,FH-1,proof,2}, we complete the proof of \eqref{equ:est,stronger,u,FH-1}.
        \end{proof}

 \subsection{Resolvent estimates for $F\in H_{0}^{1}$}\label{sec:Resolvent Estimates for F}

 In this subsection, we aim to set up the resolvent estimates for $F\in H^{1}_0$.

\begin{proposition}\label{proposition:reso,est,u,w,FH1}
 Assume that $\la\in \R$ and $F\in H^{1}_0(-1,1)$. Let $w$ be the solution of the system \eqref{equ:OS,navier-slip}, it holds that
\begin{align}\label{equ:reso,est,u,w,FH1}
 |\al|\n{u}_{L^2}+\nu^{\frac{1}{8}}|\al|^{\frac{7}{8}}\n{u}_{L^\infty}+\nu^{\f12}|\al|^{\f12}\n{(\pa_y,|\al|)w}_{L^2}+\nu^{\f14}|\al|^{\f34}\n{w}_{L^2}\lesssim \n{F}_{H^{1}}.
\end{align}
\end{proposition}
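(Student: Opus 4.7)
The four bounds on the left-hand side of \eqref{equ:reso,est,u,w,FH1} exhibit a uniform scaling shift relative to the estimates of Propositions \ref{lemma:re,w,full} and \ref{proposition:est,stronger,u,FL2,FH-1}: each $\nu$-exponent decreases by $1/4$ and each $|\al|$-exponent increases by $1/4$. My plan is to obtain each of them by a ``lift by one derivative'' argument, exploiting the facts that $F(\pm 1)=0$ (so we can integrate by parts freely against test functions with zero trace) and $\partial_y F\in L^2$. The basic machinery is already in Sections \ref{sec:Rayleigh Equation} and \ref{sec:Orr-Sommerfeld Equation with the Navier-slip Boundary Condition}; only a careful bookkeeping needs to be done.

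The main step is to establish the $\nu$-uniform bound $|\al|\n{u}_{L^2}\lesssim\n{F}_{H^1}$. Following the duality strategy of Proposition \ref{proposition:est,stronger,u,FL2,FH-1}, I begin from $\n{u}_{L^2}^2 = \babs{\blra{w/U'', U''\psi}}$ and invoke a weak-type resolvent identity adapted to $F\in H^1_0$. Concretely, I replay the derivation of \eqref{equ:est,w,phi,U'',F,H-1}: testing \eqref{equ:OS,navier-slip} against $\phi/U''$ with $\phi = \text{\bf Ray}_{\delta_1}^{-1}(U''\psi)$ produces a bound of $|\al|\babs{\blra{w/U'', (U-\la+i\delta_1)\phi - U''\Delta^{-1}\phi}}$ in terms of $|\lra{F,\phi/U''}|$, $\nu|\lra{\partial_y w, \partial_y(\phi/U'')}|$, and $|\al|\delta_1|\lra{w,\phi}|$. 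The first pairing is bounded using $\n{F}_{L^2}\leq\n{F}_{H^1}$ and, if sharper control is needed, a single integration by parts permitted by $F(\pm 1)=0$; the remaining pairings are controlled via the $L^2$ resolvent bounds on $w$ from Proposition \ref{lemma:re,w,full}. Combined with the bounds on $\text{\bf Ray}_{\delta_1}^{-1}(U''\psi)$ from Lemma \ref{lemma:ray,est,W,Phi} and the coercive estimates of Lemma \ref{lemma:in,coercive estimate}, this yields $|\al|\n{u}_{L^2}\lesssim\n{F}_{H^1}$.

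The remaining three bounds are then derived as corollaries. For $\nu^{\f12}|\al|^{\f12}\n{(\partial_y,|\al|)w}_{L^2}$ I differentiate \eqref{equ:OS,navier-slip} once: $\tilde w:=\partial_y w$ satisfies an OS-type system with source $\partial_y F$ plus lower-order corrections involving $U'w$ and $U'''\psi$, together with a boundary-layer-type corrector absorbing the nonzero trace of $\partial_y\psi$. Applying the $L^2$ resolvent bound of Proposition \ref{lemma:re,w,full} to this lifted system with source $\partial_y F\in L^2$ gives the target; the perturbative terms are absorbed via the $F\in L^2$ bounds on $w$ and $\psi$. The bound $\nu^{\f14}|\al|^{\f34}\n{w}_{L^2}\lesssim\n{F}_{H^1}$ then follows by Gagliardo-Nirenberg interpolation between $\n{(\partial_y,|\al|)w}_{L^2}$ and $\n{w}_{H^{-1}}\lesssim|\al|\n{u}_{L^2}\lesssim\n{F}_{H^1}$. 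Finally, $\n{u}_{L^\infty}\lesssim\n{u}_{L^2}^{\f12}\n{\partial_y u}_{L^2}^{\f12}+\n{u}_{L^2}$ and $\n{\partial_y u}_{L^2}\lesssim\n{w}_{L^2}+|\al|\n{u}_{L^2}$ produce $\nu^{\f18}|\al|^{\f78}\n{u}_{L^\infty}\lesssim\n{F}_{H^1}$. The main obstacle is the first step: the $\nu$-uniform bound $|\al|\n{u}_{L^2}\lesssim\n{F}_{H^1}$ does not follow from naive interpolation between the $L^2$ and $H^{-1}$ resolvent bounds, and one must carefully track the critical-layer interaction using the symmetry of $U$ and the coercive structure of the Rayleigh equation.
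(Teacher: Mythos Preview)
Your plan for the crucial step --- the $\nu$-uniform bound $|\al|\n{u}_{L^2}\lesssim\n{F}_{H^1}$ via the weak-type resolvent identity --- does not close in the critical regime $\nu|\al|^3\lesssim 1$ (which is precisely Case~3.2 of the paper's proof). Following your scheme, testing against $\vp=\text{\bf Ray}_{\delta_1}^{-1}(U''\psi)$ produces, besides the pairing $\lra{F,\vp/U''}$, the auxiliary terms $\nu\n{\pa_y w}_{L^2}\n{\pa_y\vp}_{L^2}$ and $|\al|\delta_1\n{w}_{L^2}\n{\vp}_{L^2}$. Even after you exploit $F\in H^1_0$ to reduce the first pairing to $\n{F}_{H^1}\n{u}_{L^2}$, these auxiliary terms are still controlled only through the $L^2$ resolvent bounds on $w$ from Proposition~\ref{lemma:re,w,full}. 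Carrying out the arithmetic with Lemma~\ref{lemma:ray,est,W,Phi}, each contributes $|\al|^{-1}\delta_1^{-1/2}\n{F}_{H^1}\n{u}_{L^2}$ to the bound on $|\al|\n{u}_{L^2}^2$; in the worst case $\delta_1\sim\nu^{1/2}|\al|^{-1/2}$ this is $\nu^{-1/4}|\al|^{-3/4}\n{F}_{H^1}\n{u}_{L^2}$, yielding only $\nu^{1/4}|\al|^{7/4}\n{u}_{L^2}\lesssim\n{F}_{H^1}$ rather than the required $\nu$-uniform estimate. The obstruction is structural: the auxiliary terms encode the dissipation and the $\delta_1$-regularization acting on $w$, and there is no mechanism to transfer the extra derivative on $F$ onto them. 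Invoking Lemma~\ref{lemma:in,coercive estimate} does not help here, as those coercive estimates feed into Proposition~\ref{lemma:re,w,full}, not into an improved bound on the auxiliary pairings.

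The paper's approach in this regime is entirely different: it \emph{decomposes the solution} as $w=w_3+w_4+w_5$, where $w_3$ solves the damped Rayleigh equation $(U-\la+i(-U''\delta_1-\nu|\al|))w_3-U''\psi_3=F/(i\al)$. Since $F\in H^1_0$, Lemma~\ref{lemma:ray,est,W,Phi} applied with $f=F/(i\al)$ gives the $\nu$-\emph{uniform} bound $\n{(\pa_y,|\al|)\psi_3}_{L^2}\lesssim|\al|^{-1}\n{F}_{H^1}$ immediately --- this is where the gain of a full derivative is extracted. The remainders $w_4,w_5$ solve Orr--Sommerfeld equations with sources $\nu\pa_y^2 w_3\in H^{-1}$ and $U''\al\delta_1 w_3\in L^2$, both small by the same lemma; Propositions~\ref{lemma:re,w,full} and~\ref{proposition:est,stronger,u,FL2,FH-1} then show their contributions to $\n{u}_{L^2}$ are also $\lesssim|\al|^{-1}\n{F}_{H^1}$. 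Separately, your plan to obtain $\nu^{1/2}|\al|^{1/2}\n{(\pa_y,|\al|)w}_{L^2}$ by differentiating the equation is unnecessarily hard (the corrector must absorb the trace of $\pa_y w$, not $\pa_y\psi$, and $\pa_y w(\pm1)\neq 0$ destroys the Navier-slip structure): once $|\al|\n{u}_{L^2}\lesssim\n{F}_{H^1}$ is known, the paper simply combines the real-part energy identity \eqref{equ:energy estimate,real part,w,nu} with the interpolation $\n{w}_{L^2}^2\lesssim\n{(\pa_y,|\al|)w}_{L^2}\n{u}_{L^2}$ to obtain \eqref{equ:est,wH1,w'uF,L2}. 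Your interpolation steps for $\n{w}_{L^2}$ and $\n{u}_{L^\infty}$ are correct and match the paper.
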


\begin{proof} First, noticing that
	\begin{align}\label{equ:est,w,L2,wH1,u}
		\n{w}^2_{L^2}\lesssim \n{(\pa_y,|\al|)w}_{L^2}\n{(\pa_{y},|\al|)\psi}_{L^{2}}=\n{(\pa_y,|\al|)w}_{L^2}\n{u}_{L^2},
	\end{align}
 from \eqref{equ:energy estimate,real part,w,nu} in Lemma \ref{lemma:energy estimate,real part}, we have
	\begin{align}\label{equ:est,nu,wH1,w'uF,L2}
		\nu\n{(\pa_y,|\al|)w}_{L^2}^2
		&\lesssim \nu\n{w}^2_{L^2}+\bbabs{\mathrm{Re}\blra{F,\frac{w}{U''}}}\nonumber\\
		&\lesssim \nu\n{(\pa_y,|\al|)w}_{L^2}\n{u}_{L^2}+\n{(\pa_y,|\al|)F}_{L^2}\n{u}_{L^2},
	\end{align}
	which implies
	\begin{align}\label{equ:est,wH1,w'uF,L2}
		\nu\n{(\pa_y,|\al|) w}^2_{L^2}\lesssim |\al|\n{u}^2_{L^2}+|\al|^{-1}\n{F}_{H^{1}}^{2}.
	\end{align}

	Next, we divide the proof into four cases.\smallskip
		
{\it Case 1.} $\la \leq U(0)$.

In this case, by \eqref{equ:L2,w,resolvent,0,full,proof} in Lemma \ref{lemma:energy estimate,real part}, we obtain
\begin{align}\label{equ:est,case1,wu,uwL2,FH1}
\abs{\al}\n{u}^2_{L^2}
	\lesssim& \babs{\blra{F,\frac{w}{U''}}}
	\lesssim  \bbn{\f{F}{U''}}_{H^{1}}\n{u}_{L^2}
	\lesssim \f{\abs{\al}}{2}\n{u}^2_{L^2}+\abs{\al}^{-1} \n{F}_{H^{1}}^2.
\end{align}
Together with \eqref{equ:est,wH1,w'uF,L2}, we have
\begin{align}\label{equ:est,case1,wH1,wL2,FH1}
	\nu\n{(\pa_y,|\al|)w}_{L^2}^2
	\lesssim |\al|^{-1}\n{F}^2_{H^{1}}.
\end{align}
Using \eqref{equ:est,case1,wu,uwL2,FH1} and \eqref{equ:est,case1,wH1,wL2,FH1} again, we have
\begin{align}\label{equ:est,case1,uLinfty,FH1}
    \n{u}_{L^\infty}\lesssim  \n{u}^{\f12}_{L^2}\n{w}^{\f12}_{L^2}\lesssim \nu^{-\f18}|\al|^{-\f78}\n{F}_{H^{1}}.
\end{align}

{\it Case 2.} $\la \geq U(1)$.

In this case, we use \eqref{equ:L2,w,resolvent,1,full,proof} in Lemma \ref{lemma:energy estimate,real part} to obtain
\begin{align}\label{equ:est,case1,wu,uwL2,FH1,la>1}
\abs{\al}\n{u}^2_{L^2}
	\lesssim& \babs{\blra{F,\frac{w}{U''}}}\lesssim \f{\abs{\al}}{2}\n{u}^2_{L^2}+\abs{\al}^{-1} \n{F}_{H^{1}}^2,
\end{align}
which together with \eqref{equ:est,wH1,w'uF,L2} gives
\begin{align*}
	\nu\n{(\pa_y,|\al|)w}_{L^2}^2
	\lesssim& |\al|^{-1}\n{F}^2_{H^{1}},\quad
\n{u}_{L^\infty}\lesssim \n{u}_{L^2}^{\f12}\n{w}_{L^2}^{\f12}\lesssim \nu^{-\f18}|\al|^{-\f78}\n{F}_{H^{1}}.
\end{align*}

{\it Case 3.1.} $\la \in (U(0),U(1))$ and $\nu|\al|^2\gtrsim |U(0)-\la|^{\f12}+\nu^{\f14}|\al|^{-\f14}$.

In this case, we notice that $\nu|\al|^2\gtrsim \nu^{\f14}|\al|^{-\f14}$ which gives $(\nu|\al|^3)^{\f14}\lesssim  \nu|\al|^3$. Hence, using resolvent estimates \eqref{equ:re,L2,w,main} in Proposition \ref{lemma:re,w,full}, we arrive at
\begin{align*}
	\n{w}_{L^2}\lesssim& \nu^{-\f12}|\al|^{-\f12}\n{F}_{L^2}\leq \nu^{-\f12}|\al|^{-\f32}\n{(\pa_y,|\al|)F}_{L^2}\lesssim \nu^{-\f14}|\al|^{-\f34}\n{F}_{H^{1}},\\
	\n{\pa_yw}_{L^2}\lesssim& \nu^{-\f34}|\al|^{-\f14}\n{F}_{L^2}\leq \nu^{-\f34}|\al|^{-\f54}\n{(\pa_y,|\al|)F}_{L^2}\lesssim  \nu^{-\f12}|\al|^{-\f12}\n{F}_{H^{1}},\\
	\n{u}_{L^2}\lesssim& \nu^{-\f38}|\al|^{-\f{9}{8}}\n{F}_{L^2}\leq \nu^{-\f38}|\al|^{-\f{17}{8}}\n{(\pa_y,|\al|)F}_{L^2}\lesssim  |\al|^{-1}\n{F}_{H^{1}},\\
	\n{u}_{L^\infty}\lesssim& \n{u}^{\f12}_{L^2}\n{w} ^{\f12}_{L^2}\lesssim \nu^{-\f18}|\al|^{-\f78}\n{F}_{H^{1}}.
\end{align*}

{\it Case 3.2.} $\la\in (U(0),U(1))$ and $\nu|\al|^2\lesssim |U(0)-\la|^{\f12}+\nu^{\f14}|\al|^{-\f14}$.

We decompose $w=w_3+w_4+w_5$, where $w_3$, $w_4$ and $w_5$ solve the following system
\begin{equation}\label{equ:navierslip,NS,per,decom345}
	\left\{
	\begin{aligned}
		&\big(U(y)-\la+i(-U''\delta_1-\nu|\al|)\big)w_3- U''\psi_3=\f{F}{i\alpha},\\
		&-\nu(\pa^2_y-\al^2)w_4+i\al\big((U-\la)w_4- U''\psi_4\big)=
		\nu\pa_y^2w_3,\\
		&-\nu(\pa^2_y-\al^2)w_5+i\al\big((U-\la)w_5- U''\psi_5\big)=
		U''\al\delta_1w_3,\\
		&(\pa^2_y-\al^2)\psi_3=w_3,\quad
		(\pa^2_y-\al^2)\psi_4=w_4,\quad (\pa^2_y-\al^2)\psi_5=w_5,\\
		& \psi_3|_{y=\pm 1}=\psi_4|_{y=\pm 1}=\psi_5|_{y=\pm 1}=0.
	\end{aligned}\right.
\end{equation}
where $\delta_{1}=\delta^{\frac{4}{3}}\lrs{|\la-U(0)|^{\f12}+\delta}^{\frac{2}{3}}$ and $\delta=\nu^{\frac{1}{4}}|\al|^{-\frac{1}{4}}$.

For $(w_{3},\psi_{3})$, by Lemma \ref{lemma:ray,est,W,Phi}, we have
\begin{align*} \n{(\pa_y,|\al|)\psi_3}_{L^2}+\delta_1^{\f12}\n{w_3}_{L^2}+\delta_1^{\f32}\big(|U(0)-\la|+\delta_1\big)^{-\f12}\n{\pa_yw_3}_{L^2}\lesssim |\al|^{-1}\n{F}_{H^{1}}.
\end{align*}
Due to that $|U(0)-\la|+\delta_1\lesssim |U(0)-\la| +\delta^{2}$, we arrive at
\begin{align}\label{equ:est,varphi3}
	\n{(\pa_y,|\al|)\psi_3}_{L^2}+\delta^{\frac{2}{3}}\lrs{|U(0)-\la|^{\frac{1}{2}} +\delta}^{\f13}\n{w_3}_{L^2}+\delta^{2}\n{\pa_yw_3}_{L^2}
	\lesssim& |\al|^{-1}\n{F}_{H^{1}}.
\end{align}

For $(w_{4},\psi_{4})$, by Proposition \ref{lemma:re,w,full}, Proposition \ref{proposition:est,stronger,u,FL2,FH-1} and \eqref{equ:est,varphi3}, we obtain
\begin{align}\label{equ:est,varphi4}	&\nu^{\f12}|\al|^{\f12}\n{(\pa_y,|\al|)\psi_4}_{L^2}+\nu^{\f23}|\al|^{\f13}\lrs{|U(0)-\la|^{\frac{1}{2}}+\delta}^{\f13}\n{w_4}_{L^2}\\
	\lesssim& \nu\n{\pa_y^2w_3}_{H^{-1}}\leq \nu\n{\pa_yw_3}_{L^2}\lesssim \nu^{\f12}|\al|^{-\f12}\n{F}_{H^{1}}.\nonumber
\end{align}

For $(w_{5},\psi_{5})$, using Proposition \ref{lemma:re,w,full} and \eqref{equ:est,varphi3}, we also have
\begin{align*}
	&\nu^{\f13}|\al|^{\f23}\lrs{|U(0)-\la|^{\frac{1}{2}}+\delta}^{\f23}\n{w_5}_{L^2}
	\lesssim \n{U''\al\delta_1w_3}_{L^2}\lesssim
 \delta^{\frac{2}{3}}\big(|U(0)-\la|^{\frac{1}{2}}+\delta\big)^{\f13}\n{F}_{H^{1}}.
\end{align*}
Then by Proposition \ref{proposition:est,stronger,u,FL2,FH-1} and \eqref{equ:est,varphi3}, we get
\begin{align*}
	&\nu^{\f16}|\al|^{\f56}\lrs{|U(0)-\la|^{\frac{1}{2}}+\delta}^{\f13}\n{(\pa_y,|\al|)\psi_5}_{L^2}\\
	\lesssim& \n{U''\al\delta_1w_3}_{L^2}
	\lesssim\delta^{\frac{2}{3}}\lrs{|U(0)-\la|^{\frac{1}{2}}+\delta}^{\f13}\n{F}_{H^{1}},
\end{align*}
which gives
\begin{align}\label{equ:est,varphi5}
	\n{(\pa_y,|\al|)\psi_5}_{L^2}+\nu^{\f16}|\al|^{-\f16}\lrs{|U(0)-\la|^{\frac{1}{2}}+\delta}^{\f13}\n{w_5}_{L^2}
	\lesssim |\al|^{-1}\n{F}_{H^{1}}.
\end{align}

Recalling the decomposition $w=w_3+w_4+w_5$, we are able to deduce from \eqref{equ:est,varphi3}, \eqref{equ:est,varphi4} and \eqref{equ:est,varphi5} that
\begin{align*}
	\n{u}_{L^2}+\nu^{\frac{1}{4}}|\al|^{-\frac{1}{4}}\n{w}_{L^2}
	\lesssim |\al|^{-1}\n{F}_{H^{1}},
\end{align*}
 which together with \eqref{equ:est,wH1,w'uF,L2} implies
 \begin{align*}
 	\n{(\pa_y,|\al|) w}^2_{L^2}\lesssim&
 	\nu^{-1}|\al|\n{u}_{L^2}^2+\nu^{-1}|\al|^{-1}\n{F}_{H^{1}}^{2}\lesssim  \nu^{-1}|\al|^{-1}\n{F}^2_{H^{1}},\\
 \n{u}_{L^\infty}\lesssim& \n{u}_{L^2}^{\f12}\n{w}_{L^2}^{\f12}\lesssim \nu^{-\f18}|\al|^{-\f78}\n{F}_{H^{1}}.
 \end{align*}
 Hence, we complete the proof of \eqref{equ:reso,est,u,w,FH1}.
\end{proof}

\subsection{Summary of main results}\label{sec:Summary of Results under the Navier-slip Boundary Condition}
Now, we present the main results on the resolvent estimates under the Navier-slip boundary condition. We consider the following equation
\begin{equation}\label{equ:navierslip-NS,summary}
\left\{
\begin{aligned}
&-\nu(\partial^2_y-\alpha^2)w+i\alpha(U-\lambda)w-i\alpha U''\psi+o(\nu,\al)w=F,\\
&(\partial^2_y-\alpha^2)\psi=w,\quad \psi(\pm 1)=w(\pm 1)=0,
\end{aligned}\right.
\end{equation}
where $|o(\nu,\al)|\ll \nu^{\frac{1}{2}}|\al|^{\frac{1}{2}}$.
\begin{proposition}\label{lemma:summary,reso,navierslip}
Let $(w,\psi)$ be the solution to \eqref{equ:navierslip-NS,summary}. There exist positive constants $\nu_0$, $\varepsilon_0$, such that if $|o(\nu,\al)|\leq \ve_{0}\nu^{\frac{1}{2}}|\al|^{\frac{1}{2}}$, $\nu\in(0,\nu_{0}]$, the following resolvent estimates hold
\begin{align}
\nu^{\frac{1}{4}}|\al|^{\frac{3}{4}}\n{u}_{L^{2}}+\nu^{\f34}|\alpha|^{\f14}\n{(\pa_{y},|\al|)w}_{L^{2}}+\nu^{\f12}|\alpha|^{\f12}\n{w}_{L^2} \lesssim&\n{F}_{L^2},\label{equ:re,L2,w,full,sum}\\
\nu^{\f12}|\al|^{\f12}\n{u}_{L^2}+\nu\n{(\pa_{y},|\al|)w}_{L^2}+\nu^{\frac{3}{4}}|\al|^{\frac{1}{4}}\n{w}_{L^{2}}\lesssim &\n{F}_{H^{-1}},
\label{equ:re,H-1,w,full,sum}\\
 |\al|\n{u}_{L^2}+\nu^{\frac{1}{8}}|\al|^{\frac{7}{8}}\n{u}_{L^\infty}+
\nu^{\f12}|\al|^{\f12}\n{(\pa_y,|\al|)w}_{L^2}+\nu^{\f14}|\al|^{\f34}\n{w}_{L^2}\lesssim &\n{F}_{H^{1}}.\label{equ:re,H1,w,full,sum}
\end{align}
\end{proposition}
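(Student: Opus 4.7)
The plan is to regard \eqref{equ:navierslip-NS,summary} as a perturbation of the Orr-Sommerfeld equation \eqref{equ:OS,navier-slip} treated throughout Section \ref{sec:Orr-Sommerfeld Equation with the Navier-slip Boundary Condition}. Moving the zeroth-order term to the right-hand side, set
\[
\widetilde{F} := F - o(\nu,\al)\,w,
\]
so that $(w,\psi)$ solves
\[
-\nu(\pa_y^2-\al^2)w+i\al(U-\la)w-i\al U''\psi = \widetilde F,\qquad \psi(\pm 1)=w(\pm 1)=0.
\]
Since $w(\pm 1)=0$, we have $\widetilde F\in X$ whenever $F\in X$, for $X\in\{L^2, H^{-1}, H^1_0\}$. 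Hence the estimates \eqref{equ:re,L2,w,main}, \eqref{equ:re,H-1,w,main}, \eqref{equ:est,stronger,u,FL2}, \eqref{equ:est,stronger,u,FH-1} and \eqref{equ:reso,est,u,w,FH1} from Propositions \ref{lemma:re,w,full}, \ref{proposition:est,stronger,u,FL2,FH-1}, \ref{proposition:reso,est,u,w,FH1} apply with $\widetilde F$ in place of $F$, yielding all the left-hand sides of \eqref{equ:re,L2,w,full,sum}--\eqref{equ:re,H1,w,full,sum} bounded by the corresponding norms of $\widetilde F$.

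It then remains to control $\|o(\nu,\al)w\|_X$ by a quantity that appears on the left-hand side and can be absorbed after choosing $\varepsilon_0$ sufficiently small. For \eqref{equ:re,L2,w,full,sum} this is immediate:
\[
\|o(\nu,\al)w\|_{L^2}\le \varepsilon_0\nu^{\f12}|\al|^{\f12}\|w\|_{L^2},
\]
which is exactly of the form $\nu^{\f12}|\al|^{\f12}\|w\|_{L^2}$ already present on the left. For \eqref{equ:re,H1,w,full,sum}, using $w\in H^1_0$,
\[
\|o(\nu,\al)w\|_{H^1}\le \varepsilon_0\nu^{\f12}|\al|^{\f12}\|(\pa_y,|\al|)w\|_{L^2},
\]
which again matches a term on the left. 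Both absorptions work uniformly in $\nu$ and $\al$.

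The only mildly delicate case is \eqref{equ:re,H-1,w,full,sum}, which I expect to be the main obstacle (though still fairly clean). The naive bound $\|o(\nu,\al)w\|_{H^{-1}}\lesssim \varepsilon_0\nu^{\f12}|\al|^{\f12}\|w\|_{L^2}$ is too weak compared with the leading $\nu^{\f34}|\al|^{\f14}\|w\|_{L^2}$ on the left. The way around is a duality observation: since $(\pa_y^2-\al^2)\psi=w$ with $\psi(\pm 1)=0$, an integration by parts against $g\in H^1_0$ gives
\[
\|w\|_{H^{-1}(-1,1)}\lesssim \|(\pa_y,|\al|)\psi\|_{L^2}=\|u\|_{L^2},
\]
so that
\[
\|o(\nu,\al)w\|_{H^{-1}}\lesssim \varepsilon_0 \nu^{\f12}|\al|^{\f12}\|u\|_{L^2},
\]
which is precisely the scale of the $\|u\|_{L^2}$-term on the left-hand side of \eqref{equ:re,H-1,w,full,sum}. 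Choosing $\varepsilon_0$ small completes the absorption. Note that the threshold $|o(\nu,\al)|\le \varepsilon_0\nu^{\f12}|\al|^{\f12}$ is tightly tuned to the enhanced-dissipation scaling of the resolvent estimates, which is exactly why the perturbation is absorbable in all three norms.
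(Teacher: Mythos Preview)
Your proposal is correct and follows exactly the perturbation argument the paper alludes to (the paper's own proof merely states that the $o(\nu,\al)\equiv 0$ case is a summary of Propositions~\ref{lemma:re,w,full}, \ref{proposition:est,stronger,u,FL2,FH-1}, \ref{proposition:reso,est,u,w,FH1}, and that ``the general case\ldots is derived via a perturbation argument''). Your observation that the $H^{-1}$ absorption requires the duality bound $\|w\|_{H^{-1}}\lesssim\|u\|_{L^2}$ rather than the crude $\|w\|_{H^{-1}}\lesssim\|w\|_{L^2}$ is the key detail the paper leaves implicit.
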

\begin{proof}
For the case where $o(\nu,\al)\equiv 0$, Proposition \ref{lemma:summary,reso,navierslip} serves as a summary of Propositions \ref{lemma:re,w,full}, \ref{proposition:est,stronger,u,FL2,FH-1}, and \ref{proposition:reso,est,u,w,FH1}. In the general case, the proposition is derived via a perturbation argument.
\end{proof}

\section{Boundary layer corrector}\label{sec:Boundary Layer Corrector}

Let $(w,\psi)$ be the solution to the Orr-Sommerfeld (OS) equation with the non-slip boundary condition
 \begin{equation}\label{equ:nonslip,wNa,NS,per}
	\left\{
	\begin{aligned}
		&-\nu(\pa^2_y-\al^2)w+i\al(U-\la)w-i\al U''\psi+o(\nu,\al)w=F,\\
		&(\pa^2_y-\al^2)\psi=w,\quad \psi(\pm 1)=\psi'(\pm 1)=0,
	\end{aligned}\right.
\end{equation}
where $|o(\nu,\al)|\ll \nu^{\frac{1}{2}}|\al|^{\frac{1}{2}}$. We then decompose it into
\begin{align*}
	w=w_{Na}+c_1w_{cor,1}+c_2w_{cor,2},
\end{align*}
where $w_{Na}$ is the solution to the OS equation with the Navier-slip boundary condition
 \begin{equation}\label{equ:navierslip,wNa,NS,per}
	\left\{
	\begin{aligned}
		&-\nu(\pa^2_y-\al^2)w_{Na}+i\al(U-\la)w_{Na}-i\al U''\psi_{Na}+o(\nu,\al)w_{Na}=F,\\
		&(\pa^2_y-\al^2)\psi_{Na}=w_{Na},\quad \psi_{Na}(\pm 1)=\psi''_{Na}(\pm 1)=w_{Na}(\pm 1)=0,
	\end{aligned}\right.
\end{equation}
and the boundary layer correctors $w_{cor,j}$ $(j=1,2)$ satisfy the following homogeneous OS equation
\begin{equation}\label{equ:corrector1,NS,per}
	\left\{
	\begin{aligned}
		&-\nu(\pa^2_y-\al^2)w_{cor,j}+i\al(U-\la)w_{cor,j}-i\alpha U''\psi_{cor,j}+o(\nu,\al)w_{cor,j}=0,\\
		&(\pa^2_y-\al^2)\psi_{cor,j}=w_{cor,j},\quad\psi_{cor,j}(\pm 1)=0,\\
        & \psi'_{cor,1}(1)=\psi'_{cor,2}(-1)=1,\quad\psi'_{cor,1}(-1)=\psi'_{cor,2}(1)=0.
	\end{aligned}\right.
\end{equation}

Due to the non-slip boundary condition that $
\lra{ e^{\pm |\al|y},w}=0,$
the coefficients are explicitly given by
\begin{equation}\label{equ:form,c1,c2}
\left\{
	\begin{aligned}
		&c_1(\la)=-\int_{-1}^1\frac{\operatorname{sinh}\lrs{|\al|(1+y)}}
{\operatorname{\operatorname{sinh}}(2|\al|)}w_{Na}(y)dy,\\
		&c_2(\la)=\int_{-1}^1\frac{\operatorname{\operatorname{sinh}}\lrs{|\al|(1-y)}}{\operatorname{\operatorname{sinh}}(2|\al|)}w_{Na}(y)dy.
	\end{aligned}\right.
\end{equation}

To provide the resolvent estimates with non-slip boundary condition, we are left to estimate the boundary layer correctors $w_{cor,1}$, $w_{cor,2}$ and the coefficients $c_1(\la)$, $c_2(\la)$.

\subsection{Estimates on $w_{cor,1}$ and $w_{cor,2}$}
We will construct the approximate solutions of  problems  \eqref{equ:corrector1,NS,per}  via the Airy function.
  Let $\mathrm{\bf Ai}(y)$ denote the Airy function, which is a solution to the equation $f''-yf=0$. We define the function
 \begin{align}\label{equ:def,A0}
 	A_{0}^{(1)}(z):=\int_{e^{\f{i\pi}{6}z}}^{+\infty}\mathrm{\bf Ai}(t)dt
 	=e^{\f{i\pi}{6}}\int_{z}^{+\infty}\mathrm{\bf Ai}(e^{\f{i\pi}{6}}t)dt,\quad A_{0}^{(2)}(z)=:A_{0}^{(1)}(-\ol{z}),
 \end{align}
and the quantities
\begin{align}
L_{1}&=|{\al U'(-1)}/{\nu}|^{\f13},\quad d_{1}=\f{U(-1)-\lambda-i\nu\al}{U'(-1)},\label{equ:def,L1-,d1-,d-1}\\
L_{2}&=|{\al U'(1)}/{\nu}|^{\f13},\qquad d_{2}=\f{U(1)-\lambda-i\nu\al}{U'(1)}\label{equ:def,L1+,d1+}.
\end{align}
Next, we define the approximate boundary correctors $W_{app,1}$ and $W_{app,2}$ as follows
\begin{align*}
	W_{app,1}(y)=\mathrm{\bf Ai}\big(e^{\f{i\pi}{6}}L_{1}(y+1+d_{1})
	\big),\quad W_{app,2}(y)=\mathrm{\bf Ai}\big(e^{\f{i5\pi}{6}}L_{2}(1-y+d_{2})\big),
\end{align*}
which solve the following problems respectively
\begin{equation*}
\left\{
\begin{aligned}
	-\nu(\pa^2_y-|\al|^2)W_{app,1}+i\al(U_{1}(y)-\la)W_{app,1}+o(\nu,\al)W_{app,1}&=0,\\	-\nu(\pa^2_y-|\al|^2)W_{app,2}+i\al(U_{2}(y)-\la)W_{app,2}+o(\nu,\al)W_{app,1}&=0,
\end{aligned}
\right.
\end{equation*}
where \begin{align*}
U_{1}(y)=U'(-1)(y+1)+U(-1),\quad U_{2}(y)=U'(1)(y-1)+U(1).
\end{align*}
Furthermore, the stream functions $\Psi_{app,i}$ are given by
\[
	(\pa^2_y-|\al|^2)\Psi_{app,j}=W_{app,j},\quad \Psi_{app,j}(\pm 1)=0, \quad \mathrm{for} \,\, j=1,2.
\]

We begin by presenting the following lemma, which gives effective estimates for the approximate boundary correctors.
\begin{lemma}[{\hspace{-0.01em}\cite[Lemma B.3]{CWZ-CMP}}]
	\label{lemma:est,Wapp12,LinftyL2}
There exist $c>0$, $C>0$ such that
	\begin{align*}
&|W_{app,i}(y)|\leq C(1+|L_{i}d_{i}|)^{\frac{1}{2}}|A_{0}^{(i)}(L_{i}d_{i})|e^{-cL_{i}(1+|L_{i}d_{i}|)^{\frac{1}{2}}(1-(-1)^{i}y)},\\
&\n{(1-(-1)^{i}y)^{m}W_{app,i}}_{L^{2}}\leq C(L_{i})^{-\frac{1+2m}{2}}(1+|L_{i}d_{i}|)^{\frac{1}{4}-\frac{m}{2}}|A_{0}^{(i)}(L_{i}d_{i})|,\quad m\geq 0,\\
&\n{W_{app,i}}_{L^{\infty}}\leq
C(1+|L_{i}d_{i}|)^{\frac{1}{2}}|A_{0}^{(i)}(L_{i}d_{i})|,\\
&\n{(1-(-1)^{i}y)^{m}W_{app,i}}_{L^{1}}\leq
C(L_{i})^{-1-m}(1+L_{i}d_{i})^{-\frac{m}{2}}|A_{0}^{(i)}(L_{i}d_{i})|,\\
	    & \n{(\pa_{y},|\al|)\Psi_{app,i}}_{L^{2}}\leq C(L_{i})^{-\f32}(1+|L_{i}d_{i}|)^{-\f14}|A_{0}^{(i)}(L_{i}d_{i})|.
	\end{align*}
\end{lemma}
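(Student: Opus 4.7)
My plan is to derive all five families of estimates from the known asymptotic behavior of the Airy function on the rotated ray, exploiting that both $W_{app,1}$ and $W_{app,2}$ are Airy functions of rotated and shifted arguments. I will focus on $W_{app,1}$, since the bounds for $W_{app,2}$ follow by the reflection $y \mapsto -y$ together with complex conjugation.

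Set $\zeta(y) := e^{i\pi/6} L_{1}(y+1+d_{1})$ so that $W_{app,1}(y) = \mathrm{\bf Ai}(\zeta(y))$. The classical WKB asymptotic
\[
\mathrm{\bf Ai}(\zeta) = \tfrac{1}{2\sqrt{\pi}} \zeta^{-1/4} e^{-\tfrac{2}{3}\zeta^{3/2}} \bigl(1+O(|\zeta|^{-3/2})\bigr)
\]
is valid uniformly in the sector $|\arg \zeta| < \pi - \delta$. The crucial step is to establish the lower bound $\operatorname{Re}\bigl(\tfrac{2}{3}\zeta(y)^{3/2}\bigr) \geq c\, L_{1}(1+|L_{1} d_{1}|)^{1/2}(1+y)$ for some absolute constant $c>0$ and every $y \in [-1,1]$. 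I will verify this by splitting into the two regimes $|L_{1} d_{1}| \lesssim 1$ and $|L_{1} d_{1}| \gg 1$ and computing directly. Combined with the identity
\[
|A_{0}^{(1)}(L_{1}d_{1})| \sim (1+|L_{1} d_{1}|)^{-1/2} |\mathrm{\bf Ai}(e^{i\pi/6} L_{1} d_{1})|,
\]
obtained by applying the WKB formula to $\partial_{z} A_{0}^{(1)}(z) = -e^{i\pi/6}\mathrm{\bf Ai}(e^{i\pi/6}z)$ and integrating once, this yields the pointwise bound on $W_{app,1}$.

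The $L^{2}$, $L^{1}$ and $L^{\infty}$ estimates then follow directly from the pointwise bound: the $L^{\infty}$ norm is attained near $y=-1$, while integration in $y$, combined with the substitution $s = L_{1}(1+|L_{1} d_{1}|)^{1/2}(1+y)$, converts the exponentially decaying tail into the claimed powers of $L_{1}$ and $(1+|L_{1} d_{1}|)$, with the weight $(1+y)^{m}$ producing the stated $L_{1}^{-m}(1+|L_{1}d_{1}|)^{-m/2}$ factor. For the stream-function estimate I will use the Dirichlet Green's function representation $\Psi_{app,1}(y) = -\int_{-1}^{1} G_{\alpha}(y,z)\, W_{app,1}(z)\, dz$ for the operator $\partial_{y}^{2}-|\alpha|^{2}$ on $(-1,1)$, and conclude via the elementary inequality $\|(\partial_{y},|\alpha|)\Psi_{app,1}\|_{L^{2}} \lesssim \|W_{app,1}\|_{H^{-1}} \lesssim \|W_{app,1}\|_{L^{1}}$, invoking the $L^{1}$ estimate already established, together with a boundary correction to enforce $\Psi_{app,1}(\pm 1)=0$ which is controlled by the pointwise bound at $y = \pm 1$.

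The main technical obstacle will be the lower bound on $\operatorname{Re}(\zeta(y)^{3/2})$, because $d_{1}$ is complex with $\operatorname{Im} d_{1} = -\nu\alpha/U'(-1)$, so $\arg(y+1+d_{1})$ must be tracked carefully to ensure $\arg \zeta(y)$ stays in the sector where $\operatorname{Re}(\zeta^{3/2})$ is positive with the correct growth rate. The parameter $d_{1}$ mixes the spectral offset $U(-1)-\lambda$ with the viscous scale $\nu\alpha$, which is exactly where the boundary-layer width $L_{1}^{-1}$ interacts with the critical-layer effect encoded by $L_{1} d_{1}$; balancing these two scales in the exponent is the core computation, and mirrors the argument given in \cite{CWZ-CMP}.
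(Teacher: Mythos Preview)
The paper does not prove this lemma at all; it is simply quoted from \cite[Lemma B.3]{CWZ-CMP}. Your proposal sketches the standard Airy-asymptotics argument that underlies the cited result, and the overall strategy---pointwise bound via $\operatorname{Re}(\tfrac{2}{3}\zeta^{3/2})$, then integrate against $e^{-cL_1(1+|L_1d_1|)^{1/2}(1+y)}$ with the substitution $s=L_1(1+|L_1d_1|)^{1/2}(1+y)$---is correct for the first four estimates.

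There is, however, a genuine gap in your treatment of the stream-function bound. The chain $\|(\partial_y,|\alpha|)\Psi_{app,1}\|_{L^2}\lesssim\|W_{app,1}\|_{H^{-1}}\lesssim\|W_{app,1}\|_{L^1}$ followed by the $m=0$ case of the $L^1$ estimate only yields $L_1^{-1}|A_0^{(1)}(L_1d_1)|$, which is weaker by a factor $L_1^{1/2}(1+|L_1d_1|)^{1/4}$ than the claimed $L_1^{-3/2}(1+|L_1d_1|)^{-1/4}|A_0^{(1)}(L_1d_1)|$. Also, no boundary correction is needed: $\Psi_{app,1}(\pm1)=0$ holds by definition. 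The fix is to exploit this vanishing. From $\Psi_{app,1}(-1)=0$ one has $|\Psi_{app,1}(y)|\le(1+y)^{1/2}\|\partial_y\Psi_{app,1}\|_{L^2}$, whence
\[
\|(\partial_y,|\alpha|)\Psi_{app,1}\|_{L^2}^2 = |\langle W_{app,1},\Psi_{app,1}\rangle| \le \|\partial_y\Psi_{app,1}\|_{L^2}\,\|(1+y)^{1/2}W_{app,1}\|_{L^1},
\]
and the weighted $L^1$ integral against the pointwise bound gives exactly $L_1^{-3/2}(1+|L_1d_1|)^{-1/4}|A_0|$ after the same substitution. This is the missing half-power of the boundary-layer width.
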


To return to the Orr-Sommerfeld equation, it is necessary to incorporate the error terms that satisfy the following systems
  \begin{equation}\label{equ:navierslip,NS,w,error1,2}
 	\left\{
 	\begin{aligned}
 		&-\nu(\pa^{2}_{y}-\al^{2})w_{err,j}+i\al(U(y)-\la)w_{err,j}-i\al U''\psi_{err,j}+o(\nu,\al)w_{err,j}\\
 		&\qquad \,\,\,   =-i\al\big(U(y)-U_{j}(y)\big)W_{app,j}+i\al U''\Psi_{app,j},\quad  \mathrm{for}\,\, j=1,2,\\
 		&(\pa^2_y-\al^2)\psi_{err,j}=w_{err,j},\quad w_{err,j}(\pm 1)=\psi_{err,j}(\pm 1)=\psi''_{err,j}(\pm 1)=0,
 	\end{aligned}\right.
 \end{equation}
 Consequently, the solutions $w_{cor,1}$ and $w_{cor,2}$ can be expressed as
 \begin{equation}\label{equ:form,wcor1,wcor2}
	\left\{
	\begin{aligned}
		&w_{cor,1}=C_{11}\big(W_{app,1}+w_{err,1}\big)+C_{12}\big(W_{app,2}+w_{err,2}\big),\\
		&w_{cor,2}=C_{21}\big(W_{app,1}+w_{err,1}\big)+C_{22}\big(W_{app,2}+w_{err,2}\big),
	\end{aligned}\right.
\end{equation}
 where $C_{ij},i,j=1,2$ are constants determined by matching the boundary conditions. Specifically, using the boundary conditions
\begin{equation*}
\left\{
\begin{aligned} \int_{-1}^1w_{cor,j}(y)\frac{\operatorname{\operatorname{sinh}}\big(\al(1-y)\big)}{\operatorname{\operatorname{sinh}}(2\al)}dy&=-\pa_y\psi_{cor,j}(-1),\\ \int_{-1}^1w_{cor,j}(y)\frac{\operatorname{\operatorname{sinh}}\big(\al(1+y)\big)}{\operatorname{\operatorname{sinh}}(2\al)}dy&=\pa_y\psi_{cor,j}(1),
\end{aligned}
\right.
\end{equation*}
provided that $A_1A_2-B_1B_2\neq 0$, we obtain
 \begin{equation*}
 \left\{
 \begin{aligned}
 	C_{11}=\f{A_2}{A_1A_2-B_1B_2} \quad \mathrm{and}\quad
 	 C_{12}=\f{-B_1}{A_1A_2-B_1B_2},\\
 	C_{21}=\f{B_2}{A_1A_2-B_1B_2} \quad \mathrm{and}\quad
 	C_{22}=\f{-A_1}{A_1A_2-B_1B_2},
 \end{aligned}
 \right.
 \end{equation*}
 where
 \begin{equation}\label{equ:coefficients,A,B}
 \left\{
 \begin{aligned}
 	&A_1=\int_{-1}^1\frac{\operatorname{\operatorname{sinh}}\big(\al(1+y)\big)}{\operatorname{\operatorname{sinh}}(2\al)}\big(W_{app,1}+w_{err,1}\big)dy=:A_{app,1}+A_{err,1},\\
 	&A_2=\int_{-1}^1\frac{\operatorname{\operatorname{sinh}}\big(\al(1-y)\big)}{\operatorname{\operatorname{sinh}}(2\al)}\big(W_{app,2}+w_{err,2}\big)dy=:A_{app,2}+A_{err,2},\\
 	&B_1=\int_{-1}^1\frac{\operatorname{\operatorname{sinh}}\big(\al(1-y)\big)}{\operatorname{\operatorname{sinh}}(2\al)}\big(W_{app,1}+w_{err,1}\big)dy=:B_{app,1}+B_{err,1},\\
 	&B_2=\int_{-1}^1\frac{\operatorname{\operatorname{sinh}}\big(\al(1+y)\big)}{\operatorname{\operatorname{sinh}}(2\al)}\big(W_{app,2}+w_{err,2}\big)dy=:B_{app,2}+B_{err,2}.
 \end{aligned}
 \right.
 \end{equation}

Next, we provide estimates for for $A_{app,i}$ and $B_{app,i}\,\, (i=1,2).$
   \begin{lemma}[Estimates on approximation coefficients]\label{lemma:est,Aapp12,Bapp12}
   	     If $\min(L_{1},L_{2})\gg 1$,  we have the upper bounds
   	     \begin{align*}
   	     	&\abs{A_{app,1}}\lesssim (L_{1})^{-2}	\abs{A_{0}^{(1)}(L_{1}d_{1})},\quad
   	     	\abs{A_{app,2}}\lesssim (L_{2})^{-2}\abs{A_{0}^{(2)}\big(L_{2}d_{2}\big)},
   	     \end{align*}
   and the lower bounds
   \begin{align*}
   &\abs{B_{app,1}}\gtrsim (L_{1})^{-1}	\abs{A_{0}^{(1)}(L_{1}d_{1})},\quad
   	     	\abs{B_{app,2}}\gtrsim (L_{2})^{-1}\abs{A_{0}^{(2)}\big(L_{2}d_{2}\big)},
   \end{align*}
   where $A_{0}^{(2)}(z)=A_{0}^{(1)}(-\ol{z})$.
   \end{lemma}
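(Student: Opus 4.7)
The proof rests on the fact that each $W_{app,j}$ concentrates in a boundary layer of width $\sim L_j^{-1}$ near $y=(-1)^{j+1}$, combined with the contrasting behavior of the two $\sinh$-weights at that concentration point: the weight appearing in $A_{app,j}$ vanishes linearly there, yielding the extra $L_j^{-1}$ factor in the upper bound, while the weight in $B_{app,j}$ equals one there, so that the leading-order contribution is a multiple of $\int W_{app,j}\,dy$, which is itself comparable to $L_j^{-1}|A_0^{(j)}(L_j d_j)|$. The arguments for $j=1$ and $j=2$ are entirely symmetric; I present only $j=1$, the other case being handled by the change of variables $y\mapsto -y$ (and the observation that Lemma \ref{lemma:est,Wapp12,LinftyL2} is stated symmetrically in $i=1,2$).

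For the upper bound on $A_{app,1}$, I would first establish the elementary inequality
\begin{align*}
0 \leq \frac{\sinh(|\al|(1+y))}{\sinh(2|\al|)} \leq \frac{1+y}{2}, \qquad y\in[-1,1],
\end{align*}
which is immediate from the monotonicity of $x\mapsto \sinh(x)/x$ on $[0,\infty)$ applied with $x=|\al|(1+y)\leq 2|\al|$. Consequently
\begin{align*}
|A_{app,1}| \leq \tfrac{1}{2}\,\|(1+y)\,W_{app,1}\|_{L^1([-1,1])},
\end{align*}
and the weighted $L^1$ estimate in Lemma \ref{lemma:est,Wapp12,LinftyL2} (with $i=1$, $m=1$, noting $1-(-1)^1 y = 1+y$) gives $\|(1+y)W_{app,1}\|_{L^1} \lesssim L_1^{-2}(1+|L_1 d_1|)^{-1/2}|A_0^{(1)}(L_1 d_1)|$, even stronger than the claimed $L_1^{-2}|A_0^{(1)}(L_1 d_1)|$.

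For the lower bound on $B_{app,1}$, I would split
\begin{align*}
B_{app,1} = \int_{-1}^1 W_{app,1}(y)\,dy + \int_{-1}^1 \Big[\tfrac{\sinh(|\al|(1-y))}{\sinh(2|\al|)} - 1\Big]W_{app,1}(y)\,dy =: I_{main} + I_{err}.
\end{align*}
The main term is computed explicitly: the substitution $t = L_1(1+y+d_1)$ together with the definition \eqref{equ:def,A0} of $A_0^{(1)}$ yields
\begin{align*}
\int_{-1}^{\infty} W_{app,1}(y)\,dy = \frac{e^{-i\pi/6}}{L_1}\,A_0^{(1)}(L_1 d_1),
\end{align*}
and the discarded tail $\int_1^\infty W_{app,1}$ is $O\big(e^{-cL_1}|A_0^{(1)}(L_1 d_1)|\big)$ by the pointwise exponential decay in Lemma \ref{lemma:est,Wapp12,LinftyL2}, hence negligible against $L_1^{-1}|A_0^{(1)}(L_1 d_1)|$ for $L_1\gg 1$. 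For the error term, the mean value identity $\sinh(2|\al|)-\sinh(|\al|(1-y)) = \int_{|\al|(1-y)}^{2|\al|}\cosh(t)\,dt$ produces the pointwise bound $|1 - \sinh(|\al|(1-y))/\sinh(2|\al|)|\leq |\al|(1+y)\coth(2|\al|) \lesssim (1+|\al|)(1+y)$, and the same weighted $L^1$ estimate therefore gives
\begin{align*}
|I_{err}| \lesssim (1+|\al|)\,L_1^{-2}(1+|L_1 d_1|)^{-1/2}\,|A_0^{(1)}(L_1 d_1)|.
\end{align*}
The triangle inequality then yields $|B_{app,1}| \gtrsim L_1^{-1}|A_0^{(1)}(L_1 d_1)|$ provided the ratio $(1+|\al|)/\big(L_1(1+|L_1 d_1|)^{1/2}\big)$ is sufficiently small.

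The chief technical obstacle is verifying this last smallness uniformly across the relevant parameter regime. When $|\la - U(\pm 1)|$ stays bounded away from zero, $|L_1 d_1|$ is large and the factor $(1+|L_1 d_1|)^{-1/2}$ supplies the necessary gain. The delicate case is $|\la - U(\pm 1)|\ll 1$, where one must instead use $L_1\gg 1+|\al|$; this comparison follows from the standing hypothesis $\min(L_1,L_2)\gg 1$ combined with the resolvent-regime restriction $\nu|\al|^2\lesssim 1$ inherited from \eqref{equ:restriction,v,alpha}. If the crude pointwise bound on $|1-g|$ is too lossy in some sliver of parameters, one can sharpen $|I_{err}|$ by splitting the integral into a near-boundary piece $\{|\al|(1+y)\lesssim 1\}$, where $|1-g|\lesssim |\al|(1+y)$ is tight, and its complement, where the trivial $|1-g|\leq 1$ suffices together with the exponential pointwise decay of $W_{app,1}$; this refinement produces a factor of the form $\min(1,|\al|/L_1)$ in the error and recovers the lower bound in all regimes of interest.
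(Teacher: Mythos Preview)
The paper omits the proof entirely, referring to \cite[Lemma 5.7]{cwz-mams2024}, so there is no in-paper argument to compare against.  Your treatment of the upper bounds on $A_{app,j}$ is correct and clean: the pointwise inequality $\sinh(|\al|(1+y))/\sinh(2|\al|)\le (1+y)/2$ followed by the weighted $L^1$ bound from Lemma~\ref{lemma:est,Wapp12,LinftyL2} with $m=1$ gives exactly the claimed $L_j^{-2}$ decay.

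The lower bound on $B_{app,1}$, however, has a genuine gap.  Your splitting $B_{app,1}=I_{main}+I_{err}$ with $I_{main}=\int_{-1}^1 W_{app,1}$ produces an error ratio
\[
\frac{|I_{err}|}{|I_{main}|}\;\lesssim\;\frac{1+|\al|}{L_1(1+|L_1d_1|)^{1/2}},
\]
and this quantity is only $O(1)$, not $o(1)$, in the borderline regime $\nu|\al|^2\sim 1$ with $|\la-U(1)|\lesssim (\nu/|\al|)^{1/3}$ (so $|L_1d_1|\lesssim 1$).  Indeed $|\al|/L_1\sim(\nu|\al|^2)^{1/3}$, and the restriction \eqref{equ:restriction,v,alpha} allows this to be comparable to $1$.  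Your proposed refinement---splitting at $|\al|(1+y)\sim 1$---does not help: it yields an error of size $\min(1,|\al|/L_1)\cdot L_1^{-1}|A_0^{(1)}|$, which is again only comparable to $|I_{main}|$ in the same regime.  The triangle inequality therefore cannot close.

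The fix is to choose a better main term.  Since $g(y)=\sinh(|\al|(1-y))/\sinh(2|\al|)$ satisfies
\[
g(y)-e^{-|\al|(1+y)}=-\frac{\sinh(|\al|(1+y))}{e^{2|\al|}\sinh(2|\al|)},
\qquad
\bigl|g(y)-e^{-|\al|(1+y)}\bigr|\le \tfrac{1+y}{2}\,e^{-2|\al|},
\]
replacing $I_{main}$ by $\int_{-1}^1 e^{-|\al|(1+y)}W_{app,1}\,dy$ makes the error \emph{exponentially} small in $|\al|$.  The remaining task is then a lower bound on the Laplace-type integral
\[
\frac{1}{L_1}\int_0^\infty e^{-\beta\eta}\,\mathrm{Ai}\bigl(e^{i\pi/6}(\eta+L_1d_1)\bigr)\,d\eta,
\qquad \beta=|\al|/L_1,
\]
showing it is $\gtrsim L_1^{-1}|A_0^{(1)}(L_1d_1)|$.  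This requires the explicit Airy-function asymptotics (or an ODE/integration-by-parts identity expressing the integral through $\mathrm{Ai}$, $\mathrm{Ai}'$ at $e^{i\pi/6}L_1d_1$), which is precisely the content supplied by the cited lemma in \cite{cwz-mams2024} and which your argument does not address.
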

   \begin{proof}
The proof follows arguments analogous to those in \cite[Lemma 5.7]{cwz-mams2024}, which we omit here for conciseness.
   \end{proof}

The following lemma provides estimates for the error terms.
 \begin{lemma}[Estimates on error terms]\label{lemma:est,uw,error12,L2L1Linfty}
 	Let $(w_{err,1},w_{err,2})$ be the solutions to \eqref{equ:navierslip,NS,w,error1,2}. Then the following estimates hold
 	\begin{align}\label{equ:error term,estimates,1}
 		&\n{w_{err,i}}_{L^2}+	(L_{i})^{-\frac{3}{8}}\n{w_{err,i}}_{L^\infty} +(L_{i})^{-\frac{3}{4}}\n{(\pa_y,|\al|)w_{err,i}}_{L^2}\\
 		&+(L_{i})^{\frac{3}{4}}\n{u_{err,i}}_{L^2}+(L_{i})^{\frac{3}{8}}\n{u_{err,i}}_{L^\infty}
 		\lesssim  (L_{i})^{-\f34}\big(1+|L_{i}d_{i}|\big)^{-\f14}\babs{A_{0}^{(i)}(L_{i}d_{i})}.\notag
 	\end{align}

 \end{lemma}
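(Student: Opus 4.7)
The plan is to view \eqref{equ:navierslip,NS,w,error1,2} as the Navier-slip Orr--Sommerfeld equation for $(w_{err,j},\psi_{err,j})$ with source term
\[
F_j := -i\al(U-U_j)W_{app,j} + i\al U''\Psi_{app,j},
\]
and to apply the resolvent estimates of Proposition \ref{lemma:summary,reso,navierslip}. Among the three available bounds \eqref{equ:re,L2,w,full,sum}--\eqref{equ:re,H1,w,full,sum}, only the $H^1$-force bound \eqref{equ:re,H1,w,full,sum} will yield the target scaling uniformly across the parameter range; the $L^2$- and $H^{-1}$-force bounds turn out to need the extra restriction $\nu|\al|^3\gtrsim 1$, which does not hold in general. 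Consequently the entire argument reduces to a sharp estimate of $\n{F_j}_{H^1}$.

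To obtain that, I would combine the Taylor bounds $|U-U_j|\lesssim (1-(-1)^j y)^2$ and $|U'-U_j'|\lesssim 1-(-1)^j y$ with the weighted $L^p$ estimates on $W_{app,j}$ from Lemma \ref{lemma:est,Wapp12,LinftyL2} (applied with weights $m=0,1,2$), the gradient bound on $\Psi_{app,j}$, and the Poincar\'e inequality (using $\Psi_{app,j}(\pm 1)=0$) to upgrade the latter to an $L^2$ bound on $\Psi_{app,j}$ with one gained factor of $|\al|^{-1}$. For $\pa_y F_j$, which splits into the four pieces $(U'-U_j')W_{app,j}$, $(U-U_j)\pa_y W_{app,j}$, $U'''\Psi_{app,j}$, and $U''\pa_y\Psi_{app,j}$, the explicit Airy identity $\pa_y^2 W_{app,j}=\pm iL_j^3\big((y-(-1)^j)+d_j\big)W_{app,j}$ fixes the $L_j$-scaling of $\pa_y W_{app,j}$, so analogous weighted bounds hold for $\pa_y W_{app,j}$. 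Using $|\al|=\nu L_j^3$, each contribution to $\n{\pa_y F_j}_{L^2}$ is then controlled by $(\nu|\al|)^{1/2}(1+|L_jd_j|)^{-1/4}\babs{A_0^{(j)}(L_jd_j)}$, while the undifferentiated term satisfies $\n{F_j}_{L^2}\lesssim L_j^{-3/2}(1+|L_jd_j|)^{-1/4}\babs{A_0^{(j)}}$. Combining these yields $\n{F_j}_{H^1}\lesssim (\nu|\al|)^{1/2}(1+|L_jd_j|)^{-1/4}\babs{A_0^{(j)}}$.

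Substituting this bound into \eqref{equ:re,H1,w,full,sum} and rewriting $\nu$ and $|\al|$ via $L_j=(|\al U'((-1)^j)|/\nu)^{1/3}$ delivers directly the four inequalities for $\n{w_{err,j}}_{L^2}$, $\n{(\pa_y,|\al|)w_{err,j}}_{L^2}$, $\n{u_{err,j}}_{L^2}$ and $\n{u_{err,j}}_{L^\infty}$ asserted in \eqref{equ:error term,estimates,1}. The $L^\infty$ bound on $w_{err,j}$ will then follow from the one-dimensional Sobolev interpolation $\n{w_{err,j}}_{L^\infty}^2\lesssim \n{w_{err,j}}_{L^2}\n{\pa_y w_{err,j}}_{L^2}$ together with $w_{err,j}(\pm 1)=0$.

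The main technical hurdle will be preserving the Airy-decay factor $(1+|L_jd_j|)^{-1/4}$ across the differentiation step, which requires Lemma \ref{lemma:est,Wapp12,LinftyL2} in its tightest form together with the explicit Airy identity for $\pa_y^2 W_{app,j}$. A minor bookkeeping point is that $F_j$ lies in $H^1$ but only effectively in $H^1_0$: it vanishes exactly at the boundary $y=(-1)^j$ (since both $U-U_j$ and $\Psi_{app,j}$ do there), and only up to an exponentially small Airy tail at the opposite endpoint. This is harmless, since the integration-by-parts used in the proof of Proposition \ref{proposition:reso,est,u,w,FH1} invokes the boundary condition $w_{err,j}(\pm 1)=0$ rather than $F_j(\pm 1)=0$.
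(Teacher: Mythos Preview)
Your approach differs from the paper's in a key structural way, and as written has a genuine gap.

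The paper does \emph{not} apply a single resolvent bound to the full source $F_j$. Instead it splits $w_{err,1}=w_{err,1}^1+w_{err,1}^2$, where $w_{err,1}^1$ has source $-i\al(U-U_1)W_{app,1}$ and $w_{err,1}^2$ has source $i\al U''\Psi_{app,1}$. For the first piece the paper uses the $L^2$ resolvent bound \eqref{equ:re,L2,w,full,sum}: since $\|(U-U_1)W_{app,1}\|_{L^2}\lesssim L_1^{-5/2}(1+|L_1d_1|)^{-3/4}|A_0^{(1)}|$, this yields $\|w_{err,1}^1\|_{L^2}\lesssim L_1^{-1}(1+|L_1d_1|)^{-3/4}|A_0^{(1)}|$, which is \emph{better} than the target and requires no restriction on $\nu|\al|^3$. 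Your claim that the $L^2$ bound ``needs $\nu|\al|^3\gtrsim1$'' applies only to the full $F_j$, not to this piece. For the second piece the paper uses the $H^1$ bound \eqref{equ:re,H1,w,full,sum}; here the source $i\al U''\Psi_{app,1}$ genuinely lies in $H^1_0(-1,1)$ because $\Psi_{app,1}(\pm1)=0$. The splitting thus avoids both the derivative-of-$W_{app,j}$ computation and the boundary issue entirely.

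The gap in your route is the dismissal of the $H^1_0$ hypothesis. Your assertion that ``the integration-by-parts used in the proof of Proposition~\ref{proposition:reso,est,u,w,FH1} invokes $w_{err,j}(\pm1)=0$ rather than $F_j(\pm1)=0$'' is not correct. The proof uses $|\langle F,w/U''\rangle|\lesssim \|F/U''\|_{H^1}\|u\|_{L^2}$, which comes from integrating $\langle F/U'',\pa_y^2\psi\rangle$ by parts once; the resulting boundary term $(F/U'')\overline{\pa_y\psi}\big|_{-1}^1$ vanishes only if $F(\pm1)=0$, since under Navier-slip $\pa_y\psi(\pm1)$ is in general nonzero. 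The same hypothesis $f(\pm1)=0$ is used in Lemma~\ref{lemma:ray,est,W,Phi} in Case~3.2. Your $F_j$ does not vanish at the far endpoint: for $j=1$ one has $F_1(1)=-2i\al U'(1)W_{app,1}(1)\neq0$. The tail is exponentially small in $L_1$, so the approach is repairable (e.g.\ subtract a compactly supported corrector carrying $F_1(1)$), but this requires an additional argument you have not supplied, and it is exactly the nuisance the paper's splitting was designed to avoid.
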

    \begin{proof}
    We only provide the proof of \eqref{equ:error term,estimates,1} for $w_{err,1}$, as the argument for $w_{err,2}$ is
similar.

    First, we decompose the error term as $w_{err,1}=w_{err,1}^1+w_{err,1}^2$, where $w_{err,1}^1$ and $w_{err,1}^2$ solve the following system
    	   \begin{equation}\label{equ:navierslip,NS,w,error1,decom12}
    		\left\{
    		\begin{aligned}
    			&-\nu(\pa^2_y-\al^2)w_{err,1}^1+i\al(U-\la)w_{err,1}^1-i\al U''\psi_{err,1}^1+o(\nu,\al)w_{err,1}^{1}
    			=-i\al\big(U-U_{1}\big)W_{app,1},\\
    			&-\nu(\pa^2_y-\al^2)w_{err,1}^2+i\al(U-\la)w_{err,1}^2-i\al U''\psi_{err,1}^2+o(\nu,\al)w_{err,1}^{2} =i\al U''\Psi_{app,1},\\
    			&(\pa^2_y-\al^2)\psi_{err,1}^i=w_{err,1}^i,\quad w_{err,1}^i(\pm 1)=\psi_{err,1}^i(\pm 1)=(\psi^{i}_{err,1})^{''}(\pm 1)=0,i=1,2.
    		\end{aligned}\right.
    	\end{equation}	
    	Notice that $|U(y)-U_{1}(y)|=\abs{U(y)-U(-1)-U'(-1)(y+1)}\lesssim (1+y)^2$, we deduce from Lemma \ref{lemma:est,Wapp12,LinftyL2}	that
    	\begin{align}\label{equ:est,UWapp1,Psiapp1,L2}
    	\n{(U(y)-U_{1}(y))W_{app,1}}_{L^2}
    	\lesssim&
    		(L_{1})^{-\f52}\big(1+|L_{1}d_{1}|\big)^{-\f34}\abs{A_{0}^{(1)}(L_{1}d_{1})}.
    	\end{align}
 By applying resolvent estimate \eqref{equ:re,L2,w,full,sum} in Proposition \ref{lemma:summary,reso,navierslip} and the above estimate \eqref{equ:est,UWapp1,Psiapp1,L2},
    	we obtain
    	\begin{align}\label{equ:est,wuw',err11,L2L1}
 &\nu^{\f12}|\al|^{\f12}\n{w_{err,1}^1}_{L^2}+\nu^{\f34}|\al|^{\f14}\n{(\pa_y,|\al|)w_{err,1}^1}_{L^2}+\nu^{\f14}|\al|^{\f34}\n{u_{err,1}^1}_{L^2}\\
    	\lesssim& \n{i\al(U(y)-U_{1}(y))W_{app,1}}_{L^2}\nonumber\\
    	\lesssim& (L_{1})^{-\f52}|\al|\big(1+|L_{1}d_{1}|\big)^{-\f34}\abs{A_{0}^{(1)}(L_{1}d_{1})}\notag\\
    	\lesssim& (L_{1})^{-1}\nu^{\f12}|\al|^{\f12}\big(1+|L_{1}d_{1}|\big)^{-\f34}\abs{A_{0}^{(1)}(L_{1}d_{1})},\notag
    	\end{align}
    where we have used the fact that $\lrs{L_{1}}^{-\frac{3}{2}}\simeq \nu^{\f12} |\al|^{-\f12}$.
    Similarly, using \eqref{equ:re,H1,w,full,sum} in Proposition \ref{lemma:summary,reso,navierslip} and Lemma \ref{lemma:est,Wapp12,LinftyL2}, we obtain
    	\begin{align}\label{equ:est,wuw',err12,L2L1}
 &\nu^{\f14}|\al|^{\f34}\n{w_{err,1}^2}_{L^2}+\nu^{\f12}|\al|^{\f12}\n{(\pa_y,|\al|)w_{err,1}^2}_{L^2}+|\al|\n{u_{err,1}^2}_{L^2}\\
    	\lesssim& \n{i\al U''\Psi_{app,1}}_{H^{1}}\lesssim |\al|\n{(|\al|,\pa_{y})\Psi_{app,1}}_{L^2}\nonumber\\
    	\lesssim& (L_{1})^{-\f32}|\al|\big(1+|L_{1}d_{1}|\big)^{-\f14}\abs{A_{0}^{(1)}(L_{1}d_{1})}\nonumber\\
    	\lesssim&  (L_{1})^{-\f34}\nu^{\f14}|\al|^{\f34}\big(1+|L_{1}d_{1}|\big)^{-\f14}\abs{A_{0}^{(1)}(L_{1}d_{1})}.\notag
    \end{align}
Combining \eqref{equ:est,wuw',err11,L2L1} and  \eqref{equ:est,wuw',err12,L2L1}, we arrive at
    	\begin{align}\label{equ:est,wuw',err1,L2}
    	&\n{w_{err,1}}_{L^2}+\nu^{\f14}|\al|^{-\f14}\n{(\pa_y,|\al|)w_{err,1}}_{L^2}+\nu^{-\f14}|\al|^{\f14}\n{u_{err,1}}_{L^2}\\
    	\lesssim&  \Big((L_{1})^{-1}\big(1+|L_{1}d_{1}|\big)^{-\f34}+(L_{1})^{-\f34}\big(1+|L_{1}d_{1}|\big)^{-\f14}\Big)\abs{A_{0}^{(1)}(L_{1}d_{1})}\nonumber\\
    	\lesssim &
    	(L_{1})^{-\f34}\big(1+|L_{1}d_{1}|\big)^{-\f14}\abs{A_{0}^{(1)}(L_{1}d_{1})}.\notag
    \end{align}
   For the $L^{\infty}$ estimate, we use the Gagliardo-Nirenberg inequality that
    	\begin{align*}
    	\n{w_{err,1}}_{L^\infty}\lesssim &
    	\n{w_{err,1}}^{\f12}_{L^2}\n{\pa_yw_{err,1}}^{\f12}_{L^2},\quad
    \n{u_{err,1}}_{L^\infty}\lesssim
    	\n{w_{err,1}}^{\f12}_{L^2}\n{u_{err,1}}^{\f12}_{L^2}.
    	\end{align*}
    Therefore, we complete the proof of \eqref{equ:error term,estimates,1}.
    	\end{proof}

          Next, we derive estimates for the coefficients $A_i, B_i$, which in turn yield bounds for $C_{ij}$.

         \begin{lemma}[Estimates on coefficients $A_{i}$, $B_{i}$, $C_{ij}$]\label{lemma:est,bound,AiBi,Cij,ij12}
         	If $\min(L_{1},L_{2})\gg 1$, then the following estimates hold
          \begin{align*}
             	&\abs{A_{1}}\lesssim (L_{1})^{-\f98}	\abs{A_{0}^{(1)}(L_{1}d_{1})},\quad
         	 \abs{A_{2}}\lesssim  (L_{2})^{-\f98}\abs{A_{0}^{(2)}(L_{2}d_{2})},\\
         	&\abs{B_{1}}\gtrsim  (L_{1})^{-1}	\abs{A_{0}^{(1)}(L_{1}d_{1})},\quad
         	\abs{B_{2}}\gtrsim (L_{2})^{-1}\abs{A_{0}^{(2)}(L_{2}d_{2})}.
         \end{align*}
         In particular, we have
         \begin{align*}
         	&\abs{C_{11}}\lesssim \f{(L_{1})^{\f78}}{\babs{A_{0}^{(1)}(L_{1}d_{1})}}, \qquad \abs{C_{12}}\lesssim \f{L_{2}}{\babs{A_{0}^{(2)}\big(L_{2}d_{2}\big)}},\\
         	& \abs{C_{21}}\lesssim \f{L_{1}}{\babs{A_{0}^{(1)}(L_{1}d_{1})}},\qquad \abs{C_{22}}\lesssim \f{(L_{2})^{\f78}}{\babs{A_{0}^{(2)}\big(L_{2}d_{2}\big)}}.
         \end{align*}
         \end{lemma}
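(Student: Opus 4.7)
The plan is to write $A_i = A_{app,i} + A_{err,i}$ and $B_i = B_{app,i} + B_{err,i}$, bound each piece using Lemmas \ref{lemma:est,Aapp12,Bapp12} and \ref{lemma:est,uw,error12,L2L1Linfty}, and then invert the linear system for $C_{ij}$ via Cramer's rule. The first step is to reduce the error integrals $A_{err,j}, B_{err,j}$ to pointwise boundary values of the stream-function derivative. The kernels $k_+(y) = \sinh(\al(1+y))/\sinh(2\al)$ and $k_-(y) = \sinh(\al(1-y))/\sinh(2\al)$ satisfy $k_\pm'' = \al^2 k_\pm$, while the boundary data in \eqref{equ:navierslip,NS,w,error1,2} give $\psi_{err,j}(\pm 1)=\psi_{err,j}''(\pm 1)=0$ (the second from $w_{err,j}(\pm 1)=0$). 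Using $(\pa_y^2-\al^2)\psi_{err,j} = w_{err,j}$ and integrating by parts twice, all bulk terms cancel, leaving
\begin{align*}
A_{err,1}=u_{err,1}(1),\quad B_{err,1}=-u_{err,1}(-1),\quad A_{err,2}=-u_{err,2}(-1),\quad B_{err,2}=u_{err,2}(1).
\end{align*}
The $L^{\infty}$ bound $\|u_{err,i}\|_{L^{\infty}}\lesssim L_{i}^{-9/8}|A_{0}^{(i)}(L_{i}d_{i})|$ from Lemma \ref{lemma:est,uw,error12,L2L1Linfty} then yields $|A_{err,i}|+|B_{err,i}|\lesssim L_{i}^{-9/8}|A_{0}^{(i)}(L_{i}d_{i})|$.

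Next I would combine these with the approximation bounds. Lemma \ref{lemma:est,Aapp12,Bapp12} supplies $|A_{app,i}|\lesssim L_{i}^{-2}|A_{0}^{(i)}|$ and $|B_{app,i}|\gtrsim L_{i}^{-1}|A_{0}^{(i)}|$; Lemma \ref{lemma:est,Wapp12,LinftyL2} together with $\|k_{\pm}\|_{L^{\infty}}\le 1$ additionally yields the complementary upper bound $|B_{app,i}|\le\|W_{app,i}\|_{L^{1}}\lesssim L_{i}^{-1}|A_{0}^{(i)}|$. Since $L_{i}\gg 1$ forces $L_{i}^{-2}\ll L_{i}^{-9/8}\ll L_{i}^{-1}$, the ordinary and reverse triangle inequalities give
\begin{align*}
|A_{i}|\lesssim L_{i}^{-9/8}|A_{0}^{(i)}(L_{i}d_{i})|,\qquad |B_{i}|\sim L_{i}^{-1}|A_{0}^{(i)}(L_{i}d_{i})|.
\end{align*}

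Finally, I would bound the determinant from below by
\begin{align*}
|A_{1}A_{2}-B_{1}B_{2}|\geq|B_{1}||B_{2}|-|A_{1}||A_{2}|\gtrsim L_{1}^{-1}L_{2}^{-1}|A_{0}^{(1)}(L_{1}d_{1})||A_{0}^{(2)}(L_{2}d_{2})|,
\end{align*}
since $|A_{1}A_{2}|\lesssim(L_{1}L_{2})^{-9/8}|A_{0}^{(1)}||A_{0}^{(2)}|\ll(L_{1}L_{2})^{-1}|A_{0}^{(1)}||A_{0}^{(2)}|$. Substituting into the Cramer-type formulas for $C_{ij}$ yields, for instance, $|C_{11}|\lesssim L_{1}L_{2}^{-1/8}/|A_{0}^{(1)}|$ and $|C_{12}|\lesssim L_{2}/|A_{0}^{(2)}|$; the symmetry condition $U(y)=U(-y)$ forces $|U'(-1)|=|U'(1)|$ and hence $L_{1}=L_{2}$, so $L_{1}L_{2}^{-1/8}=L_{1}^{7/8}$ and the four claimed bounds follow. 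The main delicate point is the integration-by-parts identity reducing $A_{err,i}, B_{err,i}$ to $u_{err,i}(\pm 1)$: a naive Cauchy-Schwarz bound would yield only $L_{i}^{-3/4}|\al|^{-1/2}|A_{0}^{(i)}|$, which is too weak in general to preserve the lower bound $|B_{i}|\gtrsim L_{i}^{-1}|A_{0}^{(i)}|$ after subtracting the error from $B_{app,i}$.
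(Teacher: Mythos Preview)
Your proof is correct and follows essentially the same approach as the paper: both reduce $A_{err,i},B_{err,i}$ to the boundary values $\partial_y\psi_{err,i}(\pm1)$ via the integration-by-parts identity (the paper states this tersely as $|A_{err,1}|\lesssim|\partial_y\psi_{err,1}(1)|$), bound these by $\|u_{err,i}\|_{L^\infty}\lesssim L_i^{-9/8}|A_0^{(i)}|$ from Lemma~\ref{lemma:est,uw,error12,L2L1Linfty}, and then invoke Lemma~\ref{lemma:est,Aapp12,Bapp12} together with Cramer's rule and $L_1\simeq L_2$. One minor notational point: $u_{err,i}$ is a vector, so your identities $A_{err,1}=u_{err,1}(1)$ etc.\ should read $A_{err,1}=\partial_y\psi_{err,1}(1)$ (and the signs differ from what you wrote), but this is immaterial once absolute values are taken.
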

         \begin{proof}
         We only consider estimates on $A_{1}$ and $B_{1}$, as the proof of estimates on $A_{2}$ and $B_{2}$ is similar.
         	Recalling the definition \eqref{equ:coefficients,A,B} of $A_{err,1}$, using \eqref{equ:error term,estimates,1} in Lemma \ref{lemma:est,uw,error12,L2L1Linfty}, we get
         	\begin{align}\label{equ:est,Aerr1,abs}
         		\abs{A_{err,1}}&=\bbabs{\int_{-1}^1\frac{\operatorname{\operatorname{sinh}}\big(\alpha(1+y)\big)}
         {\operatorname{\operatorname{sinh}}(2\alpha)}w_{err,1}dy}
         		\lesssim \n{\pa_y\psi_{err,1}(1)}_{L^\infty}\\
         		&\lesssim\nu^{\f18}|\al|^{-\f18}(L_{1})^{-\f{3}{4}} \babs{A_{0}^{(1)}(L_{1}d_{1})}
         		\lesssim  (L_{1})^{-\f{9}{8}}\babs{A_{0}^{(1)}(L_{1}d_{1})},\notag
         	\end{align}
         	where we used the fact $L_{1}\sim \nu^{-\frac{1}{3}}|\al|^{\f13}$.
         	Similarly, we derive
         		\begin{align}\label{equ:est,Berr1,abs}
         		\abs{B_{err,1}}\lesssim (L_{1})^{-\f{9}{8}}\abs{A_{0}^{(1)}(L_{1}d_{1})}.
         	\end{align}

         For $A_1$, using Lemma \ref{lemma:est,Aapp12,Bapp12} and \eqref{equ:est,Aerr1,abs}, we get
         	\begin{align*}
         		\abs{A_1}&\leq \abs{A_{err,1}}+\abs{A_{app,1}}
         		\lesssim (L_{1})^{-\f{9}{8}}\abs{A_{0}^{(1)}(L_{1}d_{1})}.
         	\end{align*}

         For $B_{1}$, using \eqref{equ:est,Berr1,abs} and lower bound in Lemma \ref{lemma:est,Aapp12,Bapp12}, we have
         	\begin{align*}
         |B_{1}|\geq& |B_{app,1}|-|B_{err,1}|\gtrsim (L_{1})^{-1}	\abs{A_{0}^{(1)}(L_{1}d_{1})}.
         	\end{align*}

We arrive at
\begin{align*}
\lrv{\f{A_{1}}{B_{1}}}\lesssim (L_{1})^{-\frac{1}{8}}\leq \frac{1}{2}, \quad \lrv{\f{A_{2}}{B_{2}}}\lesssim (L_{2})^{-\frac{1}{8}}\leq \frac{1}{2},
\end{align*}
and hence obtain
         	\begin{align*}
         		\abs{A_1A_2-B_1B_2}\geq \abs{B_1B_2}\bbabs{1-\f{A_1A_2}{B_1B_2}}\geq \f12\abs{B_1B_2} >0.
         	\end{align*}

         	Finally, we bound $C_{ij}$ for $i,j=1,2.$ Specifically, with $L_{1}\simeq L_{2}$, we have
         	\begin{align*}
         		|C_{22}|&=\bbabs{\f{A_1}{A_1A_2-B_1B_2}}\leq
         		\f{(L_{1})^{-\f{9}{8}}\babs{A_{0}^{(1)}(L_{1}d_{1})}}{(L_{1})^{-1}\abs{A_{0}^{(1)}(L_{1}d_{1})}(L_{2})^{-1}\abs{A_{0}^{(2)}(L_{2}d_{2})}}\\
         		&\lesssim \f{(L_{1})^{-\f18}L_{2}}{\abs{A_{0}^{(2)}(L_{2}d_{2})}}\lesssim \f{(L_{2})^{\f78}}{\abs{A_{0}^{(2)}\big(L_{2}d_{2}\big)}
         }.
         	\end{align*}
         	 The other estimates are similar and are omitted for brevity. Thus, we complete the proof.
         	 \end{proof}

         We are now in a position to provide bounds for $w_{cor,1}$ and $w_{cor,2}$.

 \begin{lemma}\label{lemma:est,uw,L2Linfty,cor12}
 	 If $\min(L_{1},L_{2})\gg 1$, then for $j=1,2$, the following estimates hold
  \begin{align}
   \n{w_{cor,j}}_{L^2}\lesssim& (L_{j})^{\frac{1}{2}}\lrs{1+|L_{j}d_{j}|}^{\f14},\label{equ:w,cor,L2}\\
     \n{w_{cor,j}}_{L^{\infty}}\lesssim&L_{j}\lrs{1+|L_{j}d_{j}|}^{\f12},\label{equ:w,cor,Lwq}\\
       \n{w_{cor,j}}_{L^{1}}\lesssim&1+ \lrs{L_{j}}^{\frac{1}{4}}(1+|L_{j}d_{j}|)^{-\frac{1}{4}},\label{equ:w,cor,L1}\\
  	\n{u_{cor,j}}_{L^2} \lesssim&(L_{j})^{-\frac{1}{2}}\lrs{1+|L_{j}d_{j}|}^{-\f14},\label{equ:u,cor,L2}\\
  \n{u_{cor,j}}_{L^\infty}\lesssim& 1.\label{equ:u,cor,Lwq}
  	\end{align}

 \end{lemma}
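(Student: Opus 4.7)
The plan is to prove the lemma by direct computation from the explicit decomposition
\[
w_{cor,j} = C_{j1}(W_{app,1}+w_{err,1})+C_{j2}(W_{app,2}+w_{err,2}).
\]
Since $\Psi_{app,i}(\pm 1)=\psi_{err,i}(\pm 1)=0$, this induces the analogous decomposition $\psi_{cor,j}=C_{j1}(\Psi_{app,1}+\psi_{err,1})+C_{j2}(\Psi_{app,2}+\psi_{err,2})$, and hence $u_{cor,j}=C_{j1}(u_{app,1}+u_{err,1})+C_{j2}(u_{app,2}+u_{err,2})$, where $u_{app,i}$ denotes the Biot--Savart velocity generated by the stream function $\Psi_{app,i}$. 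For each of the four norms $\|w_{cor,j}\|_{L^2}$, $\|w_{cor,j}\|_{L^\infty}$, $\|w_{cor,j}\|_{L^1}$, and $\|u_{cor,j}\|_{L^2}$, I would apply the triangle inequality term by term and combine three inputs: the coefficient bounds from Lemma \ref{lemma:est,bound,AiBi,Cij,ij12}, the norms of $W_{app,i}$ and $(\partial_y,|\alpha|)\Psi_{app,i}$ from Lemma \ref{lemma:est,Wapp12,LinftyL2}, and the norms of $w_{err,i}$, $u_{err,i}$ from Lemma \ref{lemma:est,uw,error12,L2L1Linfty}.

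In each of the four cases, the dominant contribution comes from the cross pairing $C_{j,3-j}W_{app,3-j}$ (respectively $C_{j,3-j}\Psi_{app,3-j}$): the factor $L_{3-j}/|A_0^{(3-j)}(L_{3-j}d_{3-j})|$ in $|C_{j,3-j}|$ exactly cancels the factor $|A_0^{(3-j)}(L_{3-j}d_{3-j})|$ appearing in the relevant norm, leaving precisely the claimed powers of $L_{3-j}$ and $(1+|L_{3-j}d_{3-j}|)$. The symmetry of $U$ forces $L_1=L_2$ and $|d_1|=|d_2|$, so the resulting bound is identical for $j=1$ and $j=2$ and matches the statement. The three remaining terms in each decomposition (the diagonal $W_{app,j}$ or $\Psi_{app,j}$ contribution and the two $w_{err,i}$ or $u_{err,i}$ contributions) always carry an additional factor of $L_j^{-1/8}$ or smaller and are therefore absorbed.

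The only non-routine step is the pointwise bound $\|u_{cor,j}\|_{L^\infty}\lesssim 1$. The naive interpolation $\|u\|_{L^\infty}^2\lesssim \|u\|_{L^2}(\|w\|_{L^2}+|\alpha|\|u\|_{L^2})$ introduces a factor of order $|\alpha|^{1/2}L_j^{-1/2}\sim(\nu|\alpha|^2)^{1/6}$ that is not controlled when $\nu|\alpha|^2\gg 1$. To avoid this loss I would instead work directly with the Dirichlet Green's function $G_\alpha(y,z)$ for $(\partial_y^2-|\alpha|^2)$ on $[-1,1]$, whose pointwise bounds $|G_\alpha(y,z)|+|\alpha|^{-1}|\partial_yG_\alpha(y,z)|\lesssim |\alpha|^{-1}e^{-|\alpha||y-z|}$ yield the $L^1\!\to\! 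L^\infty$ smoothing estimate $\|u_{app,i}\|_{L^\infty}\lesssim \|W_{app,i}\|_{L^1}\lesssim L_i^{-1}|A_0^{(i)}(L_id_i)|$. Pairing this with $|C_{j,3-j}|\lesssim L_{3-j}/|A_0^{(3-j)}|$ produces the desired $O(1)$ cross-term contribution, while the $L^\infty$ bound on $u_{err,i}$ already provided by Lemma \ref{lemma:est,uw,error12,L2L1Linfty} handles the error contributions with a gain of $L_j^{-1/8}$.

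The main obstacle is therefore purely combinatorial bookkeeping: across the sixteen scalar quantities in the decomposition, the exponents in $L_j$, in $|A_0^{(j)}(L_jd_j)|$, and in $(1+|L_jd_j|)$ must all conspire to the claimed bounds, with the cross term saturating in every one of the five estimates. The sole analytically non-trivial ingredient is the replacement of Gagliardo--Nirenberg interpolation by the Dirichlet $L^1\!\to\! L^\infty$ smoothing property of the Biot--Savart operator in the $L^\infty$ bound on $u_{cor,j}$.
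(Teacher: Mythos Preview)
Your approach for the first four estimates is essentially the paper's: decompose via \eqref{equ:form,wcor1,wcor2}, apply the triangle inequality, and combine the coefficient bounds of Lemma~\ref{lemma:est,bound,AiBi,Cij,ij12} with the $W_{app,i}$ bounds of Lemma~\ref{lemma:est,Wapp12,LinftyL2} and the error bounds of Lemma~\ref{lemma:est,uw,error12,L2L1Linfty}, using $L_1\sim L_2$ and $1+|L_1d_1|\sim 1+|L_2d_2|$.

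Your treatment of $\|u_{cor,j}\|_{L^\infty}$, however, is more elaborate than necessary. The concern about the factor $(\nu|\alpha|^2)^{1/6}$ is unfounded: by the elliptic estimate $\|u\|_{L^2}\lesssim|\alpha|^{-1}\|w\|_{L^2}$ (Lemma~\ref{lemma:proper,u,L2,Linf}), the term $|\alpha|\|u_{cor,j}\|_{L^2}$ is always dominated by $\|w_{cor,j}\|_{L^2}$, so the interpolation collapses to
\[
\|u_{cor,j}\|_{L^\infty}\lesssim \|w_{cor,j}\|_{L^2}^{1/2}\|u_{cor,j}\|_{L^2}^{1/2}\lesssim \big[(L_j)^{1/2}(1+|L_jd_j|)^{1/4}\big]^{1/2}\big[(L_j)^{-1/2}(1+|L_jd_j|)^{-1/4}\big]^{1/2}=1,
\]
which is exactly what the paper does. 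Your Green's function argument via $\|u_{app,i}\|_{L^\infty}\lesssim\|W_{app,i}\|_{L^1}$ is correct and would also close, but it is an unnecessary detour.
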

      \begin{proof}
      We only provide estimates for $w_{err,1}$, as estimates for $w_{err,2}$ follow similarly.
      Recalling that
      	\begin{align*}
      		w_{cor,1}=C_{11}(W_{app,1}+w_{err,1})+C_{12}(W_{app,2}+w_{err,2}),
      	\end{align*}
 Using Lemmas \ref{lemma:est,bound,AiBi,Cij,ij12}, \ref{lemma:est,Wapp12,LinftyL2}, and \ref{lemma:est,uw,error12,L2L1Linfty}, with $1+|L_{1}d_{1}|\simeq 1+|L_{2}d_{2}|$ and $L_{j}\sim (\nu/|\al|)^{\frac{1}{3}}$, we obtain
 estimates as follows:
      	\begin{align*}
      		\n{w_{cor,1}}_{L^2}\leq& \abs{C_{11}}\big(\n{W_{app,1}}_{L^2}+\n{w_{err,1}}_{L^2}\big)+\abs{C_{12}}\big(\n{W_{app,2}}_{L^2}+\n{w_{err,2}}_{L^2}\big)	\\
      		\lesssim& 	(L_{1})^{\frac{7}{8}}\lrs{(L_{1})^{-\frac{1}{2}}\big(1+|L_{1}d_{1}|\big)^{\f14}+(L_{1})^{-\f34}\big(1+|L_{1}d_{1}|\big)^{-\f14}}\\
      &+L_{2}\lrs{(L_{2})^{-\frac{1}{2}}\big(1+|L_{2}d_{2}|\big)^{\f14}+(L_{2})^{-\f34}\big(1+|L_{2}d_{2}|\big)^{-\f14}}\\
      		\lesssim &
      		\nu^{-\f16}\abs{\al}^{\f16}\big(1+|L_{2}d_{2}|\big)^{\f14},
      	\end{align*}
    	\begin{align*}
      		\n{w_{cor,1}}_{L^{\infty}}\leq& \abs{C_{11}}\big(\n{W_{app,1}}_{L^{\infty}}+\n{w_{err,1}}_{L^{\infty}}\big)+\abs{C_{12}}\big(\n{W_{app,2}}_{L^{\infty}}+\n{w_{err,2}}_{L^{\infty}}\big)	\\
      		\lesssim& 	(L_{1})^{\frac{7}{8}}\lrs{(\big(1+|L_{1}d_{1}|\big)^{\f12}+(L_{1})^{-\f38}\big(1+|L_{1}d_{1}|\big)^{-\f14}}\\
      &+L_{2}\lrs{\big(1+|L_{2}d_{2}|\big)^{\f12}+(L_{2})^{-\f38}\big(1+|L_{2}d_{2}|\big)^{-\f14}}\\
      		\lesssim &
      		\nu^{-\f13}\abs{\al}^{\f13}\big(1+|L_{2}d_{2}|\big)^{\f12},
      	\end{align*}
        	\begin{align*}
      		\n{w_{cor,1}}_{L^{1}}\leq& \abs{C_{11}}\big(\n{W_{app,1}}_{L^{1}}+\n{w_{err,1}}_{L^{1}}\big)+\abs{C_{12}}\big(\n{W_{app,2}}_{L^{1}}+\n{w_{err,2}}_{L^{1}}\big)	\\
      		\lesssim& 	(L_{1})^{\frac{7}{8}}\lrs{(L_{1})^{-1}+(L_{1})^{-\f34}\big(1+|L_{1}d_{1}|\big)^{-\f14}}\\
      &+L_{2}\lrs{(L_{2})^{-1}+(L_{2})^{-\f34}\big(1+|L_{2}d_{2}|\big)^{-\f14}}\\
      		\lesssim &1+ \lrs{L_{j}}^{\frac{1}{4}}(1+|L_{j}d_{j}|)^{-\frac{1}{4}},
      	\end{align*}
      	\begin{align*}
      	\n{u_{cor,1}}_{L^2}\leq& \abs{C_{11}}\big(\n{U_{app,1}}_{L^2}+\n{u_{err,1}}_{L^2}\big)+\abs{C_{12}}\big(\n{U_{app,2}}_{L^2}+\n{u_{err,2}}_{L^2}\big)	\\
      \lesssim&(L_{1})^{\frac{7}{8}}\lrs{(L_{1})^{-\frac{3}{2}}\big(1+|L_{1}d_{1}|\big)^{-\f14}
      +\nu^{\frac{1}{4}}|\al|^{-\frac{1}{4}}(L_{1})^{-\f34}\big(1+|L_{1}d_{1}|\big)^{-\f14}}\\
      &+L_{2}\lrs{(L_{2})^{-\frac{3}{2}}\big(1+|L_{2}d_{2}|\big)^{-\f14}
      +\nu^{\frac{1}{4}}|\al|^{-\frac{1}{4}}(L_{2})^{-\f34}\big(1+|L_{2}d_{2}|\big)^{-\f14}}\\
      	\lesssim&
      	\nu^{\f16}\abs{\al}^{-\f16}\big(1+|L_{2}d_{2}|\big)^{-\f14},
      	\end{align*}
      	\begin{align*}
      		\n{u_{cor,1}}_{L^\infty}\lesssim \n{w_{cor,1}}_{L^2}^{\f12}\n{u_{cor,1}}_{L^2}^{\f12}\lesssim 1.
      	\end{align*}
      Hence, we complete the proof.
 \end{proof}

\subsection{Estimates on coefficients $c_{1}$ and $c_{2}$}
In the section, we get into the analysis of bounds on the coefficients $c_{1}$ and $c_{2}$, which are given by \eqref{equ:form,c1,c2}.
\begin{lemma}\label{lemma:bounds on c1,c2}
Under the restriction that
\begin{align}\label{equ:restriction,bounds on c12}
\nu |\al|^2 \leq |\la- U(0)|^{\frac{1}{2}} + |\nu|^{\frac{1}{4}}|\al|^{-\f14},
\end{align}
for $F\in L^{2},H^{-1},H^{1}$, we have
\begin{align}\label{equ:bounds on c1,c2,L2}
|c_{1}|+|c_{2}|\lesssim& \nu^{-\frac{3}{8}}|\al|^{-\frac{7}{8}}(1+|\al||\la-U(1)|)^{-\frac{1}{4}}\n{F}_{L^{2}},\\
|c_{1}|+|c_{2}|\lesssim& \nu^{-\frac{1}{2}}|\al|^{-\frac{1}{2}}(|\la-U(1)|+\nu^{\frac{1}{3}}|\al|^{-\frac{1}{3}})^{-\frac{1}{4}}\n{F}_{H^{-1}},\label{equ:bounds on c1,c2,H-1}\\
|c_{1}|+|c_{2}|\lesssim& \nu^{-\frac{1}{8}}|\al|^{-\frac{7}{8}}(1+|\la-U(1)|)^{-\frac{1}{4}}\n{F}_{H^{1}}.\label{equ:bounds on c1,c2,H1}
\end{align}
\end{lemma}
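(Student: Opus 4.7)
The plan is to recast the coefficients as weak duality pairings
\[
c_1 = -\lra{w_{Na}/U'',U''k_1}, \qquad c_2 = \lra{w_{Na}/U'',U''k_2},
\]
where $k_1(y)=\sinh(|\al|(1+y))/\sinh(2|\al|)$ and $k_2(y)=\sinh(|\al|(1-y))/\sinh(2|\al|)$ are exponentially concentrated in the boundary layer of width $|\al|^{-1}$ at $y=(-1)^{j+1}$, with $\n{k_j}_{L^p}\simeq |\al|^{-1/p}$, $\n{\pa_yk_j}_{L^2}\simeq |\al|^{1/2}$, and $k_j((-1)^{j+1})=1$ (only one of the boundary values is nonzero). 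I will then apply the weak-type resolvent estimates of Lemmas \ref{lemma:weak,est,w,FL2}--\ref{lemma:weak,est,w,FH-1} together with the Rayleigh bounds of Lemma \ref{lemma:ray,est,W,Phi} and the resolvent estimates of Propositions \ref{lemma:summary,reso,navierslip} and \ref{proposition:reso,est,u,w,FH1}. Throughout, set $a=|\la-U(1)|+\nu^{1/3}|\al|^{-1/3}$, $\delta=\nu^{1/4}|\al|^{-1/4}$, and $\delta_1=\delta^{4/3}(|\la-U(0)|^{1/2}+\delta)^{2/3}$.

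For the $H^{-1}$ bound \eqref{equ:bounds on c1,c2,H-1}, plug $f=U''k_j$ into Lemma \ref{lemma:weak,est,w,FH-1}. The boundary contribution is $\nu^{-1/4}|\al|^{-3/4}a^{-3/4}\n{F}_{H^{-1}}$, which is dominated by the target $\nu^{-1/2}|\al|^{-1/2}a^{-1/4}\n{F}_{H^{-1}}$ because the inequality $\delta^2\leq\delta^{4/3}\leq a$ follows from hypothesis \eqref{equ:restriction,bounds on c12}. For the bulk contribution, I must estimate $W=\text{\bf Ray}_{\delta_1}^{-1}(U''k_j)$ in $L^2$ and $H^1$. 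Because $U''k_j((-1)^{j+1})\neq 0$, I cannot apply Lemma \ref{lemma:ray,est,W,Phi} directly; instead I first subtract a smooth boundary-layer extension of the datum (mirroring the construction of $\vp_1$ in the proof of Lemma \ref{lemma:weak,est,w,FH-1}) and then apply Lemma \ref{lemma:ray,est,W,Phi} to the corrected $H^1_0$ piece. Combining this with the localisation $|U-\la+i\delta_1|\gtrsim a$ on $\operatorname{supp}k_j$ (which follows from $U(y)=U(1)-U'(1)(1-y)+O((1-y)^2)$ together with $|U(1)-\la|+\delta_1\gtrsim a$) produces bounds that, after optimisation over the extension scale, close \eqref{equ:bounds on c1,c2,H-1}.

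The $L^2$ bound \eqref{equ:bounds on c1,c2,L2} follows the same blueprint with Lemma \ref{lemma:weak,est,w,FL2} replacing Lemma \ref{lemma:weak,est,w,FH-1}. Since Lemma \ref{lemma:weak,est,w,FL2} requires $f(\pm 1)=0$, I decompose $U''k_j=g_0+g_b$ where $g_b$ is a smooth boundary cut-off at scale $\delta_*$ absorbing the value $U''((-1)^{j+1})$ and $g_0\in H^1_0(-1,1)$. Lemma \ref{lemma:weak,est,w,FL2} handles the $g_0$-contribution via the Rayleigh bounds above, while $|\lra{w_{Na}/U'',g_b}|$ is controlled by Cauchy--Schwarz using the refined boundary-layer bound $\n{w_{Na}}_{L^1(\operatorname{supp}g_b)}\lesssim \delta_*^{1/2}\n{w_{Na}}_{L^2}$ and the sharp $L^1$ estimate $\nu^{3/8}|\al|^{5/8}\n{w_{Na}}_{L^1}\lesssim \n{F}_{L^2}$ from Proposition \ref{lemma:re,w,full}. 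Optimising $\delta_*$ as a function of $a$ produces the decay factor $(1+|\al||\la-U(1)|)^{-1/4}$.

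The $H^1$ bound \eqref{equ:bounds on c1,c2,H1} splits into two regimes. For $|\la-U(1)|\lesssim 1$, two integrations by parts on $c_j=\mp\int k_j w_{Na}\,dy$ using $(\pa_y^2-|\al|^2)k_j=0$ and the Navier-slip conditions $\psi_{Na}(\pm 1)=\psi''_{Na}(\pm 1)=0$ reduce $c_j$ to $u^1_{Na}((-1)^{j+1})$, and the $L^\infty$ bound of Proposition \ref{proposition:reso,est,u,w,FH1} yields $|c_j|\lesssim \nu^{-1/8}|\al|^{-7/8}\n{F}_{H^1}$ (with the prefactor $(1+|\la-U(1)|)^{-1/4}\simeq 1$). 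For $|\la-U(1)|\gg 1$, I combine Cauchy--Schwarz $|c_j|\leq\n{w_{Na}}_{L^2}\n{k_j}_{L^2}$ with the coercive estimates \eqref{equ:energy estimate,im,w,0}--\eqref{equ:energy estimate,im,w,1} of Lemma \ref{lemma:energy estimate,real part}, bounding the right-hand side via $|\lra{F,w_{Na}/U''}|\lesssim \n{F}_{H^1}\n{u_{Na}}_{L^2}\lesssim |\al|^{-1}\n{F}_{H^1}^2$ to extract the factor $|\la-U(1)|^{-1/2}$, stronger than what is needed. The hard part will be the $L^2$ step: extending Lemma \ref{lemma:weak,est,w,FL2} to test functions with non-vanishing boundary data while simultaneously producing the sharp $(1+|\al||\la-U(1)|)^{-1/4}$ decay requires a delicate optimisation over $\delta_*$, and the accompanying Rayleigh estimate for $\text{\bf Ray}_{\delta_1}^{-1}(U''k_j)$ undergoes a transition between distinct regimes as $|\la-U(1)|$ crosses $\delta^{4/3}$ (when the interior critical layer $U(y_2)=\la$ approaches the boundary layer of $k_j$).
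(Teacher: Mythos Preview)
Your $H^{-1}$ and $H^1$ plans match the paper's proof closely. For $H^{-1}$ the paper also applies Lemma~\ref{lemma:weak,est,w,FH-1} with $f=U''k_j$; the Rayleigh analysis in the case $|\la|\lesssim 1$ is more delicate than you indicate, though: the paper decomposes $W=\text{\bf Ray}_{\delta_1}^{-1}f$ as $W_1+W_2$ with $W_1=f(1)\rho(y)/(U-\la+i\delta_1)$ for a cutoff $\rho$ at scale $|\al|^{-1}$, and crucially invokes the sharp bound $\n{(\partial_y,|\al|)\Psi}_{L^2}\lesssim|\al|^{-1/2}$ for this specific $f$ (Lemma~\ref{lemma:proper,phi,ray,equ,refined}), which is strictly stronger than what Lemma~\ref{lemma:ray,est,W,Phi} alone gives. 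Your ``localisation $|U-\la+i\delta_1|\gtrsim a$ on $\operatorname{supp}k_j$'' heuristic is the right intuition, but $k_j$ has full support and the gain must come from this refined stream-function bound. For $H^1$ your two-regime argument (integration by parts to $|c_j|=|\partial_y\psi_{Na}((-1)^{j+1})|\le\n{u_{Na}}_{L^\infty}$ for bounded $\la$; energy estimates \eqref{equ:energy estimate,im,w,0}--\eqref{equ:energy estimate,im,w,1} for large $|\la|$) is exactly what the paper does.

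For $L^2$ you take a genuinely different route. The paper does \emph{not} use the weak-type machinery here; instead it runs a direct case analysis in $\la$ built on a weighted $L^2$ estimate for $w_{Na}$. The key device (Cases~1.2 and~3) is the bound $\n{yw_{Na}}_{L^2}^2\lesssim|\al|^{-1}|\langle F,w_{Na}/U''\rangle|+|\la-U(0)|\n{w_{Na}}_{L^2}^2$, obtained by revisiting the proof of \eqref{equ:L2,w,resolvent,0,full,proof}, combined with the split $c_1=\int_{-1}^{1-\varepsilon}+\int_{1-\varepsilon}^1$: the tail is $\lesssim e^{-|\al|\varepsilon}\n{w_{Na}}_{L^1}$ via exponential decay of $k_1$, while the boundary piece is $\lesssim\varepsilon^{1/2}\n{yw_{Na}}_{L^2}$ since $|y|\sim 1$ there. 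Optimising $\varepsilon$ (e.g.\ $\varepsilon\sim|\al|^{-3/4}$) produces the factor $(1+|\al||\la-U(1)|)^{-1/4}$ directly. Your approach---boundary-correct the test function, then apply Lemma~\ref{lemma:weak,est,w,FL2} plus Rayleigh bounds---is in principle viable, but extracting the $(1+|\al||\la-U(1)|)^{-1/4}$ decay from $\n{\text{\bf Ray}_{\delta_1}^{-1}g_0}_{L^2}$ would force you through essentially the same refined Rayleigh analysis the paper needs for $H^{-1}$, so the paper's weighted-$L^2$ route is more economical here.
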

\begin{proof}

For simplicity, we might assume that $o(\nu,\al)\equiv 0$, as it is a perturbation term.
It suffices to estimate the coefficient $c_{1}$, as the bounds on $c_{2}$ follows similarly.

First, we deal with \textbf{the case $F\in L^{2}$}.
We divide the proof into the following cases.\smallskip

{\it Case 1.1.} $U(0)-\la\gtrsim \frac{1}{|\al|}.$

 By energy estimate \eqref{equ:L2,w,resolvent,0,full,proof} in Lemma \ref{lemma:energy estimate,real part}, we have
\begin{align}
|\al|\lrs{|\la-U(0)|\n{w}_{L^{2}}^{2}+\blra{(U(y)-U(0))w,\frac{w}{U''}}+\n{(\pa_{y},|\al|)\psi}_{L^{2}}^{2}}
\lesssim&\babs{\blra{F,\frac{w}{U''}}},\notag
\end{align}
which, together with resolvent estimate \eqref{equ:re,L2,w,full}, yields that
\begin{align}\label{equ:L2,w,weighted,proof}
\lra{(|\la-U(0)|+y^{2})w,w}
\lesssim\nu^{-\frac{1}{3}}|\al|^{-\frac{5}{3}}\lrs{|\la-U(0)|^{\frac{1}{2}}
+\nu^{\frac{1}{4}}|\al|^{-\frac{1}{4}}}^{-\frac{2}{3}}\n{F}_{L^{2}}^{2}.
\end{align}
Hence, we get
\begin{align*}
|c_{1}|\lesssim& \bbn{\frac{\operatorname{\operatorname{sinh}}\lrs{|\al|(1+y)}}{\operatorname{\operatorname{sinh}}(2|\al|)
}}_{L^{2}}\bbn{\frac{1}{\sqrt{|\la-U(0)|+y^{2}}}}_{L_{y}^{\infty}}\bn{\sqrt{|\la-U(0)|+y^{2}}w_{Na}}_{L^{2}}\\
\lesssim& |\al|^{-\frac{1}{2}}|\la-U(0)|^{-\frac{1}{2}}\bn{\sqrt{|\la-U(0)|+y^{2}}w_{Na}}_{L^{2}}\\
\lesssim& |\al|^{-\frac{1}{2}}|\la-U(0)|^{-\frac{1}{2}}
\nu^{-\frac{1}{6}}|\al|^{-\frac{5}{6}}\lrs{|\la-U(0)|^{\frac{1}{2}}+\nu^{\frac{1}{4}}|\al|^{-\frac{1}{4}}}^{-\frac{1}{3}}\n{F}_{L^{2}}.
\end{align*}
In this case, noting that $|\al||\la-U(0)|\gtrsim \frac{1+ |\al||\la-U(1)|}{|\al|}+1$, we discard $(|\al||\la-U(0)|)^{-\frac{1}{4}}$ and use \eqref{equ:restriction,bounds on c12} to obtain
\begin{align*}
|c_{1}|\lesssim&(|\al||\la-U(0)|)^{-\frac{1}{4}}
\nu^{-\frac{1}{6}}|\al|^{-\frac{5}{6}}\lrs{|\la-U(0)|^{\frac{1}{2}}+\nu^{\frac{1}{4}}
|\al|^{-\frac{1}{4}}}^{-\frac{1}{6}}(\nu|\al|^{2})^{-\frac{1}{6}}\n{F}_{L^{2}}\\
\lesssim& (1+|\al||\la-U(1)|)^{-\frac{1}{4}}|\al|^{\frac{1}{4}}\nu^{-\frac{1}{6}}|\al|^{-\frac{5}{6}}\lrs{\nu^{\frac{1}{4}}
|\al|^{-\frac{1}{4}}}^{-\frac{1}{6}}(\nu|\al|^{2})^{-\frac{1}{6}}\n{F}_{L^{2}}\\
=&\nu^{-\frac{3}{8}}|\al|^{-\frac{5}{8}-\frac{1}{4}}(1+|\al||\la-U(1)|)^{-\frac{1}{4}}\n{F}_{L^{2}},
\end{align*}
which is the desired estimate \eqref{equ:bounds on c1,c2,L2}.\smallskip

{\it Case 1.2.} $\frac{1}{|\al|}\gtrsim U(0)-\la\geq-\nu^{\frac{1}{2}}|\al|^{-\frac{1}{2}} .$

In this case, we have
\begin{align}\label{equ:bounds on c1,two}
|c_1|\leq& \int_{-1}^{1-\ve}\frac{\operatorname{sinh}\big(|\al|(1+y)\big)}
{\operatorname{\operatorname{sinh}}(2|\al|)}w_{Na}(y)dy+\int_{1-\ve}^{1}\frac{\operatorname{sinh}\big(|\al|(1+y)\big)}
{\operatorname{\operatorname{sinh}}(2|\al|)}w_{Na}(y)dy\\
\lesssim &e^{-2|\al|\ve}\n{w_{Na}}_{L^{1}}+\ve^{\frac{1}{2}}\bn{\sqrt{|\la-U(0)|+y^{2}}w_{Na}}_{L^{2}}.\notag
\end{align}
By resolvent estimate \eqref{equ:re,L2,w,main} and \eqref{equ:L2,w,weighted,proof}, with $\ve= \frac{|\al|^{-\frac{1}{2}}}{2}$, we get
\begin{align*}
|c_{1}|\lesssim& e^{-|\al|^{\frac{1}{2}}}\nu^{-\f38}|\alpha|^{-\f58}\n{F}_{L^{2}}+|\al|^{-\frac{1}{4}}
\nu^{-\frac{1}{6}}|\al|^{-\frac{5}{6}}\lrs{|\la-U(0)|^{\frac{1}{2}}
+\nu^{\frac{1}{4}}|\al|^{-\frac{1}{4}}}^{-\frac{1}{3}}\n{F}_{L^{2}}\\
\lesssim& e^{-|\al|^{\frac{1}{2}}}\nu^{-\f38}|\alpha|^{-\f58}\n{F}_{L^{2}}
+\nu^{-\frac{1}{4}}|\al|^{-1}\n{F}_{L^{2}}
\lesssim|\al|^{-\frac{1}{4}}\nu^{-\f38}|\alpha|^{-\f78}\n{F}_{L^{2}},
\end{align*}
where we have used that $e^{-|\al|^{\frac{1}{2}}}\lesssim |\al|^{-\frac{1}{2}}$.
Noting that $|\al|\gtrsim 1+|\al||\la-U(1)|$ for this case, we arrive at the desired estimate \eqref{equ:bounds on c1,c2,L2}.\smallskip

{\it Case 2.} $\lambda\geq U(1).$

By resolvent estimate \eqref{equ:re,L2,w,full}, we have
\begin{align*}
|c_{1}|\lesssim& \bbn{\frac{\operatorname{\operatorname{sinh}}\lrs{|\al|(1+y)}}{\operatorname{\operatorname{sinh}}(2|\al|)}}_{L^{2}}\n{w_{Na}}_{L^{2}}\\
\lesssim& |\al|^{-\frac{1}{2}}
\nu^{-\frac{1}{3}}|\al|^{-\frac{2}{3}}\lrs{|\la-U(0)|^{\frac{1}{2}}+\nu^{\frac{1}{4}}|\al|^{-\frac{1}{4}}}^{-\frac{2}{3}}\n{F}_{L^{2}}\\
\lesssim&|\al|^{-\frac{1}{2}}
\nu^{-\frac{1}{3}}|\al|^{-\frac{2}{3}}\lrs{|\la-U(0)|^{\frac{1}{2}}+\nu^{\frac{1}{4}}|\al|^{-\frac{1}{4}}}^{-\frac{5}{8}}(\nu |\al|^{2})^{-\frac{1}{24}}\n{F}_{L^{2}}\\
 =&\nu^{-\frac{3}{8}}|\al|^{-\frac{10}{8}} \lrs{|\la-U(0)|^{\frac{1}{2}}+\nu^{\frac{1}{4}}|\al|^{-\frac{1}{4}}}^{-\frac{5}{8}}\n{F}_{L^{2}}.
\end{align*}
In this case, noting that $|\la-U(0)|^{\frac{1}{2}}+\nu^{\frac{1}{4}}|\al|^{-\frac{1}{4}}\geq 1+|\al||\la-U(1)|^{\frac{1}{2}}$, we get
\begin{align*}
|c_{1}|\lesssim& \nu^{-\frac{3}{8}}|\al|^{-\frac{10}{8}} \lrs{|\la-U(0)|^{\frac{1}{2}}+\nu^{\frac{1}{4}}|\al|^{-\frac{1}{4}}}^{-\frac{1}{2}}\n{F}_{L^{2}}\\
\lesssim&\nu^{-\frac{3}{8}}|\al|^{-1} \lrs{1+|\al||\la-U(1)|}^{-\frac{1}{4}}\n{F}_{L^{2}},
\end{align*}
which suffices for our goal.\smallskip

{\it Case 3.} $\lambda \in (U(0)+\nu^{\frac{1}{2}}|\al|^{-\frac{1}{2}},U(1))$.

Following the proof of \eqref{equ:L2,w,resolvent,0,full,proof}, we actually have
\begin{align}\label{equ:L2,w,resolvent,0,la}
\blra{(U(y)-U(0))w_{Na},\frac{w_{Na}}{U''}}
\lesssim&|\al|^{-1}\babs{\blra{F,\frac{w_{Na}}{U''}}}+|\la-U(0)|\n{w_{Na}}_{L^{2}}^{2},
\end{align}
which, together with resolvent estimate \eqref{equ:re,L2,w,full}, yields that
\begin{align}\label{equ:L2,w,weighted,y,proof}
\blra{y^{2}w_{Na},w_{Na}}
\lesssim&|\al|^{-1}\n{F}_{L^{2}}\n{w_{Na}}_{L^{2}}+|\la-U(0)|\n{w_{Na}}_{L^{2}}^{2}\\
\lesssim& \nu^{-\frac{1}{3}}|\al|^{-\frac{5}{3}}|\la-U(0)|^{-\frac{1}{3}}\n{F}_{L^{2}}^{2}+
\nu^{-\frac{2}{3}}|\al|^{-\frac{4}{3}}|\la-U(0)|^{\frac{1}{3}}\n{F}_{L^{2}}^{2}\notag\\
\lesssim &\nu^{-\frac{2}{3}}|\al|^{-\frac{4}{3}}|\la-U(0)|^{\frac{1}{3}}\n{F}_{L^{2}}^{2},\notag
\end{align}
where in the last inequality we have used that $|\la-U(0)|^{-\frac{2}{3}}\lesssim \nu^{-\frac{1}{3}}|\al|^{-\frac{1}{3}}$ for this case. Then in the same way as \eqref{equ:bounds on c1,two}, we have
\begin{align*}
|c_{1}|\lesssim &e^{-2|\al|\ve}\n{w_{Na}}_{L^{1}}+\ve^{\frac{1}{2}}\bn{yw_{Na}}_{L^{2}}.
\end{align*}
By resolvent estimate \eqref{equ:re,L2,w,main} and \eqref{equ:L2,w,weighted,y,proof}, with $\ve=\frac{|\al|^{-\frac{3}{4}}}{2}$, we obtain
\begin{align*}
|c_{1}|\lesssim&e^{-|\al|^{\frac{3}{4}}}\nu^{-\f38}|\alpha|^{-\f58}\n{F}_{L^{2}}+
|\al|^{-\frac{3}{8}}\nu^{-\frac{1}{3}}|\al|^{-\frac{2}{3}}|\la-U(0)|^{\frac{1}{6}}\n{F}_{L^{2}}\\
\lesssim&|\al|^{-\frac{1}{4}}\nu^{-\f38}|\alpha|^{-\f78}\n{F}_{L^{2}}+\nu^{-\frac{1}{4}}|\al|^{-\frac{11}{8}}\n{F}_{L^{2}}\\
\lesssim&\nu^{-\frac{3}{8}}|\al|^{-\frac{7}{8}}(1+|\al||\la-U(1)|)^{-\frac{1}{4}}\n{F}_{L^{2}},
\end{align*}
where in the last inequality we have used that $|\al|\gtrsim 1+|\al||\la-U(1)|$ for this case.
\vspace{1em}

Next, we deal with \textbf{the case $F\in H^{-1}$}. By weak-type resolvent estimates in Lemma \ref{lemma:weak,est,w,FH-1}, we have
\begin{align*}
|c_{1}|=&\bbabs{\int_{-1}^{1}U''(y)\frac{\operatorname{sinh}\lrs{|\al|(1+y)}}
{\operatorname{\operatorname{sinh}}(2|\al|)}\frac{w_{Na}(y)}{U''(y)}dy}\\
\lesssim&|\al|^{-1} \n{F}_{H^{-1}}\big(|U(1)-\la|+\delta^{\f43}\big)^{-\f34}\delta^{-1}\notag\\
		&+ |\al|^{-1} \n{F}_{H^{-1}} \lrs{\bn
		{\text{\bf Ray}_{\delta_1}^{-1} f}_{H^{1}}+ \delta^{-\frac{4}{3}}(|\la-U(0) |^{\frac{1}{2}} + \delta)^{\frac{1}{3}}\bn{\text{\bf Ray}_{\delta_1}^{-1} f}_{L^2}}.\notag
\end{align*}
where $\delta=\nu^{\f14}|\al|^{-\f14}\ll 1$, $\delta_{1}=\delta^{\frac{4}{3}}\lrs{|\la-U(0)|^{\f12}+\delta}^{\frac{2}{3}}$, and
$f(y)=U''(y)\frac{\operatorname{sinh}\lrs{|\al|(1+y)}}
{\operatorname{\operatorname{sinh}}(2|\al|)}$
with the property that
\begin{align*}
 f(-1)=0,\quad \bbn{\frac{f}{U''}}_{L^{2}}\lesssim |\al|^{-\frac{1}{2}},\quad
\bbn{\lrs{\frac{f}{U''}}'}_{L^{2}}\lesssim |\al|^{\frac{1}{2}}.
\end{align*}
Noting that
\begin{align*}
|\al|^{-1} \n{F}_{H^{-1}}\big(|U(1)-\la|+\delta^{\f43}\big)^{-\f34}\delta^{-1}\lesssim& |\al|^{-1} \n{F}_{H^{-1}}\big(|U(1)-\la|+\delta^{\f43}\big)^{-\f14}\delta^{-\frac{5}{3}}\\
=&\nu^{-\frac{5}{12}}|\al|^{-\frac{7}{12}}(|\la-U(1)|+\nu^{\frac{1}{3}}|\al|^{-\frac{1}{3}})^{-\frac{1}{4}}\n{F}_{H^{-1}},
\end{align*}
with $\nu^{-\frac{5}{12}}|\al|^{-\frac{7}{12}}\leq \nu^{-\frac{1}{2}}|\al|^{-\frac{1}{2}}$, we arrive at
\begin{align}
|c_{1}|\lesssim &\nu^{-\frac{1}{2}}|\al|^{-\frac{1}{2}}(|\la-U(1)|+\nu^{\frac{1}{3}}|\al|^{-\frac{1}{3}})^{-\frac{1}{4}}\n{F}_{H^{-1}}\notag\\
&+|\al|^{-1} \n{F}_{H^{-1}} \lrs{\bn
		{\text{\bf Ray}_{\delta_1}^{-1} f}_{H^{1}}+ \delta^{-\frac{4}{3}}(|\la-U(0) |^{\frac{1}{2}} + \delta)^{\frac{1}{3}}\bn{\text{\bf Ray}_{\delta_1}^{-1} f}_{L^2}}.\label{equ:weak,est,w,FH-1,bounds,c12}
\end{align}
To conclude \eqref{equ:bounds on c1,c2,H-1}, it remains to bound the following two terms
\begin{align*}
\bn{\text{\bf Ray}_{\delta_1}^{-1} f}_{L^{2}},\quad \bn{\text{\bf Ray}_{\delta_1}^{-1} f}_{H^{1}}.
\end{align*}
Recall that
\begin{equation}\label{equ:Ray,U,delta,bounds,c12}
	\text{\bf Ray}_{\delta_{1}}W=:(U - \lambda + i\delta_{1})W-U''\Psi= f, \quad (\partial_y^2 - |\alpha|^2)\Psi=W, \quad \Psi(\pm 1) = 0.
\end{equation}

{\it Case 1.} $|\lambda|\gtrsim |U(0)|+|U(1)|$.

Taking the inner product between \eqref{equ:Ray,U,delta,bounds,c12} and $\f{W}{U''}$, we get
\begin{align*}
        \int_{-1}^{1} \f{U - \la}{U''}|W|^2 dy + \n{(\partial_y, |\alpha|)\Psi}_{L^2}^2 + i\delta_{1}\bbn{\f{W}{\sqrt{U''}}}_{L^2}^2 = \bblra{f,\f{W}{U''}} ,
\end{align*}
which gives that
\begin{align*}
|\la|\n{W}_{L^{2}}^{2}\lesssim& \int_{-1}^{1} |U(y)||W|^2 dy+\n{(\partial_y, |\alpha|)\Phi}_{L^2}^{2}+\bbn{\frac{f}{U''}}_{L^{2}}\n{W}_{L^{2}}\\
\lesssim &\n{W}_{L^{2}}^{2}+\bbn{\frac{f}{U''}}_{L^{2}}\n{W}_{L^{2}}.
\end{align*}
Due to that $|\lambda|\gtrsim |U(0)|+|U(1)|$, we obtain
\begin{align}\label{equ:Rf,L2,c1}
\bn{\text{\bf Ray}_{\delta_1}^{-1} f}_{L^{2}}=\n{W}_{L^{2}}\lesssim |\la-U(1)|^{-1}\bbn{\frac{f}{U''}}_{L^{2}}\lesssim
|\al|^{-\frac{1}{2}}|\la-U(1)|^{-1}.
\end{align}

For $\bn{\text{\bf Ray}_{\delta_1}^{-1} f}_{H^{1}}$, noting that
\begin{align*}
\frac{(U - \lambda + i\delta_{1})}{U''}\pa_yW-\pa_y\Psi= \lrs{\frac{f}{U''}}'-\lrs{\frac{U}{U''}}'W,
\end{align*}
we test by $\pa_yW$ and take the real part to obtain
\begin{align*}
|\la|\n{\pa_yW}_{L^{2}}^{2}
\lesssim& \int_{-1}^{1} \frac{|U|}{U''}|\pa_yW|^2 dy+\n{\pa_y\Psi}_{L^2}\n{\pa_yW}_{L^{2}}\\
&+\bbn{\lrs{\frac{f}{U''}}^{'}}_{L^{2}}\n{\pa_yW}_{L^{2}}+
\bbn{\lrs{\frac{U}{U''}}'}_{L^{\infty}}\n{W}_{L^{2}}\n{\pa_yW}_{L^{2}}.
\end{align*}
Due to that $|\lambda|\gtrsim |U(0)|+|U(1)|$ and $|\al|\geq 1$, we use \eqref{equ:Rf,L2,c1} to obtain
\begin{align*}
|\la-U(1)|\n{\pa_yW}_{L^{2}}\lesssim& |\la|\n{\pa_yW}_{L^{2}}\lesssim \n{\pa_y\Psi}_{L^{2}}+\bbn{\lrs{\frac{f}{U''}}^{'}}_{L^{2}}+\n{W}_{L^{2}}\\
\lesssim&  |\al|^{-1}\n{W}_{L^{2}}+|\al|^{\frac{1}{2}}+\n{W}_{L^{2}}\lesssim |\al|^{\frac{1}{2}},
\end{align*}
which gives that
\begin{align}\label{equ:Rf,H1,c1}
\bn{\text{\bf Ray}_{\delta_1}^{-1} f}_{H^{1}}=\n{W}_{H^{1}}\lesssim |\al|^{\frac{1}{2}}|\la-U(1)|^{-1}.
\end{align}

Therefore, with $\delta=\nu^{\f14}|\al|^{-\f14}$, putting \eqref{equ:Rf,L2,c1}--\eqref{equ:Rf,H1,c1} into \eqref{equ:weak,est,w,FH-1,bounds,c12}, we have
\begin{align*}
& |\al|^{-1} \n{F}_{H^{-1}} \lrs{\bn
		{\text{\bf Ray}_{\delta_1}^{-1} f}_{H^{1}}+ \delta^{-\frac{4}{3}}(|\la-U(0) |^{\frac{1}{2}} + \delta)^{\frac{1}{3}}\bn{\text{\bf Ray}_{\delta_1}^{-1} f}_{L^2}}\\
\lesssim&|\al|^{-1} \n{F}_{H^{-1}} \lrs{
		|\al|^{\frac{1}{2}}|\la-U(1)|^{-1}+ \delta^{-\frac{4}{3}}|\la-U(0) |^{\frac{1}{6}}|\al|^{-\frac{1}{2}}|\la-U(1)|^{-1}},
\end{align*}
which is good enough to conclude the desired estimate \eqref{equ:bounds on c1,c2,H-1}.\smallskip

{\it Case 2.} $|\lambda|\lesssim |U(0)|+|U(1)|$.

In this case, due to that $\delta_{1}=\delta^{\frac{4}{3}}\lrs{|\la-U(0)|^{\f12}+\delta}^{\frac{2}{3}}\ll 1$, by Lemma \ref{lemma:proper,phi,ray,equ,refined}, we have
\begin{align}\label{equ:Phi,bounds,c12}
\n{(\pa_{y},|\al|)\Psi}_{L^{2}}\lesssim |\al|^{-\frac{1}{2}}.
\end{align}

For $\bn{\text{\bf Ray}_{\delta_1}^{-1} f}_{L^{2}}$, we introduce the decomposition that $W=W_{1}+W_{2}$ with
\begin{align*}
&(\pa_{y}^{2}-|\al|^{2})\Psi_{i}=W_{i},\quad \Psi_{i}(\pm1)=0,\\
&W_{1}=\frac{f(1)\rho(y)}{U(y)-\la+i\delta_{1}},\quad \rho(y)=\max\lr{1-|\al|(1-y),0},\\
&(U(y) - \lambda + i\delta_{1})W_{2}-U''(y)\Psi_{2}= \wt{f}=f-f(1)\rho(y)+U''(y)\Psi_{1}, \quad \wt{f}(\pm 1)=0.
\end{align*}
For $\n{W_{1}}_{L^{2}}$, we have
\begin{align}\label{equ:W1,bounds,c12}
\n{W_{1}}_{L^{2}}\lesssim |f(1)|\bbn{\frac{\rho(y)}{U(y)-\la+i\delta_{1}}}_{L^{2}}\lesssim \delta_{1}^{-\frac{1}{2}}|f(1)|
\lesssim\delta_{1}^{-\frac{1}{2}}.
\end{align}
For $\n{(\pa_{y},|\al|)\Psi_{1}}_{L^{2}}$, by Hardy-type inequality in Lemma \ref{lemma:hardy inequality}, we get
\begin{align}\label{equ:Phi1,bounds,c12}
\n{(\pa_{y},|\al|)\Psi_{1}}_{L^{2}}^{2}=&\babs{\lra{W_{1},\Psi_{1}}}
=\bbabs{\int_{1-|\al|^{-1}}^{1}
\frac{f(1)\rho(y)\overline{\Psi}_{1}(y)}
{U(y)-\la+i\delta_{1}}
dy}\\
\lesssim& |\al|^{-\frac{1}{2}}\n{\pa_{y}(\rho\Psi_{1})}_{L^{2}(1-|\al|^{-1},1)}+
|\al|\n{\rho \Psi_{1}}_{L^{2}(1-|\al|^{-1},1)}\notag\\
\lesssim& |\al|^{-\frac{1}{2}}\n{\pa_{y}\Psi_{1}}_{L^{2}}+|\al|^{\frac{1}{2}}\n{\Psi_{1}}_{L^{2}}\lesssim
|\al|^{-1}\n{(\pa_{y},|\al|)\Psi_{1}}_{L^{2}}.\notag
\end{align}
For $\n{(\pa_{y},|\al|)\Psi_{2}}_{L^{2}}$, we use \eqref{equ:Phi,bounds,c12} and \eqref{equ:Phi1,bounds,c12} to obtain
\begin{align}\label{equ:Phi2,bounds,c12}
\n{(\pa_{y},|\al|)\Psi_{2}}_{L^{2}}\lesssim \n{(\pa_{y},|\al|)\Psi_{1}}_{L^{2}}+\n{(\pa_{y},|\al|)\Psi}_{L^{2}}\lesssim
|\al|^{-\frac{1}{2}}.
\end{align}

For $\n{W_{2}}_{L^{2}}^{2}$, by testing $\frac{W_{2}}{U''}$ and taking the imaginary part, we have
\begin{align}\label{equ:W2,bounds,c12}
\delta_{1}\n{W_{2}}_{L^{2}}^{2}\lesssim \babs{\operatorname{Im}\blra{\frac{\wt{f}}{U''},W_{2}}}\lesssim
\bbn{(\pa_{y},|\al|)\frac{\wt{f}}{U''}}_{L^{2}}\n{(\pa_{y},|\al|)\Psi_{2}}_{L^{2}}\lesssim 1
\end{align}
where in the last inequality we have used \eqref{equ:Phi2,bounds,c12} and that
\begin{align*}
\bbn{(\pa_{y},|\al|)\frac{\wt{f}}{U''}}_{L^{2}}\lesssim \bbn{(\pa_{y},|\al|)\frac{f}{U''}}_{L^{2}}+\bbn{(\pa_{y},|\al|)\frac{\rho}{U''}}_{L^{2}}+\n{(\pa_{y},|\al|)\Psi_{1}}_{L^{2}}
\lesssim |\al|^{\frac{1}{2}}.
\end{align*}
Therefore, combining \eqref{equ:W1,bounds,c12} and \eqref{equ:W2,bounds,c12}, we arrive at
\begin{align}\label{equ:W,bounds,c12}
\bn{\text{\bf Ray}_{\delta_1}^{-1} f}_{L^{2}}=\n{W}_{L^{2}}\lesssim \delta_{1}^{-\frac{1}{2}}.
\end{align}

For $\bn{\text{\bf Ray}_{\delta_1}^{-1} f}_{H^{1}}$, we note that
\begin{align*}
\frac{(U(y) - \lambda + i\delta_{1})}{U''}\pa_yW-\pa_y\Psi= \lrs{\frac{f}{U''}}'-\lrs{\frac{U}{U''}}'W.
\end{align*}
On the one hand, due to that $(f/U'')'\leq |\al|e^{-|\al|(1-y)}$, we get
\begin{align*}
\bbn{\frac{U''}{U-\la+i\delta_{1}}\lrs{\frac{f}{U''}}'}_{L^{2}}
\lesssim& |\al|e^{-\frac{|\al|}{2}}
\bbn{\frac{U''1_{(-\frac{1}{2},\frac{1}{2})}}{U-\la+i\delta_{1}}}_{L^{\infty}}+
\bbn{\frac{U''1_{(-\frac{1}{2},\frac{1}{2})^{c}}}{U-\la+i\delta_{1}}}_{L^{2}}
|\al|\\
\lesssim& \delta_{1}^{-1}|\al|^{-\frac{1}{2}}+\delta_{1}^{-\frac{1}{2}}|\al|.
\end{align*}
On the other hand, we have
\begin{align*}
\bbn{\frac{U''}{U-\la+i\delta_{1}}\lrs{\frac{U}{U''}}'}_{L^{\infty}}\lesssim &
\bbn{\frac{U'}{U-\la+i\delta_{1}}}_{L^{\infty}}+\bbn{\frac{U'''}{U-\la+i\delta_{1}}}_{L^{\infty}}\\
\lesssim& \bbn{\frac{y}{U(y)-U(0)+(U(0)-\la)+i\delta_{1}}}_{L^{\infty}}\\
\lesssim&\delta_{1}^{-1}(|\la-U(0)|+\delta_{1})^{\frac{1}{2}} .
\end{align*}
Then
by \eqref{equ:Phi,bounds,c12} and \eqref{equ:W,bounds,c12}, we obtain
\begin{align*}
&\n{\pa_{y}(\text{\bf Ray}_{\delta_1}^{-1} f)}_{L^{2}}
\lesssim \bbn{\frac{U''}{U-\la+i\delta_{1}}\lrs{\pa_y\Psi+\lrs{\frac{f}{U''}}'-\lrs{\frac{U}{U''}}'W}}_{L^{2}}\\
\lesssim&\delta_{1}^{-1}|\al|^{-\frac{1}{2}}+\delta_{1}^{-\frac{1}{2}}|\al|+\bbn{\frac{U''}{U-\la+i\delta_{1}}}_{L^{\infty}}\n{\pa_y\Psi}_{L^{2}}+\bbn{\frac{U''}{U-\la+i\delta_{1}}\lrs{\frac{U}{U''}}'}_{L^{\infty}}\n{W}_{L^{2}}\\
\lesssim& \delta_{1}^{-1}|\al|^{-\frac{1}{2}}+\delta_{1}^{-\frac{1}{2}}|\al|+\delta_{1}^{-\frac{3}{2}}(|\la-U(0)|+\delta_{1})^{\frac{1}{2}}.
\end{align*}

Due to the condition \eqref{equ:restriction,bounds on c12}, we have $|\al|\lesssim \delta^{-\frac{4}{3}}(|\la-U(0) |^{\frac{1}{2}} + \delta)^{\frac{1}{3}}$.
With $\delta=\nu^{\f14}|\al|^{-\f14}$ and $\delta_{1}=\delta^{\frac{4}{3}}\lrs{|\la-U(0)|^{\f12}+\delta}^{\frac{2}{3}}$, we get
\begin{align*}
&|\al|^{-1} \n{F}_{H^{-1}} \lrs{\bn
		{\text{\bf Ray}_{\delta_1}^{-1} f}_{H^{1}}+ \delta^{-\frac{4}{3}}(|\la-U(0) |^{\frac{1}{2}} + \delta)^{\frac{1}{3}}\bn{\text{\bf Ray}_{\delta_1}^{-1} f}_{L^2}}\\
\lesssim&|\al|^{-1} \n{F}_{H^{-1}}\lrs{\delta_{1}^{-1}|\al|^{\frac{1}{2}}+\delta_{1}^{-\frac{3}{2}}(|\la-U(0)|+\delta_{1})^{\frac{1}{2}}
+\delta^{-\frac{4}{3}}(|\la-U(0) |^{\frac{1}{2}} + \delta)^{\frac{1}{3}}\delta_{1}^{-\frac{1}{2}}}\\
\lesssim& |\al|^{-1}\delta^{-2}\lesssim \nu^{-\frac{1}{2}}|\al|^{-\frac{1}{2}}(|\la-U(1)|+\nu^{\frac{1}{3}}|\al|^{-\frac{1}{3}})^{-\frac{1}{4}},
\end{align*}
where we have used $\delta_{1}^{-1}\lesssim \delta^{-2}$ and $1\lesssim (|\la-U(1)|+\nu^{\frac{1}{3}}|\al|^{-\frac{1}{3}})^{-\frac{1}{4}}$. Together with \eqref{equ:weak,est,w,FH-1,bounds,c12}, we arrive at the desired estimate \eqref{equ:bounds on c1,c2,H-1}.

\vspace{1em}
Finally, we deal with \textbf{the case $F\in H^{1}$}.\smallskip

{\it Case 1.} $|\lambda|\gtrsim |U(0)|+|U(1)|$.

By energy estimates \eqref{equ:energy estimate,im,w,0} and \eqref{equ:energy estimate,im,w,1}, we have
\begin{align*}
|\al||\la|\n{w_{Na}}_{L^{2}}^{2}\lesssim \babs{\blra{F,\frac{w_{Na}}{U''}}}\lesssim \bbn{(\pa_{y},|\al|)\frac{F}{U''}}_{L^{2}}\n{(\pa_{y},|\al|)\psi_{Na}}_{L^{2}}
\end{align*}
which, together with resolvent estimate \eqref{equ:reso,est,u,w,FH1} gives that
\begin{align*}
\n{w_{Na}}_{L^{2}}\lesssim |\la|^{-\frac{1}{2}}|\al|^{-1}\n{F}_{H^{1}}.
\end{align*}
Therefore, for this case, we have
\begin{align*}
|c_{1}|\lesssim \n{w_{Na}}_{L^{1}}\lesssim\n{w_{Na}}_{L^{2}}\lesssim \nu^{-\frac{1}{8}}|\al|^{-\frac{7}{8}}(1+|\la-U(1)|)^{-\frac{1}{4}}\n{F}_{H^{1}},
\end{align*}
which completes the proof of \eqref{equ:bounds on c1,c2,H1}.\smallskip

{\it Case 2.} $|\lambda|\lesssim |U(0)|+|U(1)|$.

In this case, we directly use resolvent estimate \eqref{equ:reso,est,u,w,FH1} to obtain
\begin{align*}
|c_{1}|\lesssim  \n{u_{Na}}_{L^{\infty}}\lesssim \nu^{-\frac{1}{8}}|\al|^{-\frac{7}{8}}\n{F}_{H^{1}},
\end{align*}
which suffices for our desired estimate \eqref{equ:bounds on c1,c2,H1} due to that $1+|\la-U(1)|\sim 1$.
\end{proof}

\section{Resolvent estimates with non-slip boundary condition}\label{sec:Orr-Sommerfeld Equation with the Non-slip Boundary Condition}
Now we are in a position to establish the resolvent estimates for the Orr-Sommerfeld equation with the non-slip boundary condition which reads
\begin{equation}\label{equ:nonslip,NS,per,equ}
	\left\{
	\begin{aligned}
		&-\nu(\pa^2_y-\al^2)w+i\al(U-\la)w-i\al U''\psi+o(\nu,\al)w=F,\\
		&(\pa^2_y-\al^2)\psi=w,\quad \psi(\pm 1)=\psi'( \pm 1)=0,
	\end{aligned}\right.
\end{equation}
where $|o(\nu,\al)|\ll \nu^{\frac{1}{2}}|\al|^{\frac{1}{2}}$.

\begin{proposition}\label{proposition:est,u,L2,FL2H1H-1}
Let $(w,\psi)$ be the solution to \eqref{equ:nonslip,NS,per,equ}. There exist positive constants $\nu_0$, $\varepsilon_0$, such that if $|o(\nu,\al)|\leq \ve_{0}\nu^{\frac{1}{2}}|\al|^{\frac{1}{2}}$, $\nu\in(0,\nu_{0}]$, the following resolvent estimates hold
\begin{align}
& \nu^{\frac{5}{8}}|\al|^{\frac{7}{8}}\|w\|_{L^2}+\nu^{\frac{3}{8}}\left|\al\right|^{\frac{5}{8}}\|u\|_{L^{\infty}}+\nu^{\frac{1}{4}}\left|\al\right|^{\frac{3}{4}}\|u\|_{L^2} \lesssim\|F\|_{L^2},\label{equ:reso,est,u,w,FL2,nonslip} \\
& \nu^{\frac{3}{4}}\left|\al\right|^{\frac{1}{4}}\|w\|_{L^2}+\nu^{\frac{5}{8}}\left|\al\right|^{\frac{3}{8}}\|u\|_{L^{\infty}}+\nu^{\frac{1}{2}}\left|\al\right|^{\frac{1}{2}}\|u\|_{L^2} \lesssim\|F\|_{H^{-1}}, \label{equ:reso,est,u,w,FH-1,nonslip}\\
& \nu^{\frac{3}{8}} |\al|^{\frac{5}{8}}\|w\|_{L^2}+\nu^{\frac{1}{8}}|\al|^{\frac{7}{8}}\|u\|_{L^{\infty}}+\|u\|_{L^2} \lesssim\left\|\left(\partial_y,|\al|\right) F\right\|_{L^2} . \label{equ:reso,est,u,w,FH1,nonslip}
\end{align}

\end{proposition}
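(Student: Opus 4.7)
The plan is to leverage the decomposition $w = w_{Na} + c_1 w_{cor,1} + c_2 w_{cor,2}$ introduced at the start of Section \ref{sec:Boundary Layer Corrector}. The Navier-slip piece $w_{Na}$ is controlled to the exact target order by Proposition \ref{lemma:summary,reso,navierslip}, whose three estimates \eqref{equ:re,L2,w,full,sum}, \eqref{equ:re,H-1,w,full,sum}, \eqref{equ:re,H1,w,full,sum} already match \eqref{equ:reso,est,u,w,FL2,nonslip}, \eqref{equ:reso,est,u,w,FH-1,nonslip}, \eqref{equ:reso,est,u,w,FH1,nonslip} respectively. So the remaining task is to show that the boundary-layer contribution $c_1 w_{cor,1} + c_2 w_{cor,2}$ satisfies bounds of the same order, after which \eqref{equ:reso,est,u,w,FL2,nonslip}--\eqref{equ:reso,est,u,w,FH1,nonslip} follow by the triangle inequality. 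For the $L^\infty$ bounds on $u$, I would use Gagliardo--Nirenberg interpolation $\|u\|_{L^\infty} \lesssim \|u\|_{L^2}^{1/2}\|w\|_{L^2}^{1/2}$ once the $L^2$ estimates of $u$ and $w$ are in place.

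For each class of forcing, I would multiply the coefficient bound from Lemma \ref{lemma:bounds on c1,c2} by the corrector norm from Lemma \ref{lemma:est,uw,L2Linfty,cor12}. Using $L_j \sim (|\alpha|/\nu)^{1/3}$ and $|L_j d_j| \sim \nu^{-1/3}|\alpha|^{1/3}|\lambda-U(\pm 1)|$, the representative combination for $F \in L^2$ reads
\[
|c_j|\|w_{cor,j}\|_{L^2} \lesssim \nu^{-3/8}|\alpha|^{-7/8}(1+|\alpha||\lambda-U(1)|)^{-1/4}\, L_j^{1/2}(1+|L_j d_j|)^{1/4}\|F\|_{L^2},
\]
and analogous products govern $|c_j|\|u_{cor,j}\|_{L^2}$ and $|c_j|\|u_{cor,j}\|_{L^\infty}$. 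The elementary observation that the mixed weight $(1+|\alpha||\lambda-U(1)|)^{-1/4}(1+\nu^{-1/3}|\alpha|^{1/3}|\lambda-U(1)|)^{1/4}$ is uniformly dominated by $\nu^{-1/12}|\alpha|^{-1/6}$ then converts the exponent $\nu^{-3/8-1/6}|\alpha|^{-7/8+1/6}$ into exactly $\nu^{-5/8}|\alpha|^{-7/8}$, matching \eqref{equ:reso,est,u,w,FL2,nonslip}. The $H^{-1}$ and $H^1$ estimates are obtained by the same mechanism, using the bounds \eqref{equ:bounds on c1,c2,H-1}, \eqref{equ:bounds on c1,c2,H1} in combination with the identity $L_j^{-1/2}(1+|L_jd_j|)^{-1/4}\sim (\nu|\alpha|)^{1/4}\lrs{|\lambda-U(1)|+\nu^{1/3}|\alpha|^{-1/3}}^{-1/4}$, which in the $H^{-1}$ case exhibits a perfectly aligned weight.

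Lemma \ref{lemma:bounds on c1,c2} requires $\nu|\alpha|^2 \leq |\lambda-U(0)|^{1/2}+\nu^{1/4}|\alpha|^{-1/4}$; the complementary regime $\nu|\alpha|^2 \gtrsim |\lambda-U(0)|^{1/2}+\nu^{1/4}|\alpha|^{-1/4}$ implies $\nu|\alpha|^3 \gtrsim 1$, so dissipation dominates. In that regime I would abandon the corrector split and instead test the non-slip equation \eqref{equ:nonslip,NS,per,equ} against $w/U''$ directly in the spirit of Lemma \ref{lemma:energy estimate,real part}; since $w$ vanishes at $y=\pm 1$ under the non-slip condition, no new boundary terms arise beyond those controlled in that lemma, and all three target bounds follow from the parameter inequality together with the obvious $\|F\|_{H^{-1}} \leq \|F\|_{L^2} \leq \|F\|_{H^1}$ hierarchy.

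The main obstacle is the algebraic bookkeeping in the mixed regime: the factor $(1+|\alpha||\lambda-U(1)|)^{-1/4}$ coming from the non-slip coefficient estimate and the factor $(1+|L_jd_j|)^{1/4}$ coming from the corrector $L^2$ norm are not a priori balanced, so the verification that their product never exceeds $\nu^{-1/12}|\alpha|^{-1/6}$ (and the analogous bounds for $H^{-1}$, $H^1$) requires a case split on $|\lambda-U(1)|$ relative to both $|\alpha|^{-1}$ and $\nu^{1/3}|\alpha|^{-1/3}$. Once this case analysis is carried out and the identities above are applied, the three estimates of the proposition fall out uniformly.
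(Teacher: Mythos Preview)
Your Case~1 plan is correct and coincides with the paper's: decompose $w=w_{Na}+c_1w_{cor,1}+c_2w_{cor,2}$, control $w_{Na}$ by Proposition~\ref{lemma:summary,reso,navierslip}, and multiply the coefficient bounds of Lemma~\ref{lemma:bounds on c1,c2} against the corrector norms of Lemma~\ref{lemma:est,uw,L2Linfty,cor12}. The paper packages your mixed-weight observation as the single inequality
\[
1+|L_jd_j|\ \lesssim\ |\al|^{-2/3}\nu^{-1/3}\bigl(1+|\al||\lambda-U(1)|\bigr),
\]
valid under the Case~1 restriction, which after taking $1/4$-th powers is exactly your $(\nu|\al|^2)^{-1/12}$ bound.

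Your Case~2, however, rests on a false premise. You assert that ``$w$ vanishes at $y=\pm1$ under the non-slip condition,'' but non-slip means $\psi(\pm1)=\psi'(\pm1)=0$, so $w(\pm1)=\psi''(\pm1)-\al^2\psi(\pm1)=\psi''(\pm1)$, which is generically \emph{nonzero}; indeed this nonvanishing is precisely why the boundary-layer correctors are required. Testing \eqref{equ:nonslip,NS,per,equ} against $w/U''$ therefore produces an uncontrolled boundary term $\nu\,\partial_yw(\pm1)\overline{w(\pm1)}/U''(\pm1)$, and the mechanism of Lemma~\ref{lemma:energy estimate,real part} breaks down.

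The paper instead tests against $\psi$, so that the two integrations by parts see only $\psi(\pm1)$ and $\psi'(\pm1)$. This gives
\[
\nu\|w\|_{L^2}^2+i\al\int_{-1}^1\Bigl((U-\lambda)(|\psi'|^2+|\al|^2|\psi|^2)+U''|\psi|^2\Bigr)dy=-i\al\int_{-1}^1U'\psi'\bar\psi\,dy-\langle F,\psi\rangle.
\]
The imaginary part controls $\int(U-U(0))(|\psi'|^2+|\al|^2|\psi|^2)$, and the structural bound $|U'|^2\lesssim U-U(0)$ (Lemma~\ref{lemma:proper,symme,flow}) lets this absorb the cross term; using $|\lambda-U(0)|\ll(\nu|\al|^2)^2$ one closes to $\nu\|w\|_{L^2}^2\lesssim|\langle F,\psi\rangle|$, from which all three estimates follow via $|\langle F,\psi\rangle|\lesssim|\al|^{-1}\|F\|_{H^{-1}}\|w\|_{L^2}$ together with $\nu|\al|^3\gtrsim1$.

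A minor secondary point: if you interpolate $\|u\|_{L^\infty}\lesssim\|u\|_{L^2}^{1/2}\|w\|_{L^2}^{1/2}$ on the \emph{total} $u,w$ after the fact, you lose a factor $(\nu|\al|^3)^{-1/16}$ in the $F\in L^2$ case because the corrector contribution to $\|w\|_{L^2}$ is strictly worse than the Navier-slip one. Interpolate on $(u_{Na},w_{Na})$ separately and use $\|u_{cor,j}\|_{L^\infty}\lesssim1$ from Lemma~\ref{lemma:est,uw,L2Linfty,cor12} for the boundary-layer piece.
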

\begin{proof}
The proof of which is divided into the following two cases.\smallskip

{\it Case 1.} $\nu |\al|^2 \lesssim |\la- U(0)|^{\frac{1}{2}} + \nu^{\frac{1}{4}}|\al|^{-\f14}$.

Noting that $w=w_{Na}+c_1w_{cor,1}+c_2w_{cor,2}$,
by Proposition \ref{lemma:summary,reso,navierslip}, we have
\begin{equation}\label{equ:reso,NS,nonslip,proof}
\left\{
	\begin{aligned}
\nu^{\f12}\abs{\al}^{\f12}\n{w_{Na}}_{L^2}+\nu^{\f38}\abs{\al}^{\f58}\n{u_{Na}}_{L^\infty}+\nu^{\f14}\abs{\al}^{\f34}\n{u_{Na}}_{L^2}&\lesssim \n{F}_{L^2},\\
	\nu^{\f34}\abs{\al}^{\f14}\n{w_{Na}}_{L^2}+\nu^{\f58}\abs{\al}^{\f38}\n{u_{Na}}_{L^\infty}+\nu^{\f12}\abs{\al}^{\f12}\n{u_{Na}}_{L^2}
&\lesssim \n{F}_{H^{-1}},\\
\nu^{\f14}\abs{\al}^{\f34}\n{w_{Na}}_{L^2}+\nu^{\f18}\abs{\al}^{\f78}\n{u_{Na}}_{L^\infty}+\abs{\al}\n{u_{Na}}_{L^2}&\lesssim \n{F}_{H^{1}}.
	\end{aligned}
\right.
\end{equation}
By Lemma \ref{lemma:bounds on c1,c2} and Lemma \ref{lemma:est,uw,L2Linfty,cor12}, we have
\begin{equation}\label{equ:reso,c12,nonslip,proof}
\left\{
\begin{aligned}
|c_{1}|+|c_{2}|\lesssim& \nu^{-\frac{3}{8}}|\al|^{-\frac{7}{8}}(1+|\al||\la-U(1)|)^{-\frac{1}{4}}\n{F}_{L^{2}},\\
|c_{1}|+|c_{2}|\lesssim& \nu^{-\frac{1}{2}}|\al|^{-\frac{1}{2}}(|\la-U(1)|+\nu^{\frac{1}{3}}|\al|^{-\frac{1}{3}})^{-\frac{1}{4}}\n{F}_{H^{-1}},\\
|c_{1}|+|c_{2}|\lesssim& \nu^{-\frac{1}{8}}|\al|^{-\frac{7}{8}}(1+|\la-U(1)|)^{-\frac{1}{4}}\n{F}_{H^{1}},
\end{aligned}
\right.
\end{equation}
and
\begin{equation}\label{equ:reso,wcor12,nonslip,proof}
\left\{
\begin{aligned}
 \n{w_{cor,j}}_{L^2}\lesssim& (L_{j})^{\frac{1}{2}}\lrs{1+|L_{j}d_{j}|}^{\f14}\sim  \nu^{-\frac{1}{4}}\abs{\al}^{\frac{1}{4}}
 (|\la-U(1)|+
  \nu^{\frac{1}{3}}|\al|^{-\frac{1}{3}})^{\frac{1}{4}},\\
  	\n{u_{cor,j}}_{L^2} \lesssim&(L_{j})^{-\frac{1}{2}}\lrs{1+|L_{j}d_{j}|}^{-\f14},\\
  \n{u_{cor,j}}_{L^\infty}\lesssim& 1,
  	\end{aligned}
  \right.
  \end{equation}
  where we have used
  \begin{align*}
L_{j}=|\al/\nu|^{\frac{1}{3}},\quad
(|\la-U(1)|+ \nu^{\frac{1}{3}}|\al|^{-\frac{1}{3}})\sim
 |\al/\nu|^{-\frac{1}{3}}(1+|\al/\nu|^{\frac{1}{3}}|\la-U(1)|)\sim L_{j}^{-1}(1+|L_{j}d_{j}|).
  \end{align*}
   In this case,
  we have that $\nu |\al|^2 \leq |\la- U(0)|^{\frac{1}{2}} + \nu^{\frac{1}{4}}|\al|^{-\f14}$ and hence get
  \begin{align}\label{equ:reso,wcor12,nonslip,proof,Ldj}
  1+|L_{j}d_{j}|\lesssim |\al|^{-\frac{2}{3}}\nu^{-\frac{1}{3}}\lrs{\lrs{\nu|\al|^{2}}^{\frac{1}{3}}+|\al||\la-U(1)|}
  \lesssim  |\al|^{-\frac{2}{3}}\nu^{-\frac{1}{3}}\lrs{1+|\al||\la-U(1)|}.
  \end{align}
Putting \eqref{equ:reso,NS,nonslip,proof}, \eqref{equ:reso,c12,nonslip,proof}, \eqref{equ:reso,wcor12,nonslip,proof}, and
\eqref{equ:reso,wcor12,nonslip,proof,Ldj} together, we complete the proof of \eqref{equ:reso,est,u,w,FL2,nonslip}--\eqref{equ:reso,est,u,w,FH1,nonslip}.\smallskip

{\it Case 2.} $\nu |\al|^2 \gg |\la- U(0)|^{\frac{1}{2}} + |\nu|^{\frac{1}{4}}|\al|^{-\f14}$.

Testing \eqref{equ:nonslip,NS,per,equ} by $\psi$, we obtain
\begin{align*}
         &\nu\n{w}_{L^2}^2+i\al\int_{-1}^1\lrs{(U-\la)(\abs{\pa_y\psi}^2+\abs{\al}^2\abs{\psi}^2)+U''\abs{\psi}^2}dy\\
         		=&-i\al\int_{-1}^1 U'\pa_y\psi\ol{\psi}dy-\int_{-1}^1F\ol{\psi}dy.
         	\end{align*}
         Noting that
         $$\operatorname{Im}\lrs{-i\al \int_{-1}^{1} U'\pa_y\psi\ol{\psi}dy}=\frac{\al}{2}\int_{-1}^{1} U''|\psi|^{2}dy,$$
and taking the imaginary part, we have
\begin{align}\label{equ:im,psi,test,nonslip}
&|\al|\int_{-1}^1\big(U(y)-U(0)\big)(\abs{\pa_y\psi}^2+\abs{\al}^2\abs{\psi}^2)+\frac{U''\abs{\psi}^2}{2}dy\\
\leq&|\operatorname{Im}\lra{F,\psi}|+|\al||\la-U(0)|\int_{-1}^{1}(\abs{\pa_y\psi}^2+\abs{\al}^2\abs{\psi}^2)dy.\notag
\end{align}
By taking the real part, we obtain
\begin{align*}
\nu\n{w}_{L^{2}}^{2}=&\nu\n{\pa_y^2\psi}_{L^{2}}^{2}+\nu \al^{2}\lrs{2\n{\pa_y\psi}_{L^{2}}^{2}+\al^{2}\n{\psi}_{L^{2}}^{2}}\\
\leq& |\operatorname{Re}\lra{F,\psi}|+\frac{2|\al|}{\nu|\al|^{3}}\n{U'\pa_y\psi}_{L^{2}}^{2}+\frac{\nu|\al|^{4}}{2}\n{\psi}_{L^{2}}^{2}.\notag
\end{align*}
By using \eqref{equ:im,psi,test,nonslip} and that $|U'(y)|^{2}\lesssim |y|^{2}\lesssim U(y)-U(0)$, we get
\begin{align*}
\nu\n{\pa_y^2\psi}_{L^{2}}^{2}+\nu \al^{2}\lrs{2\n{\pa_y\psi}_{L^{2}}^{2}+\al^{2}\n{\psi}_{L^{2}}^{2}}\lesssim
|\lra{F,\psi}|+\frac{|\la-U(0)|}{\nu|\al|^{2}}\int_{-1}^{1}(\abs{\pa_y\psi}^2+\abs{\al}^2\abs{\psi}^2)dy.
\end{align*}
Due to the condition that $|\la- U(0)|\ll \nu^{2}|\al|^{4}$, we arrive at
\begin{align*}
\nu\n{w}_{L^{2}}^{2}=\nu\n{\pa_y^2\psi}_{L^{2}}^{2}+\nu \al^{2}\lrs{2\n{\pa_y\psi}_{L^{2}}^{2}+\al^{2}\n{\psi}_{L^{2}}^{2}}\lesssim |\lra{F,\psi}|.
\end{align*}
Noting that
\begin{align*}
|\lra{F,\psi}|\lesssim & |\al|^{-1}\n{F}_{H^{-1}}\n{w}_{L^{2}}\lesssim |\al|^{-2}\n{F}_{L^{2}}\n{w}_{L^{2}}\lesssim |\al|^{-3}
\n{(\pa_{y},|\al|)F}_{L^{2}}\n{w}_{L^{2}},
\end{align*}
using that $\nu |\al|^{3}\gtrsim 1$, we derive \eqref{equ:reso,est,u,w,FL2,nonslip}--\eqref{equ:reso,est,u,w,FH1,nonslip}.
\end{proof}

\section{Linear inviscid damping and enhanced dissipation}\label{sec:Inviscid Damping and Enhanced Dissipation}

In this section, we focus on deriving the linear enhanced dissipation estimates, which play a crucial role in the analysis of nonlinear stability.
To achieve this, we consider the following linearized Naiver-Stokes system with non-slip boundary condition
\begin{equation}\label{equ:linear,nonslip,NS,fj}
	\left\{
	\begin{aligned}
		&\pa_{t}\om-\nu(\pa^2_y-\al^2)\om+i\al U\om-i\al U''\phi=i\al f_1+\pa_y f_2,\\
		&(\pa^2_y-\al^2)\phi=\om,\quad \phi(t,\pm 1)=\phi'( t,\pm 1)=0,\quad \om(0,y)=\om_{\al}^{\mathrm{in}}(y),
	\end{aligned}\right.
\end{equation}
where we omit the subscript $\al$ for brevity.
\begin{proposition}\label{proposition:est,u,om,timespace,f1234}
	Let $(\om,\phi)$ be the solution to \eqref{equ:linear,nonslip,NS,fj} with $\lra{\om^{\mathrm{in}},e^{\pm \abs{\al}y}}=0$. There exist positive constants $\nu_0$, $\varepsilon_0$, such that for $\nu\in(0,\nu_0]$, $\ve\in[0,\ve_{0}]$, it holds that
	\begin{align}\label{equ:est,u,om,timespace,f1234}
		&\abs{\al}\n{e^{\varepsilon \nu^{\f12}t}u}_{L_{t}^{\infty}L_{y}^{2}}^{2}
		+\abs{\al}\n{e^{\varepsilon \nu^{\f12}t}u}_{L_{t,y}^{2}}^{2}+\n{e^{\varepsilon \nu^{\f12}t}u}_{L_{t}^{\infty}L_{y}^{\infty}}^{2}+\nu^{\f12}\abs{\al}^{\f12}\n{e^{\varepsilon \nu^{\f12}t}\om}^2_{L_{t,y}^{2}}\\
		&+ \nu^{\frac{1}{2}}\n{e^{\varepsilon \nu^{\f12}t}\om}_{L_{t}^{\infty}L_{y}^{2}}^{2}+\nu^{\f32}\n{e^{\varepsilon \nu^{\f12}t}\pa_y\om}^2_{L_{t,y}^{2}}+\n{e^{\varepsilon \nu^{\f12}t}\sqrt{1-y^{2}}\om}^2_{L_{t}^{\infty}L_{y}^{2}}\notag\\
		\lesssim& \n{\om^{\mathrm{in}}}_{H_{\al}^{4}}^{2}+\nu^{-1}\n{e^{\varepsilon \nu^{\f12}t}(f_1,f_2)}^2_{L_{t,y}^{2}}.\notag
	\end{align}
\end{proposition}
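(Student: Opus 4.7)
The strategy is to reduce \eqref{equ:est,u,om,timespace,f1234} to the resolvent estimates of Section \ref{sec:Orr-Sommerfeld Equation with the Non-slip Boundary Condition}. Setting $\wt\om=e^{\ve\nu^{1/2}t}\om$ (and similarly for $f_j$) modifies \eqref{equ:linear,nonslip,NS,fj} only by an extra zero-order term $-\ve\nu^{1/2}\wt\om$; since $|\al|\ge 1$ on the non-zero modes, $\ve\nu^{1/2}\le\ve_0\nu^{1/2}|\al|^{1/2}$ falls within the admissible perturbation range $|o(\nu,\al)|\le\ve_0\nu^{1/2}|\al|^{1/2}$ of Propositions \ref{lemma:summary,reso,navierslip} and \ref{proposition:est,u,L2,FL2H1H-1}. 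I then split $\om=\om_I+\om_H$, where $\om_I$ carries the forcing $(f_1,f_2)$ with zero initial data and $\om_H$ carries the data $\om^{\mathrm{in}}$ with no forcing.

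For $\om_I$, a time Fourier transform (after extending by zero for $t<0$) reduces the problem at each $\la\in\R$ to the Orr--Sommerfeld system \eqref{equ:nonslip,NS,per,equ} with source $F=i\al\widehat f_1+\pa_y\widehat f_2$ satisfying $\n{F}_{H^{-1}}\lesssim\n{(\widehat f_1,\widehat f_2)}_{L_y^2}$. Applying the $H^{-1}$ bound \eqref{equ:reso,est,u,w,FH-1,nonslip} fiberwise in $\la$ and then invoking Plancherel produces exactly $\nu^{1/2}|\al|^{1/2}\n{\om_I}_{L_{t,y}^2}^2+|\al|\n{u_I}_{L_{t,y}^2}^2\lesssim\nu^{-1}\n{(f_1,f_2)}_{L_{t,y}^2}^2$. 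The remaining $\nu^{3/2}\n{\pa_y\om_I}_{L_{t,y}^2}^2$ and $L_t^\infty L_y^2$ bounds follow by directly testing the evolution equation against $\om_I/U''$ and integrating in $t$, reusing the coercive manipulations of Lemma \ref{lemma:energy estimate,real part}.

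For $\om_H$ I follow the three-piece decomposition $\om_H=\om_H^{(1)}+\om_H^{(2)}+\om_H^{(3)}$ used in \cite{CLWZ-ARMA}. The inviscid Rayleigh evolution $\om_H^{(1)}$ is controlled by the vorticity-depletion bound $|\om_H^{(1)}(t,y)|\lesssim|U'(y)|^{7/4}\n{\om^{\mathrm{in}}}_{H_\al^4}+\lra{t}^{-7/8}\n{\om^{\mathrm{in}}}_{H_\al^4}$ of \cite{IIJ-VJM}, which together with $|U'(y)|\sim\sqrt{1-y^2}$ (Lemma \ref{lemma:proper,symme,flow}) supplies simultaneously the weighted $\sqrt{1-y^2}\om_H^{(1)}$-bound and, via stationary-phase analysis of $e^{-i\al U(y)t}\om_H^{(1)}(y)$, the inviscid-damping bound $|\al|^{1/2}\n{u_H^{(1)}}_{L_{t,y}^2}$. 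The viscous correction $\om_H^{(2)}$ solves an inhomogeneous Orr--Sommerfeld system with forcing $\nu\Delta\om_H^{(1)}$ and is controlled by the inhomogeneous estimate for $\om_I$ just derived; the boundary-layer corrector $\om_H^{(3)}$ enforces the non-slip condition and is controlled by Lemmas \ref{lemma:est,uw,L2Linfty,cor12} and \ref{lemma:bounds on c1,c2}.

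The principal obstacle is the weighted bound $\n{\sqrt{1-y^2}\om}_{L_t^\infty L_y^2}^2$ together with the derived $\n{u}_{L_t^\infty L_y^\infty}^2$ estimate, both flagged as novel in the introduction. For the weighted estimate I would test the vorticity equation against $(1-y^2)\om$: the symmetric-flow identity $(1-y^2)U''(y)\lesssim|U'(y)|^2$ from Lemma \ref{lemma:proper,symme,flow} makes the nonlocal contribution $-i\al U''\phi$ absorbable, while the commutator $[\nu\Delta,1-y^2]\om=-4\nu y\pa_y\om-2\nu\om$ is dominated by the $L_{t,y}^2$-bounds on $\om$ and $\pa_y\om$ already secured. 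For the $L^\infty$-bound on $u$, I would start from the Green's-function representation $\n{u}_{L_y^\infty}\lesssim\n{\om}_{L_y^1}$ on $(-1,1)$, combined with the Cauchy--Schwarz split $\n{\om}_{L_y^1}\le\n{(1-y^2)^{-1/2}}_{L^2(|y|<1-\delta)}\n{\sqrt{1-y^2}\om}_{L_y^2}+\sqrt{2\delta}\,\n{\om}_{L_y^2}$ and then optimize $\delta$ (at the scale $\delta\sim\nu^{1/2}$) to balance the logarithmic tail against the $\nu^{-1/4}$-gain in $\n{\om}_{L_y^2}$, yielding the sharp $\n{u}_{L_t^\infty L_y^\infty}$ bound with no $\nu$-loss.
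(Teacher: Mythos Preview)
Your decomposition $\om=\om_I+\om_H$ and the three–piece splitting of $\om_H$ match the paper's strategy, and your treatment of $\om_I$ via Plancherel and the $H^{-1}$ resolvent bound is exactly what the paper does. But there are two concrete errors.

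\textbf{The identity $|U'(y)|\sim\sqrt{1-y^2}$ is false.} For the symmetric flows considered here one has $U'(y)\sim y$ (Lemma \ref{lemma:proper,symme,flow}), which vanishes at the \emph{center} $y=0$, not at the boundary. The vorticity depletion estimate $|\om_H^{(0)}|\lesssim|U'(y)|^{7/4}\n{\om^{\mathrm{in}}}_{H_\al^4}+\lra{t}^{-7/8}\n{\om^{\mathrm{in}}}_{H_\al^4}$ therefore says nothing about smallness of $\om$ near $y=\pm1$ and cannot supply the $\sqrt{1-y^2}\,\om$ bound. Likewise the claimed inequality $(1-y^2)U''(y)\lesssim |U'(y)|^2$ fails at $y=0$. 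The paper obtains the weighted bound not from depletion but by a direct energy identity: one tests the full evolution against $\tfrac{1-y^2}{U''}\om$, whereupon the transport term drops and the nonlocal term $-i\al U''\phi$ becomes $2\operatorname{Re}\lra{i\al y\phi,\pa_y\phi}\lesssim\|u\|_{L^2}^2$; the remaining error terms are absorbed by the already–established $L^2_{t,y}$ bounds on $\om$, $\pa_y\om$, and $u$.

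\textbf{Your $L^\infty_y$ argument loses a logarithm.} Splitting $\n{\om}_{L^1}\le\n{(1-y^2)^{-1/2}}_{L^2(|y|<1-\delta)}\n{\sqrt{1-y^2}\,\om}_{L^2}+\sqrt{2\delta}\,\n{\om}_{L^2}$ and taking $\delta\sim\nu^{1/2}$ yields a factor $\n{(1-y^2)^{-1/2}}_{L^2(|y|<1-\delta)}\sim\sqrt{\log(1/\delta)}\sim\sqrt{\log\nu^{-1}}$, because $(1-y^2)^{-1}$ is only logarithmically integrable. This is a genuine loss, not removable by optimization. The paper instead writes $\n{\om}_{L^1}\lesssim\nu^{1/4}\n{\om}_{L^2}+\n{\rho_\delta\om}_{L^2}$ with the \emph{linear} cutoff $\rho_\delta(y)=\min\{1,\delta^{-1}(1-|y|)\}$ at scale $\delta=\nu^{1/4}$; since $\rho_\delta^{-2}$ is genuinely integrable on $\{1-|y|\ge\nu^{1/2}\}$, no logarithm appears. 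The price is that $\n{\rho_\delta\om}_{L^2}$ is not controlled by the $\sqrt{1-y^2}$–weighted norm, so one must run a second energy estimate, testing the equation against $\tfrac{\wt\rho_\delta^{\,2}}{U''}\om$ for a $C^2$ version $\wt\rho_\delta\ge\rho_\delta$. The commutator terms then scale like $\nu\delta^{-2}\n{\om}_{L^2}^2=\nu^{1/2}\n{\om}_{L^2}^2$ and $|\al|\delta^{-1}\|\phi\|_{L^\infty(1-|y|\le\delta)}\|\pa_y\phi\|_{L^2}\lesssim|\al|\|u\|_{L^2}^2$, both of which are already controlled.
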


The proof of Proposition \ref{proposition:est,u,om,timespace,f1234} is postponed to Section \ref{sec:The proof of Proposition}. To establish Proposition \ref{proposition:est,u,om,timespace,f1234}, we will decompose the system \eqref{equ:linear,nonslip,NS,fj} into homogeneous and inhomogeneous parts. In the following two subsections, we will address these two problems separately.

  \subsection{The inhomogeneous linearized NS system}
We first consider the following inhomogeneous linearized Naiver-Stokes system with zero initial data
\begin{equation}\label{equ:linear,inhom,nonslip,NS,fj}
	\left\{
	\begin{aligned}
		&\pa_{t} \om_{I}-\nu(\pa^2_y-\al^2) \om_{I}+i\al U \om_{I}-i\al U''\phi_{I}=i\al f_1+ \pa_y f_2+f_3+f_4,\\
		&(\pa^2_y-\al^2)\phi_{I}= \om_{I},\quad \phi_{I}(t,\pm 1)=\phi'_{I}( t,\pm 1)=0,\quad  \om_{I}(0,y)=0.
	\end{aligned}\right.
\end{equation}

\begin{proposition}\label{proposition:est,u,timespace,L2L2,LinftyL2,f1234L2}
Let $( \om_I,\phi_{I})$ solve \eqref{equ:linear,inhom,nonslip,NS,fj}. There exist positive constants $\nu_0$, $\varepsilon_0$, such that for $\nu\in(0,\nu_0]$, $\ve\in[0,\ve_{0}]$, the following space-time estimates hold
\begin{align}
		\n{e^{\ve(\nu|\al|)^{\frac{1}{2}}t}u_I}^2_{L_{t,y}^{2}}
		\lesssim &\nu^{-1}\abs{\al}^{-1}\n{e^{\ve(\nu|\al|)^{\frac{1}{2}}t}(f_1,f_2)}^2_{L_{t,y}^{2}}+\nu^{-\f12}\abs{\al}^{-\f32}\n{e^{\ve (\nu\abs{\al})^{\f12}t}f_3}^2_{L_{t,y}^{2}}
        \label{equ:est,u,timespace,L2L2,LinftyL2,f1234L2}\\
		&+\abs{\al}^{-1}\n{e^{\ve(\nu|\al|)^{\frac{1}{2}}t}(\pa_y,|\al|) f_4)}^2_{L_{t,y}^{2}},\notag\\
		\n{e^{\ve(\nu|\al|)^{\frac{1}{2}}t}\om_I}^2_{L_{t,y}^{2}}
		\lesssim &\nu^{-\f32}\abs{\al}^{-\f12}\n{e^{\ve(\nu|\al|)^{\frac{1}{2}}t}(f_1,f_2)}^2_{L_{t,y}^{2}}+\nu^{-\f54}\abs{\al}^{-\f74}\n{e^{\ve (\nu\abs{\al})^{\f12}t}f_3}^2_{L_{t,y}^{2}}\label{equ:est,w,timespace,L2L2,LinftyL2,f1234L2}\\
		&+\nu^{-\f34}\abs{\al}^{-\f54}\n{e^{\ve(\nu\abs{\al})^{\f12}t}(\pa_y,|\al|) f_4)}^2_{L_{t,y}^{2}}.\notag
	\end{align}
\end{proposition}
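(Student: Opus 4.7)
\emph{Strategy.} The plan is to reduce \eqref{equ:linear,inhom,nonslip,NS,fj} to the resolvent bounds of Proposition \ref{proposition:est,u,L2,FL2H1H-1} via a Laplace transform in time, following the framework developed in \cite{CLWZ-ARMA}. Since the initial data vanish, this bridge is clean: space-time $L^2$ norms translate through Plancherel exactly into $L^2_\la$ integrals of resolvent solutions, and the forcing decomposes piece-by-piece into different spatial regularity classes.

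\emph{Set-up.} First I would introduce $\tilde\om(t,y)=e^{\ve(\nu|\al|)^{1/2}t}\om_I(t,y)$, extended by zero for $t<0$. It solves
\begin{equation*}
\left\{
\begin{aligned}
&\pa_t\tilde\om-\nu(\pa_y^2-\al^2)\tilde\om+i\al U\tilde\om-i\al U''\tilde\phi-\ve(\nu|\al|)^{1/2}\tilde\om=\tilde F,\\
&(\pa_y^2-\al^2)\tilde\phi=\tilde\om,\quad \tilde\phi(t,\pm1)=\tilde\phi'(t,\pm1)=0,\quad \tilde\om(0,y)=0,
\end{aligned}\right.
\end{equation*}
where $\tilde F=e^{\ve(\nu|\al|)^{1/2}t}(i\al f_1+\pa_y f_2+f_3+f_4)$. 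Taking the temporal Fourier transform with dual variable $\tau$ and setting $\la=-\tau/\al\in\R$ converts the system into the Orr-Sommerfeld equation
\begin{equation*}
-\nu(\pa_y^2-\al^2)\hat{\tilde\om}+i\al(U-\la)\hat{\tilde\om}-i\al U''\hat{\tilde\phi}+o(\nu,\al)\hat{\tilde\om}=\hat{\tilde F}(\tau,y),
\end{equation*}
with the lower-order perturbation $o(\nu,\al)=-\ve(\nu|\al|)^{1/2}$. The smallness requirement $|o(\nu,\al)|\leq\ve_0\nu^{1/2}|\al|^{1/2}$ of Proposition \ref{proposition:est,u,L2,FL2H1H-1} is then automatic for $\ve\in[0,\ve_0]$.

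\emph{Applying the resolvent bounds.} Next I would invoke Proposition \ref{proposition:est,u,L2,FL2H1H-1} pointwise in $\tau$, pairing each forcing piece with the norm that optimizes it: for the divergence term $i\al f_1+\pa_y f_2$, the $H^{-1}$ estimates $\nu^{1/2}|\al|^{1/2}\|u\|_{L^2}+\nu^{3/4}|\al|^{1/4}\|w\|_{L^2}\lesssim\|F\|_{H^{-1}}$ combined with $\|i\al g_1+\pa_y g_2\|_{H^{-1}}\lesssim\|(g_1,g_2)\|_{L^2}$; for $f_3$, the $L^2$ estimates $\nu^{1/4}|\al|^{3/4}\|u\|_{L^2}+\nu^{5/8}|\al|^{7/8}\|w\|_{L^2}\lesssim\|F\|_{L^2}$; and for $f_4$, the $H^1$ estimates $\|u\|_{L^2}+\nu^{3/8}|\al|^{5/8}\|w\|_{L^2}\lesssim\|(\pa_y,|\al|)F\|_{L^2}$. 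Squaring each pointwise bound, integrating in $\tau$, and performing the change of variables $\tau=-\al\la$ (whose Jacobian factor $|\al|$ cancels identically between the two sides), then applying Plancherel in $\tau$ delivers the announced scalings in $\nu$ and $|\al|$ of \eqref{equ:est,u,timespace,L2L2,LinftyL2,f1234L2}--\eqref{equ:est,w,timespace,L2L2,LinftyL2,f1234L2}.

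\emph{Where the difficulty lies.} The Plancherel book-keeping itself is routine; the real labor is already packaged inside Proposition \ref{proposition:est,u,L2,FL2H1H-1}. The only delicate point is that the resolvent bounds must be uniform not only in $\la\in\R$ but also in the small imaginary shift manufactured by the exponential weight -- precisely what the perturbation regime $|o(\nu,\al)|\ll\nu^{1/2}|\al|^{1/2}$ is designed to cover. A small but necessary sanity check is that the extension-by-zero of $\tilde\om$ for $t<0$ creates no distributional jump, which is guaranteed by $\om_I(0,y)=0$; this justifies the Fourier identity $\widehat{\pa_t\tilde\om}=i\tau\hat{\tilde\om}$ without boundary corrections and legitimizes the whole reduction.
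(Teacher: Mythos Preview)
Your proposal is correct and follows essentially the same approach as the paper: the paper also applies the weighted Laplace/Fourier transform in time (using the zero initial data to avoid boundary terms), reduces to the Orr--Sommerfeld resolvent equation \eqref{equ:nonslip,NS,per,equ} with the perturbation $o(\nu,\al)=-\ve(\nu|\al|)^{1/2}$, invokes Proposition~\ref{proposition:est,u,L2,FL2H1H-1} pointwise in the dual variable with the same $H^{-1}/L^2/H^1$ pairing you describe, and returns via Plancherel. Your remark about the change of variables $\tau=-\al\la$ is harmless but unnecessary---the resolvent bounds are uniform in $\la$, so integrating directly in $\tau$ suffices without ever performing the substitution.
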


\begin{proof}
First, we introduce the following notations
	\begin{align*}
		\wt{\om}_{I}(\tau,y)&=\int_0^{+\infty}e^{\ve(\nu|\al|)^{\frac{1}{2}}t-it\tau} \om_I(t,y)dt,\quad
		\wt{\phi}_{I}(\tau,y)=\int_0^{+\infty}e^{\ve(\nu|\al|)^{\frac{1}{2}}t-it\tau}\phi_{I}(t,y)dt, \\
		\wt{f}_j(\tau,y)&=\int_0^{+\infty}e^{\ve(\nu|\al|)^{\frac{1}{2}}t-it\tau}f_j(t,y)dt,\quad j\in \{1,2,3,4\},
	\end{align*}
	where $c\in \R.$ Consequently, we turn to consider the OS equation with the non-slip boundary condition
	\begin{equation}\label{equ:linear,inhom,nonslip,NS,Fj}
		\left\{
		\begin{aligned}
			&-\nu(\pa^2_y-\al^2)\wt{\om}_{I}+i\al\big( U\wt{\om}_{I}+\f{\tau}{\al}+i\ve\nu^{\frac{1}{2}}|\al|^{-\frac{1}{2}}-U''\wt{\psi}_{I}\big)= i\al \wt{f}_1+ \pa_y\wt{f}_2+\wt{f}_3+\wt{f}_4,\\
			&(\pa^2_y-\al^2)\wt{\phi}_{I}=\wt{\om}_{I},\quad \wt{\phi}_{I}(\pm 1)=\wt{\phi}_{I}'(\pm 1)=0.
		\end{aligned}\right.
	\end{equation}

	By resolvent estimates in Proposition \ref{proposition:est,u,L2,FL2H1H-1}, we get
     \begin{align}\label{equ:est,uly2,F1234ly2}
     	\n{\wt{u}_{I}}_{L^2}
     	\lesssim \nu^{-\f12}\abs{\al}^{-\f12}\n{(\wt{f}_{1},\wt{f}_{2})}_{L^{2}}+\nu^{-\f14}\abs{\al}^{-\f34}\n{\wt{f}_{3}}_{L^2}+\abs{\al}^{-1}\n{(\pa_{y},|\al|)\wt{f}_{4}}_{L^2}.
     \end{align}
	Applying Plancherel's theorem, we have
	\begin{align*}
		&\int_0^{+\infty}\n{e^{\ve(\nu|\al|)^{\frac{1}{2}}t}u}^2_{L^2}dt\sim
		\int_{\R}\n{\wt{u}(\tau)}^2_{L^2}d\tau, \\
		&\int_0^{+\infty}\n{e^{\ve(\nu|\al|)^{\frac{1}{2}}t}{\pa_y^k}f_j(t)}^2_{L^2}dt\sim
		\int_{\R}\n{\pa_y^k \wt{f}_j(\tau)}^2_{L^2}d\tau,\quad j\in \{1,2,3,4\},\quad k\in\{0,1\}.
	\end{align*}
	Thus, from \eqref{equ:est,uly2,F1234ly2}, we obtain
	  \begin{align*}
		\n{e^{\ve(\nu|\al|)^{\frac{1}{2}}t}u_I}_{L_{t,y}^{2}}
		\lesssim& \nu^{-\f12}\abs{\al}^{-\f12}\n{e^{\ve(\nu|\al|)^{\frac{1}{2}}t}(f_1,f_2)}_{L_{t,y}^{2}}+\nu^{-\f14}\abs{\al}^{-\f34}\n{e^{\ve(\nu|\al|)^{\frac{1}{2}}t}f_3}_{L_{t,y}^{2}}
		\notag\\
		&+\abs{\al}^{-1}\n{e^{\ve(\nu|\al|)^{\frac{1}{2}}t}(\pa_{y},|\al|)f_4}_{L_{t,y}^{2}}.
		\notag
	\end{align*}
Similarly, by invoking Proposition \ref{proposition:est,u,L2,FL2H1H-1} again, we derive
		\begin{align*}
		\n{e^{\ve(\nu|\al|)^{\frac{1}{2}}t}\om_I}_{L_{t,y}^{2}}
		\lesssim &\nu^{-\f34}\abs{\al}^{-\f14}\n{e^{\ve(\nu|\al|)^{\frac{1}{2}}t}(f_1,f_2)}_{L_{t,y}^{2}}+\nu^{-\f58}\abs{\al}^{-\f78}
        \n{e^{\ve(\nu|\al|)^{\frac{1}{2}}t}f_3}_{L_{t,y}^{2}}\notag\\
        &+\nu^{-\f38}\abs{\al}^{-\f58}
        \n{e^{\ve(\nu|\al|)^{\frac{1}{2}}t}f_4}_{L_{t,y}^{2}}\notag.
	\end{align*}
Hence, we end the proof of the $\n{\om_{I}}_{L_{t,y}^{2}}$ and $\n{u_{I}}_{L_{t,y}^{2}}$ estimates in \eqref{equ:est,u,timespace,L2L2,LinftyL2,f1234L2} and
\eqref{equ:est,w,timespace,L2L2,LinftyL2,f1234L2}.
\end{proof}

 \subsection{The homogeneous linearized NS system}
In this subsection, we consider the following homogeneous linearized Naiver-Stokes system
\begin{equation}\label{equ:linear,hom,nonslip,NS,initial}
	\left\{
	\begin{aligned}
		&\pa_{t} \om_{H}-\nu(\pa^2_{y}-\al^2) \om_{H}+i\al U \om_{H}-i\al U''\phi_{H}=0,\\
		&(\pa^2_y-\al^2)\phi_{H}= \om_{H},\quad \phi_{H}(t,\pm 1)=\phi'_{H}( t,\pm 1)=0,\quad  \om_{H}(0,y)= \om_{\al}^{\mathrm{in}}(y).
	\end{aligned}\right.
\end{equation}

\begin{proposition}\label{proposition:est,u,timespace,L2L2,LinftyL2,w0H1}
 Let $( \om_{H},\phi_{H})$ solve \eqref{equ:linear,hom,nonslip,NS,initial}.
  There exist positive constants $\nu_0$, $\varepsilon_0$, such that for $\nu\in(0,\nu_0]$, $\ve\in[0,\ve_{0}]$, $\nu\al^{2}\lesssim 1$, the following space-time estimates hold
	\begin{align}\label{equ:est,u,timespace,L2L2,LinftyL2,w0H1}
		&\nu^{\f12}	\abs{\al}^{\f12}\n{e^{\ve_{0}(\nu|\al|)^{\frac{1}{2}}t} \om_{H}}^2_{L_{t,y}^{2}}+	\abs{\al}	\n{e^{\ve_{0}(\nu|\al|)^{\frac{1}{2}}t}u_H}^2_{L_{t,y}^{2}}\lesssim
		\n{\om^{\mathrm{in}}}_{H_{\al}^{4}}^{2}.
	\end{align}
\end{proposition}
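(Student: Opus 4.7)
The plan is to follow the three-part decomposition $\om_H=\om_H^{(1)}+\om_H^{(2)}+\om_H^{(3)}$ from \cite{CLWZ-ARMA}, as anticipated in the key-ideas section of the introduction. Here $\om_H^{(1)}$ will solve the purely inviscid Rayleigh equation with initial datum $\om^{\mathrm{in}}_{\al}$ and the natural Dirichlet condition $\phi_H^{(1)}(\pm 1)=0$; $\om_H^{(2)}$ will account for viscosity by solving the full linearized system with source $\nu(\pa_y^2-\al^2)\om_H^{(1)}$, zero initial data, and Navier-slip boundary conditions; finally, $\om_H^{(3)}$ will be a homogeneous boundary-layer corrector, expressed in the basis $\{w_{cor,1},w_{cor,2}\}$ of Section~\ref{sec:Boundary Layer Corrector}, that restores the full non-slip condition $\phi_H'(\pm 1)=0$ for the sum.

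For $\om_H^{(1)}$, I would invoke the vorticity depletion estimate from \cite{IIJ-VJM} already recalled in the introduction,
\begin{align*}
|\om_H^{(1)}(t,y)|\lesssim |U'(y)|^{\f74}\n{\om^{\mathrm{in}}}_{H_{\al}^{4}}+\lra{t}^{-\f78}\n{\om^{\mathrm{in}}}_{H_{\al}^{4}},
\end{align*}
together with the linear inviscid damping bound for $u_H^{(1)}$ proved in the same framework. The factor $|U'(y)|^{7/4}$ vanishes of order $|y|^{7/4}$ at the critical point $y=0$, so $\||U'|^{7/4}\|_{L^2_y}$ is small, while the $\lra{t}^{-7/8}$ decay is square-integrable in $t$. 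Choosing $\ve_0$ small enough that $e^{\ve_0(\nu|\al|)^{1/2}t}\le 2$ on the window $t\lesssim (\nu|\al|)^{-1/2}$, and letting the polynomial time decay dominate the exponential weight outside this window, the prefactor $\nu^{1/2}|\al|^{1/2}$ on the left is absorbed by the window length, yielding
\begin{align*}
\nu^{\f12}|\al|^{\f12}\bn{e^{\ve_0(\nu|\al|)^{\f12}t}\om_H^{(1)}}^2_{L_{t,y}^2}+|\al|\bn{e^{\ve_0(\nu|\al|)^{\f12}t}u_H^{(1)}}^2_{L_{t,y}^2}\lesssim \n{\om^{\mathrm{in}}}_{H_{\al}^{4}}^2.
\end{align*}

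For $\om_H^{(2)}$, writing $\nu(\pa_y^2-\al^2)\om_H^{(1)}=\pa_y(\nu\pa_y\om_H^{(1)})-\nu|\al|^2\om_H^{(1)}$ and invoking Proposition~\ref{proposition:est,u,timespace,L2L2,LinftyL2,f1234L2} with $f_2=\nu\pa_y\om_H^{(1)}$ and $f_4=-\nu|\al|^2\om_H^{(1)}$ reduces the task to bounding $\nu\pa_y^k\om_H^{(1)}$ in the appropriate weighted $L^2_{t,y}$ norm for $k\in\{1,2\}$; the extra factors of $\nu$ compensate exactly for the viscous prefactors on the left-hand side. The resulting bounds are controlled by propagating Sobolev regularity along the Rayleigh semigroup, which is precisely where the $H^4_\al$ hypothesis on $\om^{\mathrm{in}}$ is consumed. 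For $\om_H^{(3)}$, the coefficients in the basis $\{w_{cor,1},w_{cor,2}\}$ are determined by the boundary traces $(\phi_H^{(1)}+\phi_H^{(2)})'|_{y=\pm 1}$; combining the pointwise depletion bound evaluated at $y=\pm 1$ (where $|U'|$ is bounded below) with the $L_y^\infty$ estimate for $\om_H^{(2)}$ obtained en route gives trace data small enough for Lemmas~\ref{lemma:est,uw,L2Linfty,cor12} and~\ref{lemma:bounds on c1,c2} to deliver the desired bound on $\om_H^{(3)}$.

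The main obstacle will be the second step: propagating enough Sobolev regularity along the Rayleigh evolution so that $\nu(\pa_y^2-\al^2)\om_H^{(1)}$ enters Proposition~\ref{proposition:est,u,timespace,L2L2,LinftyL2,f1234L2} with the correct scalings in $\nu$ and $|\al|$. The Rayleigh semigroup does not preserve high Sobolev norms uniformly in time---the critical-layer structure generates polynomial-in-$t$ growth---so this growth must be balanced against the vorticity depletion decay, with the truncation scale $(\nu|\al|)^{-1/2}$ and the exponential weight $e^{\ve_0(\nu|\al|)^{1/2}t}$ chosen consistently. A secondary difficulty is the boundary-trace matching for $\om_H^{(3)}$: since $|U'(\pm 1)|\sim 1$, the depletion factor contributes nothing at the boundary, and sharp pointwise bounds for $\pa_y\phi_H^{(j)}(\pm 1)$ must instead be derived by direct Rayleigh/duality arguments, testing against solutions of the adjoint Rayleigh equation with boundary-concentrated data.
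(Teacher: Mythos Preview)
Your decomposition has a genuine gap in Step 1. Taking $\om_H^{(1)}$ to be the \emph{raw} Rayleigh solution cannot work with the exponential weight $e^{\ve_0(\nu|\al|)^{1/2}t}$: the vorticity depletion estimate gives
\[
|\om_H^{(1)}(t,y)|\lesssim |U'(y)|^{7/4}\|\om^{\mathrm{in}}\|_{H_\al^4}+\langle t\rangle^{-7/8}\|\om^{\mathrm{in}}\|_{H_\al^4},
\]
and the first term on the right \emph{does not decay in time at all}. Hence $\nu^{1/2}|\al|^{1/2}\int_0^\infty e^{2\ve_0(\nu|\al|)^{1/2}t}\|\om_H^{(1)}(t)\|_{L^2_y}^2\,dt=+\infty$ for any $\ve_0>0$, regardless of how small $\ve_0$ is. Your sentence ``letting the polynomial time decay dominate the exponential weight outside this window'' is simply false: polynomial decay never beats exponential growth, and the non-decaying $|U'|^{7/4}$ part makes the situation even worse. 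The prefactor $\nu^{1/2}|\al|^{1/2}$ is a constant with respect to $t$ and cannot rescue a divergent time integral.

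The paper repairs this by inserting a viscous damping factor: it sets $\om_H^{(1)}(t,y)=g(t,y)\,\om_H^{(0)}(t,y)$, where $\om_H^{(0)}$ is the Rayleigh solution and
\[
g(t,y)=\exp\Big(-\nu\al^2\big(\tfrac{1}{3}(U')^2t^3+t\big)-\nu^{1/2}|\al|^{1/2}t\Big).
\]
The factor $e^{-\nu^{1/2}|\al|^{1/2}t}$ supplies genuine exponential decay that dominates the weight $e^{\ve_0(\nu|\al|)^{1/2}t}$ for $\ve_0<1$, while $e^{-\nu\al^2(U')^2t^3/3}$ encodes the Taylor-dispersion-type enhanced dissipation away from the critical point $y=0$. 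With this choice the source for $\om_H^{(2)}$ is no longer $\nu(\pa_y^2-\al^2)\om_H^{(1)}$ but picks up the extra terms $\nu\al^2(U')^2t^2\om_H^{(1)}+\nu^{1/2}|\al|^{1/2}\om_H^{(1)}-i\al U''(g\phi_H^{(0)}-\phi_H^{(1)})$ coming from $\pa_t g$ and the mismatch between $\phi_H^{(1)}$ and $g\phi_H^{(0)}$. To control $\nu\pa_y^2\om_H^{(1)}$ in this new source, the paper does \emph{not} propagate Sobolev norms of the Rayleigh solution (which, as you correctly worry, grow polynomially in $t$); instead it introduces the commuting vector field $X=\pa_y+i\al U't$, writes $\pa_y\om_H^{(1)}=\om_1^{(1)}-i\al U't\,\om_H^{(1)}$ with $\om_1^{(1)}=X\om_H^{(1)}$, and bounds $\om_1^{(1)}$ via a second application of the Rayleigh estimates to the $X$-differentiated equation. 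The growing factor $i\al U't$ that appears is then killed by the $(U')^2t^3$ exponent in $g$. This interplay between the Gaussian-in-$t^3$ damping and the vector-field commutator is the missing mechanism in your proposal.
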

\begin{proof}

	Let $\om_{H}^{(0)}$ solve
	\begin{align*}
		&\pa_{t}\om^{(0)}_{H}+i\al U\om^{(0)}_{H}-i\al U''\phi^{(0)}_{H}=0,\quad (\pa_y^2-\abs{\al}^2)\psi_{H}^{(0)}=\om_{H}^{(0)},\quad \om_{H}^{(0)}(0,y)=\om^{\mathrm{in}}(y).
	\end{align*}
We then introduce the function
	\begin{align*}
		& g(t,y)=:e^{-\nu\al^2(\f{1}{3}(U')^2t^3+t)-\nu^{\f12}\abs{\al}^{\f12}t},\quad \om^{(1)}_{H}(t,y)
		=:g(t,y)\om_{H}^{(0)}\quad t\geq 0,\\
		&(\pa^2_y-\al^2)\phi^{(j)}_{H}=\om^{(j)}_{H},\quad \phi^{(j)}_{H}(t,\pm 1)=0,\quad j=0,1.
	\end{align*}
It is straightforward to verify that $ \om^{(1)}_{H}$ satisfies the following equation
	\begin{align*}
		\pa_{t}\om^{(1)}_{H}+\nu\al^2\om^{(1)}_{H}+\big(\nu\al^2(U')^2t^2+\nu^{\f12}\abs{\al}^{\f12}\big)\om_{H}^{(1)}+i\al U(y)\om^{(1)}_{H}=i\al U''(y)g(t,y)\phi^{(0)}_{H},
	\end{align*}
which implies that
	\begin{align*}
		&\pa_{t}\om^{(1)}_{H}-\nu(\pa^2_{y}-\al^2)\om^{(1)}_{H}+i\al U(y)\om^{(1)}_{H}-i\al U''\phi^{(1)}_{H}\\
		=&-\nu\pa^2_{y}\om_{H}^{(1)}-\nu\al^2(U')^2t^2\om_{H}^{(1)}-\nu^{\f12}\abs{\al}^{\f12}\om_{H}^{(1)}+ i\al U''(g(t,y)\phi^{(0)}_{H}-\phi^{(1)}_{H}).
	\end{align*}
We now decompose $ \om_{H}$ as
	\begin{align*}
		\om_{H}=\om_{H}^{(1)}+\om_{H}^{(2)}+\om_{H}^{(3)},\quad (\pa^2_y-\al^2)\phi^{(j)}_{H}=\om^{(j)}_{H},\quad \phi^{j}_{H}(t,\pm 1)=0,\quad j=1,2,3,
	\end{align*}
	where $\om_{H}^{(2)}$ solves
	\begin{equation}\label{equ:def,wH2}
		\left\{
		\begin{aligned}
			&\pa_{t}\om_{H}^{(2)}-\nu(\pa^2_{y}-\al^2)\om_{H}^{(2)}+i\al U(y)\om_{H}^{(2)}-i\al U''\phi_{H}^{(2)}\\
			&=
			\nu\pa^2_{y}\om_{H}^{(1)}+\nu\al^2(U')^2t^2\om_{H}^{(1)}+\nu^{\f12}\abs{\al}^{\f12}\om_{H}^{(1)}- i\al U''(g(t,y)\phi^{(0)}_{H}-\phi^{(1)}_{H})\\
			&\om_{H}^{(2)}(0,y)=0,\quad \lra{\om_{H}^{(2)},\operatorname{sinh}\big(\al (1+y)\big)}=\lra{\om_{H}^{(2)},\operatorname{sinh}\big(\al (1-y)\big)}=0,
		\end{aligned}\right.
	\end{equation}
	and $\om_{H}^{(3)}$ solves
	\begin{equation}\label{equ:def,wH3}
		\left\{
		\begin{aligned}
			&\pa_{t}\om_{H}^{(3)}-\nu(\pa^2_{y}-\al^2)\om_{H}^{(3)}+i\al U(y)\om_{H}^{(3)}-i\al U''\phi_{H}^{(3)}=0,\\
			&\om_{H}^{(3)}(0,y)=0,\, \lra{\om_{H}^{(1)}+\om_{H}^{(3)},\operatorname{sinh}\big(\al (1+y)\big)}=\lra{\om_{H}^{(1)}+\om_{H}^{(3)},\operatorname{sinh}\big(\al (1-y)\big)}=0.
		\end{aligned}\right.
	\end{equation}

	{\bf Step 1.} Estimates for $\om_{H}^{(0)}$ and $\om_{H}^{(1)}$.
	
	For $\n{\om^{(0)}_{H}(t)}_{L_{t}^{\infty}L_{y}^{2}}$,
	by Lemma \ref{lemma:est,euler,w,u,l2,fH1}, we have
	\begin{align}\label{equ:est,wu,l2,win,f,H1,used}
		&\n{ \om_{H}^{(0)}}_{L_{t}^{\infty}L_{y}^2}^{2}+\abs{\al}\int_{0}^{\infty}\big(\n{\pa_y\phi_{H}^{(0)}(t)}^2_{L^2}
		+\abs{\al}^2\n{\phi_{H}^{(0)}(t)}^2_{L^2}\big)dt
		\lesssim
		\n{\om^{\mathrm{in}}}_{H_{\al}^{1}}^{2}
	\end{align}

For $\n{\om^{(1)}_{H}(t)}_{L_{t}^{\infty}L_{y}^{2}}$, noting that $\om^{(1)}_{H}(t,y) =g(t,y)\om_{H}^{(0)},\,\,g(t,y)=e^{-\nu\al^2(\f{1}{3}(U')^2t^3+t)-\nu^{\f12}\abs{\al}^{\f12}t},$ we deduce
	\begin{align}\label{equ:est,omH1,l2,om0,l2}
		\n{\om^{(1)}_{H}(t)}_{L^2}
		\lesssim e^{-\nu^{\f12}\abs{\al}^{\f12}t}\n{\om_{H}^{(0)}(t)}_{L^2}\lesssim e^{-\nu^{\f12}\abs{\al}^{\f12}t}\n{\om^{\mathrm{in}}}_{H_{\al}^{1}}.
	\end{align}

For the $L_{t}^{2}$ estimates, 	
using \eqref{est-psidec1-1} in Lemma \ref{elliptic-est-psidec12} with
	\begin{align}\label{equ:notation,lemma,elliptic,used}
		\phi_{dec,1}=\phi_{H}^{(1)},\quad \phi_{dec,2}=\phi_{H}^{(0)},\quad
		g(t,y)=e^{-\nu\al^2(\f{1}{3}(U')^2t^3+t)-\nu^{\f12}\abs{\al}^{\f12}t},
	\end{align}
	where
	\begin{align*}
	\n{g}_{L^{\infty}}\lesssim e^{-\nu^{\f12}\abs{\al}^{\f12}t} ,\quad 	 \n{g}_{C^1}\lesssim \nu^{\f14}\abs{\al}^{\f34} te^{-\f12\nu^{\f12}\abs{\al}^{\f12}t}  ,\quad \n{\pa_yg}_{H^{1}}\lesssim \nu^{\f14}\abs{\al}t^{\frac{5}{4}}e^{-\f12\nu^{\f12}\abs{\al}^{\f12}t},
	\end{align*}
	we obtain
	\begin{align}\label{equ:est,phiH1,H1,phiH1',pm1,phiH0,l2}
		&\abs{\al}^{\f12}\n{(\pa_y,\abs{\al})\phi_{H}^{(1)}(t)}_{L^2}+\abs{\pa_y\phi_{H}^{(1)}
(t,\pm1)}\\
		\lesssim& \nu^{\f14}|\al|^{\f34} te^{-\f12\nu^{\f12}\abs{\al}^{\f12}t}\abs{\al}^{\f12}
\n{(\pa_y,\abs{\al})\phi_{H}^{(0)}(t)}_{L^2}+e^{-\nu^{\f12}\abs{\al}^{\f12}t}\abs{\pa_y\phi_{H}^{(0)}(t,\pm1)}.\notag
	\end{align}
Letting
\begin{align*}
  \phi^{(0,1)}_{err}(t,y)=:g(t,y)\phi_{H}^{(0)}(t,y)-\phi_{H}^{(1)}(t,y),
\end{align*}
and using \eqref{est-psidec1-2}, we derive
\begin{align*}
	\n{(\pa_y,\abs{\al})\phi^{(0,1)}_{err}}_{L^2}
	\lesssim \nu^{\f14}e^{-\f12\nu^{\f12}
\abs{\al}^{\f12}t}\n{\al t^{\frac{5}{4}}\phi_{H}^{(0)}(t)}_{L^2},
\end{align*}
which, together with \eqref{equ:est,tkpsi,l2,w0,H1H2}, yields
\begin{align}\label{equ:definition,phi,err}
\n{e^{\ve(\nu|\al|)^{\frac{1}{2}}t}
		(\pa_y,\al)\phi_{err}^{(0,1)}}_{L_{t,y}^{2}}\lesssim \nu^{\frac{1}{4}}
\n{\al t^{\frac{5}{4}} \phi_{H}^{(0)}(t)}_{L_{t,y}^{2}}\lesssim \nu^{\frac{1}{4}}\n{\om^{\mathrm{in}}}_{H_{\al}^{2}}.
\end{align}
The estimate \eqref{equ:definition,phi,err} will subsequently be employed to bound $w_{H}^{(2)}$.

Next, putting estimates \eqref{equ:est,wu,l2,win,f,H1,used}, \eqref{equ:est,u,y=01,l2,win,f,H1} and \eqref{equ:est,kpsi,l2,w0,H2,H1} into \eqref{equ:est,phiH1,H1,phiH1',pm1,phiH0,l2}, we deduce
	\begin{align}\label{equ:est,phiH1,l2l2,om0,H1}
		\n{e^{\ve(\nu\abs{\al})^{\f12}t}u_{H}^{(1)}(t)}_{L_{t,y}^{2}}=	\n{e^{\ve(\nu\abs{\al})^{\f12}t}
(\pa_y,\abs{\al})\phi_{H}^{(1)}(t)}_{L_{t,y}^{2}}
 \lesssim \abs{\al}^{-\frac{1}{2}}\n{\om^{\mathrm{in}}}_{H_{\al}^{4}}.
	\end{align}
For $\n{\om^{(1)}_{H}(t)}_{L_{t,y}^{2}}$,  it follows directly from \eqref{equ:est,omH1,l2,om0,l2} that
	\begin{align}\label{equ:est,omH1,l2l2,om0,l2}
		\n{ e^{\ve(\nu\abs{\al})^{\f12}t}\om^{(1)}_{H}}_{L_{t,y}^{2}}
		\lesssim  \nu^{-\f14}\abs{\al}^{-\f14}\n{\om^{\mathrm{in}}}_{H_{\al}^{1}}.
	\end{align}

 For more refined $L_{t,y}^{2}$ estimates for $w_{H}^{(1)}$, we employ the pointwise estimate \eqref{equ:est,om,loc+nloc,depletion} to derive
\begin{align}\label{equ:est,omH2,I3}
	\n{e^{\ve(\nu\abs{\al})^{\f12}t}(U')^2t^4g\om_{H}^{(0)}}^2_{L_{t,y}^{2}}
	\lesssim  I_{1}+I_{2},
\end{align}
where
\begin{align}
I_{1}=&\int_0^{+\infty}\int_{-1}^1 e^{2\ve(\nu\abs{\al})^{\f12}t}(U')^{\f{15}{2}}t^8g^2(t,y)\n{\om^{\mathrm{in}}}_{H_{\al}^{4}}^{2}dy dt,\notag\\
	I_{2}=& \int_0^{+\infty}\int_{-1}^1e^{2\ve(\nu\abs{\al})^{\f12}t}(U')^4t^8g^2(t,y)\f{1}{\lra{t}^{\f74}}\n{\om^{\mathrm{in}}}_{H_{\al}^{4}}^{2} dy dt.\notag
\end{align}
For $I_{1}$, we have
\begin{align*}
I_{1}\lesssim&
	\int_{\nu^{-\f12}}^{+\infty}\int_{-1}^1 \big(\nu\abs{\al}^2(U')^2t^3\big)^{\f{15}{4}}(\nu\abs{\al}^2)^{-\frac{15}{4}}t^{-\f{13}{4}} e^{2\ve(\nu\abs{\al})^{\f12}t}g^2(t,y)\n{\om^{\mathrm{in}}}_{H_{\al}^{4}}^{2}dy dt\notag\\
	&+\int_0^{\nu^{-\f12}}\int_{-1}^1
	\big(\nu\abs{\al}^2(U')^2t^3\big)^{\f83}(\nu\abs{\al}^2)^{-\frac{8}{3}}
	e^{2\ve(\nu\abs{\al})^{\f12}t}g^2(t,y)\n{\om^{\mathrm{in}}}_{H_{\al}^{4}}^{2}dydt\notag\\
	\lesssim&
	(\nu\abs{\al}^2)^{-\frac{15}{4}}\n{\om^{\mathrm{in}}}_{H_{\al}^{4}}^{2}\int_{\nu^{-\f12}}^{+\infty} t^{-\f{13}{4}}  dt
	+(\nu\abs{\al}^2)^{-\frac{8}{3}}\n{\om^{\mathrm{in}}}_{H_{\al}^{4}}^{2}\int_0^{\nu^{-\f12}}
	dt\notag\\
	\lesssim&
	\big((\nu\abs{\al}^2)^{-\frac{15}{4}}\nu^{\f98}+(\nu\abs{\al}^2)^{-\frac{8}{3}}\nu^{-\f12}\big)\n{\om^{\mathrm{in}}}_{H_{\al}^{4}}^{2}
	\lesssim
	(v\al^{2})^{-2}\nu^{-\frac{5}{4}}|\al|^{-1}\n{\om^{\mathrm{in}}}_{H_{\al}^{4}}^{2}.\notag
\end{align*}
For $I_{2}$, with $\nu\al^{2}\lesssim 1$, we obtain
 \begin{align*}
	I_{2}\lesssim&
	(\nu \al^{2})^{-2}(\nu\abs{\al})^{-\f18} \int_0^{+\infty}\int_{-1}^1  \big(\nu\abs{\al}^2(U')^2t^3\big)^2 \big(\nu^{\f12}\abs{\al}^{\f12}t\big)^{\f14}e^{2\ve(\nu\abs{\al})^{\f12}t}g^2\n{\om^{\mathrm{in}}}_{H_{\al}^{4}}^{2}dy dt\notag\\
	\lesssim&
	(\nu \al^{2})^{-2}(\nu|\al|)^{-\frac{1}{8}}(\nu|\al|)^{-\frac{1}{2}}\n{\om^{\mathrm{in}}}_{H_{\al}^{4}}^{2}\lesssim
	(v\al^{2})^{-2}\nu^{-\frac{5}{4}}|\al|^{-1} \n{\om^{\mathrm{in}}}_{H_{\al}^{4}}^{2}.\notag
\end{align*}

Combining the estimates for $I_{1}$ and $I_{2}$, with $w_{H}^{(1)}=gw_{H}^{(0)}$,
we arrive at
\begin{align}\label{equ:est,wu,l2,win,f,H1,better,used}
\n{e^{\ve(\nu\abs{\al})^{\f12}t}(U')^2t^4\om_{H}^{(1)}}_{L_{t,y}^{2}}\lesssim (v\al^{2})^{-1}\nu^{-\frac{5}{8}}|\al|^{-\frac{1}{2}}\n{\om^{\mathrm{in}}}_{H_{\al}^{4}}.
\end{align}

{\bf Step 2.} Vector field estimates for $\om_{H}^{(0)}$ and $\om_{H}^{(1)}$.

To derive higher-order estimates for $\om_{H}^{(0)}$ and $\om_{H}^{(1)}$, we employ the vector field
 $X=\pa_y+i\al U't$, which commutes with  $\pa_{t}+i\al U$. Define
\begin{align}
  \om_{1}^{(0)}=:&X \om_{H}^{(0)},\quad \phi_{1}=:(\pa^{2}_{y}-\al^2)^{-1} \om_{1}^{(0)},\quad \phi_{2}=:X\phi_{H}^{(0)},\\
\om_{1}^{(1)}=:&X \om_{H}^{(1)}=g \om_{1}^{(0)}-\f{2}{3}\nu\al^2U'U''t^3g \om_{H}^{(0)}.\label{equ:definition,w11}
\end{align}
We observe that
\begin{align*}
\phi_{1}=\phi_{2}+\phi_{3,1}+\phi_{3,2},
\end{align*}
where
\begin{align*}
\phi_{3,1}=-i\al t(\pa^2_{y}-\al^2)^{-1}\big(U'''\phi_{H}^{(0)}\big),\quad \phi_{3,2}=- 2i\al t(\pa^2_{y}-\al^2)^{-1}\big(U''\pa_y
	\phi_{H}^{(0)}\big).
\end{align*}
Next, we derive the equation for $\om_{1}^{(0)}$
\begin{align*}
	&\pa_{t} \om_{1}^{(0)}+i\al U \om_{1}^{(0)}-i\al U''\phi_{1}
=i\al U''' \phi_{H}^{(0)}-i\al U''(\phi_{3,1}+\phi_{3,2})=:\phi_{4}.
\end{align*}
Using estimate \eqref{equ:est,wu,l2,win,f,H1} in Lemma \ref{lemma:est,euler,w,u,l2,fH1}, we obtain
	\begin{align}\label{equ:est,w1u1,l2,win,phi4,H1}
		&\sup_{t>0}\n{ \om_{1}^{(0)}(t)}^2_{L^2}+\abs{\al}\int_{0}^{+\infty}\big(\n{\pa_y\phi_{1}(t)}^2_{L^2}+\abs{\al}^2\n{\phi_{1}(t)}^2_{L^2}\big)dt\\
		\lesssim &
		\n{(\pa_y,\abs{\al}) \om_{1}^{(0)}(0)}^2_{L^2}+\abs{\al}^{-1}\int_{0}^{\infty}\big(\n{\pa_y\phi_{4}(t)}^2_{L^2}+\abs{\al}^2\n{\phi_{4}(t)}^2_{L^2}\big)dt.\notag
	\end{align}
A direct calculation shows (see for example \cite[(2.15)]{WZZ-CMP})
\begin{align}\label{equ:est,w1u1,l2,win,phi4,H1,2}
&	|\al|^{-1}\lrs{\n{\pa_y\phi_{4}(t)}_{L^2}^{2}+\abs{\al}^{2}\n{\phi_{4}(t)}_{L^2}^{2}}\\
	\lesssim&|\al|\n{(\pa_{y},\al)\phi_{H}^{(0)}}_{L^2}^{2}
+\abs{\pa_y\phi_{H}^{(0)}(t,\pm1)}^{2}+|\al|^{3}t^{2}\n{\phi_{H}^{(0)}}_{L^2}^{2}.\notag
\end{align}
Putting estimates \eqref{equ:est,w1u1,l2,win,phi4,H1,2}, \eqref{equ:est,wu,l2,win,f,H1,used}, \eqref{equ:est,u,y=01,l2,win,f,H1}, and \eqref{equ:est,tkpsi,l2,w0,H1H2}, we further obtain
	\begin{align}\label{equ:est,w1u1,l2,win,w0,H1,new}
	&\sup_{t>0}\n{ \om_{1}^{(0)}(t)}^2_{L^2}+\abs{\al}\int_{0}^{+\infty}\big(\n{\pa_y\phi_{1}(t)}^2_{L^2}+\abs{\al}^2\n{\phi_{1}(t)}^2_{L^2}\big)dt\\
	\lesssim &
	\n{(\pa_y,\abs{\al}) \om_{1}^{(0)}(0)}^2_{L^2}+\n{(\pa_y^2-\abs{\al}^2) \om^{\mathrm{in}}}^2_{L^2}\lesssim \n{\om^{\mathrm{in}}}^2_{H_{\al}^{2}},\notag
\end{align}

Using that
\begin{align}\label{equ:est,U',nu,al,tg,bound}
	|\nu\al^2U'U''t^3g(t,y)|
	&\lesssim  \nu^{-\f14}\abs{\al}^{\f14}\big(\nu\al^2(U')^2t^3\big)^{\f12}(\nu^{\f12}\abs{\al}^{\f12}t)^{\f32}g(t,y)\\
	&\lesssim \nu^{-\f14}\abs{\al}^{\f14}e^{-\nu\al^2\f{1}{6}(U')^2t^3-\f12\nu^{\f12}\abs{\al}^{\f12}t}.\notag
\end{align}
together with \eqref{equ:est,w1u1,l2,win,w0,H1,new} and \eqref{equ:est,wu,l2,win,f,H1,used}, we derive
\begin{align}\label{equ:est,om11,l2,om0,H1}
	\n{ \om^{(1)}_{1}(t)}_{L^2}
	\leq& \n{g \om_{1}^{(0)}(t)}_{L^2}+\bn{\f{2}{3}\nu\al^2U'U''t^3g \om_{H}^{(0)}(t)}_{L^2}\\
	\lesssim& e^{-\nu^{\f12}\abs{\al}^{\f12}t}\n{ \om_{1}^{(0)}(t)}_{L^2}+\nu^{-\f14}\abs{\al}^{\f14}e^{-\f12\nu^{\f12}\abs{\al}^{\f12}t}\n{ \om_{H}^{(0)}(t)}_{L^2}\notag\\
	\lesssim& \nu^{-\f14}\abs{\al}^{\f14}e^{-\f12\nu^{\f12}\abs{\al}^{\f12}t}\n{(\pa_y^2-\abs{\al}^2) \om^{\mathrm{in}}}_{L^2},\notag
\end{align}
which implies that
\begin{align}\label{equ:est,om11,l2,om0,H1,Lty2}
\n{ e^{\ve(\nu|\al|)^{\frac{1}{2}}t} \om^{(1)}_{1}(t)}_{L_{t,y}^{2}}\lesssim \nu^{-\f{1}{2}}\n{\om^{\mathrm{in}}}_{H_{\al}^{2}}.
\end{align}
 By \eqref{equ:est,w1u1,l2,win,w0,H1,new} and \eqref{equ:est,wu,l2,win,f,H1,better,used}, with $\nu\al^{2}\lesssim 1$, we obtain
\begin{align}\label{equ:est,tU'om11,l2,om0,H1}
&	\n{e^{\ve(\nu|\al|)^{\frac{1}{2}}t}tU' \om^{(1)}_{1}}_{L_{t,y}^{2}}\\
	\leq& \n{e^{\ve(\nu|\al|)^{\frac{1}{2}}t}tU'g \om_{1}^{(0)}}_{L_{t,y}^2}+\nu\al^2\bn{e^{\ve(\nu|\al|)^{\frac{1}{2}}t}(U')^2U''t^4g \om_{H}^{(0)}}_{L_{t,y}^2}\notag\\
	\lesssim& \nu^{-\f13}\abs{\al}^{-\f23}\n{e^{-\nu^{\f12}\abs{\al}^{\f12}t}}_{L_{t}^{2}}\n{ \om_{1}^{(0)}(t)}_{L_{t}^{\infty}L_{y}^2}+\nu^{-\frac{5}{8}}|\al|^{-\frac{1}{2}}\n{\om^{\mathrm{in}}}_{H_{\al}^{4}}\notag\\
\lesssim& \nu^{-\frac{5}{8}}|\al|^{-\frac{1}{2}}\n{\om^{\mathrm{in}}}_{H_{\al}^{4}},\notag
\end{align}
where we have used the following inequalities
\begin{align*}
	|tU'(y)g(t,y)|
	\lesssim&  \nu^{-\f13}\abs{\al}^{-\f23}\big(\nu\al^2(U')^2t^3\big)^{\f13}g(t,y)
	\lesssim \nu^{-\f13}\abs{\al}^{-\f23}e^{-\nu^{\f12}\abs{\al}^{\f12}t},\\
	|\nu\al^2(U')^2U''t^4g(t,y)|
\lesssim&
 \nu^{-\f12}\abs{\al}^{-\f12}\big(\nu\al^2(U')^2t^3\big)(\nu^{\f12}\abs{\al}^{\f12}t)g(t,y)
\lesssim \nu^{-\f12}\abs{\al}^{-\f12}e^{-\f12\nu^{\f12}\abs{\al}^{\f12}t}.
\end{align*}

{\bf Step 3.} Estimates for $ \om_{H}^{(2)}$.

Given the definition \eqref{equ:definition,w11} of $ \om_{1}^{(1)}$, we derive that $\pa_{y} \om_{H}^{(1)}= \om^{(1)}_{1}-i\al U't \om_{H}^{(1)}$, which gives
\begin{align*}
	\pa_{y}^{2} \om_{H}^{(1)}&=\pa_{y}( \om^{(1)}_{1}-i\al U't \om_{H}^{(1)})=\pa_{y} \om^{(1)}_{1}-i\al U''t \om_{H}^{(1)}- i\al U't\pa_{y} \om_{H}^{(1)}\\
	&=\pa_{y} \om^{(1)}_{1}-i\al U''t \om_{H}^{(1)}- i\al U't( \om^{(1)}_{1}-i\al U't \om_{H}^{(1)}).
\end{align*}
Then, using equation \eqref{equ:def,wH2} for $\om_{H}^{(2)}$ and the definition \eqref{equ:definition,phi,err} of $\phi^{(0,1)}_{err}=g\phi_{H}^{(0)}-\phi_{H}^{(1)}$,
 we get
\begin{align}\label{equ:equation,wH2,more}
	&\pa_{t} \om_{H}^{(2)}-\nu(\pa^2_{y}-\al^2) \om_{H}^{(2)}+i\al U \om_{H}^{(2)}-i\al U''\phi_{H}^{(2)}\\
	=&
	\nu\pa^2_{y} \om_{H}^{(1)}+\nu\al^2(U')^2t^2 \om_{H}^{(1)}+\nu^{\f12}\abs{\al}^{\f12} \om_{H}^{(1)}-i\al U''(g(t,y)\phi^{(0)}_{H}-\phi^{(1)}_{H})\notag\\
	=&	\nu\big(\pa_{y} \om^{(1)}_{1}-i\al U''t \om_{H}^{(1)} - i\al U't \om_{1}^{(1)}\big)+\nu^{\f12}\abs{\al}^{\f12}(\pa_{y}^{2}-\al^{2})\phi_{H}^{(1)}- i\al U''\phi^{(0,1)}_{err}.\notag
\end{align}

For $w_{H}^{(2)}$, applying Proposition \ref{proposition:est,u,timespace,L2L2,LinftyL2,f1234L2} with
 \begin{align*}
f_1=&\nu^{\f12}\abs{\al}^{\f32}\phi_H^{(1)},\quad f_2=\nu\om_{1}^{(1)}+\nu^{\f12}\abs{\al}^{\f12}\pa_y\phi_H^{(1)},\\
 f_3=&-i\nu\al U't\om_{1}^{(1)}-i\nu\al U''t\om_{H}^{(1)},\quad f_4=-i\al U''\phi^{(0,1)}_{err},
 \end{align*}
we obtain
\begin{align*}
	\n{e^{\ve(\nu|\al|)^{\frac{1}{2}}t} \om_{H}^{(2)}}_{L_{t,y}^{2}}
	\lesssim &
	\nu^{\f14}\abs{\al}^{-\f14}\n{e^{\ve(\nu|\al|)^{\frac{1}{2}}t} \om_{1}^{(1)}}_{L_{t,y}^{2}}+\nu^{-\f14}\abs{\al}^{\f14}\n{e^{\ve(\nu|\al|)^{\frac{1}{2}}t}(\pa_y,|\al|)
\phi_H^{(1)}}_{L_{t,y}^{2}}\\
&+\nu^{\f38}\abs{\al}^{\f18}\n{e^{\ve(\nu|\al|)^{\frac{1}{2}}t}tU' \om_{1}^{(1)}}_{L_{t,y}^{2}}
	+\nu^{\f18}\abs{\al}^{-\f38}\n{e^{\ve(\nu|\al|)^{\frac{1}{2}}t}(\nu^{\f12}\abs{\al}^{\f12}t) \om_{H}^{(1)}}_{L_{t,y}^{2}}\\
&+\nu^{-\f38}\abs{\al}^{\f38}\n{e^{\ve(\nu|\al|)^{\frac{1}{2}}t}
		(\pa_y,\al)\phi_{err}^{(0,1)}}_{L_{t,y}^{2}},
\end{align*}
which, along with estimates
\eqref{equ:est,om11,l2,om0,H1,Lty2}, \eqref{equ:est,phiH1,l2l2,om0,H1},
\eqref{equ:est,tU'om11,l2,om0,H1}, \eqref{equ:est,omH1,l2l2,om0,l2}, \eqref{equ:definition,phi,err}, gives
\begin{align}\label{equ:est,omH2,l2,om0,H2}
	\n{e^{\ve(\nu|\al|)^{\frac{1}{2}}t} \om_{H}^{(2)}}_{L_{t,y}^{2}}
	\lesssim& \big(\nu^{-\f14}\abs{\al}^{-\f14}+\nu^{-\f18}
\abs{\al}^{-\f{5}{8}}\big)\n{\om^{\mathrm{in}}}_{H_{\al}^{4}}
+\nu^{-\frac{1}{8}}|\al|^{\frac{3}{8}}\n{\om^{\mathrm{in}}}_{H_{\al}^{2}}\\
\lesssim& \nu^{-\f14}\abs{\al}^{-\f14}\n{\om^{\mathrm{in}}}_{H_{\al}^{4}}.\notag
\end{align}

Similarly, by using Proposition \ref{proposition:est,u,timespace,L2L2,LinftyL2,f1234L2} with
\begin{align*}
f_1=\nu \om_{1}^{(1)},\ f_2=0,\ f_3=-i\nu\al U't \om_{1}^{(1)}-i\nu\al U''t \om_{H}^{(1)}+\nu^{\f12}\abs{\al}^{\f12} \om_{H}^{(1)},\ f_4=-i\al U''\phi^{(0,1)}_{err},
\end{align*} we obtain
\begin{align*}
	&\n{e^{\ve(\nu|\al|)^{\frac{1}{2}}t}(\pa_y,\al)\phi_{H}^{(2)}}_{L_{t,y}^{2}}\\
	\lesssim &
	\nu^{-\f12}\abs{\al}^{-\f12}\n{e^{\ve(\nu|\al|)^{\frac{1}{2}}t}\nu \om_{1}^{(1)}}_{L_{t,y}^{2}}+\nu^{-\f14}\abs{\al}^{-\f34}\n{e^{\ve(\nu|\al|)^{\frac{1}{2}}t}\nu\al U't \om_{1}^{(1)}}_{L_{t,y}^{2}}\\
	&+\nu^{-\f14}\abs{\al}^{-\f34}\n{e^{\ve(\nu|\al|)^{\frac{1}{2}}t}\big(\nu\abs{\al} U''t+\nu^{\f12}\abs{\al}^{\f12}\big) \om_{H}^{(1)}}_{L_{t,y}^{2}}+\abs{\al}^{-1}\n{\al U''(\pa_{y},\abs{\al})\phi_{err}^{(0,1)}}_{L_{t,y}^{2}}\\
	\lesssim &
	\nu^{\f12}\abs{\al}^{-\f12}\n{e^{\ve(\nu|\al|)^{\frac{1}{2}}t} \om_{1}^{(1)}}_{L_{t,y}^{2}}+\nu^{\f34}\abs{\al}^{\f14}\n{e^{\ve(\nu|\al|)^{\frac{1}{2}}t}tU' \om_{1}^{(1)}}_{L_{t,y}^{2}},\\
	&+\nu^{\f14}\abs{\al}^{-\f14}\n{e^{\ve(\nu|\al|)^{\frac{1}{2}}t}\big(\nu^{\f12}\abs{\al}^{\f12}t+1\big) \om_{H}^{(1)}}_{L_{t,y}^{2}}+\n{e^{\ve(\nu|\al|)^{\frac{1}{2}}t}
		(\pa_y,\al)\phi_{err}^{(0,1)}}_{L_{t,y}^{2}},
\end{align*}
which, along with estimates \eqref{equ:est,om11,l2,om0,H1,Lty2},
\eqref{equ:est,tU'om11,l2,om0,H1}, \eqref{equ:est,omH1,l2l2,om0,l2}, \eqref{equ:definition,phi,err}, and $\nu\al^{2}\lesssim 1$, gives
\begin{align}\label{equ:est,uH2,l2,om0,H2}
	&\n{e^{\ve(\nu|\al|)^{\frac{1}{2}}t}(\pa_y,\al)\phi_{H}^{(2)}}_{L_{t,y}^{2}}\\
	\lesssim&
	\big(\abs{\al}^{-\f12}+\nu^{\f18}\abs{\al}^{-\f14}\big)\n{ \om^{\mathrm{in}}}_{H_{\al}^{4}}+\nu^{\frac{1}{4}}\n{\om^{\mathrm{in}}}_{H_{\al}^{2}}
	\lesssim
	\abs{\al}^{-\f12}\n{\om^{\mathrm{in}}}_{H_{\al}^{4}}.\notag
\end{align}

{\bf Step 4.} Estimates for $ \om_{H}^{(3)}$.

We introduce the following functions
\begin{equation}\label{equ:notations,gamma}
\left\{
\begin{aligned}
	&\gamma^{+}(y)=\frac{\operatorname{sinh}\big(\al(1+y)\big)}{\operatorname{\operatorname{sinh}}(2\al)},\qquad
	\gamma^{-}(y)=
	\frac{\operatorname{\operatorname{sinh}}\big(\al(1-y)\big)}{\operatorname{\operatorname{sinh}}(2\al)},\\
	&a_{1}(t)=\lra{\gamma^{-}, \om_{H}^{(1)}}=-\lra{\gamma^{-}, \om_{H}^{(3)}}=-\pa_{y}\phi_{H}^{(1)}(t,-1),\\
	&a_{2}(t)=\lra{\gamma^{+}, \om_{H}^{(1)}}=-\lra{\gamma^{+}, \om_{H}^{(3)}}=\pa_{y}\phi_{H}^{(1)}(t,1).
\end{aligned}
\right.
\end{equation}
From \eqref{equ:est,phiH1,H1,phiH1',pm1,phiH0,l2}, we obtain
\begin{align}\label{equ:est,a1a2,phiH0}
	\abs{a_{1}(t)}+\abs{a_{2}(t)}
	\lesssim& \nu^{\f14}|\al|^{\f34} te^{-\f12\nu^{\f12}\abs{\al}^{\f12}t}\abs{\al}^{\f12}
\n{(\pa_y,\abs{\al})\phi_{H}^{(0)}(t)}_{L^2}+e^{-\nu^{\f12}\abs{\al}^{\f12}t}\abs{\pa_y\phi_{H}^{(0)}(t,\pm1)}.
\end{align}

Next, we define
\begin{align*}
	\wt{\om}(\la,y)=&\int_{0}^{+\infty} w_{H}^{(3)}(t,y)e^{-it\la +\ve\nu^{\f12}\abs{\al}
	^{\f12}t}dt,\\
	\wt{\psi}(\la,y)=&\int_{0}^{+\infty}\phi_{H}^{(3)}(t,y)e^{-it\la +\ve\nu^{\f12}\abs{\al}
	^{\f12}t}dt,\\
	c_{1}(\la)=&\int_{0}^{+\infty}a_{1}(t)e^{-it\la +\ve\nu^{\f12}\abs{\al}
	^{\f12}t}dt=\lra{\ga^{-},\wt{\om}(\la)},\\
c_{2}(\la)=&\int_{0}^{+\infty}a_{2}(t)e^{-it\la +\ve\nu^{\f12}\abs{\al}
	^{\f12}t}dt=\lra{\ga^{+},\wt{\om}(\la)}.
\end{align*}
We then turn to consider the following system
\begin{align*}
	&\big(i\la-\ve\nu^{\f12}\abs{\al}
	^{\f12}-\nu(\pa_{y}^{2}-\abs{\al}^{2})+i\al U \big)\wt{\om}(\la,y)-i\al U''\wt{\psi}(\la,y)=0.
\end{align*}
Thus, we have
\begin{align*}
	\wt{\om}(\la,y)=-c_{1}(\la)w_{cor,1}({-\la}\abs{\al}^{-1},y)-c_{2}(\la)w_{cor,2}({-\la}\abs{\al}^{-1},y),
\end{align*}
where $w_{cor,1}$ and $w_{cor,2}$ are defined in
\eqref{equ:corrector1,NS,per}, and $-\la$ are replaced by $\la/\al$.

\textbf{Claim:}
\begin{align}\label{equ:est,c1,c2,om0,H1}
	\n{(1+\abs{\la})^{\frac{1}{2}}c_{1}(\la)}^2_{L^{2}_{\la}(\R)}+\n{(1+\abs{\la})^{\frac{1}{2}}c_{2}(\la)}^2_{L^{2}_{\la}(\R)}
	\lesssim \abs{\al}\n{\om^{\mathrm{in}}}_{H_{\al}^{2}}^{2}.
\end{align}
We postpone the proof of the claim to Step 5.

Now, by Lemma \ref{lemma:est,uw,L2Linfty,cor12}, we have
\begin{align*}
\n{\wt{u}(\la)}_{L_{y}^2}\lesssim& \abs{c_1}\n{u_{cor,1}}_{L_{y}^{2}}+\abs{c_2}\n{u_{cor,2}}_{L_{y}^{2}}\lesssim\nu^{\f16}\abs{\al}^{-\f16}\big(\abs{c_{1}(\la)}+\abs{c_{2}(\la)}\big),\\
	\n{\wt{\om}(\la)}_{L_{y}^{2}}\lesssim &
	\abs{c_1}\n{w_{cor,1}}_{L_{y}^{2}}+\abs{c_2}\n{w_{cor,2}}_{L_{y}^{2}}
	\lesssim \nu^{-\f14}(1+\abs{\la})^{\f14}\big(\abs{c_{1}(\la)}+\abs{c_{2}(\la)}\big).
\end{align*}
Therefore, with \eqref{equ:est,c1,c2,om0,H1} and $\nu\al^{2}\lesssim 1$, we get
\begin{align*}
	\n{u_{H}^{(3)}}^2_{L_{t,y}^{2}}
	\sim \bn{\n{\wt{u}(\la)}_{L_{y}^{2}}}^2_{L_{\la}^{2}}
	\lesssim \nu^{\f13}\abs{\al}^{-\f13}\abs{\al}
	\n{\om^{\mathrm{in}}}_{H_{\al}^{2}}^2\lesssim
	\n{\om^{\mathrm{in}}}_{H_{\al}^{2}}^2.
\end{align*}
This, together with \eqref{equ:est,phiH1,l2l2,om0,H1} and \eqref{equ:est,uH2,l2,om0,H2}, implies
\begin{align}\label{equ:est,uH,l2,om0,H2}
	\n{e^{\ve(\nu\abs{\al})^{\f12}t}u_{H}}^2_{L_{t,y}^{2}}=\n{(\pa_y,\abs{\al})\phi_{H}}^2_{L_{t,y}^{2}}\lesssim \abs{\al}^{-1}
	\n{\om^{\mathrm{in}}}_{H_{\al}^{4}}^{2}.
\end{align}
Similarly, we also have
\begin{align*}
	\nu^{\f12}\abs{\al}^{\f12}\n{e^{\ve(\nu\abs{\al})^{\f12}t} \om_{H}^{(3)}}^2_{L_{t,y}^{2}}
	\sim \nu^{\f12}\abs{\al}^{\f12}\bn{\n{ \wt{\om}(\la)}_{L^2}}^2_{L_{\la}^{2}}
	\lesssim& \abs{\al}^{\f32}\n{ \om^{\mathrm{in}}}^2_{H_{\al}^{2}}\lesssim \n{\om^{\mathrm{in}}}_{H_{\al}^{4}}^{2},
\end{align*}
which along with \eqref{equ:est,omH1,l2l2,om0,l2} and \eqref{equ:est,omH2,l2,om0,H2} gives
\begin{align}\label{equ:est,omH,l2,om0,H2}
	\n{ e^{\ve(\nu\abs{\al})^{\f12}t}w_{H}}^2_{L_{t,y}^{2}}
	\lesssim \nu^{-\f12}\abs{\al}^{-\f12}
	\n{\om^{\mathrm{in}}}_{H_{\al}^{4}}^{2}.
\end{align}

{\bf Step 5.} Proof of Claim \eqref{equ:est,c1,c2,om0,H1}.

It suffices to consider the estimate for $c_{1}(\la)$, as the estimate for $c_{2}(\la)$ can be derived similarly.
We recall that $a_{1}(t)=\lra{\gamma^{-}, \om_{H}^{(1)}(t)}$ and
\begin{align*}
	\big(\pa_{t}+i\al U+\nu\al^2(U')^2t^2+\nu\al^2+ \nu^{\f12}\abs{\al}^{\f12}     \big) \om_{H}^{(1)}=i\al U''g(t,y)\phi_{H}^{(0)}.
\end{align*}
Taking the time derivative of $a_{1}(t)$, we have
\begin{align*}
        \abs{\pa_{t}a_{1}(t)}=\abs{\lra{\gamma^{-},\pa_{t} \om_{H}^{(1)}}}\lesssim I_{3}+I_{4}+I_{5},
\end{align*}
where
\begin{align*}
I_{3}=&\babs{\lra{\gamma^{-},\al \big(U(y)-U(1)\big) \om_{H}^{(1)}}},\notag\\
I_{4}=& \babs{\lra{\gamma^{-},\big(\nu\al^2(U')^2t^2+\nu\al^2+\nu^{\f12}\abs{\al}^{\f12}+ \al U(1) \big) \om_{H}^{(1)}}},\notag\\
 I_{5}=&\babs{\lra{\gamma^{-},i\al U''g(t,y)\phi_{H}^{(0)}}}.
\end{align*}

For \(I_{3}\), noticing that $\gamma^{-}\big(U(y)-U(1)\big)\big|_{y=\pm1}=0$ and using \eqref{equ:est,phiH1,H1,phiH1',pm1,phiH0,l2}, we get
\begin{align*}
        I_{3}\lesssim& \abs{\al}\n{(\pa_y,\abs{\al})\phi_{H}^{(1)}}_{L^2}\n{(\pa_y,\abs{\al})\big(\gamma^{-}(U(y)-U(1))\big)}_{L^2}
        \lesssim\abs{\al}^{\f12}\n{(\pa_y,\abs{\al})
         \phi_{H}^{(1)}}_{L^2}\\
         \lesssim&
         \nu^{\f14}|\al|^{\f34} te^{-\f12\nu^{\f12}\abs{\al}^{\f12}t}\abs{\al}^{\f12}
          \n{(\pa_y,\abs{\al})\phi_{H}^{(0)}(t)}_{L^2}+e^{-\nu^{\f12}\abs{\al}^{\f12}t}\abs{\pa_y\phi_{H}^{(0)}(t,\pm1)},
\end{align*}
where we used the fact that
\begin{align*}
	\n{(\pa_y,\abs{\al})
		\big(\gamma^{-}(U(y)-U(1))\big)}_{L^2}
		\lesssim \abs{\al}^{-\f12}.
\end{align*}

By Lemma \ref{lemma:proper,symme,flow}, we have
\begin{align*}
	(U'(y))^2\lesssim \abs{U(y)-U(0)}\lesssim \abs{U(y)-U(1)}+\abs{U(1)-U(0)}.
\end{align*}
Similarly, for $I_{4}$, we obtain
\begin{align*}
	I_{4}\lesssim&
	\nu\al^2\abs{\lra{\gamma^{-},\big(U(y)-U(1)\big)t^2 \om_{H}^{(1)}}}+\nu\al^2\abs{\lra{\gamma^{-},\big(U(1)-U(0)\big)t^2 \om_{H}^{(1)}}}\\
	&+(\nu\al^2+\nu^{\f12}\abs{\al}^{\f12}+\al U(1))
	\abs{\lra{\gamma^{-}, \om_{H}^{(1)}}}\\
	\lesssim &
	\nu\al^2 t^2\abs{\al}^{-\f12}\n{(\pa_y,\abs{\al})\phi_{H}^{(1)}}_{L^2}+(\nu\al^2 t^2+\nu\al^2+\nu^{\f12}\abs{\al}^{\f12}+ \abs{\al})\abs{a_{1}(t)},
\end{align*}
which together with \eqref{equ:est,phiH1,H1,phiH1',pm1,phiH0,l2}, \eqref{equ:est,a1a2,phiH0} and the fact \(\nu|\al|^2\lesssim 1\), gives
\begin{align*}
     I_{4}\lesssim& (1+\nu\al^2 t^2\abs{\al}^{-\f12})\n{(\pa_y,\abs{\al})\phi_{H}^{(1)}}_{L^2}+(\nu\al^2 t^2+\nu\al^2+\nu^{\f12}\abs{\al}^{\f12}+ \abs{\al})\abs{a_{1}(t)}\\
	\lesssim& (\nu|\al|^2t^2+|\al|)e^{-\f12\nu^{\f12}\abs{\al}^{\f12}t}
       \big(\nu^{\f14}|\al|^{\f34} t\abs{\al}^{\f12}
       \n{(\pa_y,\abs{\al})\phi_{H}^{(0)}(t)}_{L^2}+\abs{\pa_y\phi_{H}^{(0)}(t,\pm1)}\big).
\end{align*}
Since $\n{\ga^{-}}_{L^2}\lesssim |\al|^{-\f12}$, we bound $I_{5}$ as follow
\begin{align*}
	I_{5}\lesssim |\al|e^{-\nu^{\f12}|\al|^{\f12}t}\n{\ga^{-}}_{L^2}\n{\phi_H^{(0)}}_{L^2}\lesssim |\al|^{\f12}e^{-\nu^{\f12}|\al|^{\f12}t}\n{\phi_H^{(0)}}_{L^2}.
\end{align*}

Summing up the above estimates, we arrive at
\begin{align*}
	\abs{\pa_{t} a_{1}(t)}
	\lesssim&
    \nu^{\f14}|\al|^{\f94} te^{-\f14\nu^{\f12}\abs{\al}^{\f12}t}
\n{(\pa_y,\abs{\al})\phi_{H}^{(0)}(t)}_{L^2}+|\al|\abs{\pa_y\phi_{H}^{(0)}(t,\pm1)}+
    \n{\al\phi_H^{(0)}}_{L^2}.
\end{align*}
Applying estimates \eqref{equ:est,wu,l2,win,f,H1,used} and  \eqref{equ:est,u,y=01,l2,win,f,H1}, with $\nu\al^{2}\lesssim 1$, we get
\begin{align*}
	\n{\pa_{t}a_{1}(t)}^2_{L^{2}(\R_{+})}\lesssim (|\al|^{3}+|\al|^2+1)\n{ \om^{\mathrm{in}}}^2_{H_{\al}^{1}}\lesssim \abs{\al}^{2}
\n{\om^{\mathrm{in}}}_{H_{\al}^{2}}^{2}.
\end{align*}
Therefore, we have
\begin{align*}
	\n{(1+\abs{\la})^{\frac{1}{2}}c_{1}(\la)}_{L_{\la}^2}\lesssim &
 \n{(1+\abs{\la})c_{1}(\la)}_{L_{\la}^2}^{\frac{1}{2}} \n{c_{1}(\la)}_{L_{\la}^2}^{\frac{1}{2}}\\
 \sim& \n{a_1(t)}_{H^{1}(\R_{+})}^{\frac{1}{2}}\n{a_{1}(t)}^{\f12}_{L^{2}(\R_{+})}\lesssim \abs{\al}^{\f12}
 \n{\om^{\mathrm{in}}}_{H_{\al}^{2}},
\end{align*}
which completes the proof of \eqref{equ:est,c1,c2,om0,H1}.
\end{proof}

\subsection{Space-time estimates for the full problem}\label{sec:The proof of Proposition}
In this subsection, we aim to prove Proposition \ref{proposition:est,u,om,timespace,f1234}. To achieve this, we first establish the following important lemma.

\begin{lemma}\label{lemma:est,u,om,timespace,f1234}
	Let $(\om,\phi)$ be the solution to \eqref{equ:linear,nonslip,NS,fj} with $\lra{\om^{\mathrm{in}},e^{\pm \abs{\al}y}}=0$ and $f_3=f_4=0$. Then it holds that
		\begin{align}\label{equ:est,u,om,weight,f1234}
		&\n{e^{\ve \nu^{\f12}t}u}^2_{L_{t}^{\infty}L_{y}^{2}}+\nu\n{e^{\ve \nu^{\f12}t}\om}^2_{L_{t,y}^{2}}\\
		\lesssim& \n{u^{\mathrm{in}}}^2_{L_{y}^2}+(1+\ve\nu^{\f12})\n{e^{\varepsilon \nu^{\f12}t}u}^2_{L_{t,y}^{2}}+(\nu\abs{\al}^2)^{-1}\n{e^{\varepsilon \nu^{\f12}t}(f_1,f_2)}^2_{L_{t,y}^{2}}\notag
	\end{align}
and
	\begin{align}\label{equ:est,om,weight,f1234}
	&\n{e^{\varepsilon \nu^{\f12}t}\om}^2_{L_{t}^{\infty}L_{y}^{2}}+\nu\n{e^{\ve \nu^{\f12}t}(\pa_y,\abs{\al})\om}^2_{L_{t,y}^{2}}\\
	\lesssim&
	\n{\om^{\mathrm{in}}}^2_{L^2}+\nu^{-1}
	\n{e^{\ve \nu^{\f12}t}(f_1,f_2)}^2_{L_{t,y}^{2}}+(\nu^{\f12}+\nu\abs{\al}^2)\n{e^{\varepsilon \nu^{\f12}t}\om}^2_{L_{t,y}^{2}}\notag\\
	&
	+\nu^{-\f13}\abs{\al}^{\f23}
	\n{e^{\varepsilon \nu^{\f12}t}u}^{\f43}_{L_{t,y}^{2}}\n{e^{\varepsilon \nu^{\f12}t}\om}^{\f23}_{L_{t,y}^{2}}.\notag
\end{align}
\end{lemma}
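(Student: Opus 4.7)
The plan is to obtain both estimates via energy identities derived by testing \eqref{equ:linear,nonslip,NS,fj} against carefully chosen multipliers, exploiting the non-slip boundary conditions $\phi(\pm 1)=\pa_y\phi(\pm 1)=0$.

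For \eqref{equ:est,u,om,weight,f1234}, I would test the vorticity equation against $-e^{2\ve\nu^{1/2}t}\overline{\phi}$ and take real parts. The boundary conditions ensure that all boundary contributions from the two integrations by parts in the dissipation vanish, and the elliptic identity $\mathrm{Re}\int\om\overline{\phi}\,dy=-\|u\|_{L^2_y}^{2}$ yields $\mathrm{Re}\int\pa_t\om\cdot(-\overline{\phi})\,dy=\tfrac{1}{2}\pa_t\|u\|_{L^2_y}^{2}$. The transport/non-local contribution $\mathrm{Re}\langle i\al U\om-i\al U''\phi,-\overline{\phi}\rangle$ simplifies after integration by parts to $-\al\int U'\,\mathrm{Im}(\overline{\phi}\,\pa_y\phi)\,dy$ and is bounded by $C\|u\|_{L^2_y}^{2}$ via $2|\al|\|\phi\|\|\pa_y\phi\|\leq\|u\|^{2}$. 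For the forcing, moving $\pa_y$ off $f_2$ (legal since $\phi(\pm 1)=0$) and using $|\al|\|\phi\|\leq\|u\|$ gives $|\langle i\al f_1+\pa_y f_2,-\overline{\phi}\rangle|\lesssim\|(f_1,f_2)\|\|u\|$. Young's inequality together with the Poincar\'e-type identity $\|\om\|^{2}=\|\pa_y^{2}\phi\|^{2}+2|\al|^{2}\|\pa_y\phi\|^{2}+|\al|^{4}\|\phi\|^{2}\geq|\al|^{2}\|u\|^{2}$ then absorbs $\nu|\al|^{2}\|u\|^{2}_{L^2_{t,y}}$ into $\nu\|\om\|^{2}_{L^2_{t,y}}$ and produces the desired factor $(\nu|\al|^{2})^{-1}$ on the forcing. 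Taking the supremum in $T$ concludes the first estimate.

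For \eqref{equ:est,om,weight,f1234}, I would test with $e^{2\ve\nu^{1/2}t}\overline{\om}$. The transport term $\mathrm{Re}\langle i\al U\om,\overline{\om}\rangle$ vanishes because $\int U|\om|^{2}\in\mathbb{R}$. The non-local term, after integration by parts using $\om=\Delta_\al\phi$ and $\phi(\pm 1)=0$, reduces to $\al\,\mathrm{Im}\int U'''\phi\,\pa_y\overline{\phi}\,dy$, bounded by $C\|u\|_{L^2_y}^{2}$. The forcing is controlled by $\|(f_1,f_2)\|\|(\pa_y,|\al|)\om\|$, so Young's inequality yields $\nu^{-1}\|(f_1,f_2)\|^{2}$ after absorption into $\nu\|(\pa_y,|\al|)\om\|^{2}_{L^2_{t,y}}$. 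The residual $\|u\|^{2}_{L^2_{t,y}}$ contribution from the non-local term is reabsorbed into the Gagliardo--Nirenberg term on the right through the pointwise bound $\|u(t)\|^{2}\leq|\al|^{-2/3}\|u(t)\|^{4/3}\|\om(t)\|^{2/3}$ (which follows from $|\al|\|u\|\leq\|\om\|$) combined with H\"older in $t$, valid since $|\al|^{-2/3}\leq\nu^{-1/3}|\al|^{2/3}$ under the standing smallness assumption.

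The principal obstacle is the boundary term $-\nu\,\mathrm{Re}[\pa_y\om\,\overline{\om}]_{-1}^{1}$ produced by the single integration by parts of the dissipation against $\overline{\om}$. Unlike the Navier-slip case, neither $\om|_{\pm 1}=\pa_y^{2}\phi|_{\pm 1}$ nor $\pa_y\om|_{\pm 1}=\pa_y^{3}\phi|_{\pm 1}$ vanishes. My plan is to combine the one-dimensional trace inequalities $|\om(\pm 1)|^{2}\lesssim\|\om\|_{L^2_y}\|\pa_y\om\|_{L^2_y}+\|\om\|_{L^2_y}^{2}$ and $|\pa_y\om(\pm 1)|^{2}\lesssim\|\pa_y\om\|_{L^2_y}\|\pa_y^{2}\om\|_{L^2_y}+\|\pa_y\om\|_{L^2_y}^{2}$ with the equation itself rewritten as $\nu\pa_y^{2}\om=\nu|\al|^{2}\om-\pa_t\om-i\al U\om+i\al U''\phi-i\al f_1-\pa_y f_2$, and then to balance via a weighted Young's inequality whose parameter is tuned to the Airy boundary-layer scale $\delta=(\nu/|\al|)^{1/3}$. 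After time integration and H\"older, this produces precisely the Gagliardo--Nirenberg bound $\nu^{-1/3}|\al|^{2/3}\|e^{\ve\nu^{1/2}t}u\|_{L^2_{t,y}}^{4/3}\|e^{\ve\nu^{1/2}t}\om\|_{L^2_{t,y}}^{2/3}$ appearing on the right of \eqref{equ:est,om,weight,f1234}; this interpolation step, in which the $|\al|^{2/3}$ stems from inverting the Airy layer scale while the fractional exponents arise from optimizing over the trace norms, is the most delicate part of the argument. Taking the supremum in $T$ then closes both estimates.
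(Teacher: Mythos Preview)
Your approach to \eqref{equ:est,u,om,weight,f1234} is correct and matches the paper's: test with $-\overline{\phi}$, use the non-slip boundary conditions, and absorb via $|\al|\|u\|\leq\|\om\|$.

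For \eqref{equ:est,om,weight,f1234}, however, your plan for the boundary term $\nu[\pa_y\om\,\overline{\om}]_{-1}^{1}$ has a genuine gap. Bounding $|\pa_y\om(\pm1)|$ by the trace inequality forces you to control $\|\pa_y^{2}\om\|_{L_y^{2}}$, and when you rewrite $\nu\pa_y^{2}\om$ via the equation you introduce both $\pa_t\om$ and $\pa_y f_2$. Neither is available: $\|\pa_t\om\|_{L_{t,y}^{2}}$ is not controlled by any quantity appearing in the lemma (nor elsewhere in the paper at this stage), and $f_2$ is only assumed to lie in $L_{t,y}^{2}$, not $H_y^{1}$. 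No choice of Young-inequality weight can compensate for an uncontrolled norm, so the balancing you describe cannot close.

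The paper's device is different and uses the hypothesis $\langle\om^{\mathrm{in}},e^{\pm|\al|y}\rangle=0$ that you did not invoke. Since the non-slip condition $\pa_y\phi(\pm1)=0$ forces $\langle\om(t),\gamma_1^{\pm}\rangle=0$ for all $t$ with $\gamma_1^{\pm}(y)=\sinh(|\al|(1\pm y))/\sinh(2|\al|)$, one differentiates this constraint, inserts the equation, and integrates by parts against $\gamma_1^{\pm}$. Because $(\pa_y^{2}-\al^{2})\gamma_1^{\pm}=0$, the interior diffusion cancels and one obtains directly
\[
|(\nu\pa_y\om+f_2)(\pm1)|\lesssim |\al|^{3/2}\|\phi\|_{L^{2}}+|\al|^{1/2}\|(f_1,f_2)\|_{L^{2}}+\nu|\al|\|\om\|_{L^{\infty}},
\]
with no $\pa_t\om$ or $\pa_y f_2$ anywhere. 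Multiplying by $\|\om\|_{L^{\infty}}\lesssim\|\om\|^{1/2}\|\pa_y\om\|^{1/2}$ and applying Young's inequality to $|\al|^{1/2}\|u\|\,\|\om\|^{1/2}\|\pa_y\om\|^{1/2}$ is what produces the term $\nu^{-1/3}|\al|^{2/3}\|u\|^{4/3}\|\om\|^{2/3}$ and the contribution $\nu|\al|^{2}\|\om\|^{2}$ on the right-hand side; this is the missing idea in your sketch.
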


\begin{proof}
	Taking the inner product of \eqref{equ:linear,nonslip,NS,fj} with $-\phi$, and taking the real part, we have
\begin{align*}
	\f12\f{d}{dt}\n{u}^2_{L^2}+\nu\n{\om}_{L^2}^2
	=&\mathrm{Re}\Big(-i\al\int_{-1}^1 U'\phi'\ol{\phi}dy\Big)+\mathrm{Re}\lrs{\lra{i\al f_1+\pa_yf_2,\phi}}\\
	\lesssim &\n{(\pa_y,\abs{\al})\phi}_{L^2}^2
	+\n{(f_1,f_2)}_{L^2}\n{(\pa_y,\abs{\al})\phi}_{L^2},
\end{align*}
which, together with Lemma \ref{lemma:proper,u,L2,Linf}, implies
\begin{align}\label{equ:est,dt,ul2,1}
	\f{d}{dt}\n{e^{\varepsilon \nu^{\f12}t}u}^2_{ L^2}+\f32\nu\n{e^{\varepsilon \nu^{\f12}t}\om}^2_{ L^2}
	\lesssim(1+\ve\nu^{\f12})\n{e^{\varepsilon \nu^{\f12}t}u}^2_{L^2}+
	(\nu\abs{\al}^2)^{-1}\n{e^{\varepsilon \nu^{\f12}t}(f_1,f_2)}^2_{L^2}
\end{align}
which is enough to conclude \eqref{equ:est,u,om,weight,f1234}.

Next, we get into the analysis of $L_{t}^{\infty}L_{y}^{2}$ estimate \eqref{equ:est,om,weight,f1234}.
We test equation \eqref{equ:linear,nonslip,NS,fj} with $\f{\om}{U''}$
and take the real part to obtain
\begin{align*}
	&\f12\f{d}{dt}\bn{\f{\om}{\sqrt{U''}}}^2_{L^2}+\nu\bn{(\pa_y,\abs{\al})\f{\om}
		{\sqrt{U''}}}^2_{L^2}\\
	=&
	\operatorname{Re}\lrs{   \blra{\nu \pa_y\om,\om\lrs{\frac{1}{U''}}'}+  \lra{i\al f_1,\f{\om}{U''}}-\lra{f_2,\pa_y(\f{\om}{U''})}+(\nu\pa_y\om+f_2)\f{\ol{\om}}{U''}\Big|_{-1}^1
	},
\end{align*}
which gives
\begin{align}\label{equ:energy,time,w,test}
	     &\f12\f{d}{dt}\n{\f{\om}{\sqrt{U''}}}^2_{L^2}
          +\f{7\nu}{8}\n{(\pa_y,\abs{\al})\f{\om}{\sqrt{U''}}}^2_{L^2}\\
           \lesssim &\nu\n{\om}^2_{L^2}+
	     \n{(f_1,f_2)}\n{(\pa_y,\abs{\al})\om}_{L^2}
          +\abs{(\nu\pa_y\om+f_2)(\pm 1)}\n{\om}
          _{L^\infty}.\notag
\end{align}

To deal with the boundary term, we recall that
\begin{align*}
	\gamma_{1}^{+}(y)=\frac{\operatorname{sinh}\big(\al(1+y)\big)}{\operatorname{\operatorname{sinh}}(2\al)}\quad \operatorname{and}\quad
	\gamma_{1}^{-}(y)=
	\frac{\operatorname{\operatorname{sinh}}\big(\al(1-y)\big)}{\operatorname{\operatorname{sinh}}(2\al)}.
\end{align*}
Since $\lra{\pa_{t}^j\om,\gamma_{1}^{+}}=\lra{\pa_{t}^j\om,\gamma_{1}^{-}}=0$ for $j=0,1$, we get
\begin{align*}
	    0=&\lra{\pa_{t}\om,\gamma_{1}^{+}}=\lra{\nu(\pa^2_y-\al^2)\om-i\al U\om+i\al U''\phi+i\al f_1+\pa_yf_2
           ,\gamma_{1}^{+}}\\
	   =&\lra{\nu\om,(\pa^2_y-\al^2)\gamma_{1}^{+}}+
	     \lra{-i\al U\om,\gamma_{1}^{+}}+\lra{i\al f_1,\gamma_{1}^{+}}\\
	     &+\lra{i\al U''\phi,\gamma_{1}^{+}}+\lra{-f_2,\pa_y\gamma_{1}^{+}}+\big(-\nu\om\pa_y\gamma_{1}^{+}+(\nu\pa_y\om+f_2)\gamma_{1}^{+}\big)\big|_{-1}^1\\
	    =&\lra{-i\al U\om,\gamma_{1}^{+}}+\lra
	     {i\al f_1,\gamma_{1}^{+}}+\lra{i\al U''\phi,\gamma_{1}^{+}}\\
	    &+\lra{-f_2,\pa_y\gamma_{1}^{+}}+\big(-\nu\om\pa_y\gamma_{1}^{+}+(\nu\pa_y\om+f_2)\gamma_{1}^{+}\big)\big|_{-1}^1.
\end{align*}
Noting that
\begin{align*}
	\lra{-i\al U\om,\gamma_{1}^{+}}+\lra{i\al U''\phi,\gamma_{1}^{+}}=\lra{-2i\al U'\phi,\pa_y\gamma_{1}^{+}},
\end{align*}
we derive
\begin{align}\label{equ:energy,time,w,boundary,y=1}
	    &\babs{(\nu\pa_y\om+f_2)(1)}\\
	    =&\babs{\lra{-2i\al U'\phi,\pa_y\gamma_{1}^{+}}
		+\lra{i\al f_1,\gamma_{1}^{+}}
		+\lra{-f_2,\pa_y\gamma_{1}^{+}}+(-\nu\om\pa_y\gamma_{1}^{+})|_{-1}^1}\notag\\
	    \lesssim &\abs{\al}\n{\phi}_{L^2}
        (\n{\pa_y\gamma_{1}^{+}}_{L^2}+
         \n{\gamma_{1}^{+}}_{L^2})+\n{\al f_1}_{L^2}\n{\gamma_{1}^{+}}_{L^2}\notag\\
	    &+\n{f_2}_{L^2}\n{\pa_y\gamma_{1}^{+}}_{L^2}
        +\nu\n{\pa_y\gamma_{1}^{+}(\pm 1)}\n{\om}_{L^\infty}\notag\\
	    \lesssim&\abs{\al}^{\f32}\n{\phi}_{L^2}
         +\abs{\al}^{\f12}\n{(f_1,f_2)}_{L^2}
         +\nu\abs{\al}\n{\om}_{L^\infty},\notag
\end{align}
where in the last inequality we have used that
\begin{align*}
	\abs{\pa_y\gamma_{1}^{+}(\pm1)}&=\abs{\al}\operatorname{coth}(2\abs{\al})\lesssim \abs{\al},\\
	\n{(\pa_y,\abs{\al})\gamma_{1}^{+}}^2_{L^2}&=
	-\lra{\gamma_{1}^{+},(\pa_y^2-\abs{\al}^2)\gamma_{1}^{+}}+(\pa_y\gamma_{1}^{+}\cdot \gamma_{1}^{+})\big|_{-1}^1\lesssim \abs{\al}.
\end{align*}
Similarly, we also have
\begin{align}\label{equ:energy,time,w,boundary}
	\babs{(\nu\pa_y\om+f_2)|_{y=-1}}
	\lesssim
	\abs{\al}^{\f32}\n{\phi}_{L^2}+\abs{\al}^{\f12}
	\n{(f_1,f_2)}_{L^2}+\nu\abs{\al}\n{\om}_{L^\infty}.
\end{align}

Combining \eqref{equ:energy,time,w,test}, \eqref{equ:energy,time,w,boundary,y=1}, and \eqref{equ:energy,time,w,boundary}, we conclude
\begin{align*}
	&\f{d}{dt}\n{\f{\om}{\sqrt{U''}}}_{L^2}^2
+\f32\nu\n{(\pa_y,\abs{\al})\f{\om}{\sqrt{U''}}}^2_{L^2}\\
	\lesssim &
	\nu\n{\om}^2_{L^2}+	\nu^{-1}\n{(f_1,f_2)}^2_{L^2}
	+\big(\abs{\al}^{\f32}\n{\phi}_{L^2}+\abs{\al}^{\f12}
	\n{(f_1,f_2)}_{L^2}+\nu\abs{\al}\n{\om}_{L^\infty}
\big)\n{\om}_{L^\infty},
\end{align*}
which, together with Cauchy-Schwarz and interpolation inequalities, gives
\begin{align}\label{equ:energy,time,w,test,final}
	&\f{d}{dt}\n{e^{\varepsilon \nu^{\f12}t}\f{\om}{\sqrt{U''}}}_{ L^2}^2+\nu\n{e^{\varepsilon \nu^{\f12}t}(\pa_y,\abs{\al})\f{\om}{\sqrt{U''}}}^2_{L^2}\\
	\lesssim &
	\nu^{-1}\n{e^{\varepsilon \nu^{\f12}t}(f_1,f_2)}^2_{L^2}
	+
	(\nu^{\f12}+\nu\abs{\al}^2)\n{e^{\varepsilon \nu^{\f12}t}\om}^2_{L^2}+\nu^{-\f13}\abs{\al}^{\f23}\n{e^{\varepsilon \nu^{\f12}t}u}_{L^2}^{\f43}\n{e^{\varepsilon \nu^{\f12}t}\om}^{\f23}_{L^2}.\notag
\end{align}
Integrating the above inequality \eqref{equ:energy,time,w,test,final} and discarding $\frac{1}{U''}$, we complete the proof of \eqref{equ:est,om,weight,f1234}.
\end{proof}

Now we are going to prove  Proposition \ref{proposition:est,u,om,timespace,f1234}.

\begin{proof}[\textbf{Proof of Proposition $\ref{proposition:est,u,om,timespace,f1234}$}]
	Applying Lemma \ref{lemma:est,u,om,timespace,f1234}, we obtain
		\begin{align}\label{equ:est,u,om,weight,f12}
\n{e^{\ve \nu^{\f12}t}u}^2_{L_{t}^{\infty}L_{y}^{2}}+\nu\n{e^{\ve \nu^{\f12}t} \om}^2_{ L_{t,y}^{2}}
	\lesssim (1+\varepsilon\nu^{\f12})\n{e^{\varepsilon \nu^{\f12}t}u}^2_{L_{t,y}^{2}}+
	(\nu\abs{\al}^2)^{-1}\n{e^{\varepsilon \nu^{\f12}t}(f_1,f_2)}^2_{L_{t,y}^{2}}.
		\end{align}
Next, we divide the proof into two cases.

\textbf{Estimates for $\n{u}_{L_{t,y}^{2}}$, $\n{u}_{L_{t}^{\infty}L_{y}^{2}}$, and $\n{w}_{L_{t,y}^{2}}$.}

{\it Case 1.} $\nu\abs{\al}^2\lesssim 1$.

By Proposition \ref{proposition:est,u,timespace,L2L2,LinftyL2,f1234L2} and Proposition \ref{proposition:est,u,timespace,L2L2,LinftyL2,w0H1} with $f_3=f_4=0$, we get
\begin{align}\label{equ:est,u,om,weight,f12,case1}
	\abs{\al}\n{e^{\varepsilon \nu^{\f12}t}u}^2_{L_{t,y}^{2}}+	\nu^{\f12}\abs{\al}^{\f12}\n{e^{\varepsilon \nu^{\f12}t}\om}^2_{L_{t,y}^{2}}
	\lesssim \n{\om^{\mathrm{in}}}_{H_{\al}^{4}}^{2}+\nu^{-1}\n{e^{\varepsilon \nu^{\f12}t}(f_1,f_2)}^2_{L_{t,y}^{2}}.
\end{align}
Combining \eqref{equ:est,u,om,weight,f12} and \eqref{equ:est,u,om,weight,f12,case1}, we deduce
	\begin{align}\label{equ:est,u,weight,f12,case1,1}
		\abs{\al}\n{e^{\varepsilon \nu^{\f12}t}u}^2_{L_{t}^{\infty}L_{y}^{2}}+\nu^{\f12}\abs{\al}^{\f12}\n{e^{\varepsilon \nu^{\f12}t}\om}^2_{L_{t,y}^{2}}
		\lesssim	\n{\om^{\mathrm{in}}}_{H_{\al}^{4}}^{2}+\nu^{-1}\n{e^{\varepsilon \nu^{\f12}t}(f_1,f_2)}^2_{L_{t,y}^{2}}.
	\end{align}

{\it Case 2.} $\nu\abs{\al}^2\gg 1$.
	
	Using that $\n{u}_{L^{2}}\lesssim |\al|^{-1}\n{\om}_{L^{2}}$,
with $\varepsilon\nu^{\f12}\ll 1$ and $\nu\al^{2}\gg 1$, we get
\begin{align}\label{equ:est,u,om,weight,f12,case2,L2}
	\lrs{1+\varepsilon\nu^{\f12}}\n{e^{\varepsilon \nu^{\f12}t}u}^2_{ L^2}
	\lesssim & \abs{\al}^{-2}\n{e^{\varepsilon \nu^{\f12}t} \om}^2_{ L^2}\ll \nu\n{e^{\varepsilon \nu^{\f12}t} \om}^2_{ L^2},
\end{align}
	which together with \eqref{equ:est,u,om,weight,f12} gives
\begin{align}\label{equ:est,u,om,weight,f12,case2}
     \abs{\al}^2\n{e^{\varepsilon \nu^{\f12}t}u}^2_{L_{t}^{\infty}L_{y}^{2}}+\nu\abs{\al}^2\n{e^{\varepsilon \nu^{\f12}t} \om}^2_{L_{t,y}^{2}}
	\lesssim& \n{\om^{\mathrm{in}}}^2_{L^2}+\nu^{-1}\n{e^{\varepsilon \nu^{\f12}t}(f_1,f_2)}^2_{L_{t,y}^{2}}.
\end{align}
Due to that $\nu\abs{\al}^2\gg 1$ and $\nu\ll 1$, which yields
	\begin{align*}
		\abs{\al}\le \abs{\al}^2,\qquad
		\nu |\al|+
		\nu^{\f12}\abs{\al}^{\f12}\lesssim \nu\abs{\al}^2,
	\end{align*}
together with \eqref{equ:est,u,om,weight,f12,case2,L2} and \eqref{equ:est,u,om,weight,f12,case2}, we arrive at
	\begin{align}\label{equ:est,u,om,weight,f12,case21}
		&\abs{\al}\n{e^{\varepsilon \nu^{\f12}t}u}^2_{L_{t}^{\infty}L_{y}^{2}}+\abs{\al}\n{e^{\varepsilon \nu^{\f12}t}u}^2_{L_{t,y}^{2}}+\nu^{\f12}\abs{\al}^{\f12}\n{e^{\varepsilon \nu^{\f12}t}\om}^2_{L_{t,y}^{2}}\\
		\lesssim& \n{\om^{\mathrm{in}}}^2_{L^2}+\nu^{-1}\n{e^{\varepsilon \nu^{\f12}t}(f_1,f_2)}^2_{L_{t,y}^{2}}.\notag
	\end{align}

\textbf{Estimates for $\n{\om}_{L_{t}^{\infty} L_{y}^{2}}$ and $\n{(\pa_y,\abs{\al})\om}^2_{L_{t,y}^{2}}$.}

{\it Case 1.} $\nu\abs{\al}^2\lesssim 1$.

	By \eqref{equ:est,om,weight,f1234} in Lemma \ref{lemma:est,u,om,timespace,f1234} and the inequality $\n{\om}_{L^{2}}^{2}\lesssim \n{u}_{L^{2}}\n{(\pa_{y},|\al|)\om}_{L^{2}}$, we deduce
	\begin{align*}
		&\n{e^{\varepsilon \nu^{\f12}t}\om}^2_{L_{t}^{\infty}L_{y}^{2}}+\nu\n{e^{\varepsilon \nu^{\f12}t}(\pa_y,\abs{\al})\om}^2_{L_{t,y}^{2}}\\
		\lesssim&
		\n{\om^{\mathrm{in}}}^2_{L^2}+\nu^{-1}
		\n{e^{\varepsilon \nu^{\f12}t}(f_1,f_2)}^2_{L_{t,y}^{2}}+(\nu^{\f12}+\nu\abs{\al}^2)\n{e^{\varepsilon \nu^{\f12}t}\om}^2_{L_{t,y}^{2}}\notag\\
		&
		+\nu^{-\f13}\abs{\al}^{\f23}
		\n{e^{\varepsilon \nu^{\f12}t}u}^{\f43}_{L_{t,y}^{2}}\n{e^{\varepsilon \nu^{\f12}t}\om}^{\f23}_{L_{t,y}^{2}}\notag\\
		\lesssim&
		\n{\om^{\mathrm{in}}}^2_{L^2}+\nu^{-1}
		\n{e^{\varepsilon \nu^{\f12}t}(f_1,f_2)}^2_{L_{t,y}^{2}}+\nu^{\f12}\n{e^{\varepsilon \nu^{\f12}t}u}_{L_{t,y}^{2}}\n{e^{\varepsilon \nu^{\f12}t}(\pa_y,\abs{\al})\om}_{L_{t,y}^{2}}\notag\\
		&
		+\nu\abs{\al}^2\n{e^{\varepsilon \nu^{\f12}t}\om}^2_{L_{t,y}^{2}}	+\nu^{-\f13}\abs{\al}^{\f23}
		\n{e^{\varepsilon \nu^{\f12}t}u}^{\f43}_{L_{t,y}^{2}}\n{e^{\varepsilon \nu^{\f12}t}\om}^{\f23}_{L_{t,y}^{2}}\notag
	\end{align*}
	By Young's inequality and $\nu|\al|^{2}\lesssim 1$, we get
	\begin{align*}
		&\n{e^{\varepsilon \nu^{\f12}t}\om}^2_{L_{t}^{\infty}L_{y}^{2}}+\nu\n{e^{\varepsilon \nu^{\f12}t}(\pa_y,\abs{\al})\om}^2_{L_{t,y}^{2}}\\
		\lesssim&
		\n{\om^{\mathrm{in}}}^2_{L^2}+\nu^{-1}
		\n{e^{\varepsilon \nu^{\f12}t}(f_1,f_2)}^2_{L_{t,y}^{2}}+\n{e^{\varepsilon \nu^{\f12}t}u}^2_{L_{t,y}^{2}}+\nu\al^{2}\n{e^{\varepsilon \nu^{\f12}t}\om}^2_{L_{t,y}^{2}}\notag\\	
&+
\nu^{-\frac{1}{2}}|\al|^{-\frac{1}{6}}\lrs{|\al|\n{e^{\ve \nu^{\f12}t}u}_{L_{t,y}^{2}}^{2}+\nu^{\f12}\abs{\al}^{\f12}\n{e^{\varepsilon \nu^{\f12}t}\om}^2_{L_{t,y}^{2}}}\notag,\\
\lesssim&\n{\om^{\mathrm{in}}}^2_{L^2}+\nu^{-1}
		\n{e^{\varepsilon \nu^{\f12}t}(f_1,f_2)}^2_{L_{t,y}^{2}}+
\nu^{-\frac{1}{2}}|\al|^{-\frac{1}{6}}\lrs{|\al|\n{e^{\ve \nu^{\f12}t}u}_{L_{t,y}^{2}}^{2}+\nu^{\f12}\abs{\al}^{\f12}\n{e^{\varepsilon \nu^{\f12}t}\om}^2_{L_{t,y}^{2}}}\notag.
	\end{align*}
which, together with \eqref{equ:est,u,om,weight,f12,case1}, implies
\begin{align}\label{equ:w,LinfL2,case1}
\nu^{\frac{1}{2}}\lrs{\n{e^{\varepsilon \nu^{\f12}t}\om}^2_{L_{t}^{\infty}L_{y}^{2}}+\nu\n{e^{\varepsilon \nu^{\f12}t}(\pa_y,\abs{\al})\om}^2_{L_{t,y}^{2}}}\lesssim \n{\om^{\mathrm{in}}}_{H_{\al}^{4}}^{2}+\nu^{-1}\n{e^{\varepsilon \nu^{\f12}t}(f_1,f_2)}^2_{L_{t,y}^{2}}.
\end{align}

	{\it Case 2.} $\nu\abs{\al}^2\gg 1$.

By \eqref{equ:est,om,weight,f1234} in Lemma \ref{lemma:est,u,om,timespace,f1234}, with $\nu\al^{2}\gg 1$, we get
	\begin{align*}
		&\n{e^{\varepsilon \nu^{\f12}t}\om}^2_{L_{t}^{\infty}L_{y}^{2}}+\nu\n{e^{\varepsilon \nu^{\f32}t}(\pa_y,\abs{\al})\om}^2_{L_{t,y}^{2}}\\
\lesssim&
		\n{\om^{\mathrm{in}}}^2_{L^2}+\nu^{-1}
		\n{e^{\varepsilon \nu^{\f12}t}(f_1,f_2)}^2_{L_{t,y}^{2}}
		+\nu\abs{\al}^2\n{e^{\varepsilon \nu^{\f12}t}\om}^2_{L_{t,y}^{2}}\notag\\
&+\nu^{-\frac{2}{3}}|\al|^{-\frac{4}{3}}\lrs{|\al|^{2}\n{e^{\varepsilon \nu^{\f12}t}u}_{L_{t,y}^{2}}^{2}+\nu \abs{\al}^{2}\n{e^{\varepsilon \nu^{\frac{1}{2}}t}\om}^2_{L_{t,y}^{2}}}
\end{align*}
which, together with \eqref{equ:est,u,om,weight,f12,case2} and $\nu|\al|^{2}\gg 1$, yields \eqref{equ:w,LinfL2,case1}.

\textbf{Estimate for $\n{\sqrt{1-y^{2}}\om}_{L_{t}^{\infty}L_{y}^{2}}$.}

Recalling that
\begin{equation}\label{equ:linear,nonslip,NS,fj,used}
	\left\{
	\begin{aligned}
		&\pa_{t}\om-\nu(\pa^2_y-\al^2)\om+i\al U\om-i\al U''\phi=i\al f_1+\pa_y f_2,\\
		&(\pa^2_y-\al^2)\phi=\om,\quad \phi(t,\pm 1)=\phi'( t,\pm 1)=0,\quad \om(0,y)=\om^{\mathrm{in}}(y),
	\end{aligned}\right.
\end{equation}
	we test equation \eqref{equ:linear,nonslip,NS,fj,used} with $\f{1-y^{2}}{U''(y)}\om$
	and take the real part to obtain
	\begin{align*}
		&\frac{1}{2}\f{d}{dt}\bbn{\sqrt{\f{1-y^{2}}{U''(y)}}\om}^2_{L^2}+\nu\bbn{\sqrt{\f{1-y^{2}}{U''(y)}}(\pa_y,\abs{\al})\om}^2_{L^2}\\
		=&
		\operatorname{Re}\lrs{\bblra{\nu \pa_y\om,\om\lrs{\frac{1-y^{2}}{U''(y)}}'}+2\lra{i\al y\phi,\pa_y\phi} + \bblra{i\al f_1,\f{1-y^{2}}{U''(y)}\om}
		-\bblra{f_2,\lrs{\f{1-y^{2}}{U''(y)}\om}'}},
	\end{align*}
	which gives
\begin{align*}
		&\frac{1}{2}\f{d}{dt}\bbn{\sqrt{\f{1-y^{2}}{U''(y)}}\om}^2_{L^2}+\nu\bbn{\sqrt{\f{1-y^{2}}{U''(y)}}(\pa_y,\abs{\al})\om}^2_{L^2}\\
		\lesssim &
		\nu^{\frac{1}{2}}\n{\om}^2_{L^2}+\nu^{\f32}\n{\pa_y\om}
        ^2_{L^2}+\n{u}^2_{L^2}+\nu^{-1}
		\n{(f_1,f_2)}_{L^{2}}^{2}+
        \n{\om}_{L^2}\n{f_{2}}_{L^2}.
\end{align*}		
  Applying Young's inequality and estimates for $\n{\om}_{L_{t,y}^{2}}$, $\n{\pa_{y}w}_{L_{t,y}^{2}}$, and
$\n{u}_{L_{t,y}^{2}}$, we get
\begin{align*}
         \n{e^{\varepsilon \nu^{\f12}t}\sqrt{1-y^{2}}\om}^2_{L_{t}^{\infty}L_{y}^{2}}\lesssim\n{\om^{\mathrm{in}}}_{H_{\al}^{4}}^{2}+\nu^{-1}\n{e^{\varepsilon \nu^{\f12}t}(f_1,f_2)}^2_{L_{t,y}^{2}}.
\end{align*}

\textbf{Estimate for $\n{u}_{L_{t}^{\infty} L_{y}^{\infty}}$.}

Let us introduce the cutoff function $\rho_{\delta}(y)$ defined by
\begin{equation}
\rho_{\delta}(y)=
\left\{\begin{aligned}
1,\quad &|y|\leq 1-\delta,\\
\delta^{-1}(1-|y|),\quad &|y|\in(1-\delta,1).
\end{aligned}
\right.
\end{equation}
Setting $\delta=\nu^{\frac{1}{4}}$, we derive
\begin{align}\label{equ:u,LinfLinf,wL1}
\n{u}_{L_{y}^{\infty}}\lesssim \n{\om}_{L_{y}^{1}}\leq& \int_{ 1-|y|\leq \nu^{\frac{1}{2}}}|\om(y)|dy+\int_{\nu^{\frac{1}{2}} \leq 1-|y|\leq 1}|\om(y)|dy\\
\lesssim& \nu^{\frac{1}{4}}\n{\om}_{L_{y}^{2}}+\n{\rho_{\delta}\om}_{L_{y}^{2}}\lrs{\int_{\nu^{\frac{1}{2}} \leq 1-|y|\leq 1} \rho_{\delta}^{-2}(y)dy}^{\frac{1}{2}}\notag\\
\lesssim& \nu^{\frac{1}{4}}\n{\om}_{L_{y}^{2}}+(\delta \nu^{-\frac{1}{4}}+1)\n{\rho_{\delta}\om}_{L_{y}^{2}}\notag\\
\lesssim& \nu^{\frac{1}{4}}\n{\om}_{L_{y}^{2}}+\n{\rho_{\delta}\om}_{L_{y}^{2}}.\notag
\end{align}
To bound $\n{\rho_{\delta}w}_{L_{y}^{2}}$, we introduce
a $C^{2}$-smooth function $\wt{\rho}_{\delta}(y)$ satisfying
\begin{align}\label{equ:smooth,C2,cutoff}
\wt{\rho}_{\delta}(y)\geq \rho_{\delta}(y),\quad |\pa_{y}\wt{\rho}_{\delta}(y)|\lesssim \delta^{-1},\quad
\quad |\pa_{y}^{2}\wt{\rho}_{\delta}(y)|\lesssim \delta^{-2}.
\end{align}
For instance, the following example meets these requirements
\begin{equation}
\wt{\rho}_{\delta}(y)=
\left\{\begin{aligned}
1,\quad &|y|\leq 1-\delta,\\
(\delta^{-1}(1-|y|)-1)^{3}+1,\quad &|y|\in(1-\delta,1).
\end{aligned}
\right.
\end{equation}

Testing equation \eqref{equ:linear,nonslip,NS,fj,used} with $\f{\wt{\rho}_{\delta}^{2}}{U''}\om$
	and taking the real part, we derive
	\begin{align*}
		&\frac{1}{2}\f{d}{dt}\bbn{\sqrt{\f{\wt{\rho}_{\delta}^{2}}{U''}}\om}^2_{L^2}+\nu\bbn{\sqrt{\f{\wt{\rho}_{\delta}^{2}}{U''}}(\pa_y,\abs{\al})\om}^2_{L^2}\\
		=&
		\operatorname{Re}\lrs{\bblra{\nu \om',\om\lrs{\frac{\wt{\rho}_{\delta}^{2}}{U''}}'}+\lra{i\al \wt{\rho}_{\delta}^{2}\phi,\om}} + \operatorname{Re}\lrs{\bblra{i\al f_{1},\f{\wt{\rho}_{\delta}^{2}}{U''}\om}
			-\bblra{f_{2},\lrs{\f{\wt{\rho}_{\delta}^{2}}{U''}\om}'}}\\
		=&
		\operatorname{Re}\lrs{-\bblra{\nu |\om|^{2},\lrs{\frac{\wt{\rho}_{\delta}^{2}}{U''}}''}-\lra{i\al (\wt{\rho}_{\delta}^{2})'\phi,\pa_{y}\phi}} + \operatorname{Re}\lrs{\bblra{i\al f_{1},\f{\wt{\rho}_{\delta}^{2}}{U''}\om}
			-\bblra{f_{2},\lrs{\f{\wt{\rho}_{\delta}^{2}}{U''}\om}'}}.
	\end{align*}
Using Cauchy-Schwarz inequality  and \eqref{equ:smooth,C2,cutoff}, we obtain
\begin{align*}
\bbabs{\bblra{\nu |\om|^{2},\lrs{\frac{\wt{\rho}_{\delta}^{2}}{U''}}''}}\lesssim& \nu\delta^{-2}\n{\om}_{L^{2}}^{2},\\
\abs{\lra{i\al (\wt{\rho}_{\delta}^{2})'\phi,\pa_{y}\phi}}\lesssim& |\al|\delta^{-\frac{1}{2}}\n{1_{\lr{1-|y|\leq \delta}}\phi}_{L^{\infty}}\n{\pa_{y}\phi}_{L^{2}}
\lesssim |\al|\n{u}_{L^{2}}^{2},\\
\bbabs{\bblra{i\al f_{1},\f{\wt{\rho}_{\delta}^{2}}{U''}\om}
			-\bblra{f_{2},\lrs{\f{\wt{\rho}_{\delta}^{2}}{U''}\om}'}}\lesssim & \n{(f_{1},f_{2})}_{L^{2}}\bbn{\sqrt{\frac{\wt{\rho}_{\delta}}{U''}}(\pa_{y},|\al|)w}_{L^{2}}
+\delta^{-1}\n{f_{2}}_{L^{2}}\n{\om}_{L^{2}},
\end{align*}
	which gives
	\begin{align*}
		&\frac{1}{2}\f{d}{dt}\n{\wt{\rho}_{\delta}\om}^2_{L^2}
+\nu\n{\wt{\rho}_{\delta}(\pa_y,\abs{\al})\om}_{L^{2}}\\
		\lesssim &
		\nu\delta^{-2}\n{\om}_{L^{2}}^{2}+|\al| \n{u}_{L^{2}}^{2}+
		\n{(f_{1},f_{2})}_{L^{2}}\bbn{\sqrt{\frac{\wt{\rho}_{\delta}}{U''}}(\pa_{y},|\al|)w}_{L^{2}}
+\delta^{-1}\n{f_{2}}_{L^{2}}\n{\om}_{L^{2}}.
\end{align*}		
  Applying Young's inequality and estimates for $\n{\om}_{L_{t,y}^{2}}$ and
$\n{u}_{L_{t,y}^{2}}$, with $\delta=\nu^{\frac{1}{4}}$ we get
\begin{align*}
\n{e^{\varepsilon \nu^{\f12}t}\wt{\rho}_{\delta}\om}^2_{L_{t}^{\infty}L_{y}^{2}}\lesssim &
\nu^{\frac{1}{2}}\n{e^{\varepsilon \nu^{\f12}t} w}_{L_{t,y}^{2}}^{2}+|\al|\n{e^{\varepsilon \nu^{\f12}t} u}_{L_{t,y}^{2}}^{2}+
\nu^{-1}\n{e^{\varepsilon \nu^{\f12}t}(f_1,f_2)}^2_{L_{t,y}^{2}}\\
\lesssim&\n{\om^{\mathrm{in}}}_{H_{\al}^{4}}^{2}+\nu^{-1}\n{e^{\varepsilon \nu^{\f12}t}(f_1,f_2)}^2_{L_{t,y}^{2}}.
	\end{align*}
By \eqref{equ:u,LinfLinf,wL1}, \eqref{equ:smooth,C2,cutoff}, and estimate for $\n{\om}_{L_{t}^{2}L_{y}^{2}}$, we conclude that
\begin{align*}
\n{e^{\varepsilon \nu^{\f12}t}u}_{L_{t}^{\infty}L_{y}^{\infty}}^{2}\lesssim \nu^{\frac{1}{4}}\n{e^{\varepsilon \nu^{\f12}t}w}_{L_{t}^{\infty}L_{y}^{2}}^{2}+\n{e^{\varepsilon \nu^{\f12}t}\wt{\rho}_{\delta}\om}_{L_{t}^{\infty}L_{y}^{2}}^{2}
\lesssim&\n{\om^{\mathrm{in}}}_{H_{\al}^{4}}^{2}+\nu^{-1}\n{e^{\varepsilon \nu^{\f12}t}(f_1,f_2)}^2_{L_{t,y}^{2}}.
\end{align*}

Therefore, we have completed the entire proof.
\end{proof}

\section{Asymptotic stability of the symmetric flow}\label{sec:Nonlinear Stability}

In this section, we prove Theorem \ref{theorem:nonlinear,stability}. The idea is to provide the stability estimate on
 the stability norm $\sum_{\al\in \Z}\mathcal{E}_{\al}$, where
\begin{equation}\label{equ:definition,E,al}
	\mathcal{E}_{\al}:=
			\left\{
		\begin{aligned}
		&\n{{\om}_0}_{L_{t}^{\infty}L_{y}^{2}},
		&   {\al}= 0,\\
		&\abs{\al}^{\f12}\n{ e^{\ve_{0} \nu^{\f12}t}u_{\al}}_{L_{t}^{\infty}L_{y}^{2}}+\abs{\al}^{\f12}\n{ e^{\ve_{0} \nu^{\f12}t}u_{\al}}_{L_{t,y}^{2}}+\n{e^{\ve_{0} \nu^{\f12}t} u_{\al}}_{L_{t}^{\infty}L_{y}^{\infty}}+\\
		&\nu^{\f14}\abs{\al}^{\f14}\n{e^{\ve_{0} \nu^{\f12}t}\om_{\al}}_{L_{t,y}^{2}}
		+\nu^{\f14}\n{e^{\ve_{0}\nu^{\f12}t}\om_{\al}}_{L_{t}^{\infty}L_{y}^{2}}+ \n{e^{\ve_{0} \nu^{\f12}t}\sqrt{1-y^{2}}\om_{\al}}_{L_{t}^{\infty}L_{y}^{2}},  &{\al}\neq 0.
		\end{aligned}\right.
\end{equation}

\begin{proof}[\textbf{Proof of Theorem $\ref{theorem:nonlinear,stability}$}]
Recalling \eqref{equ:omega,full,NS}, which can be rewritten as
\begin{equation}\label{equ:omega,full,NS,curl}
	\pa_{t} \om-\nu\Delta\om+U(y)\pa_x\om -U''(y)\pa_x\phi= -
    \operatorname{curl} (u\cdot\nabla u),
\end{equation}
we consider the $\al$-frequency of equation \eqref{equ:omega,full,NS,curl} as follow
\begin{align*}
&\pa_{t} \om_{\al}(t,y)-\nu(\pa_y^2-\al^2)\om_{\al}(t,y)+i\al U(y)\om_{\al}(t,y)-i\al U''(y)\phi_{\al}(t,y)\\
=&-i\al (f_{\al}^{1,2}(t,y)+f_{\al}^{2,2}(t,y))+\pa_y (f^{1,1}_{\al}(t,y)+f^{2,1}_{\al}(t,y)),
\end{align*}
where
\begin{align*}
	&f^{1,2}_{\al}(t,y)=i\sum_{l\in \Z}u^1_l(t,y)(\al-l) u^{2}_{\al-l}(t,y),\qquad
	 f^{2,2}_{\al}(t,y)=\sum_{l\in \Z}u^2_l(t,y)\pa_yu^2_{\al-l}(t,y),\\
    &f^{1,1}_{\al}(t,y)=i\sum_{l\in \Z}u^1_l(t,y)(\al-l) u^1_{\al-l}(t,y) ,\qquad
	f^{2,1}_{\al}(t,y)=\sum_{l\in \Z}u^2_l(t,y)\pa_yu^1_{\al-l}(t,y).
\end{align*}

We now analyze the energy term $\mathcal{E}_{0}$.
The condition $\operatorname{div} u=0$ implies that $u_0^2(t,y)=0$. Combined with $P_0(u^1\pa_x u^1)=0,$ the zero-frequency mode of \eqref{equ:per,INS} for $u^{1}$ yields
\begin{align}\label{new,equ:u01,def}
	\pa_{t}{u^1_0}(t,y)-\nu\pa_y^2{u^1_0}(t,y)=&-\sum_{l\in \Z\setminus\{0\} }u^2_l(t,y)\pa_yu^1_{-l}(t,y)=-f_0^{2,1}(t,y).
\end{align}

Testing equation \eqref{new,equ:u01,def} by $\pa_y^2u_0^1$ and using the integration by parts, we obtain
\begin{align*}
	\f{d}{dt}\n{\pa_yu_0^1(t)}^2_{L_{y}^2}+\nu\n{\pa_y^2u_0^1(t)}^2_{L_{y}^2}\lesssim \nu^{-1}\n{f_0^{2,1}(t,y)}^2_{L_{y}^2},
\end{align*}
which, together with  $\pa_yu_0^1(t,y)=\om_0(t,y)$, gives
\begin{align}\label{equ:est,E0,bound}
	\mathcal E_0^2=\n{\om_0}^2_{L_{t}^{\infty}L_{y}^{2}}
	\lesssim \n{{\om}^{\mathrm{in}}_0}^2_{L_{y}^2}+\nu^{-1}\n{f_0^{2,1}}^2_{L_{t,y}^2}.
\end{align}

Now we estimate $\mathcal E_{\al}$. Using the space-time estimates in Proposition \ref{proposition:est,u,om,timespace,f1234} and the definition \eqref{equ:definition,E,al} of \(\mathcal E_{\al}\), we get
\begin{align}\label{new,equ:est,E,al,bound}
	\mathcal E_{\al}\lesssim&
	\n{\om_{\al}^{\mathrm{in}}}_{H_{\al}^4}+\nu^{-\frac{1}{2}}(\n{e^{\ve_{0} \nu^{\f12}t}f^{1,2}_{\al}}_{L_{t,y}^2}
	 +\n{e^{\ve_{0} \nu^{\f12}t}f^{2,2}_{\al}}_{L_{t,y}^2})\\
&	+\nu^{-\frac{1}{2}}(\n{e^{\ve_{0} \nu^{\f12}t}f^{1,1}_{\al}}_{L_{t,y}^2}+
	\n{e^{\ve_{0}
        \nu^{\f12}t}f^{2,1}_{\al}}_{L_{t,y}^2})
	.\notag
\end{align}

For $\n{e^{\ve_{0} \nu^{\f12}t}f^{1,j}_{\al}}_{L_{t,y}^{2}}$ with $j=1,2$, we have
\begin{align}\label{equ:f1j,estimate}
&	\n{e^{\ve_{0} \nu^{\f12}t}f^{1,j}_{\al}}_{L_{t,y}^{2}}\\
\leq &
	\sum_{l\in \Z} \n{e^{\ve_{0} \nu^{\f12}t}u^1_l |\al-l|u^{j}_{{\al}-l}}_{L_{t,y}^2}\notag\\
	\lesssim &
	\sum_{l\in \Z}
	  \n{u^1_{l}}_{L_{t}^{\infty}L_{y}^{\infty}}
	\n{e^{\ve_{0} \nu^{\f12}t}|\al-l|u^{j}_{\al-l}}_{L_{t,y}^{2}}\notag\\
	\lesssim & \nu^{-\frac{1}{6}}
	\sum_{l\in \Z}
	\n{u^1_{l}}_{L_{t}^{\infty}L_{y}^{\infty}}
	(|\al-l|^{\f12}\n{e^{\ve_{0} \nu^{\f12}t}u^{j}_{\al-l}}_{L_{t,y}^2})^{\f13}
	(\nu^{\frac{1}{4}}|\al-l|^{\f14}\n{e^{\ve_{0} \nu^{\f12}t}\om_{\al-l}}_{L_{t,y}^2})^{\f23}\notag\\
    \lesssim &\nu^{-\f16}
	\sum_{l\in \Z}
	\mathcal E_{l}\mathcal E_{\al-l}.\notag
\end{align}

For $\n{e^{\ve_{0} \nu^{\f12}t}f^{2,j}_{\al}}_{L_{t,y}^{2}}$ with $j=1,2$,  by H\"{o}lder inequality, Hardy inequality \eqref{equ:hardy,Linf,L2}, Sobolev estimate \eqref{equ:proper,u,H1,H1weight} in Lemma \ref{lemma:proper,u,L2,Linf}, and the fact that $\pa_{y}u^{2}=-\pa_{x}u^{1}$, we have
\begin{align}\label{equ:f2j,estimate}
	\n{e^{\ve_{0} \nu^{\f12}t}f^{2,j}_{\al}}_{L_{t,y}^2}
	\leq& \sum_{l\in \Z} \bbn{\f{e^{\ve_{0} \nu^{\f12}t}u^{2}_{l}}{\sqrt{1-y^{2}}}
	}_{L_{t}^{2} L_{y}^{\infty}}\n{\sqrt{1-y^{2}}
		\pa_yu^{j}_{\al-l}}_{L_{t}^{\infty}L_{y}^{2}}\\
	\leq& \sum_{l\in \Z } \n{e^{\ve_{0} \nu^{\f12}t}\pa_{y}u^{2}_{l}}_{L_{t,y}^2}
	\n{\sqrt{1-y^{2}}
	\om_{\al-l}}_{L_{t}^{\infty}L_{y}^{2}}\notag\\
	\leq& \nu^{-\frac{1}{6}}\sum_{l\in \Z} (|l|^{\f12}\n{e^{\ve_{0} \nu^{\f12}t}u^1_l}_{L_{t,y}^2})^{\f13}
	(\nu^{\frac{1}{4}}|l|^{\f14}\n{e^{\ve_{0} \nu^{\f12}t}\om_l}_{L_{t,y}^2})^{\f23}\mathcal E_{\al-l}\notag\\
    \lesssim&\nu^{-\f16}\sum_{l\in \Z} \mathcal E_{l}\mathcal E_{\al-l}.  \notag
\end{align}

Collecting the estimates \eqref{equ:est,E0,bound}, \eqref{equ:f1j,estimate}, and \eqref{equ:f2j,estimate}, we arrive at
\begin{align*}
	\mathcal E_0\lesssim& \n{\om^{\mathrm{in}}_0}_{L_{y}^2}+\nu^{-\frac{1}{2}} \nu^{-\f{1}{6}}\sum_{l\in \Z}\mathcal E_{l}\mathcal E_{-l}
	\lesssim
	\n{\om^{\mathrm{in}}_0}_{H_{\al}^{4}}+ \nu^{-\f{2}{3}}\sum_{l\in \Z} \mathcal E_{l}\mathcal E_{-l},\\
	\mathcal E_{\al}
	\lesssim&
	\n{\om^{\mathrm{in}}_{\al}}_{H_{\al}^{4}}+ \nu^{-\f23}\sum_{l\in \Z} \mathcal E_{l}\mathcal E_{\al-l},
\end{align*}
which implies
\begin{align*}
	\sum_{\al\in \Z} \mathcal E_{\al}
	\lesssim& \sum_{\al\in \Z}\n{\om^{\mathrm{in}}_{\al}}_{H_{\al}^{4}}+ \nu^{-\f{2}{3}}\sum_{\al\in \Z}\sum_{l\in \Z} \mathcal E_{l}\mathcal E_{\al-l}.
\end{align*}

Given the initial condition that
\begin{align*}
	\sum_{\al\in \Z}
	\n{\om^{\mathrm{in}}_{\al}}_{H_{\al}^{4}}\leq c\nu^{\f{2}{3}},
\end{align*}
by choosing
$c$ sufficiently small, a standard continuity argument yields the uniform bound
\begin{align*}
	\sum_{\al\in \Z} \mathcal{E}_{\al}\lesssim
	\sum_{\al\in \Z}\n{\om^{\mathrm{in}}_{\al}}_{H_{\al}^{4}}.
\end{align*}
This concludes the proof.
\end{proof}

\appendix
\section{Integral estimates for the symmetric flow}\label{sec:Estimates for the Symmetric Flows}

 \begin{lemma}\label{lemma:proper,symme,flow}
Let $U(y)\in \mathrm{S}$. Then the following asymptotic behaviors hold
 		\begin{align}
 			U'(y)\sim y, \quad U(y)-U(y')\sim  (y-y')(y+y')
\label{u-y2}.
 		\end{align}
 \end{lemma}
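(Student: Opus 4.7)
The proof splits naturally into two steps, corresponding to the two claims. I would begin with the asymptotic $U'(y)\sim y$. The symmetry condition $U(y)=U(-y)$ forces $U'$ to be odd, so in particular $U'(0)=0$. Hence for any $y\in[-1,1]\setminus\{0\}$, the fundamental theorem of calculus yields
\begin{equation*}
\frac{U'(y)}{y}=\frac{1}{y}\int_{0}^{y}U''(s)\,ds,
\end{equation*}
and the strict positivity $\inf_{[-1,1]}U''>0$ together with $U\in C^{4}$ (so $U''\in L^{\infty}$) gives positive constants $c_{1},c_{2}$ with $c_{1}\leq U'(y)/y\leq c_{2}$. In particular $U'(y)$ and $y$ share the same sign, which is exactly what $U'(y)\sim y$ means in the paper's convention.

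For the second claim $U(y)-U(y')\sim(y-y')(y+y')$, my plan is to reduce it to the first by exploiting the evenness of $U$ through the change of variables $s=y^{2}$. Define $V(s):=U(\sqrt{s})$ for $s\in[0,1]$; the symmetry of $U$ makes this unambiguous, and a direct computation gives $V'(s)=U'(\sqrt{s})/(2\sqrt{s})$, which by the first step lies in the interval $[\tfrac{c_{1}}{2},\tfrac{c_{2}}{2}]$, hence $V'(s)\sim 1$ uniformly on $[0,1]$ (with the value at $s=0$ understood as the continuous extension $U''(0)/2$). Since $U(y)=V(y^{2})$, the mean value theorem applied to $V$ gives
\begin{equation*}
U(y)-U(y')=V(y^{2})-V(y'^{2})=V'(\eta)\bigl(y^{2}-y'^{2}\bigr)=V'(\eta)(y-y')(y+y')
\end{equation*}
for some $\eta$ between $y^{2}$ and $y'^{2}$. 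The uniform bound $V'(\eta)\sim 1$ immediately delivers the claim, with matching signs on both sides (both vanish exactly when $y^{2}=y'^{2}$).

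There is no serious obstacle in this argument. The only subtlety worth noting is that a naive application of the mean value theorem to $U$ itself is awkward when $y$ and $y'$ have opposite signs, because the intermediate point $\xi$ could straddle $0$ and $U'(\xi)$ would then fail to reflect the size of $y+y'$; this is precisely what the change of variables $V(s)=U(\sqrt{s})$ circumvents, by folding the symmetry directly into the parametrization.
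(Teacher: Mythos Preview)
Your proof is correct and complete. The paper itself does not give a proof of this lemma---it merely remarks that the result is elementary and refers to \cite[Lemma 4.1]{WZZ-APDE}---so your argument actually supplies more than what appears in the paper. The change of variables $V(s)=U(\sqrt{s})$ is a clean way to handle the case where $y$ and $y'$ have opposite signs, and your observation about why a direct mean-value argument on $U$ would be awkward is exactly the right diagnosis.
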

 \begin{proof}
This result is elementary. For details, see for example \cite[Lemma 4.1]{WZZ-APDE}.
 \end{proof}

  \begin{lemma}\label{lemma:basic Lp estimate,U}
 	Assume that $-1\leq y_{1}\leq 0\leq y_{2}\leq 1$ with $U(y_i)=\lambda\in [0,1]$, $i=1,2$. Then for all $\delta\in[0,1]$,the following estimates hold
 \begin{align}
 \bbn{\frac{1}{U-\la}}_{L^{\infty}(-1,1)\setminus B_{1,2,\delta})}\lesssim& \frac{1}{(y_{2}+\delta)\delta},\label{equ:basic estimate,U,L0}\\
 \bbn{\frac{1}{U-\lambda}}_{L^{2}((-1,1)\setminus B_{1,2,\delta})}\lesssim&\frac{1}{\delta^{\frac{1}{2}}(y_{2}+\delta)},\label{equ:basic estimate,U,L2}\\
 \bbn{\frac{1}{U-\lambda}}_{L^{1}((-1,1)\setminus B_{1,2,\delta})}\lesssim&\frac{\mathrm{ln}(1+\frac{2y_{2}}{\delta})}{y_{2}} \lesssim \frac{1}{\delta},\label{equ:basic estimate,U,L1}\\
 \bbn{\frac{U'}{(U-\lambda)^2}}_{L^2((-1,1)\setminus B_{1,2,\delta})}\lesssim& \frac{1}{\delta^{\f32}(y_{2}+\delta)},\label{equ:basic estimate,U,H1}
 	\end{align}
where $B_{1,2,\delta}=B(y_{1},\delta)\cup B(y_{2},\delta)$. In particular, we have
 \begin{align}
\bbn{\lrs{\frac{\rho_{\delta}}{U''}}'}_{L^{2}}+\bbn{\lrs{\frac{\rho_{\delta}^{c}}{U''}}'}_{L^{2}}\lesssim& \frac{1}{\delta^{\frac{1}{2}}},\label{equ:basic estimate,U,rho,L2}\\
 \bbn{\lrs{\frac{\rho_{\delta}}{U-\la}}'}_{L^{2}}+\bbn{\lrs{\frac{\rho_{\delta}^{c}}{U-\la}}'}_{L^{2}}\lesssim & \frac{1}{(y_{2}+\delta)\delta^{\frac{3}{2}}},\label{equ:basic estimate,U,rho,H1}
 \end{align}
 where the smooth cutoff functions $\rho_{\delta}^{c}(y)$ and $\rho_{\delta}(y)$ are defined in \eqref{equ:smooth cutoff,ex,delta} and \eqref{equ:smooth cutoff,in,delta}.
 \end{lemma}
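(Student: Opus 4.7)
The overall plan is to reduce every estimate to an explicit integral of the model weight $|y-y_{2}|^{-a}|y+y_{2}|^{-b}$ and evaluate it by a direct splitting of the domain. The key reduction step is Lemma~\ref{lemma:proper,symme,flow}, which gives
\[
|U(y)-\la|=|U(y)-U(y_{2})|\sim |y-y_{2}||y+y_{2}|,\qquad |U'(y)|\sim |y|,
\]
on $[-1,1]$, together with the fact that $y_{1}=-y_{2}$ so that $|y-y_{1}|=|y+y_{2}|$. In particular, after pulling the constants from $\sim$ outside, every estimate in the lemma reduces to the same family of model estimates on
\[
(-1,1)\setminus B_{1,2,\delta}=(-1,-y_{2}-\delta)\cup(-y_{2}+\delta,y_{2}-\delta)\cup(y_{2}+\delta,1),
\]
and by the $y\mapsto -y$ symmetry it suffices to work on the two intervals $(y_{2}+\delta,1)$ and $(0,y_{2}-\delta)$ (the latter being non-empty only when $y_{2}\geq\delta$, in which case $y_{2}+\delta\sim y_{2}$).

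For the pointwise bound \eqref{equ:basic estimate,U,L0} I would note that the minimum of $|y-y_{2}||y+y_{2}|$ on the exterior set is attained at $y=\pm(y_{2}+\delta)$ or $y=\pm(y_{2}-\delta)$ and equals $\delta(2y_{2}+\delta)\sim\delta(y_{2}+\delta)$. For \eqref{equ:basic estimate,U,L2} and \eqref{equ:basic estimate,U,H1} I would use the crude lower bound $|y+y_{2}|\gtrsim y_{2}+\delta$ on each sub-interval to factor out $(y_{2}+\delta)^{-2}$, and then compute
\[
\int_{y_{2}+\delta}^{1}\frac{dy}{(y-y_{2})^{2}}\lesssim \delta^{-1},\qquad \int_{y_{2}+\delta}^{1}\frac{dy}{(y-y_{2})^{4}}\lesssim \delta^{-3},
\]
which yield the stated $\delta^{-1/2}$ and $\delta^{-3/2}$ powers after taking square roots. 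For the $L^{2}$ estimate in \eqref{equ:basic estimate,U,H1} one extra input is that $|U'|\lesssim |y|\lesssim y+y_{2}$ for $y\in(0,1)$, so $U'/(U-\la)^{2}$ is controlled by $|y-y_{2}|^{-2}|y+y_{2}|^{-1}$, and the same splitting applies.

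For \eqref{equ:basic estimate,U,L1}, which is the only bound carrying a logarithmic factor, I would use partial fractions,
\[
\frac{1}{(y-y_{2})(y+y_{2})}=\frac{1}{2y_{2}}\Bigl(\frac{1}{y-y_{2}}-\frac{1}{y+y_{2}}\Bigr),
\]
to get on $(y_{2}+\delta,1)$ the explicit antiderivative $\tfrac{1}{2y_{2}}\log\tfrac{y-y_{2}}{y+y_{2}}$, hence the bound $(2y_{2})^{-1}\log\bigl(1+2y_{2}/\delta\bigr)$; the middle interval $(0,y_{2}-\delta)$ gives an analogous contribution. The uniform bound $\log(1+2y_{2}/\delta)/y_{2}\lesssim\delta^{-1}$ follows from $\log(1+x)\leq x$. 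This is the step I expect to be slightly delicate, since one must check that the logarithm is in fact needed (the factor $1/y_{2}$ alone blows up as $y_{2}\to 0$), but the cancellation produced by partial fractions handles it cleanly.

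Finally, for the cutoff estimates \eqref{equ:basic estimate,U,rho,L2}--\eqref{equ:basic estimate,U,rho,H1} I would expand
\[
\Bigl(\frac{\rho}{f}\Bigr)'=\frac{\rho'}{f}-\frac{\rho f'}{f^{2}}
\]
with $\rho\in\{\rho_{\delta},\rho_{\delta}^{c}\}$ and $f\in\{U'',U-\la\}$. For $f=U''$ we have $1/U''\in W^{1,\infty}$, $|\rho'|\lesssim\delta^{-1}$, and $|\mathrm{supp}\,\rho'|\lesssim\delta$, so the first term is $O(\delta^{-1/2})$ and the second $O(1)$, giving \eqref{equ:basic estimate,U,rho,L2}. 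For $f=U-\la$, the support of $\rho'$ lies at distance $\gtrsim\delta$ from $\{y_{1},y_{2}\}$, so on it $|U-\la|\gtrsim\delta(y_{2}+\delta)$ by \eqref{equ:basic estimate,U,L0}; hence the $\rho'/f$ term contributes $(1/\delta)\cdot(1/(\delta(y_{2}+\delta)))\cdot\delta^{1/2}=\delta^{-3/2}(y_{2}+\delta)^{-1}$, while the $\rho U'/f^{2}$ term is controlled by \eqref{equ:basic estimate,U,H1}. Summing yields \eqref{equ:basic estimate,U,rho,H1}. The only bookkeeping concern here is to make sure that all sub-intervals composing the supports of $\rho_{\delta}'$ and $(\rho_{\delta}^{c})'$ are treated, which is handled by the same symmetry reduction used throughout.
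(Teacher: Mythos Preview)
Your proposal is correct and follows exactly the approach of the paper, which simply invokes Lemma~\ref{lemma:proper,symme,flow} to write $U(y)-\la\sim(y-y_{2})(y+y_{2})$ and then declares that \eqref{equ:basic estimate,U,L0}--\eqref{equ:basic estimate,U,rho,H1} ``follow from direct calculations''. You have supplied those direct calculations in full (symmetry reduction, splitting into sub-intervals, partial fractions for the $L^{1}$ bound, and product-rule expansion for the cutoff derivatives), so there is nothing to add.
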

 \begin{proof}
  By Lemma \ref{lemma:proper,symme,flow}, we derive that $U(y)-\la\sim (y-y_{2})(y+y_{2})$. The estimates \eqref{equ:basic estimate,U,L0}--\eqref{equ:basic estimate,U,H1} then follow from direct calculations.
The bounds \eqref{equ:basic estimate,U,rho,L2}--\eqref{equ:basic estimate,U,rho,H1} are derived similarly.
\end{proof}

\section{Sobolev estimates}\label{sec:Sobolev Estimates}

\begin{lemma}[Hardy inequalities]\label{lemma:hardy inequality}
There holds that
\begin{align}\label{equ:hardy inequality}
\int_{0}^{\infty}\lrs{\frac{1}{x}\int_{0}^{x}f(y)dy}^{2}dx\lesssim\int_{0}^{\infty}|f(y)|^{2}dy.
\end{align}
Furthermore, for any function $f(y)\in H^{1}_0(-1,1)$, the following estimates hold
 	\begin{align}
 		&\Big\|\frac{f}{1-y^2}\Big\|_{L^2}\lesssim \|\partial_yf\|_{L^2},\quad \Big\|\frac{f}{\sqrt{1-y^{2}}}\Big\|_{L^2}\lesssim \|\partial_yf\|_{L^2},\label{proper,f,h1,1}\\
 &\bbn{\frac{f}{\sqrt{1-y^{2}}}}_{L^{\infty}}\lesssim \n{\pa_{y}f}_{L^{2}},\label{equ:hardy,Linf,L2}\\
 		&\|f\|_{L^2}\leq \|(1-y^2)^{\frac{1}{2}}f\|^{\f23}_{L^2}\|(1-y^2)^{-1}f\|^{\f13}_{L^2}\lesssim \|(1-y^2)^{\frac{1}{2}}f\|^{\f23}_{L^2}\|\partial_yf\|^{\f13}_{L^2}.\label{proper,f,h1,3}
 	\end{align}
\end{lemma}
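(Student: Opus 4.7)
The plan is to handle the four inequalities in order, reducing each to the classical one-sided Hardy inequality together with Cauchy--Schwarz.

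\emph{Classical Hardy on $(0,\infty)$.} I would set $F(x)=\int_0^x f(y)\,dy$ so that $F'=f$ and $F(0)=0$. Writing $x^{-2}=-(1/x)'$ and integrating by parts gives
$$\int_0^\infty \f{F(x)^2}{x^2}\,dx = \left[-\f{F(x)^2}{x}\right]_0^\infty + 2\int_0^\infty \f{F(x)f(x)}{x}\,dx,$$
where both boundary terms vanish by a standard density argument ($F(x)=O(x)$ near zero, and $F^2/x\to 0$ at infinity whenever $F/x\in L^2$). Cauchy--Schwarz applied to the remaining term together with absorption yields $\n{F/x}_{L^2}^2\leq 4\n{f}_{L^2}^2$.

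\emph{Weighted bounds and pointwise estimate.} I would split $(-1,1)=(-1,0]\cup [0,1)$. On $[0,1)$, the condition $f(1)=0$ allows writing $f(y)=-\int_y^1 f'(t)\,dt$, and the change of variables $s=1-y$ reduces $\int_0^1 |f(y)/(1-y)|^2\,dy$ to the classical Hardy inequality applied to $g(s)=f(1-s)$. Since $1-y^2\sim 2(1-y)$ on $[0,1)$, this controls $\n{f/(1-y^2)}_{L^2([0,1))}$; the half-interval $(-1,0]$ is symmetric. The weaker bound with weight $(1-y^2)^{-1/2}$ follows a fortiori on the bounded segment. For the pointwise estimate, Cauchy--Schwarz applied to both representations $f(y)=\int_{-1}^y f'\,dt$ and $f(y)=-\int_y^1 f'\,dt$ gives $|f(y)|^2\leq \min(1+y,1-y)\,\n{f'}_{L^2}^2$, and the identity $\min(1+y,1-y)/(1-y^2)=1/\max(1+y,1-y)\leq 1$ on $(-1,1)$ produces the pointwise bound.

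\emph{Interpolation.} I would decompose
$$|f|^2=\bigl(|f|(1-y^2)^{1/2}\bigr)^{4/3}\cdot\bigl(|f|/(1-y^2)\bigr)^{2/3},$$
where the weights combine to $(1-y^2)^{2/3-2/3}=1$. H\"older's inequality with dual exponents $(3/2,3)$ then yields $\n{f}_{L^2}^2\leq \n{(1-y^2)^{1/2}f}_{L^2}^{4/3}\n{f/(1-y^2)}_{L^2}^{2/3}$, and combining with the previously established bound $\n{f/(1-y^2)}_{L^2}\lesssim \n{\pa_y f}_{L^2}$ completes the estimate. None of these steps presents a genuine obstacle; the only point requiring care is the vanishing of the boundary terms in the first step, handled by the usual density argument.
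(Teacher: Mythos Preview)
Your proof is correct and self-contained. The paper itself does not actually prove this lemma: it declares the classical Hardy inequality ``well-established and widely documented'' and refers the weighted estimates \eqref{proper,f,h1,1}--\eqref{proper,f,h1,3} to \cite[Lemma A.2]{cdlz-arxiv2023}. Your argument---integration by parts plus absorption for the classical inequality, reduction to the one-sided Hardy estimate on each half-interval for the weighted $L^2$ bounds, the $\min(1+y,1-y)$ trick for the $L^\infty$ bound, and H\"older with exponents $(3/2,3)$ for the interpolation---is exactly the standard route one would expect behind the cited reference, and every step checks out. In particular, the weight bookkeeping in the interpolation step and the observation $\min(1\pm y)/(1-y^2)=1/\max(1\pm y)\le 1$ are both correct.
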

\begin{proof}
The classical Hardy inequality \eqref{equ:hardy inequality} is well-established and widely documented in the literature. Hence, its proof is omitted here. For the specific estimates \eqref{proper,f,h1,1}--\eqref{proper,f,h1,3}, see for example \cite[Lemma A.2]{cdlz-arxiv2023}.
\end{proof}

	\begin{lemma}\label{lemma:proper,u,L2,Linf}
 		Let $(\partial_y^2-|\alpha|^2)\psi=w$ and $u=(-\pa_y\psi,i\al\psi)$ with $\psi(\pm 1)=0$ and $|\alpha|\geq1$. Then it holds that
 		\begin{align}
 			&\n{u}_{L^\infty}\lesssim \n{w}_{L^1},\qquad
 			\qquad \n{u}_{L^\infty}\lesssim |\alpha|^{-\frac{1}{2}}\n{w}_{L^2}\label{equ:proper,u,LinfL2}\\
             & \n{\psi}_{L^{2}}\lesssim |\al|^{-2}\n{w}_{L^{2}},\qquad \n{u}_{L^2}\lesssim |\alpha|^{-\frac{1}{2}}\n{w}_{L^1},\qquad
            \n{u}_{L^2}\lesssim|\al|^{-1}\n{w}_{L^{2}}
           ,\label{equ:proper,u,L2L1}\\
 			&\n{(\sqrt{1-y^{2}})(\pa_{y},|\al|) u}_{L^2}\lesssim \n{(\sqrt{1-y^{2}})w}_{L^2}\label{equ:proper,u,H1,H1weight}.
 		\end{align}
 	\end{lemma}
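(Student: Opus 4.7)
The plan is to organize the six estimates into three groups and dispatch each by a standard tool: the explicit Green's function for $\partial_y^2-|\alpha|^2$ on $(-1,1)$ with Dirichlet boundary condition, a basic energy identity, and a weighted energy identity.

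First I would write $\psi(y)=\int_{-1}^1 G_\alpha(y,z) w(z)\,dz$, where $G_\alpha$ is the Dirichlet Green's function, which satisfies $|G_\alpha(y,z)|\lesssim |\alpha|^{-1}e^{-|\alpha||y-z|}$ together with $|\partial_y G_\alpha(y,z)|+|\alpha G_\alpha(y,z)|\lesssim e^{-|\alpha||y-z|}$ uniformly for $y,z\in(-1,1)$ when $|\alpha|\geq 1$. Since $u=(-\partial_y\psi,i\alpha\psi)$, the estimates on $u$ reduce to bounds on $(\partial_y,|\alpha|)G_\alpha$. Taking the $L^\infty_z$ norm of the kernel gives $\|u\|_{L^\infty}\lesssim\|w\|_{L^1}$; taking the $L^2_z$ norm gives $\|u\|_{L^\infty}\lesssim|\alpha|^{-1/2}\|w\|_{L^2}$. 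For the $L^2$ output, Minkowski's inequality gives $\|u\|_{L^2_y}\leq \sup_z\|(\partial_y,|\alpha|)G_\alpha(\cdot,z)\|_{L^2_y}\|w\|_{L^1}\lesssim |\alpha|^{-1/2}\|w\|_{L^1}$, which is the second bound in \eqref{equ:proper,u,L2L1}.

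The remaining two bounds in \eqref{equ:proper,u,L2L1} follow from the standard energy identity: multiplying $(\partial_y^2-|\alpha|^2)\psi=w$ by $\bar\psi$, integrating, and using $\psi(\pm 1)=0$, one obtains
\begin{align*}
\|\partial_y\psi\|_{L^2}^2+|\alpha|^2\|\psi\|_{L^2}^2=-\langle w,\psi\rangle\leq \|w\|_{L^2}\|\psi\|_{L^2}\leq |\alpha|^{-1}\|w\|_{L^2}\|(\partial_y,|\alpha|)\psi\|_{L^2}.
\end{align*}
This immediately yields $\|u\|_{L^2}=\|(\partial_y,|\alpha|)\psi\|_{L^2}\lesssim|\alpha|^{-1}\|w\|_{L^2}$ and $\|\psi\|_{L^2}\lesssim|\alpha|^{-2}\|w\|_{L^2}$.

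For the weighted estimate \eqref{equ:proper,u,H1,H1weight}, I would test $(\partial_y^2-|\alpha|^2)\psi=w$ against $(1-y^2)\bar\psi$ and integrate by parts. The boundary contributions vanish because of the weight, and using $\partial_y(1-y^2)=-2y$ together with the identity $2y\,\partial_y\psi\,\bar\psi=y\,\partial_y|\psi|^2$ followed by another integration by parts, one arrives at
\begin{align*}
\|\sqrt{1-y^2}\,\partial_y\psi\|_{L^2}^2+\|\psi\|_{L^2}^2+|\alpha|^2\|\sqrt{1-y^2}\,\psi\|_{L^2}^2=-\langle(1-y^2)w,\psi\rangle.
\end{align*}
Applying Cauchy--Schwarz on the right gives both $|\alpha|^2\|\sqrt{1-y^2}\,\psi\|_{L^2}\lesssim\|\sqrt{1-y^2}\,w\|_{L^2}$ and $|\alpha|\|\sqrt{1-y^2}\,\partial_y\psi\|_{L^2}\lesssim\|\sqrt{1-y^2}\,w\|_{L^2}$. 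Since $\partial_y u^1=-\partial_y^2\psi=-w-|\alpha|^2\psi$, $\partial_y u^2=i\alpha\partial_y\psi$, and $|\alpha|u=(-|\alpha|\partial_y\psi,\,i\alpha|\alpha|\psi)$, the pointwise bound $|(\partial_y,|\alpha|)u|\lesssim|w|+|\alpha|^2|\psi|+|\alpha||\partial_y\psi|$ together with these two weighted bounds yields \eqref{equ:proper,u,H1,H1weight}. None of the steps presents a real obstacle; the only mild subtlety is keeping track of the correct boundary contributions in the weighted identity, which is handled cleanly by the factor $1-y^2$ vanishing at $\pm 1$.
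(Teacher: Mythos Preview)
Your proof is correct. For \eqref{equ:proper,u,LinfL2} and \eqref{equ:proper,u,L2L1} the paper simply cites \cite[Lemma 9.3]{CLWZ-ARMA}, so your Green's-function and energy arguments supply the standard details. For the weighted estimate \eqref{equ:proper,u,H1,H1weight}, both you and the paper rely on integration by parts with the weight $1-y^{2}$, but the executions differ. The paper expands $\|\sqrt{1-y^{2}}\,w\|_{L^2}^{2}=\int(1-y^{2})|\partial_y^{2}\psi-|\alpha|^{2}\psi|^{2}\,dy$ directly and shows, after integration by parts, that it equals $\int(1-y^{2})(|\partial_y^{2}\psi|^{2}+2|\alpha\partial_y\psi|^{2}+|\alpha^{2}\psi|^{2})\,dy$ plus a nonnegative remainder; the integrand in the main term is precisely $|(\partial_y,|\alpha|)u|^{2}$, so the bound follows in one step. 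Your route instead tests the equation against $(1-y^{2})\bar\psi$, obtaining bounds on $|\alpha|^{2}\|\sqrt{1-y^{2}}\psi\|_{L^2}$ and $|\alpha|\|\sqrt{1-y^{2}}\partial_y\psi\|_{L^2}$, and then recovers the $\partial_y^{2}\psi$ piece via $\partial_y^{2}\psi=w+|\alpha|^{2}\psi$. Both are valid; the paper's computation is slightly cleaner in that it gives an exact identity and avoids the final triangle-inequality step, while your identity has the incidental bonus of producing the unweighted term $\|\psi\|_{L^2}^{2}$ along the way.
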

\begin{proof}
 For estimates \eqref{equ:proper,u,LinfL2} and \eqref{equ:proper,u,L2L1}, see for example \cite[Lemma 9.3]{CLWZ-ARMA}. For \eqref{equ:proper,u,H1,H1weight},
 using integration by parts, we get
\begin{align*}
        &\n{\sqrt{1-y^{2}}w}^2_{L^2}=\int_{-1}^1\big(1-y^{2}\big)(\pa_y^2\psi-|\al|^2\psi)^2dy\\
        =&\int_{-1}^1\big(1-y^{2}\big)(|\pa_y^2\psi|^2+|\al^2\psi|^2)dy-2\operatorname{Re}
        \int_{-1}^1\big(1-y^{2}\big)\pa_y^2\psi|\al|^2\ol{\psi} dy\\
        =&\int_{-1}^1\big(1-y^{2}\big)(|\pa_y^2\psi|^2+2|\al \pa_y\psi|^2+|\al^2\psi|^2)dy-2\operatorname{Re}\int_{-1}^1 y\pa_y\psi|\al|^2\ol{\psi} dy\\
        =&\int_{-1}^1\big(1-y^{2}\big)(|\pa_y^2\psi|^2+2|\al \pa_y\psi|^2+|\al^2\psi|^2)dy+\int_{-1}^1 |\al\psi|^2 dy,
        \end{align*}
which gives
\begin{align*}
    \n{\sqrt{1-y^{2}}(\pa_{y},|\al|) u}_{L^2}^2\leq
    \n{\sqrt{1-y^{2}}w}_{L^2}^{2}.
\end{align*}

\end{proof}

	\begin{lemma}
 [{\hspace{-0.01em}\cite[Lemma C.2]{CWZ-CMP}}]\label{elliptic-est-psidec12}
 	Let $\psi_{dec,1},\psi_{dec,2} \in H^{2}(-1,1)\cap H_0^1(-1,1)$, $g\in H^{2}(-1,1)$ satisfy $(\partial_y^2-|\alpha|^2)\psi_{dec,1}=g(\partial_y^2-|\alpha|^2)\psi_{dec,2}$ for $|\al|\geq 1$. Then it holds that
 	\begin{align}
 |\al|^{\frac{1}{2}}\|u_{dec,1}\|_{L^2}+|\partial_y\psi_{dec,1}(\pm1)|
 		\lesssim &|\al|^{\frac{1}{2}}\|g\|_{C^1}
 \|u_{dec,2}\|_{L^2}+
 \n{g}_{L^{\infty}}|\partial_y\psi_{dec,2}(\pm1)|\label{est-psidec1-1}
 ,\\
\|(\partial_y,\alpha)(\psi_{dec,1}-g\psi_{dec,2})\|_{L^2}\lesssim&  \|\partial_yg\|_{H^{1}}\|\psi_{dec,2}\|_{L^2}.\label{est-psidec1-2}
 	\end{align}
 \end{lemma}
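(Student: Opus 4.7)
The natural starting point is the corrector $\phi:=\psi_{dec,1}-g\psi_{dec,2}$, which satisfies $\phi(\pm 1)=0$ since $\psi_{dec,1}(\pm1)=\psi_{dec,2}(\pm1)=0$. A direct Leibniz computation reduces the hypothesis to
\[
(\partial_y^2-|\al|^2)\phi = -2\,\partial_y g\,\partial_y\psi_{dec,2}-\partial_y^2 g\,\psi_{dec,2},
\]
so the problem becomes one of estimating the solution of an inhomogeneous elliptic equation whose data involve only $\partial_y g$ and $\partial_y^2 g$.

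To prove \eqref{est-psidec1-2} I would run a standard energy estimate: test the $\phi$-equation against $\bar\phi$, integrate by parts to move one derivative off $\partial_y\psi_{dec,2}$ (no boundary term appears because $\psi_{dec,2}(\pm 1)=0$), apply the trace inequality $\|\phi\|_{L^\infty}\lesssim\|\partial_y\phi\|_{L^2}$ coming from $\phi(\pm 1)=0$, and absorb $\|\partial_y g\|_{L^\infty}\lesssim\|\partial_y g\|_{H^1}$ by Sobolev embedding. A single Cauchy--Schwarz and absorption then yield $\|(\partial_y,\al)\phi\|_{L^2}\lesssim\|\partial_y g\|_{H^1}\|\psi_{dec,2}\|_{L^2}$, which is precisely \eqref{est-psidec1-2}.

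For the $L^2$-piece of \eqref{est-psidec1-1} I prefer to bypass $\phi$ entirely and test the raw identity against $\bar\psi_{dec,1}$; two integrations by parts on the right-hand side (all boundary terms vanish by the Dirichlet conditions) produce
\[
\|u_{dec,1}\|_{L^2}^2=\int g\,\partial_y\bar\psi_{dec,1}\,\partial_y\psi_{dec,2}\,dy+\int\partial_y g\,\bar\psi_{dec,1}\,\partial_y\psi_{dec,2}\,dy+|\al|^2\!\int g\,\psi_{dec,2}\,\bar\psi_{dec,1}\,dy.
\]
The Poincar\'e-type bound $\|\psi_{dec,j}\|_{L^2}\lesssim|\al|^{-1}\|u_{dec,j}\|_{L^2}$ from Lemma \ref{lemma:proper,u,L2,Linf} controls each of the three terms by $\|g\|_{C^1}\|u_{dec,1}\|_{L^2}\|u_{dec,2}\|_{L^2}$ (the $\partial_y g$ term picks up the saving factor $|\al|^{-1}$), and dividing through delivers $|\al|^{1/2}\|u_{dec,1}\|_{L^2}\lesssim|\al|^{1/2}\|g\|_{C^1}\|u_{dec,2}\|_{L^2}$.

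The hard part is the boundary trace $|\partial_y\psi_{dec,1}(\pm1)|$, which is invisible to interior energy methods. I plan to handle it through the explicit Dirichlet Green's function of $(\partial_y^2-|\al|^2)$ on $(-1,1)$, whose Poisson-type normal-derivative kernel at $y=1$ is exactly $\gamma^+(z)=\sinh(|\al|(1+z))/\sinh(2|\al|)$, giving the trace formula $\partial_y\psi_{dec,j}(1)=\int\gamma^+(z)(\partial_z^2-|\al|^2)\psi_{dec,j}(z)\,dz$. Substituting the hypothesis and subtracting $g(1)\partial_y\psi_{dec,2}(1)$ yields
\[
\partial_y\psi_{dec,1}(1)-g(1)\partial_y\psi_{dec,2}(1)=\int_{-1}^{1}\gamma^+(z)\big(g(z)-g(1)\big)(\partial_z^2-|\al|^2)\psi_{dec,2}(z)\,dz,
\]
and two integrations by parts---whose boundary contributions all vanish thanks to $\gamma^+(-1)=0$, $(g-g(1))|_{z=1}=0$, and $\psi_{dec,2}(\pm1)=0$, while the $|\al|^2$ piece cancels against $(\partial_z^2-|\al|^2)\gamma^+=0$---collapse the integral to $\int\partial_z\gamma^+\,\partial_z g\,\psi_{dec,2}\,dz-\int\gamma^+\,\partial_z g\,\partial_z\psi_{dec,2}\,dz$. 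The elementary kernel bounds $\|\gamma^+\|_{L^2}\lesssim|\al|^{-1/2}$ and $\|\partial_z\gamma^+\|_{L^2}\lesssim|\al|^{1/2}$, combined with $\|\psi_{dec,2}\|_{L^2}\lesssim|\al|^{-1}\|u_{dec,2}\|_{L^2}$, close the estimate, and the mirror argument at $y=-1$ with kernel $\gamma^-(z)=\sinh(|\al|(1-z))/\sinh(2|\al|)$ completes \eqref{est-psidec1-1}.
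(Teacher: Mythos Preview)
The paper does not supply its own proof of this lemma; it is quoted verbatim as \cite[Lemma C.2]{CWZ-CMP} and left without argument. So there is no in-paper proof to compare against, and your task was effectively to reconstruct the cited result.

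Your reconstruction is correct in all three parts. The computation of $(\partial_y^2-|\al|^2)\phi$ for $\phi=\psi_{dec,1}-g\psi_{dec,2}$ is right, and the energy estimate for \eqref{est-psidec1-2} goes through exactly as you outline: after moving one $\partial_y$ off $\partial_y\psi_{dec,2}$ the right-hand side is controlled by $\|\partial_y g\|_{H^1}\|\psi_{dec,2}\|_{L^2}\|\partial_y\phi\|_{L^2}$, and Cauchy--Schwarz closes it. For the $L^2$ part of \eqref{est-psidec1-1}, your direct pairing with $\bar\psi_{dec,1}$ and single integration by parts on the right is clean and gives exactly the three terms you list; each is bounded by $\|g\|_{C^1}\|u_{dec,1}\|_{L^2}\|u_{dec,2}\|_{L^2}$ as claimed. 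The boundary-trace argument is the most delicate and you handle it well: the Poisson-kernel identity $\partial_y\psi(1)=\langle\gamma^+,(\partial_y^2-|\al|^2)\psi\rangle$ is exactly the right tool, the subtraction of $g(1)\partial_y\psi_{dec,2}(1)$ manufactures the vanishing factor $(g-g(1))$ at $z=1$ needed for the first integration by parts, and your two-step integration by parts correctly collapses everything to integrals involving only $\partial_z g$, $\gamma^+$, $\partial_z\gamma^+$, $\psi_{dec,2}$, and $\partial_z\psi_{dec,2}$, with the $|\al|^2$ contributions cancelling via $(\partial_z^2-|\al|^2)\gamma^+=0$. The kernel bounds $\|\gamma^+\|_{L^2}\lesssim|\al|^{-1/2}$ and $\|\partial_z\gamma^+\|_{L^2}\lesssim|\al|^{1/2}$ are standard and finish the estimate. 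This is precisely the kind of argument one expects for this type of result, and very likely mirrors the proof in the cited reference.
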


\section{Estimates for Rayleigh and Euler equations}\label{sec:Estimates for the Rayleigh and Euler Equations}

  \begin{lemma}[{\hspace{-0.01em}\cite[Proposition 6.1]{WZZ-APDE}}]\label{lemma:proper,phi,ray,equ}
Let $U(y)\in \mathrm{S}$, $\mathrm{Re}\,\la\in\mathrm{Ran}\,U$, $0<\abs{\mathrm{Im}\,\la}\ll 1$,
 	and $\psi\in H^{1}(-1,1)$ be the solution to the Rayleigh equation
 	\begin{equation}\label{equ:def,Ray,U}
 		(U(y) - \la )(\pa_y^2-\abs{\al}^2)\psi-U''(y)\psi = g,  \quad \psi(\pm 1) = 0.
 	\end{equation}
Then it holds that
 	\begin{align}
 		\n{\pa_y\psi}_{L^2}+\abs{\al}\n{\psi}_{L^2}
 		\lesssim \n{\pa_yg}_{L^2}+\abs{\al}\n{g}_{L^2}.
 	\end{align}
 \end{lemma}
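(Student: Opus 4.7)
The plan is to transform the Rayleigh equation via the substitution $\phi := \psi/(U-\lambda)$, which is well defined throughout $[-1,1]$ since $\mathrm{Im}\,\lambda \neq 0$ keeps $U-\lambda$ bounded away from zero. A direct computation converts the equation for $\psi$ into the divergence (formally self-adjoint) form
\[
\partial_y\bigl[(U-\lambda)^2 \phi'\bigr] - |\alpha|^2 (U-\lambda)^2 \phi = g,
\]
with Dirichlet condition $\phi(\pm 1)=0$ inherited from $\psi(\pm 1)=0$ together with $U(\pm1)\neq\lambda$. This form has a natural energy structure, whereas the original Rayleigh form hides a singular potential $U''/(U-\lambda)$.

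First I would test the divergence-form equation against $\bar\phi$ and integrate by parts, obtaining
\[
\int_{-1}^{1}(U-\lambda)^2\bigl(|\phi'|^2 + |\alpha|^2|\phi|^2\bigr)dy = -\int_{-1}^{1} g\bar\phi\,dy.
\]
Taking the real part and using $(U-\lambda)^2=(U-\mathrm{Re}\,\lambda)^2-(\mathrm{Im}\,\lambda)^2-2i(\mathrm{Im}\,\lambda)(U-\mathrm{Re}\,\lambda)$ rewrites this as
\[
\int |U-\lambda|^2\bigl(|\phi'|^2 + |\alpha|^2|\phi|^2\bigr)dy = 2(\mathrm{Im}\,\lambda)^2\int\bigl(|\phi'|^2 + |\alpha|^2|\phi|^2\bigr)dy - \mathrm{Re}\int g\bar\phi\,dy.
\]
The error term carrying $(\mathrm{Im}\,\lambda)^2$ is the main source of difficulty, since it is of the same order as $|U-\lambda|^2$ near the critical layers $\{y_1,y_2\}$ where $U(y_i)=\mathrm{Re}\,\lambda$ (which exist and satisfy $y_1=-y_2$ by the symmetric hypothesis on $U$). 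To absorb it, I would invoke the coercive estimate of Lemma \ref{lemma:in,coercive estimate} together with the Hardy-type inequality of Lemma \ref{lemma:hardy-type} in a neighborhood $B_{1,2,\delta}$ of the critical layers, with $\delta$ proportional to $|\mathrm{Im}\,\lambda|$ rescaled by the $U'$-scale $|y-y_i|$ from Lemma \ref{lemma:proper,symme,flow}; outside $B_{1,2,\delta}$, $|U-\lambda|^2$ is uniformly bounded below and the error is directly absorbable.

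Once $\int |U-\lambda|^2(|\phi'|^2 + |\alpha|^2|\phi|^2)\,dy$ is controlled, I would convert it into the desired $H^1_\alpha$ bound for $\psi$ via $\psi = (U-\lambda)\phi$ and $\psi' = U'\phi + (U-\lambda)\phi'$. The terms $\|(U-\lambda)\phi\|_{L^2}$ and $\|(U-\lambda)\phi'\|_{L^2}$ directly control $|\alpha|\|\psi\|_{L^2}$ and the main part of $\|\psi'\|_{L^2}$, while the remainder $\|U'\phi\|_{L^2}$ is handled using $(U')^2(y)\lesssim U(y)-U(0)\leq |U(y)-\lambda|+|U(0)-\mathrm{Re}\,\lambda|$ from Lemma \ref{lemma:proper,symme,flow}, combined with the Hardy-type weighted estimate. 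To pass from the $L^2$ pairing $\int g\bar\phi\,dy$ on the right to an $H^1_\alpha$ norm on $g$, I would rewrite $\bar\phi=\overline{\psi/(U-\lambda)}$ and integrate by parts in $y$ so as to transfer one derivative onto $g$, the cost being a factor involving $(\log(U-\lambda))'$ whose singular contribution near $\{y_1,y_2\}$ is absorbed yet again by the coercive estimate of Lemma \ref{lemma:in,coercive estimate}.

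The hard part is the simultaneous treatment of the two paired critical layers and the uniform control of the $(\mathrm{Im}\,\lambda)^2\|\phi\|_{H^1_\alpha}^2$ error as $\mathrm{Im}\,\lambda\to 0$. Lemma \ref{lemma:in,coercive estimate} is tailor-made for this, and the symmetric hypothesis $U(y)=U(-y)$ is precisely what ensures the two critical-layer contributions can be packaged into a single coercive estimate; Lemma \ref{lemma:hardy-type} provides the complementary weighted bound on $\psi/(U-\lambda)$ needed to close the argument.
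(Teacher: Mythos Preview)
The paper does not prove this lemma; it is quoted verbatim from \cite[Proposition 6.1]{WZZ-APDE} and no argument is given in the present paper. So there is no ``paper's own proof'' to compare against, and you are effectively proposing an independent proof of a cited result.

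That said, your outline has a genuine gap. The coercive estimate of Lemma~\ref{lemma:in,coercive estimate} and the Hardy-type inequality of Lemma~\ref{lemma:hardy-type} are stated for a stream function $\psi_1$ that vanishes at the \emph{critical points} $y_1,y_2$ (where $U(y_i)=\lambda$), not at $\pm 1$; this is essential to the integration by parts in their proofs. Your function $\psi$ satisfies only $\psi(\pm 1)=0$, and after the substitution $\phi=\psi/(U-\lambda)$ the object $\phi$ also has no reason to vanish at $y_1,y_2$. Hence those lemmas cannot be invoked as written, and the paper's decomposition $\psi=\psi_1+\psi_2$ (which manufactures the vanishing at $y_1,y_2$) is tied to the \emph{Orr--Sommerfeld} analysis with an auxiliary harmonic piece $\psi_2$ matching the values $\psi(y_i)$; you have not set up anything analogous here.

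The second issue is the absorption step itself. At the critical layer $|U-\lambda|^2$ equals $(\mathrm{Im}\,\lambda)^2$, so the error $2(\mathrm{Im}\,\lambda)^2\int(|\phi'|^2+|\alpha|^2|\phi|^2)$ is the \emph{same order} as the left-hand side there, with coefficient $2$ rather than something strictly less than $1$; a scale separation argument alone does not close. The proof in \cite{WZZ-APDE} proceeds quite differently, via an explicit construction of the Rayleigh resolvent using homogeneous solutions and a Wronskian representation (the limiting absorption principle), not by an energy identity in the divergence form you describe. If you want a self-contained argument along your lines, you would need to supply a new coercive estimate adapted to $\psi(\pm 1)=0$ rather than $\psi_1(y_1)=\psi_1(y_2)=0$, which is substantially more than what Lemmas~\ref{lemma:in,coercive estimate}--\ref{lemma:hardy-type} provide.
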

  \begin{lemma}[{\hspace{-0.01em}\cite[Proposition C.2]{cdlz-arxiv2023}}]\label{lemma:proper,phi,ray,equ,refined}
Let $U(y)\in \mathrm{S}$, $\mathrm{Re}\,\la\in\mathrm{Ran}\,U$, $0<\abs{\mathrm{Im}\,\la}\ll 1$,
 	and $\psi\in H^{1}(-1,1)$ be the solution to the Rayleigh equation \eqref{equ:def,Ray,U} with $g=-\frac{\sinh |\al|(1+y)}{\sinh |\al|}$.
Then it holds that
 	\begin{align}\label{equ:est,psi,H1,g,H1}
 		\n{\pa_y\psi}_{L^2}+\abs{\al}\n{\psi}_{L^2}
 		\lesssim |\al|^{-\frac{1}{2}}.
 	\end{align}
 \end{lemma}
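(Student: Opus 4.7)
The estimate seeks an improvement by a full factor of $|\al|$ over the naive bound one gets from Lemma~\ref{lemma:proper,phi,ray,equ}. Direct computation shows that $g(y)\propto \sinh(|\al|(1+y))$ has $\|g\|_{L^2}\sim |\al|^{-1/2}$ but $\|\partial_y g\|_{L^2}\sim |\al|^{1/2}$, so Lemma~\ref{lemma:proper,phi,ray,equ} alone yields only $\|(\partial_y,|\al|)\psi\|_{L^2}\lesssim \|g\|_{H^1_{|\al|}}\sim |\al|^{1/2}$. The saving must come from the crucial algebraic feature of the source: $g$ is not an arbitrary $H^1$ function but an $|\al|$-harmonic function, satisfying the identity $(\partial_y^2-|\al|^2)g=0$. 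Equivalently, $g$ concentrates exponentially near $y=1$, so its $L^2$ mass is much smaller than its $H^1$ mass.

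My plan is to replace the $H^1$-based estimate by an $L^2$-to-$H^1$ resolvent estimate for the Rayleigh operator, namely to show that under the assumptions $U\in\mathrm S$ and $0<|\mathrm{Im}\,\la|\ll 1$ one has the uniform bound $\|(\partial_y,|\al|)\psi\|_{L^2}\lesssim \|g\|_{L^2}$, which combined with $\|g\|_{L^2}\sim|\al|^{-1/2}$ gives the target. I would pursue this either by (i) a decomposition $\psi=\psi_1+\psi_2$, where $\psi_2$ is an explicit approximate solution (e.g.\ a harmonic extension modified by a critical-layer cutoff) chosen so that $(U-\la)(\partial_y^2-|\al|^2)\psi_2-U''\psi_2=g$ up to an improved source of size $\|g\|_{L^2}$, with $\psi_1$ controlled by applying Lemma~\ref{lemma:proper,phi,ray,equ} to this improved source; or (ii) by a duality argument, writing $\|(\partial_y,|\al|)\psi\|_{L^2}^2 = -\langle \psi,(g+U''\psi)/(U-\la)\rangle$ from the Rayleigh equation, absorbing the $U''\psi$ contribution by the coercivity proved in Lemmas~\ref{lemma:in,coercive estimate}--\ref{lemma:hardy-type}, and using the algebraic identity $(\partial_y^2-|\al|^2)g=0$ together with the boundary data $g(-1)=0$ to integrate by parts and transfer all derivatives onto $\bar\psi/(U-\la)$, thereby trading the large $\|\partial_y g\|_{L^2}$ for the small $\|g\|_{L^2}$.

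The principal obstacle, in either route, is the critical layer $\{U=\mathrm{Re}\,\la\}$ where the factor $(U-\la)^{-1}$ nearly develops a simple pole: this is precisely the point where naive Cauchy--Schwarz loses in $|\mathrm{Im}\,\la|$. The difficulty is most acute when $\mathrm{Re}\,\la$ lies close to $U(\pm 1)$, for then the critical layer sits near $y=\pm 1$, exactly where $g$ is exponentially concentrated, so the singular factor and the peak of the source reinforce each other. To close the argument uniformly as $|\mathrm{Im}\,\la|\to 0$ I would introduce a smooth cutoff separating a neighborhood of the critical layer from the bulk, control the bulk piece by the clean IBP computation above (where $(U-\la)^{-1}$ has bounded derivatives), and treat the boundary-layer piece with the single-point estimate of Lemma~\ref{lemma:single point estimate,in} and the Hardy-type inequality of Lemma~\ref{lemma:hardy-type}, using the exponential decay $g(y)\lesssim e^{-|\al|(1-y)}$ to extract the required $|\al|^{-1/2}$ factor. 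The $\mathrm S$-condition ($U$ symmetric with $U''>0$) is essential here to pin down the location and size of the critical layer uniformly in $\la\in\mathrm{Ran}\,U$.
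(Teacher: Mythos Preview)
The paper does not prove this lemma; it is imported verbatim from \cite[Proposition~C.2]{cdlz-arxiv2023}, so there is no in-paper argument to compare against. Your diagnosis of why Lemma~\ref{lemma:proper,phi,ray,equ} alone falls short by a full power of $|\al|$ is correct, and you have identified the two structural features that matter: the identity $(\partial_y^2-|\al|^2)g=0$ and the exponential concentration of $g$ near $y=1$.

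Two cautions on your outline. First, your top-level phrasing---a uniform $L^2\to H^1$ resolvent bound $\|(\partial_y,|\al|)\psi\|_{L^2}\lesssim\|g\|_{L^2}$---is too strong as a general statement: no such bound holds for arbitrary $g\in L^2$ uniformly as $|\mathrm{Im}\,\la|\to 0$ (otherwise Lemma~\ref{lemma:proper,phi,ray,equ} would already say so). The gain is specific to this $g$, and both of your routes (i)--(ii) quietly acknowledge this by invoking $(\partial_y^2-|\al|^2)g=0$ at the key step; just be sure the write-up does not overclaim. Second, Lemmas~\ref{lemma:in,coercive estimate}--\ref{lemma:single point estimate,in} are formulated for the auxiliary piece $\psi_1$ with the \emph{interior} zeros $\psi_1(y_1)=\psi_1(y_2)=0$, not for $\psi$ with $\psi(\pm1)=0$; to use them you would first need the decomposition $\psi=\psi_1+\psi_2$ of \eqref{def-varphi1}--\eqref{form-varphi2} and a separate treatment of the harmonic piece $\psi_2$.

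Your route (i) is closest in spirit to how such refinements are usually obtained in this circle of papers: build an explicit parametrix near the critical layer out of the homogeneous Rayleigh solutions (the limiting-absorption construction of \cite{WZZ-APDE}), chosen so that the residual source lands in $H^1_{|\al|}$ with norm $\lesssim|\al|^{-1/2}$, and close with Lemma~\ref{lemma:proper,phi,ray,equ}. The interaction you flag---critical layer approaching $y=1$ where $g$ peaks---is precisely the delicate case, and resolving it is the substance of the cited proposition; the exponential decay $|g(y)|\lesssim e^{-|\al|(1-y)}$ is indeed what rescues the estimate there.
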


Next, we provide the following space-time estimates for the linearized Euler equation
\begin{align}\label{equ:linear,euler,near,U,def}
	\pa_{t}\om +i\al U\om-i\al U''\phi=f,\qquad (\pa_{y}^{2}-\abs{\al}^2)\phi=\om.
\end{align}
\begin{lemma}\label{lemma:est,euler,w,u,l2,fH1}
Let $U(y)\in \mathrm{S}$ and $( \om,\phi)$ be the solution to the linearized Euler equation \eqref{equ:linear,euler,near,U,def}, then it holds that
	\begin{align}\label{equ:est,wu,l2,win,f,H1}
		\n{ \om(t)}_{L^2}^{2}+\abs{\al}\int_{0}^{+\infty}\n{(\pa_y,|\al|)\phi(t)}_{L^2}^{2}dt
		\lesssim &
		\n{\om^{\mathrm{in}}}^2_{H^{1}}
        +\abs{\al}^{-1}\int_{0}^{+\infty}
        \n{(\pa_y,|\al|)f(t)}_{L^2}^{2}dt.
	\end{align}
	If $f(t,\pm 1)=0$, then we can also obtain
	\begin{align}\label{equ:est,u,y=01,l2,win,f,H1}
		\int_{0}^{+\infty}\n{(\pa_y,|\al|)\phi(t,\pm 1)}_{L^2}^{2}dt
		\lesssim
		\n{ \om^{\mathrm{in}}}^2_{H_{\al}^{1}}+\abs{\al}^{-1}\int_{0}^{+\infty}\n{(\pa_y,|\al|)f(t)}_{L^2}^{2}dt.
	\end{align}
	If $f=0$, we have
	\begin{align}\label{equ:est,kpsi,l2,w0,H2,H1}
		\n{\pa_y \phi(t)}_{L^2}
		\lesssim \f{\n{ \om^{\mathrm{in}}}_{H_{\al}^{1}}}{\abs{\al}^{\frac{1}{2}}\lra{t}}\qquad  \text{and} \qquad
        \n{\al \phi(t)}_{L^2}
		\lesssim \min \lr{\f{\n{\om^{\mathrm{in}}}_{H_{\al}^{2}}}{\abs{\al}^{-\frac{1}{2}}\lra{t}^2},\f{\n{ \om^{\mathrm{in}}}_{H_{\al}^{1}}}{\abs{\al}^{\frac{1}{2}}\lra{t}},\f{\n{\om^{\mathrm{in}}}_{L^2}}{\abs{\al}}},
	\end{align}
	which in particular implies
	\begin{align}\label{equ:est,tkpsi,l2,w0,H1H2}
		\int_{0}^{+\infty}\lra{t}^{\frac{5}{2}}\n{\al\phi(t)}_{L^2}^{2}dt
		\lesssim \abs{\al}^{-1}
		\big(\n{\om^{\mathrm{in}}}_{H_{\al}^{2}}^{2}+\abs{\al}^2\n{ \om^{\mathrm{in}}}_{H_{\al}^{1}}^{2}\big).
	\end{align}

Furthermore, if $f=0$ and $\int_{\T} \om^{\mathrm{in}}dx=0$,
\begin{align}\label{equ:est,om,loc+nloc,depletion}
|\om(t,y)|\lesssim \abs{U'(y)}^{\f74}\n{\om^{\mathrm{in}}}_{H_{\al}^{4}}+\lra{t}^{-\f78}
\n{\om^{\mathrm{in}}}_{H_{\al}^{4}}.
\end{align}
\end{lemma}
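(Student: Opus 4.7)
The plan is to prove the five displays in turn by combining energy identities, a commuting vector field, a Laplace transform / Rayleigh-resolvent argument, and a direct appeal to the vorticity depletion theorem.

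First I would prove \eqref{equ:est,wu,l2,win,f,H1}. For the $L^{2}$ control on $\om$, test the equation by $\om/U''$ and take the real part. Since $U/U''$ is real, the transport term contributes only to the imaginary part, and since $\lra{\phi,\om}=-\n{(\pa_{y},|\al|)\phi}_{L^{2}}^{2}$ is real, the nonlocal term contributes purely imaginary as well. This produces the closed identity
\[ \tfrac{d}{dt}\bbn{\om/\sqrt{U''}}^{2}_{L^{2}}=2\mathrm{Re}\bblra{f,\om/U''}, \]
giving the $L^{\infty}_{t}L^{2}_{y}$ bound on $\om$ in terms of $\n{\om^{\mathrm{in}}}_{L^{2}}$ and the forcing. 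To upgrade to $H^{1}_{\al}$ initial data, I would apply the commuting vector field $X=\pa_{y}+i\al U't$, which satisfies $[\pa_{t}+i\al U,X]=0$, and re-run the energy argument on $X\om$; the resulting commutator $X(i\al U''\phi)-i\al U''X\phi$ is of order zero in $\om$ and is absorbed by the $L^{2}$ bound already obtained, while $X\om|_{t=0}=\pa_{y}\om^{\mathrm{in}}$ produces the $H^{1}_{\al}$ dependence. For the inviscid damping integral $|\al|\int_{0}^{\infty}\n{(\pa_{y},|\al|)\phi}_{L^{2}}^{2}dt$, I would pass to the Laplace transform $\wt{\om}(\tau,y)=\int_{0}^{\infty}e^{-it\tau}\om(t,y)dt$, which satisfies a Rayleigh equation with spectral parameter $-\tau/\al$ and source $\om^{\mathrm{in}}+\wt{f}$. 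Applying the Rayleigh resolvent bound of Lemma \ref{lemma:proper,phi,ray,equ}, which controls $\n{(\pa_{y},|\al|)\wt{\phi}}_{L^{2}}$ by the $H^{1}$ norm of the right-hand side, and then Plancherel in $\tau$, converts the estimate back into the claimed space-time bound.

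Next, for the trace estimate \eqref{equ:est,u,y=01,l2,win,f,H1} with $f(t,\pm 1)=0$, I would mimic Step 5 of the proof of Proposition \ref{proposition:est,u,timespace,L2L2,LinftyL2,w0H1}: test the equation against the harmonic modes $\gamma^{\pm}(y)=\operatorname{sinh}(|\al|(1\pm y))/\operatorname{sinh}(2|\al|)$, which satisfy $(\pa_{y}^{2}-|\al|^{2})\gamma^{\pm}=0$ and $\pa_{y}\gamma^{\pm}(\pm 1)\sim |\al|$. Integration by parts, together with $\phi(t,\pm 1)=0$ and $f(t,\pm 1)=0$, expresses $\pa_{y}\phi(t,\pm 1)$ as a combination of $\lra{U'\phi,\pa_{y}\gamma^{\pm}}$, $\lra{f,\gamma^{\pm}}/(i\al)$ and similar bulk pairings, each of which is pointwise in $t$ dominated by the inviscid damping integrand from the first estimate; integrating in $t$ yields the claimed $L^{2}_{t}$ bound on the boundary traces. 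For the pointwise decay bounds in \eqref{equ:est,kpsi,l2,w0,H2,H1} with $f=0$, I would invoke the spectral (limiting absorption) representation of the linearized Euler semigroup around the symmetric flow: decompose the evolution as an integral over the continuous spectrum of the Rayleigh operator and perform one or two stationary-phase integrations by parts in the spectral variable, using the resolvent bound of Lemma \ref{lemma:proper,phi,ray,equ}; this yields the $\lra{t}^{-1}$ decay of $\pa_{y}\phi$ from $H^{1}_{\al}$ data and the $\lra{t}^{-2}$ decay of $|\al|\phi$ from $H^{2}_{\al}$ data, while the trivial bound $\n{\al\phi}_{L^{2}}\lesssim\n{\om^{\mathrm{in}}}_{L^{2}}/|\al|$ comes from energy conservation. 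The weighted time integral \eqref{equ:est,tkpsi,l2,w0,H1H2} then follows by splitting at $t\sim 1$ and combining the three decay rates via H\"older.

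The main obstacle will be the pointwise depletion estimate \eqref{equ:est,om,loc+nloc,depletion}, which asserts that at the degenerate critical point $y=0$ of the symmetric flow (where $U'(0)=0$) the vorticity $\om(t,y)$ vanishes like $|U'(y)|^{7/4}$ uniformly in $t$, with a complementary temporal decay $\lra{t}^{-7/8}$. This vorticity-depletion phenomenon lies much deeper than the other estimates, and I would invoke it directly from the theorem of Ionescu--Iyer--Jia (cited in the paper as \cite{IIJ-VJM}), which is tailored precisely to symmetric shear profiles with a non-degenerate critical point and yields both the $7/4$ vanishing exponent and the $7/8$ temporal rate under the assumed zero-mean condition $\int_{\T}\om^{\mathrm{in}}dx=0$ and $H^{4}_{\al}$ regularity.
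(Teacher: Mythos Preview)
Your proposal is correct and matches the paper's approach: the paper's own proof consists entirely of citations to \cite{WZZ-CMP} and \cite[Proposition D.1]{cdlz-arxiv2023} for \eqref{equ:est,wu,l2,win,f,H1}--\eqref{equ:est,u,y=01,l2,win,f,H1}, to \cite[Theorem 1.2]{WZZ-APDE} for \eqref{equ:est,kpsi,l2,w0,H2,H1}, and to \cite[Theorem 1.2]{IIJ-VJM} for \eqref{equ:est,om,loc+nloc,depletion}, and your sketch accurately outlines the arguments behind those references (the $1/U''$-weighted energy identity, the commuting vector field $X$, the Laplace/Rayleigh-resolvent route via Lemma~\ref{lemma:proper,phi,ray,equ}, the harmonic-mode pairing for boundary traces, and the direct appeal to the Ionescu--Iyer--Jia depletion theorem).
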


\begin{proof}
 The proof of space-time estimates in \eqref{equ:est,wu,l2,win,f,H1}--\eqref{equ:est,u,y=01,l2,win,f,H1} is similar to those in \cite{WZZ-CMP} and \cite[Proposition D.1]{cdlz-arxiv2023}, which we here omit it for brevity.
For the inviscid damping estimates \eqref{equ:est,kpsi,l2,w0,H2,H1}, we appeal directly to \cite[Theorem 1.2]{WZZ-APDE}, while the pointwise vorticity depletion estimate \eqref{equ:est,om,loc+nloc,depletion} is obtained from \cite[Theorem 1.2]{IIJ-VJM}.
\end{proof}

\section*{Acknowledgment}
The authors thank Dongyi Wei for the stimulating discussions.  Q. Chen is partially supported by NSF of China under Grant 12401268. S. Shen is partially supported by NSF of China under Grant 12501322. Z. Zhang is partially supported by NSF of China under Grant 12288101.

\medskip
\noindent\textbf{Data Availability Statement.}
Data sharing is not applicable to this article as no datasets were generated or analysed during the current study.\medskip

\noindent\textbf{Conflict of Interest.}
The authors declare that they have no conflict of interest.

\end{document}